{\makeatletter\AtBeginDocument{\def\UTFviii@undefined@err#1{}}}

\documentclass[10pt]{article}
\usepackage{amsmath, amsfonts, amsthm, amssymb, amscd, mathrsfs, a4wide}
\usepackage{cases}
\usepackage{dsfont, pxfonts}
\usepackage{color}  
\usepackage[makeroom]{cancel}
\usepackage[mathcal]{euscript} 
\usepackage[perpage]{footmisc}

\linespread{1.2}
\textheight = 647pt
\textwidth = 439pt
\oddsidemargin = -0pt
\topmargin = -49pt

\newcommand \mt {\widetilde m}
\newcommand \obig {o}
\newcommand \cbf {\mathbf c} 
\newcommand \Mbf {\mathbf M}
 
\usepackage{graphicx} 

\usepackage{float}

\usepackage[hidelinks]{hyperref} 

 
 \newcommand \Acal {\mathcal A}

 \newcommand \smallcal {\mathcal N}
 \newcommand \vs {\vskip.15cm}
\newcommand \ocal {{o}}

\newcommand \bse {\begin{subequations}}
\newcommand \ese {\end{subequations}}

\newtheorem{theorem}{Theorem}[section]
\newtheorem{proposition}[theorem]{Proposition}

\newtheorem{lemma}[theorem]{Lemma}
\newtheorem{corollary}[theorem]{Corollary}
\newtheorem{definition}[theorem]{Definition}
\newtheorem{claim}[theorem]{Claim}
\newtheorem{remark}[theorem]{Remark}
\numberwithin{equation}{section}
%
\newcommand \beginmyremark {\begin{remark} \rm}
\newcommand \beginmylemma {\vskip.16cm \begin{lemma}} 
\newcommand \beginmyproposition {\vskip.16cm \begin{proposition}} 

\newcommand\ee         {\end{equation}}
\newcommand \bei {\begin{itemize}}
\newcommand \eei {\end{itemize}}
\newcommand \be  {\begin{equation}}
\newcommand\bel {\be\label} 
 
%

\newcommand \gloc {g^\text{loc}}
\newcommand \hloc {h^\text{loc}}
\newcommand \ghat {\widehat g}

\newcommand \pt {\widetilde p} 
\newcommand \Deltas {\cancel \Delta}

\newcommand \del \partial 
\newcommand \pstar {p_*}
\newcommand \qstar {q_*} 
\newcommand \chichart {\chi_{\text{chart}}}
\newcommand \oldp p
\newcommand \oldq q 
\newcommand \eeG {\eps_G}
\newcommand \eeM {\eps_M}
\newcommand \ppG {p_G}
\newcommand \qqG {q_G}
\newcommand \ppM {p_M}
\newcommand \qqM {q_M}
\newcommand \Obig {\mathcal O}
\newcommand \Hstar {H_*} 
\newcommand \Mstar {M_*} 
\newcommand \Gstar {G_*} 
\newcommand \ADM {{\text{ADM}}}
\newcommand \mADM {\mathbf m^\ADM}
\newcommand \PADM {\mathbf P^\ADM}

\newcommand \Mbb {\mathbb M} 
\newcommand \Pss {\mathcal{P}_{\text{seed}}^{\text{sol}}}  
\newcommand \Dcal {\mathcal D}  
\newcommand \Cscr {\mathscr C}
\newcommand \Tscr {\mathscr T}
\newcommand \sgn {\text{sgn}} 
\newcommand \loc {\text{loc}} 
\newcommand \Hess {\mathbf{Hess}}  
\newcommand \Tr {\mathbf{Tr} \hskip.03cm}
%
\newcommand \Ocal {\mathcal O} 
\newcommand \Jbf {\mathbf J} 
\newcommand \eps \epsilon 

\newcommand{\bs}[1]{\boldsymbol{#1}}

\newcommand \Lcal {\bs{\mathcal{L}}} 
\newcommand \Mcal {\mathcal M}
\newcommand \ut {\widetilde u}
\newcommand \gt {\widetilde g}
\newcommand \xt {\widetilde x}
\newcommand \rt {\widetilde r}
\newcommand \wt {\widetilde w}
\newcommand \htt {\widetilde h}
\newcommand \Zt {\widetilde Z}

\newcommand \Wt {\widetilde W}
\newcommand \Sch {\text{Sch}}
\newcommand \Eucl {\text{Eucl}}
\newcommand \Et {\widetilde E}
\newcommand \Ric {\textbf{Ric}}
\newcommand \Div {\mathbf{Div}}
 
\newcommand \Gcal   {\mathcal G} 
\newcommand \LMcal {\mathcal B}
\newcommand \RR         {\mathbb R}
\newcommand \Pt {\widetilde P}
\newcommand \Ecal   {\mathcal E}
\newcommand \Fcal   {\mathcal F}
\newcommand \Hcal   {\mathcal H}
\newcommand \Tcal   {\mathcal T}
\newcommand \Qcal {\mathcal Q}
%


\begin{document}

\title{The seed-to-solution method for  
 the Einstein constraints 
\\
 and the asymptotic localization problem\footnotetext{$^1$ 
Laboratoire Jacques-Louis Lions \& Centre National de la Recherche Scientifique,
Sorbonne Universit\'e, 4 Place Jussieu, 75252 Paris, France.
Email: {\sl contact@philippelefloch.org}
\newline
$^2$ D\'epartement de Math\'ematiques et Informatique, Universit\'e Paris-Nanterre, 
200 Avenue de la R\'epublique, 92001 Nanterre, France. 
Email: {\sl alpthecang@gmail.com}
\newline 
\textit{Key Words and Phrases.} Einstein constraints; asymptotically Euclidean; harmonic decay; asymptotic localization.
\newline Published in: Journal of Functional Analysis (2023). 
}  
}
\author{Philippe G. LeFloch$^1$ and The-Cang Nguyen$^2$}

\date{}
               
\maketitle

\begin{abstract}  
We establish the existence of a broad class of asymptotically Euclidean solutions to Einstein's constraint equations, whose asymptotic behavior at infinity is arbitrarily prescribed. 
The proposed {\sl seed-to-solution method}  
encompasses vacuum  
as well as matter spaces, 
and relies on iterations based on the linearized  Einstein operator and its dual.
It generates a  Riemannian manifold (with finitely many asymptotically Euclidean ends)
from any {\sl seed data set} consisting of 
(1): a Riemannian metric and a symmetric two-tensor,
 and (2): a (density) field and a (momentum) vector field representing the matter content. 
We distinguish between {\sl tame} or {\sl strongly tame}
seed data sets,
depending whether the data provides a rough or an accurate asymptotic Ansatz at infinity. 
We encompass classes of metrics and matter fields with low decay (with infinite ADM mass) or strong decay (with Schwarzschild behavior). 
Our analysis is motivated by Carlotto and Schoen's pioneering work on the localization problem for Einstein's vacuum equations. Dealing with metrics with very low decay and, simultaneously, establishing estimates that include 
(and go beyond) harmonic decay require significantly new arguments which are developed in the present paper. 
We work in a weighted Lebesgue-H\"older framework adapted to the given seed data,
and we analyze the nonlinear coupling between the Hamiltonian and momentum constraints.
By establishing elliptic regularity estimates for the linearized Einstein operator and its dual, 
we uncover the novel notion of {\sl mass-momentum correctors} which is related to the ADM mass of the manifold. We derive precise estimates for the difference between the seed data and the actual Einstein solution, a result that should be of interest for future numerical investigation. 
Furthermore, we introduce here and study the {\sl asymptotic localization problem} 
in which we replace Carlotto-Schoen's exact localization requirement by an asymptotic condition
at a super-harmonic rate. By applying our seed-to-solution method with a suitably constructed, parametrized family of seed data, we solve this problem by exhibiting mass-momentum correctors with harmonic decay. 
\end{abstract} 
 
\newpage 

{\small 

\setcounter{tocdepth}{1}  
\tableofcontents   
 
}

\section{Introduction} 
\label{section--1}

\subsection{Einstein's constraint equations}

\paragraph{Curvature operators of interest.}

Any $3$-dimensional spacelike (i.e.~Riemannian) hypersurface embedded in a $4$-dimensional Lorentzian spacetime 
fullfiling {Einstein's field equations} must 
 satisfy constraint equations of Gauss-Codazzi type, which are  expressed in terms of the induced geometry (first and second fundamental forms) and the projection of the matter tensor (density and momentum) on this hypersurface. 
We thus consider such manifolds $(\Mbf, g, k)$ (with finitely many asymptotic ends) endowed with a Riemannian metric $g$ and a symmetric $2$-tensor field $k$ ---the latter representing the second fundamental form in the dynamical picture. 
Given a scalar field $\Hstar: \Mbf \to \RR$ and a vector field $\Mstar$ defined on $\Mbf$, we study here 
the equations\footnote{See the textbook~\cite{Choquet-book}. Some conditions such as $\Hstar \geq 0$ are natural in the context of general relativity,
but will not be used in this paper.} 
\bel{eq:ee11}
\aligned
R_g + (\Tr_g k)^2 -  | k |_g^2 & = \Hstar,   
\qquad
\Div_g \big ( k - (\Tr_g k) g \big) = \Mstar,   
\endaligned
\ee
which are referred to as the {\bf Hamiltonian and momentum constraints,} respectively.  
Here, $R_g$ denotes the scalar curvature of $g$, 
while $\Tr_g k$  and $| k |_g$ denote the trace  and the norm of $k$. 
Recall that in an arbitrary chart of local coordinates $(x^j)$ (in a subset of $\RR^3$) and with the standard notation for lowering or raising indices with the metric $g = g_{ij} dx^i dx^j$, one defines the trace $\Tr_g k = k_j^j = g_{ij} k^{ij}$, the (squared) norm $| k |_g^2 = k_{ij} k^{ij}$, and 
the divergence operator $(\Div_g k)_j = \nabla_i k^i_j$. 
Here, $\nabla$ denotes the Levi-Civita connection of $g$, and the range of Latin indices is $i,j, \ldots = 1,2,3$.

With the notation
$h \coloneqq k - \Tr_g(k) g$, 
\begin{subequations}
we write the Hamiltonian and momentum operators as
\be
\aligned
\Hcal(g,h) 
& \coloneqq R_g + {1  \over 2} \big( \Tr_g h \big)^2 - | h |^2_g,
\qquad\quad
\Mcal(g,h) \coloneqq \Div_g h, 
\endaligned
\ee
and by setting   
\bel{eq:EOp0}
\Gcal(g,h) \coloneqq \big( \Hcal(g,h), \Mcal(g,h) \big),
\qquad 
\Gstar \coloneqq (\Hstar, \Mstar), 
\ee 
\end{subequations}
Einstein's constraint equations \eqref{eq:ee11} read  
\bel{eq:Einstein00}
\Gcal(g,h) = \Gstar \quad \text{on the manifold } \Mbf. 
\ee  
Throughout, we use the notation $(g, h)$ rather than $(g,k)$.

\paragraph{Classes of solutions to Einstein's constraint equations.}

The Einstein constraints form a system of nonlinear partial differential equations 
of elliptic type,
 which is highly under-determined. The standard technique of existence of solutions goes back to pioneering work by Lichnerowicz and followers who developed the so-called conformal method. For a detailed bibliography\footnote{We also refer the reader to the following comprehensive review: A. Carlotto, The general relativistic constraint equations, Living Reviews in Relativity (2021), 24:2 (which appeared after the completion of the present paper).}
 we refer the reader to~\cite{CaciottaN}--\cite{DiltMaxwell},~\cite{GMS},~\cite{Maxwell}, and~\cite{Maxwell2014}, 
and the references therein. The coupling with a scalar field and the stability of solutions was investigated in the work~\cite{DruetHebey,DruetPremoselli}. 
In the present paper, we build upon a recent work by Carlotto and Schoen~\cite{CarlottoSchoen} about the localization problem (see Section~\ref{sec-1313} below), which stemed from pioneering work by 
Corvino~\cite{Corvino-2000}
and
Corvino and Schoen~\cite{CorvinoSchoen}. 
We also refer to 
Chru\'sciel and Delay~\cite{ChruDelay} and Chru\'sciel, Corvino, and Isenberg~\cite{ChrusCorvinoIsenberg} 
and the references therein, as well as~\cite{Allen,ChruscielB,ChD,CorvinoHuang}. 
Most of the existing literature (but not all) is focused on solutions that enjoy the standard Schwarzschild decay at infinity. However, as in~\cite{CarlottoSchoen} one may also investigate a more general behavior possibly arising with gravitational systems of matter, such as stars, galaxies, etc. It is our objective here to develop a theory that encompasses a broad class of  behaviors at infinity.

\subsection{Solutions generated from seed data sets: selected results}
\label{sec--1212}

\paragraph{Notion of seed data set.} 

Focusing on the set of solutions that are asymptotically Euclidean at infinity, in this paper we prove that the asymptotic behavior of solutions to  
Einstein's constraint equations may be essentially {\sl arbitrarily prescribed at infinity.} We proceed by choosing a ``seed data set'', consisting of a Riemannian manifold $(\Mbf, g_1)$ with finitely many asymptotically Euclidean ends, together with a symmetric two-tensor field $h_1$ on $\Mbf$, as well as a scalar field $\Hstar$ and a vector field $\Mstar$. In our theory, these data may be chosen to have very low decay at infinity or, to the contrary, very fast decay. 
Considering the vacuum equations or, more generally, the matter Einstein constraints, we prove that from any seed data we can generate a solution $(g,h)$ on $\Mbf$ that (essentially) enjoys the same asymptotic behavior as the one of the seed data. 
As will show in this paper, the precise description of the asymptotic behavior, in fact, is {\sl more involved} and one of our objectives is to derive conditions on the data $g_1, h_1, \Hstar, \Mstar$ (especially on their decay at infinity) and precisely control the asymptotic behavior of the corresponding Einstein solution.

\paragraph{A broad class of vacuum Einstein spaces.}

From our general theory in Section~\ref{section--2} below, we extract the following result. For the sake of simplicity, we tacitly assume sufficient regularity on the data and restrict attention to vanishing matter fields, while referring the reader to the next section for much more general statements. The exponents $p_G$ and $p_M$ below determine the decay of the metric (and extrinsic curvature) and the decay of the matter (or Einstein operator), respectively. 

\vskip.16cm 

\noindent{\bf Main Theorem 1} (The seed-to-solution method). 
{\sl 
{
Consider Einstein's constraint equations in the vacuum (that is, \eqref{eq:Einstein00} with $\Gstar = 0$) posed on a manifold with a single asymptotically Euclidean end. 
Consider any seed data set $(g_1, h_1)$ consisting of a Riemannian metric $g_1$ and a symmetric two-tensor $h_1$ satisfying suitable smallness conditions together with the following decay conditions (in a coordinate chart defined at infinity, $g_\Eucl$ being the Euclidean metric, and $r$ being the radius in the coordinates at infinity): }
\be
\aligned 
& g_1 = g_\Eucl + \Obig(r^{-p_G}),
\qquad \qquad 
&& h_1 =  \Obig(r^{-p_G-1}),  
\\
& 
\Hcal(g_1,h_1) =  \Obig(r^{-p_M-2}),
\qquad 
&& \Mcal(g_1,h_1) =  \Obig(r^{-p_M-2}),
\endaligned
\ee
for some\footnote{Throughout, for clarity in the presentation we keep the exponent $p_G$ to be less or equal to one: see the paragraph on ``follow-up works'' at the end of this section.} 
$p_G \in (1/2, 1]$
and $p_M \in (1/2, + \infty)$ 
(with $p_G \leq p_M$). 
Then, 
there exists a solution $(g,h)$ to Einstein's vacuum constraint equations 
$\Gcal(g,h) =0$ that enjoys the following decay properties. 
\bse
\bei 

\item {\bf Sub-harmonic constraint decay.} If $p_M < 1$ then 
\be
g = g_1 + \Obig(r^{-p_M}),
\qquad \quad
h =  h_1 + \Obig(r^{-p_M-1}). 
\ee

\item   
{\bf Harmonic constraint decay.} When $p_M = 1$ and $\Hcal(g_1,h_1)$ and $\Mcal(g_1,h_1)$ are integrable, then 
\be
\aligned
& g = g_1 + \mt \, r^2 \, \Hess_{g_\Eucl}(1/r) + \obig(r^{-1}), 
\qquad \quad
h =  h_1 + \Obig(r^{-2}), 
\endaligned
\ee 
where $\mt$ is a constant determined from the data.

\item {\bf Super-harmonic constraint decay.} When $p_M >1$, one has a stronger statement with $p = \min(p_G+1, p_M, 2)$: 
\be
\aligned
&  
g = g_1 +  \mt \, r^2 \, \Hess_{e}(1/r) + \Obig(r^{-p}),
\qquad \quad
h =  h_1 + \Obig(r^{-2}). 
\endaligned
\ee 
\eei
\noindent 
\ese
 
 }

Moreover, as we prove below, the ``mass corrector'' $\mt = \mt(g_1, h_1, \Mstar, \Hstar)$ in the last two cases 
satisfies\footnote{The meaning $\Ocal $ is specified in terms of weighted functional norms below.}
\bel{equa--39}
\mt = -{1 \over 16 \pi} \int_\Mbf \Hcal( g_1, h_1) \,d V_{g_1}
+ \Ocal( \Gcal( g_1, h_1)^2),
\ee
(which obviously vanishes if $( g_1, h_1)$ is an actual solution).  
  
\vskip.16cm


\paragraph{Geometry vs. matter decay rates.}

Let us consider particular choices for our exponents. 

\bei

\item {\bf Regime $p_G = p_M \to 1/2$.} This is the weakest possible decay on the metric as well as on the Einstein operator. Interestingly, our method does cover the limiting case $p_G = 1/2$ (not included in the previous statement). 

\item {\bf Regime $p_G \to 1/2$ with $p_M \to + \infty$.} This is the weakest possible decay on the metric and the strongest possible decay of the Einstein operator. This regime is covered in our statement above and \eqref{equa--39} tells us that the constant $\mt$ turns out to approach zero when $p_M \to + \infty$. Obviously the limit $p_M \to + \infty$ is realized when, for instance,
 the seed data is a solution\footnote{but the converse does not hold, as allowing for $p_M \to + \infty$ does not imply that the seed data is a solution}. 

\item {\bf Regime $p_G =1$ with arbitrary $p_M \in [1, +\infty)$.} This is the strongest decay on the metric while the Einstein operator can have a broad range of behaviors. When $p_M$ is close to $1$, the seed data is  a ``rough'' approximation, while our seed data is more accurate as $p_M$ becomes larger. 

\eei 
 

\subsection{The Asymptotic Localization Problem}
\label{sec-1313}

\paragraph{Optimal localization.}

The seed-to-solution method provides us with a strategy in order to tackle a question raised by Carlotto and Schoen in~\cite{CarlottoSchoen}. Solutions to 
Einstein's constraint equations
  were constructed which are prescribed within two asymptotic angular regions joined by a ``small' angular region, denoted as $\Cscr_a \cup \Cscr^c_{a+\eps}\cup \Tscr_{a,\eps}$ in our statement below. Within the asymptotic regions $\Cscr_a$ and $\Cscr^c_{a+\eps}$ Carlotto and Schoen impose that the solution coincides {\sl exactly} with (for instance) the Euclidean and Schwarzschild metrics, respectively. They prove that such solutions exist by solving the  
    vacuum Einstein constraints 
     in the transition region $\Tscr_a^\eps$ in suitably weighted function spaces. However, the method in~\cite{CarlottoSchoen} provides only a {\sl sub-harmonic} control on the decay of the solutions within $\Tscr_{a,\eps}$, that is, $r^{-p}$ with $p \in (1/2, 1)$. 

\paragraph{Asymptotic localization.}

In the present paper, we do construct solutions which are controlled at the level of harmonic decay (and, actually, beyond the harmonic decay)
 at the expense of slightly {\sl relaxing} the requirement that the solution coincides with prescribed metrics in angular regions. We study the {\bf asymptotic localization problem}, as we call it, consisting of seeking solutions which enjoy estimates with \underline{super-harmonic rate} and approach prescribed Minkowski and Schwarzschild behaviors (for instance) except outside a small transition region. From a physical standpoint, this new problem appears to be as natural as the optimal localization one.  Furthermore, with additional analysis our technique could be combined with the method in~\cite{CarlottoSchoen} and be applied within a cone-like domain in order to achieve an exact localization, but this is not our main objective in the present paper.


\paragraph{A new class of solutions.}

We solve the proposed problem and construct solutions that are harmonic in the transition region. We state only a typical result and refer to Section~\ref{sectio6} for a more general result (cf.~Theorems~\ref{theorem:trois}  and~\ref{corollary-optimalCS}). 

\vskip.16cm 

{

\noindent{\bf Main Theorem 2} (Resolution of the asymptotic localization problem.) 
{\sl  
Consider  
Einstein's vacuum constraint equations
 \eqref{eq:Einstein00} on a manifold $\Mbf$ with a single asymptotic end. 
Decompose asymptotic infinity into three asymptotic angular regions in $\RR^3$, say $\Cscr_a  \cup \Cscr^c_{a+\eps} \cup \Tscr_{a,\eps}$, where $\Cscr_a$ is a cone\footnote{defined as
$\Cscr_a := \big\{ x \cdot \nu \geq |x| \, \cos a \text{ and } |x| \geq 1 \big\}$ in coordinates at infinity for some unit vector $\nu$}
with a (possibly arbitrarily) small angle $a \in (0, 2\pi)$, 
$\Cscr^c_{a+\eps}$ is the complement of the same cone but with a (slightly) larger angle $a+\eps$, 
while $\Tcal_a^\eps$ is the remaining transition region.  
Then, by considering the Euclidean metric $g_\Eucl$ and the Schwarzschild metric $g_\Sch$ (with mass denoted by $m_\Sch>0$), 
there exists a solution to 
Einstein's vacuum constraint equations
 $\Gcal(g,h) =0$,  
whose metric has the $1/r$ behavior in each asymptotic direction while being asymptotic to the Euclidean and Schwarzschild metrics in the chosen cones, that is, in suitable coordinates at infinity and for any a priori fixed $q \in (1,2)$, 
\be 
\aligned
g & = g_\Eucl + \Obig(r^{-1}) \, \text{in } \, \Tscr_{a,\eps}, 
\qquad
g  = g_\Eucl  + \Obig(r^{-q}) \, \text{in } \, \Cscr^c_{a+\eps}, 
\qquad
g = g_\Sch + \Obig(r^{-q}) \, \text{in } \, \Cscr_a. 
\endaligned
\ee

}

}

\vskip.16cm 
 
The proof of this theorem will be based on a set of seed data that depend on parameters (determined implicitly within our proof), together with a further refinement of the seed-to-solution method described below.


\subsection{The Seed-to-Solution Method}

\paragraph{The seed data sets.} 

On a given $3$-manifold with finitely many asymptotic ends, we are thus given a seed data set consisting of a Riemannian metric $g_1$ and a symmetric two-tensor $h_1$, as well as a (matter density) scalar field $\Hstar$ and a (matter momentum) vector field $\Mstar$. 
To any such data (satisfying suitable smallness and decay conditions), 
we are able to associate
an asymptotically Euclidean solution to Einstein's constraint equations.   
We distinguish between several classes of seed data referred to as {\sl tame} or {\sl strongly tame}. 
This allows us to encompass metrics with the weakest possible decay and even solutions with infinite ADM mass, as well as
the strongest possible (that is, Schwarzschild) decay at infinity. 
We also distinguish whether the seed data provide a rough or accurate asymptotic Ansatz at infinity.    
 One can think of the seed data as being an approximate solution to the Einstein constraints. It can be generated by taking a formal solution in the vicinity of infinity, which is determined by plugging an Ansatz in the equations and can made to be of arbitrary accuracy at infinity. Such an Ansatz can be merged an with arbitrary data picked up 
in a bounded domain. In particular, the data is the bounded domain can be chosen to be an exact solution. Other strategies for  constructing seed data can be also considered.

\paragraph{Definition and continuity of the Seed-to-Solution map.}

We are going to linearize around the (rough or accurate) approximate solution $(g_1, h_1, \Mstar, \Hstar)$ and, under our tame decay conditions, analyze the structure of the (linearized, adjoint) 
Einstein constraints. 
 We proceed by assuming a rather minimal decay required on the seed data set in order to solve the equations \eqref{eq:Einstein00}. 
We prove the convergence of an iteration scheme whose principal part is a linearization of    Einstein's constraint equations 
around the given seed data, while nonlinearities are treated as a ``source''. The convergence to an actual solution is established in suitably weighted Lebesgue-H\"older spaces. Interestingly, at this stage of our construction, the solution {\sl need not have the expected decay} prescribed via the seed data or does so only at a {\sl sub-harmonic} rate of decay.
In the course of our analysis, we also study whether our solutions depend continuously upon the seed data and, specifically, we establish a Lipschitz continuity estimate in weighted spaces. We observe that the weighted Lebesgue regularity arises 
naturally in the analysis of the linearized Einstein operator in a variational form, while 
the H\"older regularity arises in the application of Douglis--Nirenberg's elliptic theory~\cite{DouglisNirenberg}. 
We refer the reader to Sections \ref{sec-LEO} and \ref{section Douglis-Nirenberg}, respectively. The existence of the 
 seed-to-solution map in weighted Lebesgue-H\"older spaces is established in Theorem~\ref{theo-main3}.

\paragraph{Encompassing slow or fast decay.}

In our construction, the seed data set may have {\sl much slower decay} at infinity in comparison to the Schwarzschild decay 
or, alternatively, may have precisely the Schwarzschild decay. In both cases, what is relevant is whether the seed data is
(for the Einstein-matter system) a rough or an accurate asymptotic Ansatz at infinity. This leads us to define two classes of data, referred to as tame and strongly tame, respectively. 
For the first class of data, we prove that the Einstein solutions have the prescribed decay (i.e.~the behavior of the seed data) up to a sub-harmonic rate, only. For the second class of data, our result is stronger and we prove that the constructed solutions have the prescribed decay up to ---and including--- the harmonic decay rate. 


\paragraph{Nonlinear geometry-matter coupling.}

In a second stage of our analysis, we investigate the asymptotic properties of the solutions we have constructed, and we relate the asymptotic behavior of the data and the asymptotic behavior of the actual solution. We must cope with several difficulties which are not dealt with in Carlotto and Schoen's original method~\cite{CarlottoSchoen}. We work with different decay exponents for the Hamiltonian and momentum components, and, most importantly, we treat geometry-matter terms at the harmonic level of decay. 
The main technical part is thus an investigation, based on several successive improvements of basic estimates, of the nonlinear coupling between the Hamiltonian and momentum operators. Under sufficiently strong decay conditions on the seed data,  we also relate the ADM mass and momentum of the solution with the one of the data, together with suitable mass and momentum ``correctors'', as we call them.
 

\paragraph{Follow-up works.}

In the present paper, we focus on the problem of ``reaching {\it and} breaking the $1/r$ barrier" , as  it could be called, by considering a problem first identified in~\cite{CarlottoSchoen}. 
From the standpoint of mathematical analysis, the main novelty of the present paper is the use of 
Green functions for the linearized Einstein constraints and their dual.  
After this paper was distributed (arXiv:1903.00243, March 2019), two related and very interesting results were announced, namely two methods of gluing with optimal decay by 
S. Aretakis, S. Czimek, and I. Rodnianski (arXiv:2107.02441, July 2021) 
and Y. Mao and Z. Tao (arXiv:2210.09437, October 2022). 
While in the present paper we focus on the approximate localization problem with harmonic estimates, we point out that our method can be extended in several directions. In a recent work, B. Le Floch and P.G. LeFloch~\cite{Le-FlochLeFloch} solve the exact localization problem with harmonic estimates, in the original form posed by Carlotto and Schoen~\cite{CarlottoSchoen}.


\paragraph{Outline of this paper.} 

In Section~\ref{section--2}, we introduce our tame decay conditions together with the relevant function spaces. We state the main results established in this paper in Theorem~\ref{theo-prec} (existence of the seed-to-solution map) and in Theorems~\ref{theo:deuxieme} and \ref{theo:deuxieme-2} (asymptotic properties of the solutions). In Section~\ref{section--3}, we investigate the dual version of the linearized Einstein operators in suitably weighted Sobolev spaces and, next,
 in suitably weighted H\"older spaces. We then solve
 Einstein's (nonlinear) constraint equations  and establish the existence of the seed-to-solution map, completing therefore the proof of Theorem \ref{theo-prec}. 
Next, in Section~\ref{section--6}, we study the linearized Hamiltonian and momentum operators and derive refined estimates at the (super-)harmonic level of decay. 
In Section~\ref{section--7}, the asymptotic properties of the seed-to-solution map are established and we provide a proof of Theorems~\ref{theo:deuxieme} and \ref{theo:deuxieme-2}.  
Finally, the asymptotic localization problem is solved in Section~\ref{sectio6}.  


\section{The seed-to-solution map: definition, existence, and asymptotic properties}  
\label{section--2}

\subsection{The functional setup} 

\paragraph{Background geometry.}

Throughout, $\Mbf$ denotes a topological $3$-manifold with finitely many Euclidean ends ---a notion we define below after introducing now a background manifold. 

\begin{definition} 
\label{def-backgr}
Given some $\eps_* \in (0,1]$, a {\bf background manifold $(\Mbf,e,r)$} is a smooth Riemannian manifold that admits 
finitely many ends, which are denoted by $N_1, N_2, \ldots, N_n$ and are pairwise disjoint and diffeomorphic to the exterior of a closed ball in $\RR^3$, 
and moreover is endowed with a {\bf radius function} $r: M \to [1, + \infty)$, such that:

\bei 

\item In each end, the metric $e = e_{ij} dx^i dx^j = \delta_{ij} dx^i dx^j$ is the Euclidean metric in a suitably chosen chart\footnote{As mentioned earlier, all Latin indices $j,k, \ldots$ range in $1,2,3$.}
 $(x^j)$ and the radius function coincides with $r^2 = \sum_j (x^j)^2$.  

\item The manifold-with-boundary $M_0\coloneqq M \setminus (N_1 \cup N_2 \ldots \cup N_n)$ is covered by a finite collection of local coordinate charts $(x^j)$ in which the metric $e = e_{ij} dx^i dx^j$ is close to the Euclidean metric in the sense that the functions $e_{ij} - \delta_{ij}$ admit continuous derivatives up to fourth order which are less than $\eps_*$ in the sup-norm. 

\eei 
\end{definition}
 
A sufficiently small parameter $\eps_* \in (0,1]$ is fixed throughout this paper and we often denote such a fixed background by $\Mbf = (\Mbf,e,r)$. No topological restriction is required on the manifold under consideration. 
It is useful to fix (once and for all) a {\bf partition of unity} $\chi_a$ adapted to the family of asymptotic ends $N_a$ and the compact set $M_0$, that is,  
\bel{equa=chif}
\chi_a \geq 0, 
\qquad
\sum_{0 \leq a \leq n} \chi_a \equiv 1, 
\qquad 
\chi_a|_{N_a} \equiv 1 \qquad (a=1, \ldots, n). 
\ee
\beginmyremark
\label{rem22}
  A more standard presentation would consist of introducing only the Euclidian metric in the asymptotic ends; 
  our  
   introduction of a background metric $e$ is convenient in order to state quantitative estimates involving the actual 
   solution and the seed data.
For instance if we are interested in the topology $M \simeq\RR^3$, then we simply choose $\eps_*= 0$ and standard Cartesian coordinates $(x^j)$ defined globally on $\RR^3$  
with $e_{ij} = \delta_{ij}$ in $\Mbf$ together with the radius function given by 
$r(x) = \sqrt{|x|^2 + e^{-1/(1-|x|^2)}}$ for $|x| \leq 1$, while 
$r(x) = |x|$ for all $|x| \geq 1$. 
\end{remark}


\paragraph{H\"older and Lebesgue spaces of interest.}

On a background manifold $\Mbf = (\Mbf,e,r)$, we now define functional spaces based on the volume form $dV_e$ determined by the metric $e$. It is convenient to use the same notation for spaces of scalar, vector, and tensor fields (except when some emphasis is useful) and, for simplicity, we state our definitions below for functions. For tensor fields, all the norms below should be defined with respect to the specific atlas of coordinate charts implied by Definition~\ref{def-backgr}. More precisely, the following weighted spaces will be needed. 

\bei

\item {\bf H\"older spaces.} 
Given $\alpha \in (0,1)$ and $\theta >0$
and  given a 
 non-negative integer $l$, 
   we define the {\bf weighted H\"older space} $C_\theta^{l,\alpha}(\Mbf,e,r)$ as the space of functions $f: \Mbf \to \RR$ with H\" older regularity of order $l + \alpha$ and with finite weighted norm 
\begin{subequations}
\be
\| f \|_{C_\theta^{l, \alpha}(\Mbf,e,r)}
\coloneqq 
\sum_{\text{charts}}
\sum_{|L| \leq l}   \sup_{\Mbf} \Big( r^{|L| + \theta} \, |\del^L f|  \chichart \Big) 
 +  
\sum_{\text{charts}}
\sum_{|L| = l}   \sup_{\Mbf} \Big( r^{|L| + \theta} \,  [\del^L f]_\alpha \chichart \Big),
\ee
where $\chichart$ denotes a partition of unity associated with the partition of unity $\chi_a$ (see above) completed with the chosen family of charts covering $M_0$.  
Here, $\del^L f$ denotes the partial derivatives (in any given chart) with respect to the multi-index $L$
and, furthermore with $x,y$ denoting local coordinates,  
\be
[ f]_\alpha (x) \coloneqq r(x)^\alpha  \sup_{0 < |y - x| \leq r(x)} {|f(y) - f(x)| \over |y - x|^\alpha}. 
\ee 
Moreover, when $l= 0$ we simply write $C_\theta^\alpha(\Mbf,e,r)$ instead of $C_\theta^{0,\alpha}(\Mbf,e,r)$. 


\item {\bf Lebesgue spaces.} Given any real $\theta >0$,   
 we define the {\bf weighted Lebesgue space} $L_\theta^2(\Mbf,e,r)$ by completion of the set of smooth functions $f: M \to \RR$ with bounded support and finite norm
\be
\|f\|^2_{L_\theta^2(\Mbf,e,r)}
\coloneqq  \int_\Mbf  | f |^2 \, r^{-3  + 2 \theta} \, dV_e. 
\ee
\end{subequations}
For instance, when $M= \RR^3$, we recover the standard $L^2$ space provided the exponent $\theta$ is chosen to be $3/2$. We will also use the notation $L^k(\Mbf,e)$ and the standard $L^1k$ norm of a function $f: \Mbf \to \RR$ defined as 
$\|f\|_{L^k(\Mbf,e,r)} \coloneqq  \big( \int_\Mbf  | f|^k \, dV_e\big)^{1/k}$ (with $k=1,2$ for our purpose).  


\item {\bf Lebesgue-H\"older spaces.} Combining the previous two definitions, we refer to 
$L^2C_\theta^{l,\alpha}(\Mbf,e,r) \coloneqq L_\theta^2(\Mbf,e,r) \cap C_\theta^{l,\alpha}(\Mbf,e,r)$ 
as the {\bf weighted Lebesgue-H\"older space} with decay exponent $\theta$ and regularity exponents $l, \alpha$.
The squared norm of a function $f$ in this space is defined as 
$\| f \|_{L^2C_\theta^{l,\alpha}(\Mbf,e,r)}^2
\coloneqq \| f \|_{L^2_\theta(\Mbf,e,r)}^2 + \| f \|_{C_\theta^{l,\alpha}(\Mbf,e,r)}^2$. 
In our notation, {\sl a larger $\theta$ means a stronger decay in space. } We find this notation to be natural in the present context, especially since we are interested in harmonic decay issues. {\sl Roughly speaking,} a function in a space with a subscript $\theta$ decays slightly faster than $1/r^\theta$ at infinity. 

\eei 

Often, we will not specify the background manifold $(\Mbf,e,r)$ and we will write $L^2C_\theta^{l,\alpha}(\Mbf)$, $L^1(\Mbf)$, etc. In our proofs, we sometimes omit the manifold $\Mbf$ from the notation, unless some confusion may arise.


\subsection{The notion of tame seed data sets} 

\paragraph{Basic definitions.}

We begin with several definitions. We emphasize that our decay conditions below are much weaker than the standard ones (and we refer the reader to specific comments given after each definition and statement below). The notation $(a,b) < (c,d)$ is used when $a < c$ and $b < d$ both hold (with obvious generalizations). Throughout, $\eps_*, \eeM, \eeG \, \in (0,1)$ are assumed to be sufficiently small. 

\begin{definition}
\label{def:basics}
A pair $(p, q)$ satisfying\footnote{$p, q$ can be taken to be arbitrarily large in some of the following statements.} 
$1/2 \leq p \leq 2 ( q -1)$ is called a pair of {\bf admissible decay exponents.} Such a pair is said to be {\bf sub-harmonic} if, moreover, one has $(p,q) < (1,2)$, and to be {\bf harmonic} if $p=1$ or $q=2$ (or both). In particular, the {\bf balanced regime} by definition corresponds to the choice of admissible exponents 
$$
p \geq 1/2, \qquad q=p+1, 
$$
which is sub-harmonic or harmonic if $p <1$ or $p=1$, respectively. 
\end{definition}

\begin{remark}
The term "balanced" refers to the fact that under the condition 
$q=p+1$ the terms $\del g$ and $h$ 
 then have a decay of the same order.  For instance, this will be our choice in the case~\eqref{equa-typical-choice}. 
In a first reading, all of our exponents could be taken in the balanced regime. Yet, encompassing exponents $q \neq p+1$ is  
of interest in order to clearly distinguish between the contributions of the metric and extrinsic curvature.
\end{remark}

\begin{definition} 
\label{def-aset}  
Given a H\"older exponent $\alpha \in (0,1]$ and admissible exponents $(\ppG, \qqG)$ and $(\ppM, \qqM)$, a {\bf tame seed data set } $(g_1, h_1, \Hstar, \Mstar)$ over the background manifold $\Mbf = (\Mbf,e,r)$ consists of four tensor fields defined on $\Mbf$ and satisfying the following regularity and decay conditions for  
$\eeG, \eeM \in (0,1)$. 
\bei 

\item {\bf Asymptotically Euclidean data:} 
$g_1$ is a Riemannian metric satisfying\footnote{Our notation $A \lesssim B$ stands for $A \leq c B$ where $c>0$ is a fixed constant.
  In our presentation we find it convenient to think of $ \eeG$ and $\eeM$ as real numbers in the interval $(0,1)$ 
  and, consequently,  
the implied constants such as the ones in Definition~\ref{def-aset} 
are {\sl not} dimension-less. These 
numerical constants depend on the choice of our data as well as $\ppG, \qqG, \ppM,\qqM, \alpha$; they are irrelevant for the mathematical analysis, but would play a role in numerical computations, say.
}  
$\| g_1 - e \|_{C_{\ppG}^{4, \alpha}(\Mbf)} \lesssim \eeG$, 
while 
$h_1$ is a symmetric $(0,2)$-tensor satisfying  
$\| h_1 \|_{C_{\qqG}^{3,\alpha}(\Mbf)} \lesssim \eeG$. 

\item {\bf Asymptotically Einstein data:} $\Hstar$ is a scalar field satisfying\footnote{This inequality and the next one restrict, both, the Einstein operator applied to $(g_1, h_1)$ and the matter content of the space.} 
$\| \Hcal(g_1,h_1) - \Hstar \|_{L^2C_{\ppM + 2}^{\alpha}(\Mbf)} \lesssim \eeM$, 
while 
$\Mstar$ is a vector field satisfying  
$\| \Mcal(g_1, h_1) - \Mstar \|_{L^2C_{\qqM + 1}^{1, \alpha}(\Mbf)} \lesssim \eeM$.

\eei 
\end{definition}


\paragraph{Decay of the seed data.}

The above definition provides us with a (quantitative) formulation of the asymptotic flatness conditions at each end, while also constraining the manifold to be ``almost Euclidean'' in a whole. 
For instance, on $M= \RR^3$ endowed with the standard metric, we can easily construct seed data satisfying our conditions by deriving a formal expansion near infinity and using a standard cut-off technique in order to glue this expansion with the Euclidean metric in the interior. 
As will become clear in the following, our admissibility condition $1/2 \leq p \leq 2 ( q -1)$ is natural and, for instance, 
  we observe that  the Hamiltonian equation 
  is
{\sl schematically} 
a Laplace operator (in the metric $e$, say) with a quadratic right-hand side 
  in $h$ (plus other terms with similar or higher decay), hence the inequality $p-2 \leq 2q$ is assumed. 
Our conditions so far are very mild, but will be sufficient for our existence result in Theorem~\ref{theo-prec}. In order to further motivate our definition above, we record here some observations\footnote{which, in a first reading, can be skipped by the reader}.
 
\bei 

\item {\bf Rate of decay of the seed metric $g_1$.} We assume that $\ppG \geq 1/2$, while the condition $\ppG > 1/2$ is, {\em in principle}, the standard assumption for the ADM mass to be well-defined (together with the integrability of the scalar curvature). 
  Yet, even under the condition $\ppG > 1/2$, our tensor $h_1$ may have a rather slow decay
  and, after solving the coupled Einstein constraints,
may contribute to generate an 
actual solution with infinite ADM mass. 

\item {\bf Rate of decay of the seed tensor $h_1$.}
Our condition $\qqG \geq 1 + \ppG/2$ does not imply that the ADM momentum is well-defined, since the standard condition is $\qqG > 3/2$. This condition $\qqG \geq 1 + \ppG/2$ is motivated  from a different perspective: in the Hamiltonian constraint this is the condition required 
  for the source $(1/2) \Tr_e(h_1)^2 - |h_1|_e^2$  
  to be compatible with the assumed decay on the metric given by $\ppG \geq1/2$. 

\item {\bf Mass and matter content.} 
In the important special case $\qqG \geq \ppG + 1 > 3/2$, both the ADM mass and the ADM momentum are well-defined, and this case will be treated in Theorem~\ref{theo:deuxieme} below. 
Our general existence theory, stated in Theorem~\ref{theo-prec} below, provides us with solutions to  
 the Einstein constraints 
 that may have infinite mass ---namely in the regime $\qqG \in (5/4, 3/2)$. This is the first result of this kind in the mathematical literature, which is of interest at least from the standpoint of understanding the structure and the coupling properties of  Einstein's constraint equations. 

\eei


Furthermore, we do not impose any {\sl lower bound} on the decay rate of $g_1, h_1$ and, on the other hand, we do not assume any specific sign on the scalar curvature $R_{g_1}$. Of course, these two issues are related in view of the positive mass theorem~\cite{SchoenYau}. 

\bei 

\item When $R_{g_1} \geq 0$ one can not allow data with a too strong decay since, otherwise, by the positive mass theorem the assumption $\ppG >1$ would then imply that $g_1=e = \delta_{ij}$ can only be the Euclidean metric on $M \simeq \RR^3$.

\item However, since $R_{g_1}$ need not be non-negative in Theorem~\ref{theo-prec} below, we can thus allow for $\ppG >1$ in our existence theory ---although this may not be the most interesting regime. 
 
\eei

\subsection{  
Solutions to Einstein constraints generated from general tame data}

\paragraph{Effective exponents.} 

Based on a suitable iteration scheme, we are going to define a map $(g_1, h_1) \mapsto (g, h)$ that associates an actual solution $(g, h)$  to any seed data. We can think of $(g_1, h_1)$ as an ``approximate solution''. 
In Definition~\ref{def-aset}, we specified its decay at infinity as well as the decay enjoyed by the matter terms $\Hcal(g_1,h_1) - \Hstar$ and $\Mcal(g_1,h_1) - \Mstar$. We anticipate that the decay behavior of $(g, h)$ could be different from the one of $(g_1, h_1)$.  
Our analysis will proceed in two stages. We begin by establishing a rather general existence theory for tame data
 and by controlling the asymptotic behavior at any arbitrary sub-harmonic level.

\begin{definition} 
\label{def:basics2}
Given any admissible exponents $(\ppG, \qqG,)$ and $(\ppM, \qqM)$, a pair of admissible exponents $(p,q)$ is called {\bf effective} if 
\bel{eq--2-12}
\aligned
& (p, q) <
(1,2), 
\qquad 
 (p, q) \leq (\ppM, \qqM), 
\qquad
 | q - p -  1 | \leq \qqG -1.
\endaligned
\ee 
In the balanced regime, this amounts to assume that 
\be
p <
1, \qquad 
p \leq \ppM, \qquad q=p+1.
\ee 
\end{definition}

Heuristically, a natural choice for $(p,q)$ is to try to saturate the inequality $(p, q) \leq (\ppM, \qqM)$ and choose them to coincide with $(p_M, q_M)$. We are also constrained by the requirement that $(p,q)$ remains sub-harmonic, while  
the exponents $(\ppM, \qqM)$ may be arbitrary large.  


\paragraph{The existence theory.}  

Our first result concerns the existence of solutions with prescribed asymptotics beyond the harmonic decay. In short, we prove now that the equations can be solved from a prescribed seed data up to the effective rate of decay.

\begin{theorem}[Existence of solutions to Einstein constraints
 for tame data sets] 
\label{theo-prec} 
Consider an arbitrary tame seed data set $(g_1, h_1, \Hstar, \Mstar)$ defined on a background manifold $\Mbf = (\Mbf,e,r)$, associated with admissible decay exponents $(\ppG, \qqG,\ppM, \qqM)$ and H\"older exponent $\alpha \in (0,1)$, and 
fix any pair $(p,q)$ of effective exponents. 
For instance, in the balanced regime this amounts to assume that 
\bel{equa-typical-choice} 
1/2 \leq \ppG, 
\qquad 
p < 1, 
\qquad
1/2 \leq p \leq \ppM,
\qquad
 (\qqG, \qqM, q) = (\ppG+1, \ppM+1, p+1).
\ee
Under these conditions and provided 
$\eeG, \eeM \in (0,1)$ are sufficiently small, 
  there exists a pair
  of $(0,2)$-tensors $(g,h)$ defined on $\Mbf$ such that: 
\bei 

\item {\bf Solution property.} The pair $(g,h)$ is a $C^{2, \alpha}$--H\" older continuous solution to 
 the Einstein constraints 
 \eqref{eq:Einstein00} associated with the matter fields $\Hstar$ and $\Mstar$. 

\item {\bf Asymptotic property.} This solution is asymptotic to $(g_1, h_1)$ in the sense that 
$\| g - g_1 \|_{L^2C_{p}^{2, \alpha}(\Mbf)} + \| h - h_1 \|_{L^2C_{q}^{2, \alpha}(\Mbf)}
\lesssim \eeG$. 

\item {\bf Stability property.} More precisely, this solution depends continuously upon the values of the Einstein operator in the sense that 
\bel{equa-213-2}
\aligned
& \| g - g_1 \|_{L^2C_{p}^{2, \alpha}(\Mbf)} 
 \lesssim
\| \Hcal(g_1,h_1) - \Hstar \|_{L^2C_{p + 2}^{\alpha}(\Mbf)}
 + \eeG \, \| \Mcal(g_1,h_1) - \Mstar \|_{L^2C_{q+1}^{1, \alpha}(\Mbf)}, 
\\
& \| h - h_1 \|_{L^2C_{q}^{2, \alpha}(\Mbf)}
 \lesssim
\eeG \, \| \Hcal(g_1,h_1) - \Hstar \|_{L^2C_{p + 2}^{\alpha}(\Mbf)}
 +  
 \| \Mcal(g_1,h_1) - \Mstar \|_{L^2C_{q+1}^{1, \alpha}(\Mbf)}, 
\endaligned 
\ee
in which the implied constants depend on the choice of decay and regularity exponents.

\item  {\bf Structure of the solutions.} Furthermore, $(g,h)$ 
belongs to the image of the dual operator 
$d\Gcal_{(g_1,h_1)}^*$ associated with 
 the linearized Einstein constraints\footnote{See the discussion around \eqref{eq2:hua}, below.}
 at $(g_1,h_1)$ and, specifically, for some $(u, Z)$ 
\bel{equa--condition}
g-g_1 = r^{3 - 2\oldp} \, d\Hcal_{(g_1,h_1)}^* [u,Z], 
\qquad 
h-h_1 =  r^{3 - 2\oldq}  \, d\Mcal_{(g_1,h_1)}^*[u,Z], 
\ee
with 
\bel{equa-213-2-u-Z} 
\aligned
& \| u \|_{C^{4, \alpha}_{1-p}}  
 \lesssim
\| \Hcal(g_1,h_1) - \Hstar \|_{L^2C_{p + 2}^{\alpha}(\Mbf)}
 + \eeG \, \| \Mcal(g_1,h_1) - \Mstar \|_{L^2C_{q+1}^{1, \alpha}(\Mbf)}, 
\\
&  \| Z \|_{C^{3, \alpha}_{2-q}} 
 \lesssim
\eeG \, \| \Hcal(g_1,h_1) - \Hstar \|_{L^2C_{p + 2}^{\alpha}(\Mbf)}
 +  
 \| \Mcal(g_1,h_1) - \Mstar \|_{L^2C_{q+1}^{1, \alpha}(\Mbf)}.
\endaligned 
\ee

\eei 
\end{theorem}


In particular, if $(g_1, h_1)$ is known to be an actual solution, then $(g, h)$ does coincide with that solution. 
In fact, if $(g_1, h_1)$ is chosen to be a sequence converging to an actual solution, then the distance (in appropriate weighted spaces specified in our theorem) between this solution and the solution $(g,h)$ approaches zero.  
It is worth mentioning also that, by construction, the solutions $(g,h)$ in Theorem~\ref{theo-prec} are small perturbations of the background data $(e, 0)$ (with $e$ itself being sufficiently close to the Euclidean metric), since 
$ 
\| g - e \|_{L^2C_{p}^{2,\alpha}(\Mbf)} +  \| h \|_{L^2C_{q}^{2,\alpha}(\Mbf)}
\lesssim \eeM + \eeG. 
$
From now on, we assume that $\eeG, \eeM \in (0,1)$ are sufficiently small, so that all of the conclusions in Theorem~\ref{theo-prec} hold.

\begin{remark}
\label{rem-new1}
A version of Theorem~\ref{theo-prec} (in suitably weighted Sobolev spaces) also follows
by combining the surjectivity property of the linearized Einstein map established in \cite{CorvinoSchoen} with general implicit function arguments in Banach spaces~\cite{AMR83} (cf.~Theorem 2.5.9 therein). 
 On the other hand, our presentation is self-contained and provides properties of the solutions constructed by the seed-to-solution method, and 
 the conditions \eqref{equa-213-2}, \eqref{equa--condition}, and  \eqref{equa-213-2-u-Z}
 will play an essential role in the rest of this paper. 
\end{remark}


\paragraph{Heuristic observations.} 

We can motivate our conditions \eqref{eq--2-12} as follows.  

\bei 

\item  The sub-harmonic assumption in \eqref{eq--2-12} is imposed since  
  from the schematic form of the Hamiltonian operator, namely $\Delta_e g_2$, for the perturbation $g_2 = g - g_1$
 one expects harmonic terms $1/r$ which are not included in Theorem~\ref{theo-prec} (but will be studied in Theorem~\ref{theo:deuxieme}). 
Similarly, from the schematic form of the Hamiltonian operator $\Div_e h_2$ for the perturbation $h_2 = h - h_1$, 
 one expects terms in $1/r^2$ which are not included in Theorem~\ref{theo-prec} (but will be studied in Theorem~\ref{theo:deuxieme-2}). 

\item Our second condition $(p,q) \leq (p_M, q_M)$ in  \eqref{eq--2-12} relates the effective exponents to the matter exponents, while the geometry exponents satisfy $(p_G, q_G) \leq (p_M, q_M)$.

\item Our third condition in \eqref{eq--2-12} is relevant in view of the  
  linearization of the constraints
   about the data $(g_1, h_1)$. 
{\sl Schematically,} the Hamiltonian equation is a Laplace equation on $g_2$ (in the metric $e$, say)
with 
right-hand side containing products of $h_1$ and $h_2$, as well as the term $(\Hcal(g_1,h_1) - H_*)$
(plus other terms with similar or higher decay). 
From which we argue
 that the conditions $p+2 \leq q_G + q$ and $p+2 \leq p_M+2$
are necessary for the nonlinearities to decay at least as fast as the principal term. 

Similarly, an analysis of the linearization of the momentum constraint about the data $(g_1, h_1)$ leads us to consider the operator
  $\Div_e h_2$ 
  and source-terms that are products of $h_1$ and $\del g_2$ as well as the term
 $(\Mcal(g_1, h_1) - \Mstar)$ (plus other terms with similar or higher decay), 
  from which we argue that the conditions  $q+1 \leq q_G + p + 1$ and $q+1 \leq q_M +1$ are required. 

\eei


\subsection{  
Solutions to Einstein constraints generated from strongly tame data} 

\paragraph{The seed-to-solution map.}

Under the conditions in Theorem~\ref{theo-prec}, by choosing $p=1/2$ and 
  $q=3/2$ therein we can define a map 
\bel{def.seed-map}
\Pss : (g_1, h_1) \mapsto (g, h),  
\ee
which we refer to as the {\bf seed-to-solution map} associated with 
Einstein's constraint equations. 
It is defined on a suitable subset of $C_{\ppG}^{4, \alpha}(\Mbf) \times C_{\qqG}^{3, \alpha}(\Mbf)$, and its image is such that 
$g- g_1 \in L^2C_{1/2}^{2, \alpha}(\Mbf)$ and $h-h_1 \in L^2C_{3/2}^{2, \alpha}(\Mbf)$. 
Of course, this map also depends upon the choice of the matter terms $\Hstar, \Mstar$, which we regard as fixed data and, for instance, can be chosen to vanish identically if one is solely interested in vacuum solutions. 

\paragraph{Basic definitions.}

Our next result establishes that, under additional conditions, the 
prescribed seed data behavior is ``realized '' by the actual solution at (and beyond) the {\sl standard $1/r$ Schwarzschild rate.} 
This naturally requires us to assume that our seed data is a suitably ``accurate'' asymptotic solution. 

\begin{definition}
\label{def-sb}  
Under the condition in Definition~\ref{def-aset}, a tame seed data set $(g_1, h_1, \Hstar, \Mstar)$ is said to be {\bf strongly tame} for, respectively, the Hamiltonian operator $\Hcal(g_1,h_1) - \Hstar$
and for the momentum operators
 $\Mcal(g_1,h_1) - \Mstar$ if
the following decay conditions\footnote{The finite integrability of the Hamiltonian and momentum operators is relevant only for the harmonic values $\ppM = 1$ and $\qqM=2$. Recall that throughout we write $L^1(\Mbf)$ instead of $L^1(\Mbf,e,r)$, etc.}
hold, respectively: 
\bel{eq:strongtameH}
\ppM \geq 1, \qquad  \qqM  > \max(3 - \qqG, 3/2), 
\qquad  
\Hcal(g_1,h_1) - \Hstar \in L^1(\Mbf), 
\ee  
\bel{eq:strongtameM}
\qqM \geq 2, 
\qquad \hskip4.2cm 
\Mcal(g_1,h_1) - \Mstar \in L^1(\Mbf). 
\ee    
\end{definition}
 
We emphasize that our conditions concern the matter (or Einstein) content of the seed data and do not restrict the behavior of the metric itself, which may have very low or very strong decay (cf.~the examples in Section~\ref{sec:ADM-exam}). 

\begin{definition} 
\label{definition-pstar}
Given any admissible exponents $(\ppG, \qqG,\ppM, \qqM)$ associated with a strongly tame data set, some exponents $(\pstar, \qstar)$ are called {\bf effective for strongly tame data} if\footnote{More precisely, $\pstar=1$ is the harmonic case while $\pstar>1$ is the super-harmonic regime.} 
\bel{eq:strongexpo} 
\aligned
&  (1,2) \leq (\pstar, \qstar) \leq (\ppM, \qqM), 
\qquad
\\
& \pstar \leq \min (\ppG + 1, \qqG + \qqM - 2),
\qquad  
&& \pstar < \min (2, \qqG), 
\\
& \qstar \leq \min (\ppG + 2, \qqG +1), 
\qquad 
&&
\qstar < 3. 
&
\endaligned
\ee   
In the balanced regime $(\qqG, \qqM, \qstar) = (\ppG+1, \ppM+1, \pstar+1)$ these conditions amount to assuming
\bel{equa-210-balanced} 
1/2 \leq \ppG, 
\qquad
1 \leq \pstar \leq \min(\ppG+1,\ppM), 
\qquad 
\pstar < \min(2,\ppG+1). 
\ee
\end{definition} 


\paragraph{Analysis of the seed-to-solution map.}
 
 We now establish that the Einstein solution $(g,h)$ in Theorem~\ref{theo-prec} is asymptotically close to $(g_1, h_1)$ at the {\sl harmonic rate} of decay, in the sense that if the seed data is strongly tame, then the metric $g$ enjoys the same asymptotic behavior at infinity as the seed metric up to the harmonic rate, with 
$ \| g - g_1 \|_{C_{1}^{2,\alpha}(\Mbf)} +  \| h - h_1 \|_{C_{2}^{2,\alpha}(\Mbf)} \lesssim \eeM$. 
In fact, our result is even stronger since we also describe the harmonic term in the metric, as now stated. 

\begin{theorem}[Asymptotic behavior at the (super-)harmonic level -- The metric component] 
\label{theo:deuxieme} 
Assume the strongly tame conditions \eqref{eq:strongtameH} for some seed data set $(g_1, h_1, \Hstar, \Mstar)$ on a background manifold $\Mbf = (\Mbf,e,r)$ with decay exponents $(\ppG, \qqG,\ppM, \qqM)$ and H\"older exponent $\alpha \in (0,1)$, 
and consider the solution $(g,h)$ to the Einstein-matter equations given by Theorem~\ref{theo-prec}.
(For instance, the statement applies with decay exponents satisfying \eqref{equa-typical-choice} and \eqref{eq:strongtameH}.)
Then, in suitable coordinates at infinity the metric is asymptotic to\footnote{The functions $\chi_a$ were introduced in \eqref{equa=chif}.}
\bel{equa-2p8} 
\gt_1  := g_1 + \sum_{1 \leq a \leq n} \chi_a \mt_a r^2 \, \Hess_{e} (1/r), 
\ee
referred to as the {\bf effective seed metric}, 
for all (super-)harmonic exponents $\pstar$ satisfying \eqref{eq:strongexpo}: 
\begin{subequations}
\bel{p*<2}
\aligned
\| g - \gt_1\|_{C_{\pstar}^{2,\alpha}(\Mbf)} 
& \lesssim \Ecal(g_1, h_1, \Hstar, \Mstar) 
+ \max_{1 \leq a \leq n}  
\big| m_a^* (g_1, h_1, \Hstar) \big|. 
\endaligned 
\ee
Here, $\Ecal = \Ecal(g_1, h_1, \Hstar, \Mstar)$ measures the ``accuracy'' of the seed data (with $\qstar' := \min(\pstar + 1, \qqM)$): 
\be
\aligned
\Ecal 
:= 
\, 
& 
\| \Hcal(g_1,h_1) - \Hstar \|_{C^{\alpha}_{\pstar + 2}(\Mbf)}  
 + ( \eeG + \eeM ) \, \| \Mcal(g_1,h_1) - \Mstar \|_{C^{1,\alpha}_{\qstar'+1}(\Mbf)}, 
\endaligned
\ee
and, for each asymptotic end $N_a$, $m^*_a = m_a^* (g_1, h_1, \Hstar)$ is the following average of the Hamiltonian data 
\be
m^*_a := {1 \over 16 \pi} \int_\Mbf  \chi_a  \big(\Hstar -  \Hcal( g_1, h_1) \big) \, d V_{g_1}, 
\ee
while the {\bf mass correctors} $\mt_a$ satisfy the estimate 
\bel{massg2b}
\aligned
\max_{1 \leq a \leq n}
\big| \mt_a - m_a^* \big| 
& \lesssim 
\eeG \, \| \Hcal(g_1,h_1) - \Hstar \|_{L^2C_{5/2}^{\alpha}(\Mbf)}
   + \eeG \, \| \Mcal(g_1,h_1) - \Mstar \|_{L^2C_{5/2}^{1, \alpha}(\Mbf)}.   
\endaligned
\ee
In particular, as well in the harmonic case $p_* = 1$, 
one has $\lim_{r \to +\infty} \big( r \, | g-\gt_1 | + r^2 \, |\del (g - \gt_1) | \big) = 0$. 
\end{subequations}
\end{theorem}

It is convenient to introduce now the notation $V_{ij} := \Big( x_i x_j + 3 r^2 \delta_{ij} \Big)/ r^3$, which is defined in each of the charts at infinity and can be smoothly extended to the whole of the manifold.  
The notation $\Lcal$ below stands for the Lie derivative operator.

\begin{theorem}[Asymptotic behavior at the (super-)harmonic level --- The extrinsic curvature]
\label{theo:deuxieme-2} 
Under the conditions in Theorem~\ref{theo:deuxieme}, assume now that the strongly tame conditions \eqref{eq:strongtameH}
and \eqref{eq:strongtameM} hold.  
Then, the extrinsic curvature $h$ 
is asymptotic to 
\bel{eq:217}
\htt_1 := h_1 -{1 \over 2}  \sum_{1 \leq a \leq n} \chi_a  \Lcal_{\Pt_a \cdot V} g_1,  
\ee
referred to as the {\bf effective seed extrinsic curvature}, 
for all (super-)harmonic exponents $\pstar, \qstar$ satisfying \eqref{eq:strongexpo}): 
\begin{subequations} 
\bel{q*<3} 
\| h - \htt \|_{C_{\qstar}^{2,\alpha}(\Mbf)} \lesssim \Fcal(g_1, h_1, \Hstar, \Mstar)
+  \max_{1 \leq a \leq n} \big( \eeG \, | m_a^* |  + |P_a^* | \big). 
\ee
Here, $\Fcal = \Fcal(g_1, h_1, \Hstar, \Mstar)$ measures the ``accuracy'' of the seed data: 
\be
\aligned
\Fcal
:= \, &  (\eeG + \eeM) \, \| \Hcal(g_1,h_1) - \Hstar \|_{C^{\alpha}_{\pstar + 2}(\Mbf)}  
 + \, \| \Mcal(g_1,h_1) - \Mstar \|_{C^{1,\alpha}_{\qstar + 1}(\Mbf)} 
\endaligned 
\ee 
and, for each asymptotic end $N_a$, the
{\bf average of the momentum data}  $P^*_a = P^*_a(g_1, h_1, \Mstar)$ is defined as 
\be
P^*_a  := {1 \over 8 \pi} \int_\Mbf  \chi_a \big( \Mstar - \Mcal( g_1, h_1) \big) \, d V_{g_1}, 
\ee
while the {\bf momentum correctors} $\Pt_a$ satisfy the estimate 
\bel{q*<3b} 
\aligned
\max_{1 \leq a \leq n}
\big|  \Pt_a - P_a^* \big| 
& \lesssim 
 \eeG \, \| \Hcal(g_1,h_1) - \Hstar \|_{L^2C_{5/2}^{\alpha}(\Mbf)}
   + \eeG \, \| \Mcal(g_1,h_1) - \Mstar \|_{L^2C_{5/2}^{1, \alpha}(\Mbf)}.   
\endaligned
\ee
In particular, as well as in the harmonic case $q_* = 2$, one has $\lim_{r \to +\infty} \big( r^2 \, | h - \htt_1 | + r^3 \, |\del (h-\htt_1)| \big) = 0$.
\end{subequations} 
\end{theorem}


\paragraph{Heuristic observations.}

The strongly tame decay conditions can be motivated from the schematic expressions (for the perturbations $g_2 = g- g_1$ and $h_2=h- h_1$)
$$
\aligned
\Delta g_2 & = \big( \Hcal(g_1, h_1) - \Hstar \big) + h * h_2 + \del g * \del g_2, 
\qquad \qquad 
\Div (h_2) = \big( \Mcal(g_1, h_1) - \Mstar \big) + \del h * g_2. 
\endaligned
$$
Differentiating the momentum equation, we consider the second-order formulation obtained by replacing it by
$
\Delta h_2 + \Div(\nabla h_2) = \del \Big( \Mcal(g_1, h_1) - \Mstar  + g_2 * \del h  \Big).
$
If $(p, q)$ denotes the pair of decay exponents associated with the solution perturbation $(g_2, h_2)$, then the study of the second-order elliptic system above will lead us to the conditions 
$p \leq \min(p_M, 2q - 2, q_G + q - 2, 2p, p_G + p)$ and 
$q \leq \min(q_M, p + q_G, p + q).$
The conditions \eqref{eq:strongtameH} and \eqref{eq:strongtameM} 
 arise in order to reach the harmonic decay exponents $p = 1$ and $q = 2$, respectively. The definition of $(\pstar, q_*)$ in \eqref{eq:strongexpo} is similarly motivated.


\subsection{ADM mass, ADM momentum, and examples}
\label{sec:ADM-exam}
 
\paragraph{Mass and momentum.}  

Whenever the expressions below are well-defined, the {ADM mass} $\mADM(\Mbf,g)$ and {ADM momentum} $\PADM(\Mbf,h)$ are defined as the following scalar and vector in $\RR^3$, respectively, by  
\be
\aligned  
\mADM(\Mbf,g) & \coloneqq {1 \over 16 \pi} \lim_{r \to +\infty} \int_{S_r} 
\sum_{i,j = 1}^3 \big( g_{ij,i} - g_{ii,j} \big) {x_j \over r} \, d\omega, 
\qquad
\PADM_i(\Mbf,h) 
\coloneqq {1 \over 8 \pi} \lim_{r \to +\infty} \int_{S_r} \sum_{1 \leq j \leq 3} h_{ij} {x_j \over r} \, d\omega, 
\endaligned
\ee
where $d\omega$ denotes the standard measure on the unit sphere $\RR^2$ and $S_r$ is the sphere with radius $r$ (in the coordinate chart given in the asymptotic end). These formulas are valid when the manifold admits a single asymptotic end, and is easily extended to manifolds with several ends by considering the contributions at each end, which we denote by 
  $\mADM_a(\Mbf,g)$ and  $\PADM_a(\Mbf,g)$ for $a=1,\ldots, n$.
Our asymptotic expansion provides us with some expressions of the mass and momentum of the actual solutions. 

\begin{corollary}[Mass and momentum properties of the seed-to-solution map] 
\label{corol:admmm} 
The ADM mass and ADM momentum of the Einstein solution $(g,h)$ in Theorems~\ref{theo:deuxieme} and \ref{theo:deuxieme-2}
enjoy the following decomposition at each end. 
Provided $\ppG > 1/2$ and $\qqG > 3/2$ as well as\footnote{Since we already assumed $\Hcal(g_1,h_1) - \Hstar \in L^1(\Mbf)$, it follows that $\Hstar \in L^1(\Mbf)$  since $h_1$ has sufficient decay.}
   $R_{g_1} \in L^1(\Mbf)$ 
(so that the mass and momentum are finite), one has
$$
\aligned
\mADM _a(\Mbf, g) & = \mADM_a (\Mbf, g_1) +
\mt_a, 
\qquad\qquad
\PADM_a(\Mbf, h) = \PADM _a(\Mbf, h_1) + 
\Pt_a.
\endaligned
$$ 
\end{corollary}

Recall also that,  
by the positive mass theorem~\cite{BC96,SchoenYau,Wit81},
 if the matter data $(\Hstar, \Mstar)$ satisfies the 
dominant energy condition\footnote{specifically, $\Hstar \geq |\Mstar|_g$}
{\sl with respect to} the constructed solution $(g,h)$,  
we find 
$\mADM (\Mbf, g) \geq |\PADM (\Mbf, h)|_g$, 
which is a strict inequality 
unless $(\Mbf, g , h)$ is a Cauchy initial data for the Minkowski spacetime, i.e.~$(\Mbf,g)$ can be isometrically embedded in Minkowski spacetime with second fundamental form $k$.

\bse
\bei

\item Hence, the positive mass theorem implies that  
\be
\mADM (\Mbf, g_1) + \sum_{1 \leq a \leq n} \mt_a
\geq
\Big|
\PADM (\Mbf, h_1) +  \sum_{1 \leq a \leq n} \Pt_a \Big|_g, 
\ee
which is a {\sl lower bound} on the mass correctors $\sum_{1 \leq a \leq n} \mt_a$. 

\item If the matter data also satisfies the positive energy condition {\sl with respect to the seed data}, 
then  
  $\mADM (\Mbf, g_1) \geq |\PADM (\Mbf, h_1)|_{g_1}$
   or 
\be
\mADM (\Mbf, g) - \sum_{1 \leq a \leq n} \mt_a
\geq 
\Big|
\PADM (\Mbf, h) -  \sum_{1 \leq a \leq n} \Pt_a \Big|_{g_1}, 
\ee 
which is an {\sl upper bound} on the mass correctors $\sum_{1 \leq a \leq n} \mt_a$. 

\eei 

\ese 


\paragraph{Several regimes of interest.}
 
It is interesting to consider special values of the exponents arising in Theorems~\ref{theo-prec} and \ref{theo:deuxieme}. For simplicity, in our examples we assume that $\Hstar$ and $\Mstar$ vanish identically. 
\begin{subequations} 
\bei

\item {\bf Seed metric with slow decay.} The slowest possible decay allowed in our theorems is as follows. 

\bei

\item {\bf Tame data.} In Theorem~\ref{theo-prec}, for any sufficiently small $\delta \geq 0$ we can choose
\be  
(\ppG, \qqG) = (1/2, 5/4), 
\qquad 
(\ppM, \qqM) = (p,q) = (1/2 + \delta, 5/4+\delta).
\ee
Hence, Theorem~\ref{theo-prec} shows that a prescribed data set that decays much slower than Schwarz\-schild and in fact with infinite ADM mass, 
generates a solution $(g,h)$ such that $g - g_1$ decays slower than Schwarzschild, but slightly faster than the seed data $g_1$ if one has $\delta >0$.  Theorem~\ref{theo-prec} applies even when $\ppG = \ppM=p=1/2$ so that the seed data, the matter, and the solution enjoy the same decay in this regime ---however, it must be observed that a slightly stronger norm is used to control the matter based on a weighted $L^2$ norm. 

\item {\bf Strongly tame data.} In Theorem~\ref{theo:deuxieme}, for any sufficiently small $\delta \geq 0$ 
we can choose 
\be  
(\ppG, \qqG) = (1/2, 5/4), 
\qquad 
(\ppM, \qqM) = (p,q) = (1 + \delta, 2+\delta).
\ee 
In this regime, the remainder $g - g_1$ enjoys the harmonic decay at infinity when $\delta \geq 0$, and the remainder $g - \gt_1$ has even  super-harmonic decay when $\delta >0$. 
\eei


\item{\bf Seed metric with fast decay.} For any sufficiently small ${\delta'} \geq \delta \geq 0$ we can choose our exponents to be
\be  
(\ppG, \qqG) = (1 + \delta, 2+\delta), 
\qquad 
(\ppM, \qqM) = (\pstar,\qstar) = (1 + {\delta'}, 2+{\delta'}).
\ee
Hence, our method allows us to generate a solution perturbation enjoying a (super-)harmonic behavior at infinity depending whether we choose $\delta'=0$ or $\delta'>0$. 
 
\eei  

\end{subequations}

\paragraph{A class of strongly tame seed data sets.}
 
It is not difficult to check that our collection of seed data sets is non-empty. We take again $M= \RR^3$, $\Hstar=0$, and $\Mstar = 0$, and we choose any two-tensor $h_1 \in L^2C^{2,\alpha}_3(\RR^3)$ together with an arbitrary decay exponent $\ppG \in (1/2, 1]$. For any given $\eps \in (0,1)$, we can find a Riemannian metric $g_1$ on $\RR^3$ such that
  $\|g_1 - \delta\|_{L^2C^{4,\alpha}_{\ppG}(\RR^3)} \leq \eps$ as well as
$\|R_{g_1} \|_{C^{2,\alpha}_{2\ppG + 2}(\RR^3)} \leq \eps^2$. 
For instance, we can choose the class of metrics 
$g_1 = 2 v \, dx^1dx^2 + (1 + u)  \sum_{i=1}^3(dx^i)^2$, 
in which the metric coefficients $u, v \in C^{2,\alpha}_{\ppG}(\RR^3)$ are chosen to satisfy 
$\Delta u = \del_1\del_2  v$. 
Namely, such functions exist since, for instance, we can pick up an arbitrary function $v \in C^{2, \alpha}_{\ppG}(\RR^3)$, which, therefore, 
satisfies 
$\del_1\del_2 v \in C^{0,\alpha}_{\ppG+2}(\RR^3)$,
   and the existence of a function $u$ is guaranteed by Proposition~\ref{propo.classcialregular}, below.
Moreover, one has 
$R_{g_1} =  - \Delta g_{1ii} +  \del_{ij} g_{1ij} + \del g_1 * \del g_1
  = 2\del_1\del_2 v - 2\Delta u + \del g_1 * \del g_1
  = \del g_1 * \del g_1$, 
which enjoys the desired decay. 
Our theory in Theorems \ref{theo:deuxieme} and \ref{theo:deuxieme-2} applies to this seed data set (which has $\ppM = 2\ppG > 1$).

\paragraph{Schwarzschild metric.}  

Consider the effect of a change of coordinates at infinity on the Hessian term arising in the metric. 
Let us assume that, in some given coordinates 
at infinity, an Einstein solution $(g , h)$ has the following expansion near infinity (for some constants $m_1 , m_2$): 
\bel{equa=gm}
g = \big(1 + 2m_1 /r \big) \, e + m_2 r^2 \, \Hess_e (1/r) + \obig( 1/r). 
\ee 
A natural question to ask is whether $g$ is in fact asymptotic to the Schwarzschild metric itself and, indeed, we claim that
there exists an asymptotic coordinate system $(\xt^j) = (\xt^j(x))$ such that 
\bel{gSchcoor}
\gt_{ij} = \big(1 + 2 m/ \rt \big) \, \delta_{ij} + \ocal(1/\rt)
\ee
 for some mass coefficient $m$ which we can determine from $m_1, m_2$, as follows. 
Here, $(\rt)^2 = \sum_{i = 1}^3 (\xt^i)^2 \geq 1$ and $\rt = r + o(1)$. 
  (We use a  
standard notation for functions $o(1)$ that tend to zero at infinity.) 
We search for such a coordinate transformation $x \mapsto \xt$ in the form 
$
\xt = C x + \varphi,
$
where $C = (C_i^j)$ is a  
$3\times 3$ matrix 
and the vector-valued function $\varphi =(\varphi^j(x))$ (in the weighted space $C_0^{2,\alpha}$, say) is bounded at infinity. In view of the identity $g_{ij} = \gt_{kl} \, \del_i \xt^k \del_j \xt^l$, \eqref{equa=gm} and \eqref{gSchcoor} lead us to the condition 
$$
\aligned
\big(1 + 2m_1/r \big) \, \delta_{ij} + m_2 \Big( - \delta_{ij} /r + 3 x_i x_j / r^3 \Big) 
&
= (C_i^k + \del_i \varphi^k) (C_j^l + \del_j \varphi^l) \big(1 + 2m/r \big)  \delta_{kl} 
  + \ocal( 1/r)
\\
&= \big(1 + 2m/r \big) (C_{ki} C_j^k + C_{kj} \del_i \varphi^k + C_{ki} \del_j \varphi^k) 
   + \ocal( 1/r). 
\endaligned
$$
Therefore,  we must impose $C_{ki} C_j^k = \delta_{ij}$ and 
$ 
C_{ki} \del_j \varphi^k + C_{kj} \del_i \varphi^k 
= (- 2m + 2m_1 - m_2) \delta_{ij}/r + 3m_2 x_i x_j/r^3. 
$
In order to handle the corrector term arising in the expansion of the metric, the simplest choice is obtained by taking $C_i^j = \delta_i^j$ and we can choose a solution to the non-homogeneous 
Killing equation $\del_j \varphi_i + \del_i \varphi_j 
= (- 2m + 2m_1 - m_2) \delta_{ij}/r + 3m_2 x_i x_j/r^3$, for instance $\varphi(x) = a x/r$ with $a=-(3/2) m_2$ and 
therefore $m=m_1 + m_2$.


\section{Existence of the seed-to-solution map for general tame data sets}  
\label{section--3}

\subsection{The linearized Einstein operator and its adjoint}

\paragraph{Weighted Sobolev spaces.}

We consider first the adjoint of the linearization $d\Gcal$  of the Einstein operator around a given seed data set $(g_1, h_1, \Hstar, \Mstar)$, and we solve a linearized version of our problem in which the unknowns are associated with the adjoint operator $d\Gcal_{(g_1, h_1)}^*$. These adjoint variables consist of a scalar field $u$ and a vector field $Z$ sought in (suitably weighted) Sobolev spaces. 
The system under consideration couples together second- and first-order elliptic equations. 
For the convenience of the reader, the most technical proofs in this section are postponed to Appendix~\ref{appendix-AAA}. 

Recall that a background manifold $\Mbf = (\Mbf,e,r)$ is fixed throughout and all of our statements assume that $\eps_*$ is sufficiently small and fixed. For any real $\theta>0$  
we define the weighted Sobolev space $H^k_\theta(\Mbf)$ by completion from the set of all smooth and compactly supported functions $f: M \to \RR$ with finite weighted Sobolev norm 
\bel{equa-scale31}
\|f\|^2_{H_\theta^k(\Mbf)}
\coloneqq \sum_{\text{charts}} \sum_{|K| \leq k} \int | \del^K f|^2 r^{-3 + 2 |K| + 2 \theta} \, \chichart \, dV_e, 
\ee
where $\chi_{\text{chart}}$ is a given partition of unity compatible with the partition \eqref{equa=chif}. The integration is performed over our (finite) covering of $\Mbf$ while $dV_e$ denotes the volume form of $(\Mbf,e)$. As already pointed out we use the same notation for tensor fields, by considering components in the prescribed coordinate charts of $(\Mbf,e,r)$.

\paragraph{Linearization around a seed data set.}

Given any seed data $(g_1,h_1)$, from \eqref{eq:Einstein00} we can compute the {linearized Einstein operator} 
$d\Gcal_{(g_1,h_1)}: (g_2, h_2) \mapsto d\Gcal_{(g_1,h_1)}[g_2, h_2]$, 
together with the {adjoint Einstein operator} 
$d\Gcal^*_{(g_1,h_1)}: (u,Z) \mapsto d\Gcal^*_{(g_1,h_1)}[u,Z]$.
Recall that the formal adjoint  in the $L^2$ sense is defined by
\be
\int_\Mbf  (g_2, h_2) \cdot d\Gcal_{(g_1,h_1)}^*[u, Z] \, dV_{g_1}
\coloneqq 
\int_\Mbf  d\Gcal_{(g_1,h_1)}[g_2, h_2] \cdot (u,Z) \, dV_{g_1}, 
\ee
where the dot notation is defined with the metric $g_1$. 
The expressions were derived in~\cite{FischerMarsden-1975a,FischerMarsden-1975b}.

Namely, the {\bf linearized Hamiltonian and momentum operators} read
\begin{subequations}
\bel{eq-linearHM}
\aligned
d\Hcal_{(g_1,h_1)}[g_2, h_2]
= 
& - \Delta_{g_1} \big( \Tr_{g_1}(g_2) \big) + \Div_{g_1} \big( \Div_{g_1} g_2  \big) 
   - g_1\big(g_2, \Ric_{g_1} \big) + h_1 * h_1 * g_2 + h_1*h_2,
\\
d\Mcal_{(g_1,h_1)}[g_2, h_2] 
= \, & \Div_{g_1}(h_2) + h_1*\nabla_{g_1}g_2, 
\endaligned
\ee
which are second-order (scalar-valued) and first-order (vector-valued) operators, respectively. 
Here, $\nabla_{g_1}$, $\Div_{g_1}$, $\Delta_{g_1}$, and $\Ric_{g_1}$ denote the Levi-Civita connection, divergence operator, Laplace operator, and Ricci curvature associated with the seed metric $g_1$, respectively. 
The {\bf adjoint Hamiltonian and momentum operators} are more involved and, in a schematic form,  read
\bel{eq-linearHMstar}
\aligned
d\Hcal_{(g_1,h_1)}^* [u,Z]
= & - \big(\Delta_{g_1} u\big)g_1 + \Hess_{g_1}(u) - u \, \Ric_{g_1} + h_1*h_1*u + \nabla_{g_1}(h_1*Z),
\\
d\Mcal_{(g_1,h_1)}^*[u,Z] 
= &-{1 \over 2}  \Lcal_Z g_1 + h_1*u, 
\endaligned
\ee
which are second-order and first-order operators, respectively. (Recall that $\Lcal$ stands for the Lie derivative operator.) 
\end{subequations}


\paragraph{Full expression of the adjoint operator.}

More precisely, the adjoint Hamiltonian operator at $(g_1,h_1)$ reads 
\begin{subequations}
\bel{eq=formuleexactes}
\aligned
d\Hcal^*_{(g_1,h_1)}[u,Z]
& = - (\Delta_{g_1}u)g_1 + \Hess_{g_1}(u) 
      + \Big( - \Ric_{g_1} + (\Tr_{g_1} h_1) h_1 - 2 h_1 \times h_1 \Big) \, u - {1 \over 2} \Lcal_Z h_1  
\\
& \quad   + {1 \over 2} \Div(Z)h_1
 - {1\over 2} \Big(
Z\otimes\Div_{g_1} h_1 + \Div_{g_1} h_1\otimes Z
\Big)^\flat
 +  {1\over 4}g_1\big(\Lcal_Z g_1,h_1\big) - {1\over 2}g_1(Z,\Div_{g_1} h_1),
\endaligned
\ee
where the flat symbol ${}^\flat$ is the contravariant vs.~covariant transformation of a tensor via the metric duality based on $g_1$, and the notation $h_ 1 \times h_1 \coloneqq h_1^{ik} h_{1k}^j$ is used. 
On the other hand, the full expression of the adjoint momentum operator at $(g_1,h_1)$ is 
\bel{eq=formuleexactes2}
\aligned
d\Mcal^*_{(g_1,h_1)}[u,Z] 
& \coloneqq -{1 \over 2} \Lcal_Z g_1 + \Big((\Tr_{g_1} h_1)g_1^{-1} - 2 h_1\Big) \, u. 
\endaligned
\ee
\end{subequations}
A special case of interest is obtained by taking the tensor $h_1$ and the vector field $Z$ to vanish identically, and the adjoint Hamiltonian operator then reduces to the {adjoint scalar curvature operator}
\bel{equa-Letoile}
L^*_{g_1}(u)
 \coloneqq 
d\Hcal_{(g_1,0)}^* [u,0]
= 
- (\Delta_{g_1} u) g_1 + \Hess_{g_1}(u) - u \, \Ric_{g_1}. 
\ee 
We will also use the adjoint momentum operator obtained by taking the tensor $h_1$ to vanish identically, which is essentially the {Lie derivative operator} 
\bel{equa-LieD}
M^*_{g_1}(Z) :=  d\Mcal_{(g_1,0)}^*[0,Z] = - {1 \over 2}  \Lcal_Z g_1. 
\ee 


\paragraph{Einstein operators in weighted Sobolev spaces.} 

The scale of weighted Sobolev spaces defined in \eqref{equa-scale31} is now used.  
 
\beginmyproposition
Given any tame seed data set $(g_1, h_1, \Hstar, \Mstar)$ on a background manifold $\Mbf = (\Mbf,e,r)$ with (admissible) decay exponents $(\ppG, \qqG)$ and $(\ppM, \qqM)$,  
the images of the linearized Einstein operator and its adjoint are such that, for any effective  exponents $(p,q)$ in the sense 
of \eqref{eq--2-12}, 
the operators 
$d\Gcal_{(g_1, h_1)} : H^2_{p}(\Mbf) \times H_{\oldq}^1(\Mbf) \to L^2_{p+2}(\Mbf) \times L^2_{\oldq+1}(\Mbf)$ and 
$d\Gcal_{(g_1, h_1)}^*: H^{2}_{1-p}(\Mbf) \times H^1_{2-\oldq}(\Mbf) \to L^2_{3-p}(\Mbf) \times L^2_{3-\oldq}(\Mbf)$ 
are bounded operators. 
\end{proposition}

\begin{proof}  
We treat first the linearized Hamiltonian operator $d\Hcal_{(g_1,h_1)}[g_2, h_2]$ and consider each term arising in \eqref{eq-linearHM}. Denoting by $\Gamma_{ij}^k(g_1)$ the Christoffel symbols in the chart prescribed in any of the asymptotic ends, since $(g_1, h_1)$ is tame and $(g_2,h_2) \in H^2_{p}(\Mbf) \times H_{\oldq}^1(\Mbf)$ we obtain $\Gamma_{ij}^k(g_1) \simeq r^{-P_G-1}$ and, with some obvious notation, 
$$
\aligned
\Delta_{g_1} \big( \Tr_{g_1}(g_2) \big) 
& \simeq r^{-p-2} + r^{-p_G-1-p-1} \lesssim r^{-p-2}, 
\qquad
\Div_{g_1} \big( \Div_{g_1} g_2  \big) 
 \simeq r^{-p-2} + r^{-p_G-2-p} + r^{-p_G-1-p-1} \lesssim r^{-p-2}, 
\\
g_1\big(g_2, \Ric_{g_1} \big) 
& \simeq r^{-p - p_G-2} \lesssim r^{-p-2}, 
\qquad
h_1 * h_1 * g_2 + h_1*h_2 
 \simeq r^{-2 q_G-p} + r^{-q_G-q} \lesssim r^{-p-2}. 
\endaligned
$$
  (The notation $\simeq$ is used here between functions enjoying a decay of the same order at infinity, and we write 
  $\lesssim$ when the left-hand decays at least as fast as the right-hand side.) 
For the latter condition, we require that $p + 2 - \qqG \leq \oldq \leq p + \qqG$, as stated in our definition. 
On the other hand, for the linearized momentum operator $d\Mcal_{(g_1,h_1)}[g_2, h_2]$ in \eqref{eq-linearHM}, we obtain 
$\Div_{g_1}(h_2) \lesssim  r^{-q-1}$ 
and  
$h_1*\nabla_{g_1} g_2 \lesssim r^{-q-1}$. 
Finally, dealing with the adjoint (Hamiltonian and momentum) operators is similar. 
\end{proof}

\paragraph{Technical estimates.} 

We state here standard inequalities in weighted functional spaces and, specifically, the statements 
given in the following two lemmas 
go back to 
 the works~\cite{CorvinoSchoen} and \cite{CarlottoSchoen}; 
 for the convenience of the reader, their proofs is also provided in Appendix \ref{appendix-AAA}. 
By definition,  in the Euclidean space $\RR^3$, our radius function $r=r(x)$ is bounded below by a positive constant and coincides with the usual distance function $|x|$ outside a fixed ball.  

\begin{lemma}
\label{lemma WP1}
For any exponent $a>0$ and any function $w: \RR^3 \to \RR$ tending to zero at infinity, one has 
$\| w \|_{L^2_{a}(\RR^3)} \lesssim \| \nabla w \|_{L^2_{a + 1}(\RR^3)}$, 
as well as  $\| w \|_{H^2_{a}(\RR^3)} \lesssim \big\| \Hess_{g_\Eucl}(w) \big\|_{L^2_{a + 2}(\RR^3)}$, 
with implied constants depending upon $a$ only.
\end{lemma}

On the other hand, let us consider the Killing operator in the Euclidean space $\RR^3$. Recall that the Killing operator $\Dcal$ is defined on any Riemannian manifold $(\Mbf,g)$ by 
$\Dcal(Y)(Z,W) \coloneqq g(\nabla_Z Y,W) + g(\nabla_W Y, Z)$, 
where $\nabla$ denotes the Levi-Civita connection associated with the metric under consideration. 

\begin{lemma}
\label{prop. weighted KKK}
Given any exponent $q \in ( - \infty, 2)$, the inequality 
$\|Z\|_{H^1_{2 - q}(\RR^3)} \lesssim \| \Dcal(Z)\|_{L^2_{3 - q}(\RR^3)}$ 
holds  for any vector field $Z$ defined on $\RR^3$ and tending to zero at infinity, 
in which the implied constant depends upon $q$ only. 
\end{lemma}

\paragraph{An invertibility property.}
Relying on weighted functional inequalities, we are in position to establish a fundamental property of the linearization. 
Observe that our functional setup (namely the choice of weights) 
here  rules out the kernel of the adjoint operators. 

\begin{proposition}[Invertibility on its image for the adjoint Einstein operator]
\label{prop.key}
Consider any tame seed data set $(g_1, h_1, \Hstar, \Mstar)$ on a background manifold $\Mbf = (\Mbf,e,r)$ with decay exponents $(\ppG, \qqG, \ppM, \qqM)$. 
Then for any effective exponents $(p,q)$
and for all $u \in H_{1-p}^2(\Mbf)$ and $Z \in H_{2-\oldq}^1(\Mbf)$, one has (with implied constant depending upon the decay and regularity exponents)
$$ 
\aligned
\| u \|_{H^2_{1-p}(\Mbf)} 
& \lesssim 
\big\|d\Hcal^*_{(g_1,h_1)}[u,Z]\big\|_{L^2_{3-p}(\Mbf)}
+ \eeG \, \big\|d\Mcal^*_{(g_1,h_1)}[u,Z]\big\|_{L^2_{3-\oldq}(\Mbf)},
\\
 \| Z \|_{H^1_{2-\oldq}(\Mbf)} 
& \lesssim 
\eeG \, \big\|d\Hcal^*_{(g_1,h_1)}[u,Z]\big\|_{L^2_{3-p}(\Mbf)}
+ 
\big\|d\Mcal^*_{(g_1,h_1)}[u,Z]\big\|_{L^2_{3-\oldq}(\Mbf)}. 
\endaligned
$$
\end{proposition}

We recall that, throughout, the coefficients\footnote{The parameter 
$\eps_*$ plays a minor role and, for instance, can be taken to vanish when the background can be chosen to be $\RR^3$. On the other hand, $\eeG$ arises when 
we compare the seed metric to the background metric. 
}
 $\eps_*$ and $\eeG$ are sufficiently small. If one is not interested in the dependency of the constant in $\eeG$, one can rewrite the estimate in Proposition~\ref{prop.key} in the (slightly less precise) form 
$
\|(u,Z)\|_{H^2_{1-p}(\Mbf) \times H^1_{2-\oldq}(\Mbf)} 
\lesssim 
\big\|d\Gcal^*_{(g_1,h_1)}[u,Z]\big\|_{L^2_{3-p}(\Mbf) \times L^2_{3-\oldq}(\Mbf)}. 
$
It is convenient to decompose the proof into several steps, and 
Proposition~\ref{prop.key} follows immediately once we establish the following two technical lemmas. 
The second lemma below follows from a standard perturbation and its proof is postponed to Appendix~\ref{appendix-AAA}.


\beginmylemma[Adjoint Einstein operator for the background metric]
\label{prop. weighted Poincare-curved} 
1. Given any $p \in ( - \infty, 1)$, one has
$
\| w \|_{H^2_{1-p}(\Mbf,e,r)} 
\lesssim
\| L_e^* (w) \|_{L^2_{3-p}(\Mbf,e,r)}$
for all $w \in H^2_{1-p}(\Mbf,e,r)$, 
in which the implied constant may depend upon $p$ (as well as $(\Mbf,e,r)$). 

2. Given any $q \in ( - \infty, 2)$, for the operator in \eqref{equa-LieD} one has 
$
\| W \|_{H^1_{2 - q}(\Mbf,e,r)} 
\lesssim
\| M^*_e(W) \|_{L^2_{3-q}(\Mbf,e,r)}$ for all $W \in H^1_{2 - q}(\Mbf,e,r)$, 
in which the implied constant may depend upon $q$ (as well as $(\Mbf,e,r)$). 
\end{lemma}

\beginmylemma[Adjoint Einstein operator close to the background metric]
\label{lemma-onemore}
Under the assumptions of Proposition~\ref{prop.key}, one has
$$
\aligned
\big\| d\Hcal^*_{(g_1,h_1)}[u,Z] - d\Hcal^*_{(e,0)}[u,Z] \big\|_{L^2_{3-p}(\Mbf)} 
& \lesssim  
\eeG \, \|(u,Z)\|_{H^2_{1-p}(\Mbf) \times H^1_{2 - \oldq}(\Mbf)},
\\
\big\| d\Mcal^*_{(g_1,h_1)}[u,Z] - d\Mcal^*_{(e,0)}[u,Z] \big\|_{L^2_{3-q}(\Mbf)} 
& \lesssim 
\eeG \, \|(u,Z)\|_{H^2_{1-p}(\Mbf) \times H^1_{2 - \oldq}(\Mbf)}.
\endaligned
$$
\end{lemma}


\begin{proof}[Proof of Lemma \ref{prop. weighted Poincare-curved}]
\bse
{\bf 1. Hamiltonian operator.} 
{\bf 1a.}  
We establish first the following Poincar\'e-type inequality on the background manifold $(\Mbf,e,r)$: 
\bel{iq.2derive}
\| w \|_{H^2_{1-p}(\Mbf,e,r)} \lesssim \| \del^2 w \|_{L^2_{3 - p}(\Mbf,e,r)}. 
\ee
Proceeding by contradiction, if \eqref{iq.2derive} does not hold, we can find a sequence  $w^n: M \to \RR$ such that
\bel{aeq:w}
\| w^n \|_{H^2_{1-p}(\Mbf,e,r)} = 1, 
\qquad 
\lim_{n \to +\infty} \| \del^2 w^n \|_{L^2_{3 - p}(\Mbf,e,r)} = 0.
\ee
Passing to a subsequence if necessary, we can assume that $w^n$ converges weakly to a limit $w \in H^2_{1-p}(\Mbf,e,r)$. Since $w, \del w$ 
 enjoy integrability conditions at infinity, 
we deduce from \eqref{aeq:w} that the limit $w \equiv 0$. Moreover, by the Sobolev theorem, $w^n$ converges strongly to $0$ in $H^1(\Mbf, e)$, hence in $H^2(\Mbf, e)$ by \eqref{aeq:w}. (Recall here that the embedding $H^2_\theta \hookrightarrow H^1_{\theta'}$ is compact provided $\theta > \theta'$; see, for instance,   \cite[Lemma 2.1]{ChoquetC}.)

Now we let $\xi: M \to [0,1]$ be a cut-off function such that $\xi$ is identically $1$ outside  some ball of sufficiently large radius, chosen so that 
its support is included in the union of the asymptotic ends which we have denoted by $N_1, N_2,  \ldots, N_n$. 
On one hand, since $w^n$ converges to $0$ in $H^2(\Mbf, e)$, we see that $(1 - \xi) w^n$ converges to $0$ in $H^2_{1 - p}(\Mbf,e,r)$. In combination with  the property $\| w^n \|_{H^2_{1-p}(\Mbf,e,r)} = 1$, we get
$\lim_{n \to +\infty} \|\xi w^n\|_{H^2_{1-p}(\Mbf,e,r)} = 1$. 
On the other hand, since $\del^2 w^n$ converges to $0$ in $L^2_{3 - p}(\Mbf,e,r)$ and since $w^n$ converges to $0$ in $H^2(\Mbf, e)$, we see that $\del^2(\xi w^n)$ converges to $0$ in $L^2_{3 - p}(\Mbf,e,r)$. Therefore, since $\lim_{n \to +\infty} \|\xi w^n\|_{H^2_{1-p}(\Mbf,e,r)} = 1$ it follows that
$
\lim_{n \to +\infty} 
{\| \del^2  (\xi w^n) \|_{L^2_{3 - p}(\Mbf,e,r)}  / \|\xi w^n\|_{H^2_{1-p}(\Mbf,e,r)}}
 = 0.
$
However, the function $\xi w^n$ may be regarded (for each end) to be defined on $\RR^3$, so this then contradicts the statement in Lemma~\ref{lemma WP1}. Hence \eqref{iq.2derive} holds as claimed.  


\vskip.16cm 

{\bf 1b.} Next, by our definition of $\eps_*$ we have
$
\|  \Ric(e)^k_{ij} \del_k w \|_{L^2_{3 - p}(\Mbf,e,r)} 
\lesssim \eps_*  \| \del w\|_{L^2_{2-p}(\Mbf,e,r)}
$
and, provided $\eps_*$ is sufficiently small,  from \eqref{iq.2derive} we deduce that 
\bel{eq.Hess}
\| w \|_{H^2_{1-p}(\Mbf,e,r)} \lesssim \| \Hess_e w \|_{L^2_{3 - p}(\Mbf,e,r)}. 
\ee
In each asymptotic end and by denoting by $\delta$ the Euclidean metric in the coordinate chart at infinity, with the notation \eqref{equa-Letoile} we can write 
$\Hess_e w = L^*_e w -{1 \over 2}\Tr ( L^*_e w) e + w * \del^2 e * e + \del^2 w * (e - \delta)$. 
Provided $\eps_*$ is sufficiently small, we obtain 
$
\| \Hess_e w \|_{L^2_{3 - p}(\Mbf,e,r)} \lesssim \|L^*_e w\|_{L^2_{3 - p}(\Mbf,e,r)}, 
$ 
which together with \eqref{eq.Hess} completes the argument for the first item of the lemma. 
\ese


\vskip.16cm

{\bf 2. Momentum operator.} We work in the coordinate charts chosen on $(\Mbf,e,r)$ and to any vector field $W$
we associate the  Killing operator $\Dcal(W)$, which for instance for the Euclidian metric
in Cartesian coordinates 
reads 
$(\Dcal(W))_{ij} = ( \del_j W^i + \del_i W^j) \, dx^i \otimes dx^j$ with $W = W^i d x^i$.
Thanks to Lemma~\ref{prop. weighted KKK} and an analysis similar to the derivation of \eqref{iq.2derive} above, we can check that  
$
\| W \|_{H^1_{2 - q}(\Mbf, e, r)} 
\lesssim
\| \Dcal(W) \|_{L^2_{3-q}(\Mbf,e,r)}$
for all $W \in H^1_{2 - q}(\Mbf,e,r)$. 
On the other hand, by the definition of the Lie derivative we have (again in the coordinate charts under consideration) 
$$
(\Lcal_W e - \Dcal(W))_{ij} = \del_k e_{ij} W^k  + (e_{kj} - \delta_{kj}) \del_i W^k +  (e_{ki} - \delta_{kj}) \del_j W^k,   
$$ 
so provided $\eps_*$ is sufficiently small, we obtain
$\| W \|_{H^1_{2 - q}(\Mbf,e,r)} 
\lesssim
\| \Lcal_W e \|_{L^2_{3-q}(\Mbf,e,r)}$. 
\end{proof}

\subsection{Variational framework for the linearized Einstein operator}
\label{sec-LEO}

Following Corvino and Schoen~\cite{CorvinoSchoen} 
and Carlotto and Schoen~\cite{CarlottoSchoen}, 
given any $(f,V)\in L^2_{p+2} \times L^2_{\oldq+1}(\Mbf)$ we consider the real-valued functional 
\begin{subequations}
\label{Gformula}
\be
\Jbf_{(g_1, h_1, f, V)} : H^2_{1-p}(\Mbf) \times H^1_{2-\oldq}(\Mbf) \ni (u,Z) \mapsto  \Jbf_{(g_1, h_1, f, V)}(u,Z)   \in \RR
\ee
defined by 
\be  
\aligned
& \Jbf_{(g_1, h_1, f, V)} (u,Z) 
\coloneqq 
\int_\Mbf  \Bigg(
{1 \over 2}  \, \big|d\Hcal_{(g_1,h_1)}^*[u,Z] \big|^2 r^{3-2p}
 + {1 \over 2} \, \big| d\Mcal_{(g_1,h_1)}^*[u,Z] \big|^2 r^{3-2\oldq}
- f u -  g_1(V,Z) 
\Bigg) \, dV_{g_1}.
\endaligned
\ee 
\end{subequations}
The {\bf Euler-Lagrange equation} for a minimizer $(u,Z)$ of the functional $\Jbf_{(g_1, h_1, f, V)}$ reads
\bel{eq:minmini}
\aligned
& d\Gcal_{(g_1,h_1))}[g_2,h_2] 
=  (f, V), 
\qquad 
 g_2  \coloneqq r^{3-2p} d\Hcal_{(g_1,h_1)}^*[u,Z], 
\qquad
h_2 \coloneqq r^{3-2\oldq} d\Mcal_{(g_1,h_1)}^*[u,Z]. 
\endaligned
\ee

\begin{theorem}[Existence theory for the adjoint   
version of the Einstein constraints]
\label{prop-exislinea}
Consider a tame seed data set $(g_1, h_1, \Hstar, \Mstar)$ on a background manifold $\Mbf = (\Mbf,e,r)$ with decay exponents $(\ppG, \qqG, \ppM, \qqM)$. 
Then for any effective  exponents $(p,q)$ in the sense \eqref{eq--2-12}
and for any
$
(f, V)\in L^2_{p+2}(\Mbf) \times L^2_{\oldq+1}(\Mbf),
$
there exists a unique minimizer $(u, Z) \in H^2_{1 - p}(\Mbf) \times H^1_{2 - \oldq}(\Mbf)$ of the adjoint Einstein functional $\Jbf_{(g_1, h_1, f, V)}$ satisfying \eqref{eq:minmini}.
\end{theorem}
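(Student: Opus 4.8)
The plan is to apply the direct method of the calculus of variations to the functional $\Jbf_{(g_1, h_1, f, V)}$ on the Hilbert space $\Hcal := H^2_{1-p}(\Mbf) \times H^1_{2-\oldq}(\Mbf)$. First I would observe that the quadratic part of the functional, namely
\[
Q(u,Z) := \int_\Mbf \Big( \tfrac12 \big| d\Hcal_{(g_1,h_1)}^*[u,Z]\big|^2 r^{3-2p} + \tfrac12 \big| d\Mcal_{(g_1,h_1)}^*[u,Z]\big|^2 r^{3-2\oldq}\Big) \, dV_{g_1},
\]
is, by Proposition~\ref{prop.key}, bounded below by $c\,\|(u,Z)\|_\Hcal^2$ for some $c>0$ (using that $\eps_\star$ and $\eeG$ are sufficiently small, so the cross terms in the two estimates of Proposition~\ref{prop.key} can be absorbed). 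This coercivity is the heart of the matter and is exactly what Proposition~\ref{prop.key} was set up to provide. The linear part $(u,Z) \mapsto \int_\Mbf (fu + g_1(V,Z))\, dV_{g_1}$ is a bounded linear functional on $\Hcal$: indeed, by the Cauchy–Schwarz inequality in the weighted $L^2$ pairing, $\big|\int fu\, dV_{g_1}\big| \lesssim \|f\|_{L^2_{p+2}}\|u\|_{L^2_{1-p}} \lesssim \|f\|_{L^2_{p+2}}\|u\|_{H^2_{1-p}}$ (the weights $r^{-3+2(p+2)}$ and $r^{-3+2(1-p)}$ multiply to $r^{-3+2\cdot 3}\cdot r^{-3} $... more precisely the pairing weight works out so that the dual of $L^2_{p+2}$ against $dV_{g_1}\approx dV_e$ is $L^2_{1-p}$, consistent with the adjoint setup), and similarly for the $V$–$Z$ term.

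Next I would conclude existence and uniqueness by the standard argument: $\Jbf$ is a strictly convex, coercive, weakly lower semicontinuous functional on the Hilbert space $\Hcal$ (lower semicontinuity of $Q$ follows since it is a continuous nonnegative quadratic form, hence weakly l.s.c.; the linear part is weakly continuous). Coercivity gives $\Jbf(u,Z) \geq \tfrac{c}{2}\|(u,Z)\|_\Hcal^2 - C\big(\|f\|_{L^2_{p+2}} + \|V\|_{L^2_{\oldq+1}}\big)\|(u,Z)\|_\Hcal \to +\infty$ as $\|(u,Z)\|_\Hcal \to \infty$, so any minimizing sequence is bounded; extracting a weakly convergent subsequence and using weak l.s.c. yields a minimizer, and strict convexity (again from Proposition~\ref{prop.key}, which forces $Q$ to be positive definite, not merely semidefinite) yields uniqueness.

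Finally I would verify that the minimizer satisfies the stated Euler–Lagrange system \eqref{eq:minmini}: taking the first variation of $\Jbf$ in an arbitrary direction $(v,Y)\in\Hcal$ gives
\[
\int_\Mbf \Big( d\Hcal^*_{(g_1,h_1)}[u,Z]\cdot d\Hcal^*_{(g_1,h_1)}[v,Y]\, r^{3-2p} + d\Mcal^*_{(g_1,h_1)}[u,Z]\cdot d\Mcal^*_{(g_1,h_1)}[v,Y]\, r^{3-2\oldq} - fv - g_1(V,Y)\Big)\, dV_{g_1} = 0,
\]
and then, setting $g_2 := r^{3-2p} d\Hcal^*_{(g_1,h_1)}[u,Z]$ and $h_2 := r^{3-2\oldq} d\Mcal^*_{(g_1,h_1)}[u,Z]$ and using the defining adjoint relation $\int (g_2,h_2)\cdot d\Gcal^*_{(g_1,h_1)}[v,Y]\, dV_{g_1} = \int d\Gcal_{(g_1,h_1)}[g_2,h_2]\cdot (v,Y)\, dV_{g_1}$, the variational identity becomes $\int d\Gcal_{(g_1,h_1)}[g_2,h_2]\cdot(v,Y)\, dV_{g_1} = \int (fv + g_1(V,Y))\, dV_{g_1}$ for all $(v,Y)$, i.e. $d\Gcal_{(g_1,h_1)}[g_2,h_2] = (f,V)$ in the appropriate weak (distributional) sense. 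I expect the main obstacle to be purely bookkeeping: checking that each term in $d\Hcal^*$ and $d\Mcal^*$ (whose schematic forms appear in \eqref{eq-linearHMstar}) maps $\Hcal$ boundedly into the target weighted $L^2$ spaces so that $Q$ is well-defined and finite on all of $\Hcal$ — this is essentially the content of the Proposition stated just before Lemma~\ref{lemma WP1}, and the tame decay hypotheses are exactly calibrated for it — together with the care needed in justifying the adjoint integration-by-parts identity for elements of $\Hcal$ rather than compactly supported smooth fields (handled by density). Everything else is the textbook Lax–Milgram / direct-method machinery.
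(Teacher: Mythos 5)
Your proposal is correct and follows essentially the same route as the paper: coercivity of $\Jbf_{(g_1,h_1,f,V)}$ via Proposition~\ref{prop.key}, the direct method (bounded minimizing sequence, weak lower semicontinuity of the nonnegative quadratic part, weak continuity of the linear part), and uniqueness from strict convexity — which the paper implements concretely through the midpoint identity showing that two minimizers must satisfy $d\Gcal^*_{(g_1,h_1)}[u_2-u_1,Z_2-Z_1]=0$ and hence coincide by Proposition~\ref{prop.key}. Your weighted Cauchy--Schwarz check for the linear term (the weights $r^{1+2p}$ and $r^{-1-2p}$, resp.\ $r^{2q-1}$ and $r^{1-2q}$, cancel exactly) and the first-variation derivation of \eqref{eq:minmini} are the right bookkeeping and match what the paper leaves implicit.
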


\begin{proof} For simplicity in the proof\footnote{The assumption $(p, q) \leq (p_M, q_M)$ is not used in this proof.}
 we suppress the explicit dependence in $\eeG$. 
From the definition of $\Jbf_{(g_1, h_1, f, V)}$ and for some constants $C_1, C_2 >0$ we have 
$$
\aligned
\Jbf_{(g_1, h_1, f, V)}(u,Z)\geq 
& \, C_1\, \big\|d\Gcal^*_{(g_1,h_1)}[u,Z]\big\|^2_{H^2_{1-p} \times H^1_{2-\oldq}(\Mbf)}
  - C_2 \, \|(f,V)\|_{L^2_{p + 2} \times L^2_{\oldq+1}(\Mbf)} \|(u,Z)\|_{H^2_{1-p} \times H^1_{2-\oldq}(\Mbf)}
\endaligned
$$
and therefore, with  Proposition~\ref{prop.key}, 
$$
\aligned
\Jbf_{(g_1, h_1, f, V)}(u,Z)\geq 
C_1 \, \|(u,Z)\|^2_{H^2_{1-p} \times H^1_{2-\oldq}(\Mbf)}
 -C_2 \, \|(f,V)\|_{L^2_{p + 2} \times L^2_{\oldq+1}(\Mbf)} \|(u,Z)\|_{H^2_{1-p} \times H^1_{2-\oldq}(\Mbf)}.
\endaligned
$$
Therefore, the functional $\Jbf_{(g_1, h_1, f, V)}$ is coercive. It is a standard matter to show that it is lower semi-continuous in the spaces under consideration, and we conclude that $\Jbf_{(g_1, h_1, f, V)}$ admits at least one minimizer. 

Moreover, if both $(u_1,Z_1)$ and $(u_2,Z_2)$ are minimizers, then we find
$$
\aligned
&
\Jbf_{(g_1, h_1, f, V)} \big(\frac{(u_1,Z_1) + (u_2,Z_2)}{2} \big)
\\
& = 
{1 \over 2}  \, \Jbf_{(g_1, h_1, f, V)}(u_1,Z_1) + {1 \over 2}  \, \Jbf_{(g_1, h_1, f, V)}(u_2,Z_2)
-\frac{1}{8} \, \big\|d\Gcal^*_{(g_1,h_1)}[u_2-u_1,Z_2-Z_1]\big\|^2_{H^2_{1-p}(\Mbf) \times H^1_{2-\oldq}(\Mbf)}. 
\endaligned
$$ 
Therefore, we obtain $d\Gcal^*_{(g_1,h_1)}[u_2-u_1,Z_2-Z_1]= 0$, 
which implies $u_1=u_2$ and $Z_1=Z_2$ by Proposition~\ref{prop.key}. 
\end{proof}


\subsection{Uniform ellipticity of the adjoint Einstein operator}
\label{section Douglis-Nirenberg} 

\paragraph{H\"older regularity for elliptic systems.}

We continue our study of the linearized equations and we now investigate the weighted H\"older regularity of our solutions. We begin by recalling a general theory in the Euclidean space, or rather in a bounded domain $\Gamma \subset \RR^3$, as specified below. We present the setting in three-dimensions, since this is our application of interest. We consider first a system of $N$ linear partial differential equations in $\RR^3$ of the general form
\begin{subequations}
\bel{elliptic}
L_i(\cdot, \del)  w = \sum_{j=1}^N L_{ij}(\cdot, \del)  w_j = f_i, \qquad i =1, 2,\ldots, N, 
\ee
where the operator $L_{ij} = L_{ij}(x, \del)$ are polynomials of the partial derivatives $\del=(\del_1,\del_2,\del_3)$. Assume that there exist $2N$ integers, denoted by $s_1,\ldots,s_N$ and $t_1,\ldots,t_N$, such that for all relevant $x$ 
\be
L_{ij}(x, \del)  \text{ has order less or equal than } s_i + t_j. 
\ee
We denote $L^\prime_{ij}$ the sum of those terms in $L_{ij}$ that have exactly the order 
$s_i + t_j$, and, for $\xi \in \RR^3$, we refer to
\bel{eq41CC}
P(x,\xi) \coloneqq \text{det} \big( L^\prime_{ij}(x,\xi) \big)_{1\leq i,j \leq N}
\ee
as the \textbf{characteristic polynomial} associated with the operator \eqref{elliptic}. 
\end{subequations}

We consider the above operator in a bounded domain $\Gamma \subset \RR^3$ with sufficiently regular boundary, and let $d: \Gamma \to \RR$ be the distance function from the boundary $\del\Gamma$. For any integer $k \geq 0$ and reals $\theta \geq 0$ and $\alpha \in [0,1)$, we consider the {\bf weighted H\"older norm}
\be
\| w \|_{C_\theta^{k, \alpha}(\Gamma, d)}  
\coloneqq  \sum_{i= 0}^k \sup_{x \in \Gamma} d(x)^{\theta + i} | \del^i w(x)| + \sup_{x \in \Gamma} d(x)^{\theta+k+\alpha} [\del^k w]_\alpha, 
\ee
and we denote by $C_l^{k,\alpha}(\Gamma,d)$ the Banach space determined by completion (with respect to the above norm) of the set of all smooth functions on $\RR^3$ restricted to $\Gamma$. 

\begin{subequations}
\paragraph{Ellipticity conditions.} 

We decompose the operator in the form
$
L_{ij}(x,\del) = \sum_{| \beta|= 0}^{s_i + t_j}a_{ij,\beta} \, \del^\beta,
$
where the summation is over all multi-indices ordered by their length $|\beta|$. The following conditions are assumed for some $\alpha \in (0,1)$ and $K>0$ and for all indices $i,j = 1,\ldots,N$: 
\bei

\item[(1)] The coefficients $a_{ij,\beta}$ belong to $C^{-s_i, \alpha}_{-s_i - t_j + |\beta|}(\Gamma, d)$ with
$
\max_{i,j,\beta} \|a_{ij,\beta} \|_{C^{-s_i, \alpha}_{-s_i - t_j + | \beta|}(\Gamma, d)}
\leq K.
$

\item[(2)] The right-hand sides $f_i$ in \eqref{elliptic} belong to the space $C_{s_i + t}^{-s_i,\alpha}(\Gamma, d)$ with $t := \max_j t_j$.  

\item[(3)] With $m\coloneqq \sum _{k=1}^N(s_k + t_k)$, the characteristic polynomial satisfies the uniform positivity condition 
$
P(x,\xi) \geq K^{-1} \big(\sum_{i=1}^{n} \xi_i^2\big)^{m/2}$ for $x \in \Gamma, \, \xi \in \RR^N.
$
\eei
\end{subequations}
We now recall Douglis--Nirenberg's elliptic theory \cite[Theorem 1]{DouglisNirenberg}. 

\begin{theorem}[Interior H\"older regularity]
\label{propo.ellipticity}
Consider a solution $w: \Gamma \to \RR^N$ to the system \eqref{elliptic} under the ellipticity conditions (stated above) such that, for any $j=1,\ldots, N$, the component $w_j \in C_{t - t_j}^{(0,0)}(\Gamma,d)$ admits H\"older continuous derivatives up to order $t_j$. Then one has the higher regularity $w_j \in C^{t_j,\alpha}_{t  - t_j}(\Gamma,d)$ and, moreover, 
\be
\sum_{1 \leq j \leq N} 
\| w_j \|_{C^{t_j,\alpha}_{t  - t_j}(\Gamma,d)} 
\lesssim
\sum_{1 \leq j \leq N} 
\Big( 
\| w_j \|_{C^{0,0}_{t  - t_j}(\Gamma,d)} + \| f_j \|_{C^{-s_j,\alpha}_{s_j+ t}(\Gamma,d)}
\Big),
\ee
where the implied constant may depend upon $K, N, \alpha, s_1,\ldots,s_N, t_1, \ldots, t_N$.
\end{theorem}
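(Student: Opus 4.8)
\textbf{Plan of proof for Theorem~\ref{propo.ellipticity} (Douglis--Nirenberg interior estimates).}

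The statement is precisely \cite[Theorem 1]{DouglisNirenberg} transcribed into the weighted-H\"older scale $C^{k,\alpha}_\theta(\Gamma,d)$, so the plan is not to reprove the full elliptic theory from scratch but to reduce the weighted estimate to the classical flat-space Schauder estimate for the Douglis--Nirenberg system via a rescaling argument localized near the boundary. First I would fix a point $x_0\in\Gamma$ and set $\rho:=d(x_0)$, and perform the parabolic (or here, isotropic) rescaling $y\mapsto x_0+\rho y$, so that the ball $B_{\rho/2}(x_0)$ becomes the unit ball $B_{1/2}(0)$. Under this rescaling one tracks how each object transforms: the unknown component $w_j$ is rescaled with weight $\rho^{\,t-t_j}$ (matching the subscript $t-t_j$ in its norm), the coefficient $a_{ij,\beta}$ picks up a factor $\rho^{\,|\beta|-s_i-t_j}$ from the chain rule together with its own weight, and the right-hand side $f_j$ transforms with weight $\rho^{\,s_j+t}$. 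The point of the weights in conditions (1)--(3) is exactly that \emph{after} rescaling the hypotheses become uniform (in $x_0$) bounds on the rescaled system on the unit ball: the rescaled coefficients are bounded in $C^{-s_i,\alpha}$ with a constant controlled by $K$, the rescaled characteristic polynomial still satisfies the ellipticity inequality with the same constant $K$ (because the principal symbol is homogeneous of the correct total degree, the $\rho$-factors cancel), and the rescaled $f_j$ is bounded in $C^{-s_j,\alpha}(B_{1/2})$.

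Having arranged uniformity, I would invoke the interior Schauder estimate of Douglis--Nirenberg on the unit ball: for the rescaled solution $\widetilde w$ one gets $\sum_j\|\widetilde w_j\|_{C^{t_j,\alpha}(B_{1/4})}\lesssim \sum_j\big(\|\widetilde w_j\|_{C^{0,0}(B_{1/2})}+\|\widetilde f_j\|_{C^{-s_j,\alpha}(B_{1/2})}\big)$, with a constant depending only on $K,N,\alpha$ and the indices $s_i,t_j$ but \emph{not} on $x_0$. Then I would undo the rescaling: multiplying back by the appropriate powers of $\rho=d(x_0)$ converts the left-hand side into the quantity $d(x_0)^{\,t-t_j+\ell+\alpha}$ times the $(\ell,\alpha)$-H\"older data of $w_j$ at $x_0$ for $\ell\le t_j$, and the right-hand side into the corresponding weighted quantities for $w_j$ in $C^{0,0}_{t-t_j}$ and $f_j$ in $C^{-s_j,\alpha}_{s_j+t}$. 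Because the ball $B_{\rho/2}(x_0)$ stays inside $\Gamma$ (since $\rho=d(x_0)$) and on it $d$ is comparable to $\rho$, all the weights on the right are comparable to their values at $x_0$ up to a universal factor, so the estimate localizes cleanly. Taking the supremum over $x_0\in\Gamma$ and over $\ell\le t_j$ yields the claimed weighted bound, and the higher-regularity assertion $w_j\in C^{t_j,\alpha}_{t-t_j}(\Gamma,d)$ follows from the same estimate applied to difference quotients (or from the standard bootstrap built into the Douglis--Nirenberg argument).

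\textbf{Main obstacle.} The one genuinely delicate point is the bookkeeping of weights: one must verify that the exponents appearing in conditions (1)--(3) are \emph{exactly} the ones that make the rescaled system uniform, i.e.\ that for a term $a_{ij,\beta}\del^\beta w_j$ in the $i$-th equation the combination (weight of $a_{ij,\beta}$) $+\,|\beta|\,+$ (weight of $w_j$) equals (weight of $f_i$) minus the $-s_i$ shift — which amounts to checking $(-s_i-t_j+|\beta|)+|\beta|\cdot 0\,$ cancels correctly against $\del^\beta$ lowering the weight by $|\beta|$ and $w_j$ carrying weight $t-t_j$, landing on $s_i+t$. Getting every sign and shift right, and confirming that the ellipticity constant is scale-invariant because $P(x,\xi)$ is homogeneous of degree $m=\sum_k(s_k+t_k)$, is the crux; once the scaling is pinned down, the rest is a direct appeal to the classical theorem plus a covering/supremum argument. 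Since the paper explicitly attributes the result to \cite{DouglisNirenberg} and the authors state ``The proof $\ldots$ is omitted'' for the neighboring technical lemmas, I would expect the write-up here to likewise be brief, citing \cite{DouglisNirenberg} for the flat estimate and indicating the rescaling that passes to the weighted norms.
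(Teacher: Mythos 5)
Your proposal is consistent with what the paper does: the paper gives no proof at all, simply recalling the statement as \cite[Theorem 1]{DouglisNirenberg}, whose original formulation is already in distance-to-the-boundary weighted norms, so no conversion argument is even written down. Your rescaling sketch (uniformizing the system on unit balls $B_{d(x_0)/2}(x_0)$ and invoking the flat Schauder estimate) is the standard way one would derive such a weighted interior estimate and is sound, and you correctly anticipated that the paper would treat this as a cited external result rather than prove it.
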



Recall that a seed data set $(g_1, h_1, \Hstar, \Mstar)$ with exponents $(p_G, q_G, p_M, q_M)$
is prescribed together with effective exponents $(p,q)$. 
Thanks to Theorem~\ref{prop-exislinea}, for any $(f,V)\in L^2_{2 + \oldp}(\Mbf) \times L^2_{\oldq+2}(\Mbf)$ we can associate a unique minimizer of the functional $\Jbf_{(g_1, h_1, f, V)}$. This minimizer is  denoted by $(u, Z)\in H^2_{1-\oldp}(\Mbf)  \times H^1_{2-\oldq}(\Mbf)$ and enjoys the direct equations
\bel{eq1:hu}
\aligned
-\Delta_{g_1}(\Tr_{g_1}(g_2)) + \Div_{g_1}(\Div_{g_1}(g_2)) - g_1(g_2, \Ric_{g_1}) + h_1*h_1*g_2 + h_1*h_2 & = f,
\quad
\Div_{g_1}(h_2) + h_1*\nabla_{g_1}g_2  = V,
\endaligned
\ee
and the adjoint equations 
\bel{eq2:hua}
\aligned
-\big(\Delta_{g_1}u\big)g_1 + \Hess_{g_1}(u) -u\Ric_{g_1} + h_1*h_1*u + \nabla_{g_1}(h_1*Z) 
& = r^{-3 + 2\oldp} g_2, 
\qquad 
-{1 \over 2}  \Lcal_Z g_1 + h_1*u 
 = r^{-3 + 2\oldq} h_2. 
\endaligned
\ee
Taking the trace of the first equation in \eqref{eq2:hua} and the divergence of the second equation in \eqref{eq2:hua} into account, we obtain
$$
\aligned
-\Delta_{g_1}u & = \big(\Tr_{g_1}(g_1) - 1\big)^{-1} 
\Big(
u R_{g_1} - \Tr_{g_1}(h_1*h_1*u) - \Tr_{g_1} \big(\nabla_{g_1}(h_1*Z)\big) 
+ r^{-3 + 2\oldp} \Tr_{g_1}(g_2) 
\Big),
\\
-{1 \over 2}  \Div_{g_1} \big(\Lcal_Zg_1\big) 
& =  r^{-3 + 2\oldq} \Div_{g_1} (h_2) 
+ h_2\cdot\nabla_{g_1} \big( r^{-3 + 2\oldq}\big) - \Div_{g_1}(h_1*u).
\endaligned
$$
In combination with \eqref{eq1:hu}, we arrive at the following {\sl fourth order equation} for the dual Hamiltonian variable $u$: 
\begin{subequations}
\bel{system0}
\aligned
-\Delta_{g_1} \big(\Delta_{g_1} u\big) & = \Delta_{g_1} \Big((\Tr_{g_1}(g_1) - 1)^{-1} \big(u R_{g_1} - \Tr_{g_1}(h_1*h_1*u) - \Tr_{g_1} \big(\nabla_{g_1}(h_1*Z))\big)\Big)
\\
& \quad + {\Tr_{g_1}(g_2) \over r^{3 - 2\oldp}} \Delta_{g_1} \big((\Tr_{g_1}(g_1) - 1)^{-1} \big) 
+ 2g_1\Big(\nabla (\Tr_{g_1}(g_1) - 1)^{-1}, \nabla \big(r^{-3 +2\oldp} \Tr_{g_1}(g_2) \big) \Big)
\\
& \quad + 2(\Tr_{g_1}(g_1) - 1)^{-1}g_1\big(\nabla\big(\Tr_{g_1}(g_2)\big), \nabla r^{2p -3} \big) + (\Tr_{g_1}(g_1) - 1)^{-1} \Tr_{g_1}(g_2)\Delta_{g_1}r^{2p - 3}
\\
& \quad + (\Tr_{g_1}(g_1) - 1)^{-1} \big(f - \Div_{g_1} \big(\Div_{g_1}(g_2)\big) + g(g_2,\Ric) - h_1*h_1*g_2 + h_1*h_2 \big) r^{2p - 3}, 
\endaligned
\ee
together with the following {\sl second-order system of equations} for the dual momentum variable $Z$: 
\be
\aligned 
 -{1 \over 2}  \Div_{g_1} \big(\Lcal_Z g_1\big) 
& = r^{-3 + 2\oldq} \big(V - h_1* \nabla_{g_1}g_2 \Big)
 + h_2. \nabla_{g_1} \big(r^{-3 + 2\oldq} \big) - \Div_{g_1}(h_1*u).
\endaligned
\ee
\end{subequations}


In terms of the {\bf weighted unknowns\footnote{Observe that different exponents are used for the components $u$ and $Z$.}}
$\ut \coloneqq r^{-\oldp}u$ and $\Zt \coloneqq r^{-\oldq}Z$, 
this system may be rewritten as 
\begin{subequations}
\bel{system1}
\aligned
\Delta_{g_1}(\Delta_{g_1} \ut) + \sum_{0\leq | \beta| \leq 3}a_{\beta}^{(1)} \del^\beta \ut
+ A_p *\Big(\sum_{0\leq | \beta| \leq 3}c_{\beta}^{(1)} \del^\beta \Zt \Big) 
& = {\Big(\Tr_{g_1}(g_1) - 1\Big)^{-1}}r^{\oldp - 3}f,
\\
\Div_{g_1} \big(\Lcal_{\Zt} g_1\big) +  \sum_{0\leq | \beta| \leq 1}c_{\beta}^{(2)} \del^\beta \Zt 
+ B_p * \Big(\sum_{0\leq | \beta| \leq 3}c_{\beta}^{(1)} \del^\beta \ut\Big) 
& = - 2r^{\oldq -3}V,
\endaligned
\ee 
in which the coefficients satisfy the following bounds within the manifold $(\Mbf, g_1)$ 
\be
\aligned
&|a_\beta^{(1)} | \lesssim \eeG r^{| \beta|-4}, 
\qquad
&& |c_{\beta}^{(1)} | \lesssim \eeG r^{| \beta| -3},
\qquad |A_p| \lesssim \eeG r^{-\oldp}, 
\\
& |a_\beta^{(2)} | \lesssim \eeG r^{| \beta| - 3}, 
\qquad
&& |c_{\beta}^{(2)} | \lesssim \eeG r^{| \beta| -2}, 
\qquad |B_p| \lesssim \eeG r^{-\oldp}.
\endaligned
\ee
\end{subequations}
From~\cite{CarlottoSchoen} we recall the following ellipticity property of 
 Einstein's constraint equations
  in the sense of Douglis and Nirenberg. Within each asymptotic end of the manifold $(\Mbf, g_1)$ in the coordinate chart at infinity: provided the coefficients $\eps_*, \eeG$ are sufficiently small and by choosing the order parameters 
$s_1 = 0$, $s_2=s_3=s_4 = -1$, $t_1=4$, and $t_2=t_3=t_4 = 3$, 
the system \eqref{system1} in local coordinates takes the form \eqref{elliptic} and is elliptic in the sense of Douglis and Nirenberg.


\subsection{Lebesgue-H\"older regularity theory for the
linearized Einstein constraints}

\begin{subequations}
\label{eq:Ejdnb55}
Recall that
\bel{def: gh}
\aligned
g_2 = r^{3-2\oldp} d\Hcal_{(g_1,h_1)}^*[u,Z], 
\qquad
h_2 = r^{3-2\oldq} d\Mcal_{(g_1,h_1)}^*[u,Z],
\endaligned
\ee
satisfy
\bel{E-L-2}
d\Gcal_{(g_1,h_1)}[g_2,h_2] - (f, V) = 0.
\ee
\end{subequations}
We can now estimate $(g_2, h_2)$ in suitably weighted Lebesgue and H\"older norms.  
Recall once more that $\eps_*$ and $\eeG$ are assumed to be sufficiently small. We point out that the condition $(p, q) \leq (p_M, q_M)$ is actually not needed for the following two properties to hold. 
The proofs are postponed to Appendix~\ref{appendix-AAA}.

\begin{proposition}[Weighted Lebesgue regularity for the linearized Einstein operator]
\label{pro_integral}
Consider any tame seed data set $(g_1, h_1, \Hstar, \Mstar)$ on a background manifold $\Mbf = (\Mbf,e,r)$ with decay exponents $(\ppG, \qqG, \ppM, \qqM)$, and H\"older exponent $\alpha \in (0,1)$. 
Then for any effective exponents $(p,q)$, the solution $(g_2, h_2)$ to the linearized Einstein constraints
 \eqref{eq:Ejdnb55} satisfies 
$$
\aligned
\| g_2 \|_{L^2_{\oldp}(\Mbf)} 
& \lesssim \| f \|_{L^2_{\oldp + 2}(\Mbf)} + \eeG \, \| V \|_{L^2_{\oldq+1}(\Mbf)},
\qquad
\| h_2\|_{L^2_{\oldq}(\Mbf)} 
 \lesssim \eeG \, \| f \|_{L^2_{\oldp + 2}(\Mbf)} + \| V \|_{L^2_{\oldq+1}(\Mbf)}. 
\endaligned
$$
\end{proposition}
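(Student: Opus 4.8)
The strategy is to combine the variational characterization of $(g_2,h_2)$ via the minimizer $(u,Z)$ with the invertibility estimate of Proposition~\ref{prop.key}. The key observation is that, from the Euler--Lagrange system \eqref{eq:minmini}, the pair $(g_2,h_2)$ is precisely the image of $(u,Z)$ under the adjoint operator (up to weight factors), so controlling $(g_2,h_2)$ in $L^2_{p}\times L^2_{q}$ reduces to controlling $(u,Z)$ in $H^2_{1-p}\times H^1_{2-q}$. First I would test the equation \eqref{E-L-2} against $(u,Z)$ itself: pairing $d\Gcal_{(g_1,h_1)}[g_2,h_2]$ with $(u,Z)$ in the (unweighted, $dV_{g_1}$) $L^2$ sense and moving the operator to the other side gives
\[
\int_\Mbf \Big( r^{3-2p}\big|d\Hcal^*_{(g_1,h_1)}[u,Z]\big|^2 + r^{3-2q}\big|d\Mcal^*_{(g_1,h_1)}[u,Z]\big|^2 \Big)\, dV_{g_1}
= \int_\Mbf \big( f u + g_1(V,Z)\big)\, dV_{g_1},
\]
which is exactly $\|g_2\|_{L^2_p}^2 + \|h_2\|_{L^2_q}^2$ on the left-hand side (recalling the weighted $L^2$ norm has weight $r^{-3+2\theta}$ and that $g_2 = r^{3-2p}d\Hcal^*$, so $|g_2|^2 r^{-3+2p} = r^{3-2p}|d\Hcal^*|^2$).

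\textbf{Main steps.} (i) Establish the energy identity above; the cross terms on the right are then estimated by Cauchy--Schwarz in the weighted pairing: $\int f u\, dV_{g_1} \lesssim \|f\|_{L^2_{p+2}}\|u\|_{L^2_{1-p}}$ and $\int g_1(V,Z)\, dV_{g_1} \lesssim \|V\|_{L^2_{q+1}}\|Z\|_{L^2_{2-q}}$, where the pairing of weights $r^{-3+2(p+2)}$ with $r^{-3+2(1-p)}$ indeed integrates against $r^{-3}\,dV_e$ as required, and $dV_{g_1}$ is comparable to $dV_e$ since $g_1$ is close to $e$. (ii) Bound $\|u\|_{L^2_{1-p}} \le \|u\|_{H^2_{1-p}}$ and $\|Z\|_{L^2_{2-q}} \le \|Z\|_{H^1_{2-q}}$, and then invoke Proposition~\ref{prop.key} to get $\|(u,Z)\|_{H^2_{1-p}\times H^1_{2-q}} \lesssim \|(g_2,h_2)\|_{L^2_p \times L^2_q}$ (in the $\eeG$-refined form, $\|u\|_{H^2_{1-p}}\lesssim \|g_2\|_{L^2_p} + \eeG\|h_2\|_{L^2_q}$ and symmetrically for $Z$). (iii) Feed this back into the energy identity: $\|g_2\|_{L^2_p}^2 + \|h_2\|_{L^2_q}^2 \lesssim \|f\|_{L^2_{p+2}}\big(\|g_2\|_{L^2_p}+\eeG\|h_2\|_{L^2_q}\big) + \|V\|_{L^2_{q+1}}\big(\eeG\|g_2\|_{L^2_p}+\|h_2\|_{L^2_q}\big)$, and divide through (or apply Young's inequality) to obtain $\|g_2\|_{L^2_p}+\|h_2\|_{L^2_q} \lesssim \|f\|_{L^2_{p+2}} + \|V\|_{L^2_{q+1}}$. (iv) Finally, to extract the sharper \emph{decoupled} estimates with the $\eeG$ in the right places, re-run the argument keeping track of which norm multiplies which: the $\eeG$-weighted version of Proposition~\ref{prop.key} says the $u$-part is controlled mostly by $d\Hcal^*$ (i.e.\ by $g_2$) with only an $\eeG$ contribution from $d\Mcal^*$, so absorbing the small $\eeG$ cross-terms into the left-hand side yields $\|g_2\|_{L^2_p} \lesssim \|f\|_{L^2_{p+2}} + \eeG\|V\|_{L^2_{q+1}}$ and the symmetric bound for $h_2$.

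\textbf{Anticipated obstacle.} The routine part is the energy identity and Cauchy--Schwarz; the delicate point is step (iv)---propagating the smallness parameter $\eeG$ correctly through the bootstrap so that the off-diagonal terms genuinely carry a factor $\eeG$ and can be absorbed, rather than appearing at size $O(1)$. This requires using Proposition~\ref{prop.key} in its precise (non-symmetrized) form and being careful that the only place a ``large'' coupling could enter---the pairing $\int(fu + g_1(V,Z))$---is in fact diagonal ($f$ pairs with $u$, $V$ with $Z$), so the cross terms arise solely from the adjoint-operator coupling, which is $O(\eeG)$ by the structure of \eqref{eq-linearHMstar}. One should also verify that the integration by parts leading to the energy identity is justified given the decay of $(g_2,h_2,u,Z)$ in the stated weighted spaces (no boundary terms at infinity), which follows from the admissibility/sub-criticality of $(p,q)$ ensuring the relevant products decay faster than $r^{-3}$.
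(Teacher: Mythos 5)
Your steps (i)--(iii) reproduce the paper's first step: the paper uses the comparison $\Jbf_{(g_1,h_1,f,V)}(u,Z)\leq \Jbf_{(g_1,h_1,f,V)}(0,0)=0$ (equivalent, up to a factor, to your energy identity from the Euler--Lagrange equation), then Cauchy--Schwarz and Proposition~\ref{prop.key}, to get the combined bound $\|g_2\|_{L^2_p}+\|h_2\|_{L^2_q}\lesssim \|f\|_{L^2_{p+2}}+\|V\|_{L^2_{q+1}}$. That part is fine.

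The gap is in step (iv), which is where the actual content of the proposition lies. From the single combined inequality, writing $A=\|g_2\|_{L^2_p}$, $B=\|h_2\|_{L^2_q}$, $F=\|f\|_{L^2_{p+2}}$, $W=\|V\|_{L^2_{q+1}}$, you have $A^2+B^2\lesssim FA+\eeG FB+\eeG WA+WB$. The term $WB$ carries no factor of $\eeG$; it is diagonal for the $h_2$ estimate but an $O(1)$ obstruction for the $g_2$ estimate, and it cannot be ``absorbed into the left-hand side'' in a way that isolates $A$. Indeed, with $F=0$ and $W=1$ the inequality $A^2+B^2\lesssim \eeG A+B$ is consistent with $A\simeq B\simeq 1$, so it does not imply $A\lesssim \eeG W$. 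No amount of ``keeping track of which norm multiplies which'' in this one scalar inequality can decouple the two components. The missing idea is to exploit the variational structure a second time by comparing the minimizer against competitors in which one component is frozen: the paper uses $\Jbf(u,Z)\leq \Jbf(0,Z)$ (equivalently, testing the Euler--Lagrange equation against $(u,0)$ alone), which yields
$\|g_2\|_{L^2_p}^2 \lesssim \int\big(|d\Hcal^*_{(g_1,h_1)}[0,Z]|^2 r^{3-2p}+(|d\Mcal^*_{(g_1,h_1)}[0,Z]|^2-|d\Mcal^*_{(g_1,h_1)}[u,Z]|^2)r^{3-2q}\big)dV_{g_1}+\|u\|_{L^2_{1-p}}\|f\|_{L^2_{p+2}}$; here every term except the $f$-pairing is genuinely $O(\eeG^2)$ because $d\Hcal^*[0,Z]$ and $d\Mcal^*[0,Z]-d\Mcal^*[u,Z]=h_1*u$ consist only of $h_1$-weighted terms (the computation of Lemma~\ref{lemma-onemore}). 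Combined with Proposition~\ref{prop.key} and the step-(iii) bound, this gives $\|g_2\|_{L^2_p}\lesssim \|f\|_{L^2_{p+2}}+\eeG\|V\|_{L^2_{q+1}}$, and the symmetric comparison $\Jbf(u,Z)\leq\Jbf(u,0)$ gives the $h_2$ estimate. Your intuition about where the $\eeG$'s come from is correct, but without this separate-competitor (or separate-test-function) device the decoupled estimates do not follow.
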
 


\begin{proposition}[Weighted H\"older regularity for the linearized Einstein operator] 
\label{prop. estimate 4 order}
Consider any tame seed data set $(g_1, h_1, \Hstar, \Mstar)$ on a background manifold $\Mbf = (\Mbf,e,r)$ with decay exponents $(\ppG, \qqG, \ppM, \qqM)$, and H\"older exponent $\alpha \in (0,1)$. 
Then for any effective exponents $(p,q)$, the solution $(g_2, h_2)$ to the linearized equations \eqref{eq:Ejdnb55} satisfies 
$$
\aligned
\|g_2\|_{C^{2,\alpha}_{\oldp}(\Mbf)} 
& 
\lesssim  
\|f\|_{L^2C^{0,\alpha}_{\oldp+2}(\Mbf)} + \eeG \, \| V \|_{L^2C^{1,\alpha}_{\oldq+1}(\Mbf)}, 
\qquad
\|h_2\|_{C^{2,\alpha}_{\oldq}(\Mbf)}
\lesssim \eeG \, \| f \|_{L^2C^{0,\alpha}_{\oldp+2}(\Mbf)} + \| V \|_{L^2C^{1,\alpha}_{\oldq+1}(\Mbf)}.
\endaligned
$$
\end{proposition}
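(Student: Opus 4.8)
The plan is to upgrade the weighted Lebesgue bounds of Proposition~\ref{pro_integral} to weighted H\"older bounds by invoking the Douglis--Nirenberg interior regularity theorem (Theorem~\ref{propo.ellipticity}) applied, end by end, to the elliptic system~\eqref{system1} satisfied by the rescaled dual variables $\ut = r^{-p}u$ and $\Zt = r^{-q}Z$. First I would localize: fix one asymptotic end, work in the coordinate chart at infinity, and introduce the standard dyadic decomposition $\Mbf \supset A_k := \{2^{k} \leq r \leq 2^{k+2}\}$ (say), so that after the scaling $x \mapsto 2^k x$ each annulus becomes a fixed bounded domain $\Gamma$ with $d(x) \sim 1$ on the central sub-annulus. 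On $\Gamma$ the rescaled system~\eqref{system1} is uniformly elliptic in the Douglis--Nirenberg sense with order parameters $s_1=0$, $s_2=s_3=s_4=-1$, $t_1=4$, $t_2=t_3=t_4=3$, and its coefficients are controlled in the relevant weighted H\"older norms by the smallness parameters $\eps_\star, \eeG$ (uniformly in $k$, since $r^{|\beta|-\text{(order)}}$ scales correctly). The right-hand sides are, up to the factors $(\Tr_{g_1}(g_1)-1)^{-1} r^{p-3} f$ and $-2 r^{q-3} V$, in the required spaces once $f \in L^2C^{0,\alpha}_{p+2}(\Mbf)$ and $V \in L^2C^{1,\alpha}_{q+1}(\Mbf)$.

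The key steps, in order: (i) record that the rescaled problem on each $\Gamma$ satisfies hypotheses (1)--(3) of the Douglis--Nirenberg setup with constants independent of $k$; (ii) apply Theorem~\ref{propo.ellipticity} to obtain, on the central sub-annulus,
$\|\ut\|_{C^{4,\alpha}} + \|\Zt\|_{C^{3,\alpha}} \lesssim \|\ut\|_{C^{0,0}} + \|\Zt\|_{C^{0,0}} + \|r^{p-3}f\|_{C^{-s,\alpha}} + \|r^{q-3}V\|_{C^{-s,\alpha}}$
on $\Gamma$; (iii) undo the scaling and sum over $k$ (using that the weighted H\"older norm is, up to uniform constants, the supremum over annuli of the rescaled norms, and likewise $\|\cdot\|_{C^{0,0}}$ over annuli is dominated by the weighted $L^2$ norm via the local elliptic/Sobolev estimate already implicit in Proposition~\ref{pro_integral} together with interior $L^2 \to L^\infty$ bounds for the fourth-order system); (iv) translate the bound on $(\ut,\Zt)$ into one on $(u,Z)$ in $C^{4,\alpha}_{1-p} \times C^{3,\alpha}_{2-q}$, then feed this back through the defining relations~\eqref{def: gh} --- $g_2 = r^{3-2p} d\Hcal^*_{(g_1,h_1)}[u,Z]$ involves two derivatives of $u$ plus one derivative of $h_1 * Z$, hence lands in $C^{2,\alpha}_{p}$; similarly $h_2 = r^{3-2q} d\Mcal^*_{(g_1,h_1)}[u,Z]$ lands in $C^{2,\alpha}_{q}$ (here the $\eeG$-weighted cross terms $h_1 * u$ and $\nabla(h_1*Z)$ produce the $\eeG$ factors in the claimed estimates); (v) combine with Proposition~\ref{pro_integral} to replace the $C^{0,0}$ (equivalently $L^2$) terms on the right-hand side by the asserted norms of $f$ and $V$, obtaining the stated inequalities. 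Finally, one checks the interior $M_0$ contribution, where $d\Gcal^*$ is a uniformly elliptic system on a compact manifold-with-boundary, separately by the same regularity theorem; this piece is routine since all weights are comparable to $1$ there.

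The main obstacle I expect is bookkeeping the scaling of the weighted H\"older norms through the fourth-order/second-order mixed system~\eqref{system1}: one must verify that the Douglis--Nirenberg weights $t-t_j$ match the powers of $r$ produced by the $r^{-p}$ (resp.\ $r^{-q}$) rescaling of $u$ (resp.\ $Z$), so that summing the rescaled estimates over dyadic annuli reproduces exactly the global weighted norms $C^{2,\alpha}_p$ and $C^{2,\alpha}_q$ without loss --- this is precisely the point where the distinct exponents for the two components (flagged in the footnote to~\eqref{system1}) must be tracked carefully. A secondary technical point is controlling the commutator terms arising when $\Delta_{g_1}$ and the weights $r^{p-3}$, $r^{2p-3}$ interact with $(\Tr_{g_1}(g_1)-1)^{-1}$ in~\eqref{system0}--\eqref{system1}; these are lower order and absorb into the $\eeG$-small coefficients $a^{(1)}_\beta, c^{(1)}_\beta$, etc., but their H\"older-norm bounds need the tame decay hypotheses on $g_1, h_1$. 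Once these are in hand, the remaining estimates are routine applications of the a priori bound in Theorem~\ref{propo.ellipticity} and the Lebesgue bounds of Proposition~\ref{pro_integral}.
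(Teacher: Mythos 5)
Your proposal follows essentially the same route as the paper: localize to the asymptotic ends, apply the Douglis--Nirenberg interior estimate (Theorem~\ref{propo.ellipticity}) to the rescaled system~\eqref{system1} for $(\ut,\Zt)$ on bounded domains with $d(x)\sim r(x)$, control the resulting sup-norm term by the weighted $L^2$ norm via an interior $L^2\to L^\infty$ elliptic estimate (the paper does this explicitly with Caccioppoli--Leray plus Sobolev embedding), invoke Propositions~\ref{prop.key} and~\ref{pro_integral} to bound those $L^2$ norms by the data, feed the result back through~\eqref{def: gh}, and extract the $\eeG$ factors by revisiting the two blocks of~\eqref{system1}. The only differences (dyadic annuli versus balls $B(x,r(x)/3)$, and the level of detail in the $L^2\to L^\infty$ step) are cosmetic.
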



\subsection{A Lipschitz continuity property for  
 the Einstein constraints} 
\label{existence of solution}

The calculations in this section are parallel to those in~\cite{CarlottoSchoen} but in a different setup. Let us summarize our conclusion so far with a slightly different notation. Thanks to Theorem~\ref{prop-exislinea}, for each $(f_2,V_2) \in L^2C^{0,\alpha}_{\oldp + 2}(\Mbf) \times L^2C^{1,\alpha}_{\oldq+1}(\Mbf)$ there exists a unique minimizer 
of the adjoint Einstein functional $\Jbf_{(f_2,V_2)}$, that is, 
$(u_2, Z_2) \in H^2_{1 - \oldp}(\Mbf) \times H^1_{2 - \oldq}(\Mbf)$. 
By setting
$
g_2 \coloneqq r^{3-2\oldp} d\Hcal_{(g_1,h_1)}^*[u,Z], 
$
and $h_2 \coloneqq r^{3-2\oldq} d\Mcal_{(g_1,h_1)}^*[u,Z]$, 
it follows that 
$
d\Gcal_{(g_1,h_1)}[g_2,h_2] - (f_2, V_2) = 0. 
$ 

As established in the previous section, given any $(f_2, V_2) \in L^2C^{0,\alpha}_{\oldp + 2}(\Mbf) \times L^2C^{1,\alpha}_{\oldq+1}(\Mbf)$, from Propositions~\ref{pro_integral} and \ref{prop. estimate 4 order} we deduce  that $(g_2,h_2) \in {L^2C^{2,\alpha}_{\oldp}(\Mbf) \times L^2C^{2,\alpha}_{\oldq}}(\Mbf)$ and, in fact, 
$$
\|(g_2,h_2)\|_{{L^2C^{2,\alpha}_{\oldp}(\Mbf) \times L^2C^{2,\alpha}_{\oldq}}(\Mbf)} 
\lesssim \|(f_2,V_2)\|_{L^2C^{0,\alpha}_{\oldp + 2}(\Mbf) \times L^2C^{2,\alpha}_{\oldq+1}(\Mbf)}.
$$
Therefore, it allows us to define a bounded linear map which we find convenient to denote\footnote{Strictly speaking, this map also depends upon $(\Hstar, \Mstar)$.}
 by $(d\Gcal)^{-1}_{(g_1, h_1)}$ 
$$
\aligned 
(d\Gcal)^{-1}_{(g_1, h_1)}: 
L^2C^{0,\alpha}_{\oldp + 2}(\Mbf) \times L^2C^{1,\alpha}_{\oldq+1}(\Mbf)
\ni
(f_2,V_2) 
& \mapsto (d\Gcal)^{-1}_{(g_1, h_1)}(f_2,V_2)\coloneqq (g_2,h_2)
\in {L^2C^{2,\alpha}_{\oldp}(\Mbf) \times L^2C^{2,\alpha}_{\oldq}}(\Mbf).
\endaligned
$$
For the nonlinear problem to be considered now, the main unknown is $(g_2, h_2)$ and the argument takes place in the Lebesgue-H\"older spaces defined in Section~\ref{section--2}. 

With the notation 
$g = g_1 + g_2$ and $ h = h_1 + h_2$, 
we define the ``quadratic part'' of the Einstein operator to be 
\be
\aligned
& \Qcal_{(g_1,h_1)}[g_2, h_2] 
= \Big( \Qcal\Hcal_{(g_1,h_1)}[g_2, h_2],  \Qcal\Mcal_{(g_1,h_1)}[g_2, h_2] \Big) 
\\
& \coloneqq \Big(\Hcal(g,h) - \Hcal(g_1,h_1) - d\Hcal_{(g_1,h_1)}[g_2, h_2], \Mcal(g,h) - \Mcal(g_1,h_1)  - d\Mcal_{(g_1,h_1)}[g_2, h_2] \Big).  
\endaligned
\ee
The following proposition shows that the nonlinearities of Einstein's constraint equations
 can be controlled at the level of decay and regularity of interest. The proof follows from similar arguments as in~\cite{CarlottoSchoen} and, therefore, is omitted.

\begin{proposition}[Lipschitz continuity for the Einstein operator in weighted Lebesgue-H\"older spaces] 
\label{propositionfixedpoint} 
Consider any tame seed data set $(g_1, h_1, \Hstar, \Mstar)$ on a background manifold $\Mbf = (\Mbf,e,r)$ with decay exponents $(\ppG, \qqG, \ppM, \qqM)$, and H\"older exponent $\alpha \in (0,1)$. 
Then for any effective exponents $(p,q)$ and for any $\lambda>0$, there exists a sufficiently small real $r_0>0$ such that for all $f_2, V_2, f_3, V_3$ satisfying 
$
\| (f_2, V_2) \|_{L^2C^{0,\alpha}_{\oldp + 2}(\Mbf)  \times L^2C^{1,\alpha}_{\oldq+1}(\Mbf) } \leq  r_0,  
$
and 
$
\| (f_3, V_3) \|_{L^2C^{0,\alpha}_{\oldp + 2}(\Mbf) \times L^2C^{1,\alpha}_{\oldq+1}(\Mbf) }  \leq r_0,  
$
 with $(g_2,h_2) =  (d\Gcal)^{-1}_{(g_1, h_1)}(f_2, V_2)$ and $(g_3, h_3) =  (d\Gcal)^{-1}_{(g_1, h_1)}(f_3, V_3)$ one has
$$
\| \Qcal_{(g_1,h_1)}[g_2, h_2] - \Qcal_{(g_1,h_1)}[g_3, h_3] \|_{L^2C^{0,\alpha}_{\oldp + 2}(\Mbf)  \times L^2C^{1,\alpha}_{\oldq+1}(\Mbf)} 
\leq 
\lambda \, \|(g_2, h_2) - (g_3, h_3)\|_{L^2C^{2,\alpha}_{\oldp}(\Mbf)  \times L^2C^{2,\alpha}_{\oldq}(\Mbf)}.
$$
\end{proposition}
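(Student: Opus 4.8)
The plan is to estimate the difference $\Qcal_{(g_1,h_1)}[g_2,h_2] - \Qcal_{(g_1,h_1)}[g_3,h_3]$ term by term, exploiting the fact that $\Qcal_{(g_1,h_1)}$ collects precisely the super-linear contributions of the Einstein operator expanded about $(g_1,h_1)$. Writing $g = g_1 + g_2$, $g' = g_1 + g_3$, and similarly for $h,h'$, the Hamiltonian component $\Qcal\Hcal_{(g_1,h_1)}[g_2,h_2]$ is a sum of terms that are quadratic (and higher order) in $(g_2,h_2)$, schematically of the form $\del g_2 \star \del g_2$, $g_2 \star \del^2 g_2$, $g_2 \star g_2 \star (\text{curvature of } g_1)$, $h_2 \star h_2$, and $h_1 \star h_2 \star g_2$, together with contributions where one slot carries a $g_1$- or $h_1$-dependent coefficient. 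The momentum component $\Qcal\Mcal_{(g_1,h_1)}[g_2,h_2]$ is similar but one order lower, involving terms like $g_2 \star \del h_2$ and $h_2 \star \del g_2$. For the difference of two such expressions, I would use the standard algebraic identity $ab - a'b' = (a-a')b + a'(b-b')$ so that each term in the difference contains exactly one factor of the form $(g_2 - g_3)$ or $(h_2 - h_3)$ (or a derivative thereof), multiplied by factors drawn from $\{g_1 - e, h_1, g_2, g_3, h_2, h_3\}$ and their derivatives.

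Next I would bound each resulting term in the $L^2C^{0,\alpha}_{p+2} \times L^2C^{1,\alpha}_{q+1}$ norm using the multiplicative (Hölder-type) properties of the weighted Lebesgue–Hölder spaces: a product of a factor in $L^2C^{a}_{\theta_1}$ with a factor in $C^{b}_{\theta_2}$ lands in $L^2C^{\min(a,b)}_{\theta_1+\theta_2}$, and the admissibility relations $1/2 \le p \le 2(q-1)$ together with the subcritical-effectiveness conditions $|q-p-1|\le q_G - 1$ and $(p,q)\le(p_M,q_M)$ are exactly what guarantee that the weight exponents add up correctly — e.g. $\del g_2 \star \del(g_2-g_3)$ contributes a weight $\ge (p+1)+(p+1) = 2p+2 \ge p+2$ since $p \ge 1/2$, and the mixed term $h_1 \star h_2 \star (g_2-g_3)$ contributes $q_G + q + p \ge p+2$ by the admissibility of $(q_G,q)$. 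Every such factor $(g_2-g_3)$, $(h_2-h_3)$ is controlled by $\|(g_2,h_2)-(g_3,h_3)\|_{L^2C^{2,\alpha}_p \times L^2C^{2,\alpha}_q}$, while every companion factor is one of $g_2,g_3,h_2,h_3$ (each of norm $\lesssim r_0$ by Propositions~\ref{pro_integral} and \ref{prop. estimate 4 order} applied to $(f_2,V_2)$ or $(f_3,V_3)$) or one of $g_1 - e$, $h_1$ (each of norm $\lesssim \eeG$). Collecting, the right-hand side is $\lesssim (r_0 + \eeG)\,\|(g_2,h_2)-(g_3,h_3)\|_{L^2C^{2,\alpha}_p \times L^2C^{2,\alpha}_q}$, and choosing $r_0$ (and recalling $\eeG$) small enough so that the implied constant times $(r_0+\eeG)$ is below $\lambda$ gives the claim.

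The one subtlety to keep track of is that $(g_2,h_2)$ enters $\Qcal\Hcal$ through second derivatives (the term $\Tr(g_2)\,\Delta_{g_1}(\cdots)$ and $\Div\Div g_2$ appear in the full expansion of $R_{g_1+g_2}$), so in those terms the companion factor $g_2$ or $g_1-e$ must carry only undifferentiated or once-differentiated weight; this is harmless because $g_2 \in L^2C^{2,\alpha}_p$ controls up to two derivatives and the worst term $g_2 \star \del^2(g_2 - g_3)$ has weight $p + (p+2) \ge p+2$. Also, terms that are genuinely cubic or higher in $(g_2,h_2)$ — arising from the Taylor expansion of $g_1+g_2 \mapsto R_{g_1+g_2}$ and of the norm $|h_1+h_2|^2_{g_1+g_2}$ — only improve the estimate by an extra power of $r_0$, so they require no separate care.

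The main obstacle I anticipate is purely bookkeeping: one must verify that \emph{every} schematic term surviving the expansion of $\Hcal(g,h) - \Hcal(g_1,h_1) - d\Hcal_{(g_1,h_1)}[g_2,h_2]$ (and its momentum analogue) has its weight exponents summing to at least $p+2$ (respectively $q+1$) after the difference identity is applied — in particular the borderline terms $\del g_2 \star \del(g_2-g_3)$ and $h_1 \star h_2 \star (g_2-g_3)$ are exactly at the edge, and it is the admissibility hypothesis $1/2 \le p \le 2(q-1)$ together with $q_G \ge 1 + p_G/2$ that saves them; there is no room to spare. Since the structural form of $\Qcal_{(g_1,h_1)}$ and the multiplicative estimates are identical to those in Carlotto–Schoen's setup apart from the choice of weights, the argument reduces to checking these weight inequalities, which is why we have omitted the detailed computation.
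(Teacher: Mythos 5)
Your approach --- expanding $\Qcal_{(g_1,h_1)}$ into its schematic quadratic-and-higher terms, applying the bilinear identity $ab-a'b'=(a-a')b+a'(b-b')$ so that each term of the difference carries exactly one factor of $(g_2-g_3)$ or $(h_2-h_3)$, and checking that the weighted multiplication estimates close under the admissibility and subcritical-effectiveness conditions --- is exactly the argument the paper has in mind: the paper omits the proof altogether and defers to \cite{CarlottoSchoen}, where the same computation is carried out with different weights. Your identification of the borderline terms and of the role of $1/2\leq p\leq 2(q-1)$ and $|q-p-1|\leq \qqG-1$ is correct.

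One step as written would fail, though it is easily repaired. You arrive at a bound $\lesssim (r_0+\eeG)\,\|(g_2,h_2)-(g_3,h_3)\|$ and then propose to choose ``$r_0$ (and recalling $\eeG$) small enough'' to get below $\lambda$. But $\eeG$ is fixed by the tame seed data in the hypothesis, whereas $\lambda>0$ is arbitrary; if a term bounded only by $\eeG\,\|(g_2,h_2)-(g_3,h_3)\|$, with no accompanying factor of $r_0$, genuinely appeared, the proposition would be false for $\lambda\ll\eeG$. In fact no such term appears: by its very definition $\Qcal_{(g_1,h_1)}$ contains no contribution that is linear in $(g_2,h_2)$, so after the difference identity every surviving term carries, besides the factor $(g_2-g_3)$ or $(h_2-h_3)$, at least one companion factor drawn from $\{g_2,g_3,h_2,h_3\}$ and their derivatives --- each of norm $\lesssim r_0$ by Propositions~\ref{pro_integral} and \ref{prop. estimate 4 order} applied to $(f_i,V_i)$ --- while $g_1-e$ and $h_1$ can only enter as \emph{additional} coefficients multiplying an already quadratic term, improving the estimate rather than degrading it. The correct bound is therefore $\lesssim r_0\,\|(g_2,h_2)-(g_3,h_3)\|$, and choosing $r_0\leq\lambda/C$ finishes the proof; your phrase ``every companion factor is one of $g_2,g_3,h_2,h_3$\,\dots\,or one of $g_1-e$, $h_1$'' should be corrected accordingly.
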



\subsection{Existence of the seed-to-solution map (Theorem~\ref{theo-prec})}

Given a seed data set $(g_1, h_1)$, the requirement that $(g,h) = (g_1 + g_2, h_1 + h_2)$ is a solution to 
Einstein's constraint equations 
 is equivalent to saying that 
\bel{Eeqe}
d\Gcal_{(g_1,h_1)}[g_2, h_2] + \Qcal_{(g_1,h_1)}[g_2, h_2] = (\Hstar, \Mstar) - \Gcal(g_1,h_1).
\ee 
In order to apply what we have established so far, we must ensure that 
$(g_2,h_2)$
  remains within the range of the mapping $(d\Gcal)^{-1}_{(g_1, h_1)}$, that is, 
$
(g_2, h_2) \in (d\Gcal)^{-1}_{(g_1, h_1)} \big( L^2C^{0,\alpha}_{\oldp + 2}(\Mbf) \times L^2C^{1,\alpha}_{\oldq+1}(\Mbf) \big).
$
In view of the Lipschitz continuity property enjoyed by the quadratic part which was derived in Proposition~\ref{propositionfixedpoint}, the left-hand-side of \eqref{Eeqe} 
 should also be guaranteed to
belong to $L^2C^{0,\alpha}_{\oldp + 2}(\Mbf) \times L^2C^{1,\alpha}_{\oldq+1}(\Mbf)$. This leads us to the condition 
$
(\Hstar, \Mstar) - \Gcal(g_1,h_1) \in L^2C^{0,\alpha}_{\oldp + 2}(\Mbf)  \times L^2C^{1,\alpha}_{\oldq+1}(\Mbf),
$
which, as stated in \eqref{eq--2-12}, requires us to assume the condition $(\oldp, \oldq) \leq (\ppM, \qqM)$ on the decay exponents for the matter.

\begin{theorem}[Existence of the seed-to-solution map for the Einstein constraints]  
\label{theo-main3}
Consider any tame seed data set $(g_1, h_1, \Hstar, \Mstar)$ on a background manifold $\Mbf = (\Mbf,e,r)$ with decay exponents $(\ppG, \qqG, \ppM, \qqM)$, and H\"older exponent $\alpha \in (0,1)$. 
Then for any effective exponents $(p,q)$, there exists a pair 
$
(g_2, h_2) \in
 (d\Gcal)^{-1}_{(g_1, h_1)} \big( L^2C^{0,\alpha}_{\oldp + 2}(\Mbf) \times L^2C^{1,\alpha}_{\oldq+1}(\Mbf) \big) 
$
such that 
\bel{eq:520}
d\Gcal_{(g_1,h_1)}[g_2, h_2] + \Qcal_{(g_1,h_1)}[g_2, h_2] 
= (\Hstar, \Mstar) - \Gcal(g_1,h_1).  
\ee
Moreover, the solution $(g,h) = (g_1+g_2, h_1+ h_2)$ satisfies \eqref{equa-213-2} and such a solution is unique.   
\end{theorem}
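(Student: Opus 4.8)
The plan is to recast the nonlinear equation \eqref{eq:520} as a fixed-point problem on the Banach space $X \coloneqq L^2C^{2,\alpha}_{p}(\Mbf) \times L^2C^{2,\alpha}_{q}(\Mbf)$ and apply the Banach fixed-point theorem. First I would rewrite \eqref{Eeqe} by composing with the bounded linear solution operator $(d\Gcal)^{-1}_{(g_1, h_1)}$ constructed in Section~\ref{existence of solution}: since the admissibility condition $(p,q) \leq (\ppM, \qqM)$ together with the tameness of $(g_1, h_1, \Hstar, \Mstar)$ guarantees that $(\Hstar, \Mstar) - \Gcal(g_1,h_1) \in L^2C^{0,\alpha}_{p + 2}(\Mbf) \times L^2C^{1,\alpha}_{q+1}(\Mbf)$, solving \eqref{eq:520} is equivalent to finding $(g_2, h_2) \in X$ with
\[
(g_2, h_2) = (d\Gcal)^{-1}_{(g_1, h_1)}\Big( (\Hstar, \Mstar) - \Gcal(g_1,h_1) - \Qcal_{(g_1,h_1)}[g_2, h_2] \Big) =: \Phi(g_2, h_2).
\]
Define the closed ball $B_{r_0} \subset X$ of radius $r_0$. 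By the boundedness estimate for $(d\Gcal)^{-1}_{(g_1, h_1)}$ recalled at the start of Section~\ref{existence of solution} and the decay hypotheses controlling $\| \Gcal(g_1,h_1) - \Gstar \|$ by $\eeM$, the ``affine part'' $\Phi(0,0)$ has norm $\lesssim \eeM$; shrinking $\eeM$ we can make this $\leq r_0/2$. For the contraction property, I would invoke Proposition~\ref{propositionfixedpoint} with $\lambda$ chosen small (say $\lambda$ times the operator norm of $(d\Gcal)^{-1}_{(g_1, h_1)}$ at most $1/2$), which is legitimate once $r_0$ is taken small enough: then for $(g_2,h_2),(g_3,h_3)\in B_{r_0}$ one gets
\[
\| \Phi(g_2,h_2) - \Phi(g_3,h_3) \|_X \lesssim \| \Qcal_{(g_1,h_1)}[g_2,h_2] - \Qcal_{(g_1,h_1)}[g_3,h_3] \|_{L^2C^{0,\alpha}_{p+2} \times L^2C^{1,\alpha}_{q+1}} \leq \tfrac12 \|(g_2,h_2)-(g_3,h_3)\|_X .
\]
Combined with the self-map property $\Phi(B_{r_0}) \subset B_{r_0}$ (which follows by a triangle inequality once $\Phi(0,0)$ is small and $\Phi$ is Lipschitz-$1/2$ on the ball), the Banach fixed-point theorem yields a unique fixed point $(g_2, h_2)$. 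By construction $(g_2, h_2) \in \mathrm{range}\,(d\Gcal)^{-1}_{(g_1, h_1)}$ and $(g, h) = (g_1+g_2, h_1+h_2)$ solves \eqref{eq:Einstein00}; the $C^{2,\alpha}$ regularity is read off from $X$.

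For the stability estimate \eqref{equa-213-2}, I would split the fixed-point identity as $(g_2,h_2) = (d\Gcal)^{-1}_{(g_1,h_1)}(\Hstar - \Hcal(g_1,h_1), \Mstar - \Mcal(g_1,h_1)) + (d\Gcal)^{-1}_{(g_1,h_1)}(-\Qcal_{(g_1,h_1)}[g_2,h_2])$. The first summand is bounded componentwise by the right-hand sides of \eqref{equa-213-2} using the refined $\eeG$-dependent bounds of Propositions~\ref{pro_integral} and \ref{prop. estimate 4 order}; the second summand, by Proposition~\ref{propositionfixedpoint} with $(g_3,h_3)=(0,0)$ (noting $\Qcal_{(g_1,h_1)}[0,0]=0$), is bounded by $\lambda \|(g_2,h_2)\|_X$, which can be absorbed into the left-hand side since $\lambda$ is as small as we wish. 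Uniqueness within the range of $(d\Gcal)^{-1}_{(g_1,h_1)}$ follows since any two solutions both lie in $B_{r_0}$ (their norms being controlled by $\eeM + \eeG$, hence small) and the fixed point of a contraction on a closed ball is unique.

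The main obstacle is bookkeeping rather than conceptual: one must verify that the radius $r_0$ supplied by Proposition~\ref{propositionfixedpoint} is compatible with the smallness of $\eeM, \eeG$ needed to place $\Phi(0,0)$ inside $B_{r_0}$, i.e.\ that the two smallness requirements can be met simultaneously. This is standard —one first fixes $\lambda$ small relative to the (fixed) operator norm of $(d\Gcal)^{-1}_{(g_1,h_1)}$, which determines $r_0$ via Proposition~\ref{propositionfixedpoint}, and then chooses $\eeM$ (and if needed $\eeG$) small enough that $\|\Gcal(g_1,h_1)-\Gstar\|_{L^2C^{0,\alpha}_{p+2}\times L^2C^{1,\alpha}_{q+1}}$, hence $\|\Phi(0,0)\|_X$, is at most $r_0/2$. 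A secondary subtlety is making sure the decay exponents $(p,q)$ being subcritically effective is exactly what licenses every norm appearing: the condition $(p,q)\le(\ppM,\qqM)$ for the source term, the condition $|q-p-1|\le \qqG-1$ for the off-diagonal coupling terms inside $\Qcal$, and sub-criticality of $(p,q)$ for the elliptic solvability in Propositions~\ref{pro_integral}–\ref{prop. estimate 4 order}; all of these were already arranged upstream, so here they need only be cited.
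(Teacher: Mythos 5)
Your proposal is correct and is essentially the paper's own argument: the paper runs the Picard iteration $(f_i,V_i)\mapsto (g_i,h_i)\mapsto (f_{i+1},V_{i+1})$ explicitly (proving by induction that the iterates stay in the $r_0$-ball of the source space and that the differences decay geometrically, hence Cauchy), which is exactly the constructive form of the Banach fixed-point argument you describe, with the same ingredients — boundedness of $(d\Gcal)^{-1}_{(g_1,h_1)}$, the Lipschitz estimate of Proposition~\ref{propositionfixedpoint}, smallness of $\eeM$ relative to $r_0$, and the same absorption argument for uniqueness. The only cosmetic difference is that the paper iterates in the source variables $(f_i,V_i)$ and tracks the dual minimizers $(u_i,Z_i)$ to confirm membership in the range of $(d\Gcal)^{-1}_{(g_1,h_1)}$, whereas you phrase the contraction directly on the ball in $L^2C^{2,\alpha}_{p}\times L^2C^{2,\alpha}_{q}$; these are equivalent via the bounded linear solution map.
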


\begin{proof}
 1. Letting $g_3 = 0,\,h_3 = 0$ and $f_3 = 0,\,V_3= 0$, we inductively construct a sequence 
 $(f_i,V_i)$ and a sequence $(g_i,h_i)$ 
  as follows for all $i \geq 4$:
$$
\aligned
(f_i,V_i) & \coloneqq -\Qcal_{(g_1,h_1)}[g_{i-1}, h_{i-1}] + (\Hstar, \Mstar) - \Gcal(g_1,h_1),
\qquad 
(g_i,h_i) \coloneqq  (d\Gcal)^{-1}_{(g_1, h_1)} (f_i,V_i).
\endaligned
$$
Let $\lambda>0$ be sufficiently small and $r_0>0$ be as in Proposition~\ref{propositionfixedpoint}. We first prove by induction that 
\bel{induction:52}
\| (f_i, V_i) \|_{L^2C^{0,\alpha}_{\oldp + 2} \times L^2C^{1,\alpha}_{\oldq+1}} \leq r_0.
\ee 
In fact, assume that $\|(f_i,V_i)\|_{L^2C^{0,\alpha}_{\oldp + 2} \times L^2C^{1,\alpha}_{\oldq+1}}  \leq r_0$ for $i=3, \ldots,k$. Thanks to Proposition~\ref{propositionfixedpoint} and the fact that the operator $(d\Gcal)^{-1}_{(g_1, h_1)}$ is bounded (with respect to the specified spaces), we have 
$$
\aligned
& \|(f_{k + 1},V_{k + 1}) - (f_k,V_k)\|_{L^2C^{0,\alpha}_{\oldp + 2} \times L^2C^{1,\alpha}_{\oldq+1}}  
= \| \Qcal_{(g_1,h_1)}[g_k,h_k] - \Qcal_{(g_1,h_1)}[g_{k-1},h_{k-1}]\|_{L^2C^{0,\alpha}_{\oldp + 2} \times L^2C^{1,\alpha}_{\oldq+1}} 
\\
& \leq \lambda \|(g_k,h_k) - (g_{k-1},h_{k-1})\|_{L^2C^{2,\alpha}_{\oldp} \times L^2C^{2,\alpha}_{\oldq}}
\lesssim
\lambda\|(f_k,V_k) - (f_{k-1},V_{k-1})\|_{L^2C^{0,\alpha}_{\oldp + 2} \times L^2C^{1,\alpha}_{\oldq+1}}.
\endaligned
$$
Therefore, provided $\lambda$ is sufficiently small we have 
$$
\aligned
& \|(f_{k + 1},V_{k + 1}) - (f_k,V_k)\|_{L^2C^{0,\alpha}_{\oldp + 2} \times L^2C^{1,\alpha}_{\oldq+1}}  
 \leq 2^{-k} \, \|(f_4, V_4) - (f_3,V_3)\|_{L^2C^{0,\alpha}_{\oldp + 2} \times L^2C^{1,\alpha}_{\oldq+1}} 
 = 2^{-k} \, \|(\Hstar, \Mstar) - \Gcal(g_1,h_1)\|_{L^2C^{0,\alpha}_{\oldp + 2} \times L^2C^{1,\alpha}_{\oldq+1}},
\endaligned
$$
hence
$$
\aligned
& \|(f_{k + 1},V_{k + 1}) - (\Hstar, \Mstar) + \Gcal(g_1,h_1)\|_{L^2C^{0,\alpha}_{\oldp + 2} \times L^2C^{1,\alpha}_{\oldq+1}}  
\\
&
\leq \sum_{i=1}^{k}2^{-i} \|(\Hstar, \Mstar) - \Gcal(g_1,h_1)\|_{L^2C^{0,\alpha}_{\oldp + 2} \times L^2C^{1,\alpha}_{\oldq+1}} 
\leq 
2 \, \|(\Hstar, \Mstar) - \Gcal(g_1,h_1)\|_{L^2C^{0,\alpha}_{\oldp + 2} \times L^2C^{1,\alpha}_{\oldq+1}}.
\endaligned
$$
It follows that
$ 
\|(f_{k + 1},V_{k + 1})\|_{L^2C^{0,\alpha}_{\oldp + 2} \times L^2C^{1,\alpha}_{\oldq+1}}  
\leq 3 \|(\Hstar, \Mstar) - \Gcal(g_1,h_1)\|_{L^2C^{0,\alpha}_{\oldp + 2} \times L^2C^{1,\alpha}_{\oldq+1}}
  \leq 3 \eeM, 
$
and, therefore, provided that $ \eeM \leq r_0/3$, we obtain 
$
\| (f_{k + 1},V_{k + 1}) \|_{L^2C^{0,\alpha}_{\oldp + 2} \times L^2C^{1,\alpha}_{\oldq+1}} \leq r_0
$   
and \eqref{induction:52} holds, as claimed.

\vskip.16cm

2. Next, we observe that for all integers $m,n>0$
$$
\|(f_{m + n},V_{m + n}) - (f_{n},V_{n})\|_{L^2C^{0,\alpha}_{\oldp + 2} \times L^2C^{1,\alpha}_{\oldq+1}}  \leq 2^{-n} \|(f_{m},V_{m})\|_{L^2C^{0,\alpha}_{\oldp + 2} \times L^2C^{1,\alpha}_{\oldq+1}}  \leq 2^{-n}r_0. 
$$
This means that the sequence $(f_i,V_i)$, hence $(g_i,h_i)$, is Cauchy. As a consequence, $\{(g_i,h_i)\} $ converges in $L^2C^{2,\alpha}_{\oldp} \times L^2C^{2,\alpha}_{\oldq}$ to a limit which we denote by $(g_2, h_2)$ and satisfies the equation
$
d\Gcal_{(g_1,h_1)}[g_2, h_2] + \Qcal_{(g_1,h_1)}[g_2, h_2] =  (\Hstar, \Mstar) - \Gcal(g_1,h_1).
$
Moreover, each pair $(g_i,h_i)$ is of the form
$
d\Gcal^*_{(g_1,h_1)}[u_i,Z_i] = (r^{-3 + 2\oldp} g_i, r^{-3 + 2\oldq} h_i)
$
 for some minimizer $(u_i,Z_i) \in H^2_{1-\oldp} \times H^1_{2-\oldq}$.
We deduce from Proposition~\ref{prop.key} that the sequence $(u_i,Z_i)$ is also Cauchy, hence $(u_i,Z_i)$ converges in $H^2_{1-\oldp} \times H^1_{2-\oldq}$ to the minimizer $(u_2,Z_2)$ which satisfies 
$$
\aligned 
(r^{-3 + 2\oldp} g_2, r^{-3 + 2\oldq} h_2) 
& = d\Gcal^*_{(g_1,h_1)}[u_2,Z_2],
\qquad
(u_2,Z_2) 
 \in H^2_{1-\oldp}(\Mbf)  \times H^1_{2-\oldq}(\Mbf). 
\endaligned
$$
This means that $(g_2, h_2) =  (d\Gcal)^{-1}_{(g_1, h_1)}\big( d\Gcal_{(g_1,h_1)}[g_2, h_2] \big)$.

\vskip.16cm

3. Finally, let $(g_2, h_2)$ and $(g_2', h_2')$ be such solutions. It follows by \eqref{eq:520} that
\bel{unique}
d\Gcal_{(g_1,h_1)}[g_2 - g_2', h_2 - h_2'] + \Qcal_{(g_1,h_1)}[g_2, h_2] - \Qcal_{(g_1,h_1)}[g_2', h_2']
= 0.
\ee
On one hand, since $S$ is a bounded linear map we have
\bel{unique2}
 \|(g_2,h_2) - (g_2',h_2')\|_{L^2C^{2,\alpha}_{\oldp} \times L^2C^{2,\alpha}_{\oldq}}
\lesssim
\|d\Gcal_{(g_1,h_1)}[g_2 - g_2', h_2 - h_2'] \|_{L^2C^{0,\alpha}_{\oldp + 2} \times L^2C^{1,\alpha}_{\oldq+1}}.
\ee
On the other hand, the Einstein solutions, say $(g, h) = (g_1 + g_2, h_1 + h_2), (g',h') = (g_1 + h_2', h_1 + h_2')$, satisfy \eqref{equa-213-2}, and 
$d\Gcal_{(g_1,h_1)}[g_2, h_2]$ and $d\Gcal_{(g_1,h_1)}[g_2', h_2']$ are sufficiently small in $L^2C^{0,\alpha}_{\oldp + 2}(\Mbf) \times L^2C^{1,\alpha}_{\oldq+1}(\Mbf)$. Therefore, thanks to Proposition~\ref{propositionfixedpoint} we obtain 
$$
\|\Qcal_{(g_1,h_1)}[g_2, h_2] - \Qcal_{(g_1,h_1)}[g_2', h_2'] \|_{L^2C^{0,\alpha}_{\oldp + 2} \times L^2C^{1,\alpha}_{\oldq+1}}
\lesssim
\lambda \|(g_2,h_2) - (g_2',h_2')\|_{L^2C^{2,\alpha}_{\oldp} \times L^2C^{2,\alpha}_{\oldq}},
$$
for some sufficiently small $\lambda>0$. Combining with \eqref{unique}-\eqref{unique2}, we get
$$
\|(g_2,h_2) - (g_2',h_2')\|_{L^2C^{2,\alpha}_{\oldp} \times L^2C^{2,\alpha}_{\oldq}} \lesssim \lambda \|(g_2,h_2) - (g_2',h_2')\|_{L^2C^{2,\alpha}_{\oldp} \times L^2C^{2,\alpha}_{\oldq}},
$$
which implies $(g_2, h_2) = (g_2', h_2')$ as long as $\lambda$ is small. The proof is completed.
\end{proof}


\section{Asymptotic properties for the  
linearized Einstein constraints} 
\label{section--6}

\subsection{Weighted H\"older regularity}

The previous section has provided us with a large class of solutions to 
Einstein's constraint equations. Our next objective is to analyze their asymptotic properties and, in the present section, we begin with the linearized version of the Einstein constraints 
  and their dual version. We consider first the adjoint Hamiltonian equation linearized around the Euclidean metric $\delta$, that is,  $d\Hcal^*_{(\delta,0)} [u,0]$, 
which is essentially the Hessian operator.
We are interested in regularity and asymptotic decay properties at the sub/super-harmonic levels.
For the convenience of the reader, the most technical proofs in this section 
  are postponed to Appendix~\ref{append-sec4}.

The reader is referred to Bartnik~\cite{Bartnik} and Choquet-Bruhat and Christodoulou~\cite{ChoquetC} for standard material. For instance, recall the following continuous embedding property for weighted Sobolev and H\"older spaces: 
$C^{l,\alpha}_{\theta  + \eps}(\RR^3) \subset W^{l,m}_\theta (\RR^3)\subset C^{k,\alpha}_{\theta}(\RR^3)$, 
valid for all $\theta >0$, $m>1$, $l-k-\alpha > n/m$, and $\eps \in (0,1)$. 
In particular, a function $f \in W^{l, \oldq}_{\theta}(\RR^3)$ with $l > 3/q$ enjoys the pointwise decay $f = \Obig(r^{-\theta})$ when $r \to +\infty$. The following statement is standard.

\begin{proposition}[Weighted H\"older elliptic regularity. I]
\label{propo.classcialregular}
The Laplace operator $\Delta: C^{k + 2, \alpha}_\theta (\RR^3) \to C^{k, \alpha}_{\theta  + 2}(\RR^3)$ is an isomorphism for all 
$\theta \in (0,1)$ and $\alpha \in (0,1)$ and $k \geq 0$. 
\end{proposition}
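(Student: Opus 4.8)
## Proof plan for Proposition~\ref{propo.classcialregular}

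The plan is to reduce the claim to the classical isomorphism theorem for the Laplacian between weighted Sobolev spaces on $\RR^3$ and then transfer to the weighted Hölder scale using the embeddings recalled just above the statement. Recall that Bartnik's theory (see \cite{Bartnik}, and also \cite{ChoquetC}) gives that $\Delta : W^{k+2,m}_{\delta}(\RR^3) \to W^{k,m}_{\delta+2}(\RR^3)$ is an isomorphism whenever the weight $\delta$ avoids the exceptional (indicial) values, which in dimension $3$ are the nonnegative integers and their reflections; in our decay convention the admissible range is precisely $\delta \in (0,1)$, i.e. decay strictly between $r^{-0}$ and $r^{-1}$ but not hitting either harmonic threshold. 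Since we only claim the result for $\theta \in (0,1)$, we are safely inside the non-exceptional window.

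First I would establish surjectivity. Given $f \in C^{k,\alpha}_{\theta+2}(\RR^3)$, pick $m>1$ large and $\eps \in (0,1)$ small so that the chain $C^{k,\alpha}_{\theta+2}(\RR^3)\subset W^{k,m}_{\theta+2-\eps}(\RR^3)$ holds (this is the first inclusion recalled before the statement, applied with weight slightly decreased). Since $\theta \in (0,1)$ we may also arrange $\theta - \eps/2 \in (0,1)$, so $\theta+2-\eps$ is still in the non-exceptional range; the weighted Sobolev theory then produces $u \in W^{k+2,m}_{\theta-\eps/2}(\RR^3)$ with $\Delta u = f$. Bumping the Sobolev regularity once more via interior elliptic estimates and feeding it back into the embedding $W^{k+2,m}_{\theta'}(\RR^3)\subset C^{k+1,\alpha}_{\theta}(\RR^3)$ (valid for $l-k-\alpha>n/m$, guaranteed by taking $m$ large) upgrades $u$ to $C^{k+1,\alpha}_\theta$; a final application of interior Schauder estimates on balls $B(x,r(x)/2)$, rescaled and summed against the weight $r^\theta$, yields the missing derivative and the quantitative bound $\|u\|_{C^{k+2,\alpha}_\theta}\lesssim \|f\|_{C^{k,\alpha}_{\theta+2}}$. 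This simultaneously gives continuity of the inverse.

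For injectivity, suppose $u \in C^{k+2,\alpha}_\theta(\RR^3)$ satisfies $\Delta u = 0$. Then $u$ is a harmonic function on $\RR^3$ with $u = \Obig(r^{-\theta})$ and $\theta>0$, so $u \to 0$ at infinity; by the maximum principle (or Liouville for harmonic functions vanishing at infinity) $u \equiv 0$. Boundedness of $\Delta$ itself on these spaces is immediate from the definition of the weighted Hölder norms, since $\Delta$ lowers the weight exponent by exactly $2$ and costs two derivatives. Combining boundedness, injectivity, surjectivity, and the quantitative inverse bound gives the isomorphism.

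The one point requiring a little care — and the only place the hypothesis $\theta \in (0,1)$ is genuinely used rather than for convenience — is checking that all the intermediate weights produced by shifting by $\eps$ stay off the exceptional set $\{0, 1\}$ (equivalently, that no nontrivial harmonic polynomial or inverse-power harmonic obstructs solvability). This is routine here because $(0,1)$ is an open interval strictly between two consecutive indicial roots, so a sufficiently small $\eps$ always works; no genuine obstacle arises. The weighted Schauder bootstrap that converts the Sobolev solution into a Hölder solution with the sharp constant is the most computational step, but it is entirely standard (rescaled interior Schauder estimates, as in \cite{Bartnik}), so I would simply cite it rather than reproduce it.
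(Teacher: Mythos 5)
The paper offers no proof of this proposition: it is declared ``standard'' and deferred to Bartnik \cite{Bartnik} and Choquet-Bruhat--Christodoulou \cite{ChoquetC}, so there is no argument in the text to compare yours against line by line. Judged on its own, your outline follows the expected reduction (weighted Sobolev isomorphism, then embeddings, then rescaled Schauder), and the injectivity and boundedness parts are fine, but there is one genuine gap: the step where you recover the sharp decay exponent $\theta$ for the solution.

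The embedding $C^{k,\alpha}_{\theta+2}\subset W^{k,m}_{\theta+2-\eps}$ necessarily loses an $\eps$ (at equal weights the borderline weighted $L^m$ integral diverges logarithmically), so the Sobolev solution you produce lies in $W^{k+2,m}_{\theta-\eps}$ and, after the reverse embedding --- which in the form recalled in the paper, $W^{l,m}_{\theta'}\subset C^{k,\alpha}_{\theta'}$, keeps the \emph{same} weight on both sides --- only in $C^{k+1,\alpha}_{\theta-\eps}$, not $C^{k+1,\alpha}_{\theta}$. The final step you invoke, interior Schauder estimates on $B(x,r(x)/2)$ rescaled and weighted, cannot close this gap: such estimates bound $r(x)^{\theta+i}|\del^i u(x)|$ by $\sup_{B(x,r(x))}r^{\theta}|u|$ plus the weighted norm of $f$ (this is exactly the structure of Proposition~\ref{propo.laplace - holder}, whose hypothesis is $w\in C^0_\theta$); they upgrade regularity at a \emph{given} decay rate but never improve the zeroth-order decay rate itself. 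To establish $\sup r^{\theta}|u|<\infty$ you need a separate ingredient: either the direct estimate on the Newtonian potential $u(x)=\tfrac{1}{4\pi}\int f(y)|x-y|^{-1}\,dy$ obtained by splitting the integral into the regions $|y|\le|x|/2$, $|x|/2\le|y|\le 2|x|$, $|y|\ge 2|x|$ (the computation carried out in Proposition~\ref{proposition behavior at infinity k} for the super-harmonic range), or a barrier argument using that $-\Delta(r^{-\theta})=\theta(1-\theta)\,r^{-\theta-2}>0$ precisely when $\theta\in(0,1)$ (the device used in the proof of Proposition~\ref{laplaceplus}). That is also where the hypothesis $\theta\in(0,1)$ is really consumed --- not merely, as you suggest, in keeping the intermediate Sobolev weights off the exceptional set.
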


More generally, using for instance Douglis--Nirenberg's regularity theory~\cite{DouglisNirenberg} we have the following decay property. 

\begin{proposition}[Weighted H\"older elliptic regularity. II]
\label{propo.laplace - holder}
Let $k \geq 0$, $\theta >0$, and $\alpha \in (0,1)$. 
If $w\in C^0_\theta (\RR^3)$ and $ - \Delta w + fw \in C^{k,\alpha}_{\theta  + 2} (\RR^3)$ for some $f\in C^{k,\alpha}_2 (\RR^3)$, then $w \in C^{k + 2,\alpha}_\theta (\RR^3)$ and
$
\| w\|_{C^{k + 2, \alpha}_\theta(\RR^3)} 
\lesssim 
\| w\|_{C^{0}_\theta(\RR^3)} + \| \Delta w  -  f w \|_{C^{k,\alpha}_{\theta + 2}(\RR^3)}. 
$ 
\end{proposition}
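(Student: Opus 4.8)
The plan is to move the zeroth-order term $fw$ into the source and to run a rescaling-and-bootstrap argument, parallel to the one used in the proof of Proposition~\ref{prop. estimate 4 order}. Write $F \coloneqq -\Delta w + fw$, which belongs to $C^{k,\alpha}_{\theta+2}(\RR^3)$ by hypothesis. First I would fix a large radius $R \geq 2$ and rescale by $w_R(y) \coloneqq w(Ry)$ on the fixed annulus $A \coloneqq \{1/2 < |y| < 2\}$; then $w_R$ solves the uniformly elliptic scalar equation $-\Delta w_R + c_R\,w_R = F_R$ on $A$, where $c_R(y) \coloneqq R^2 f(Ry)$ and $F_R(y) \coloneqq R^2 F(Ry)$. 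Because $f \in C^{k,\alpha}_2$, one has $\|c_R\|_{C^{k,\alpha}(A)} \lesssim K$, with $K \coloneqq \|f\|_{C^{k,\alpha}_2(\RR^3)}$ and a constant independent of $R$; likewise $\|F_R\|_{C^{k,\alpha}(A)} \lesssim R^{-\theta}\,\|F\|_{C^{k,\alpha}_{\theta+2}(\RR^3)}$ and $\|w_R\|_{C^0(A)} \lesssim R^{-\theta}\,\|w\|_{C^0_\theta(\RR^3)}$. These are just the scale-invariant reformulations of the weighted norms with the weights read off from the subscripts.

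The core of the argument is the interior estimate $\|w_R\|_{C^{k+2,\alpha}(A')} \lesssim \|w_R\|_{C^0(A)} + \|F_R\|_{C^{k,\alpha}(A)}$ on a slightly smaller annulus $A'$, with the implied constant depending only on $k$, $\alpha$, and $K$. This is the only delicate point: a priori $w_R$ is merely continuous, so $c_R w_R$ is merely bounded and H\"older (Schauder) theory cannot be applied directly. I would first apply interior $L^p$ (Calder\'on--Zygmund) estimates to $-\Delta w_R = F_R - c_R w_R \in L^\infty \subset L^p$ to obtain $w_R \in W^{2,p}_{\mathrm{loc}}$ for every $p < \infty$, hence $w_R \in C^{1,\beta}_{\mathrm{loc}}$ for every $\beta \in (0,1)$ by Sobolev embedding; then $F_R - c_R w_R \in C^{0,\alpha}$, so interior Schauder gives $w_R \in C^{2,\alpha}_{\mathrm{loc}}$; and when $k \geq 1$ one iterates, noting that at the $j$-th step $c_R \in C^{j,\alpha}$ and $w_R \in C^{j+1,\alpha}$ force $c_R w_R \in C^{j,\alpha}$, hence $w_R \in C^{j+2,\alpha}$, until $j = k$. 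Equivalently, one may invoke the Douglis--Nirenberg interior estimate of \cite{DouglisNirenberg} (that is, Theorem~\ref{propo.ellipticity}) applied to the first-order reduction of the scalar equation, exactly as at the end of Section~\ref{section Douglis-Nirenberg}. At every stage the constants depend only on ellipticity and on $K$, not on $R$.

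Finally I would unscale: by the chain rule, the interior estimate for $w_R$ on $A'$ is equivalent to a bound on the weighted $C^{k+2,\alpha}_\theta$-norm of $w$ restricted to the annulus $\{R/2 < |x| < 2R\}$ by $\|w\|_{C^0_\theta} + \|F\|_{C^{k,\alpha}_{\theta+2}}$ restricted to $\{R/4 < |x| < 4R\}$, uniformly in $R \geq 2$; this is precisely the scale-invariance recorded in the first step. Taking the supremum over all dyadic radii, and handling the remaining bounded region $\{|x| \leq 4\}$ by standard interior elliptic regularity (there $f$ is bounded and H\"older and $\Delta w = fw - F$ is H\"older), one obtains $w \in C^{k+2,\alpha}_\theta(\RR^3)$ together with the claimed estimate $\|w\|_{C^{k+2,\alpha}_\theta(\RR^3)} \lesssim \|w\|_{C^0_\theta(\RR^3)} + \|\Delta w - fw\|_{C^{k,\alpha}_{\theta+2}(\RR^3)}$. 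The main obstacle is the interior bootstrap of the middle step: one must make sure the estimate is genuinely a priori, starting only from $w \in C^0$, and that no stage of the iteration introduces a dependence of the constant on $R$ --- which is exactly what the scale-invariant form of the weighted H\"older norms guarantees.
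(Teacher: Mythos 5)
Your argument is correct and is essentially the paper's own proof: the paper simply applies the Douglis--Nirenberg interior estimate of Theorem~\ref{propo.ellipticity} with $t=k+2$, $s=-k$, and $d(x)=r(x)/3$, which packages exactly the scale-invariant interior estimate you derive by hand on rescaled annuli (an equivalence you yourself point out). Your additional $L^p$-to-Schauder bootstrap, justifying the qualitative regularity needed before the a priori estimate can be invoked starting from $w\in C^0_\theta$ only, is a point the paper leaves implicit.
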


\begin{proof} Applying Theorem~\ref{propo.ellipticity} with $t = k + 2$ and $s = - k$ and $d(x) = r(x)/3$, we find  
$$
\aligned
& \sum_{0 \leq i \leq k + 2}d(x)^i|\del^iw(x)| + d(x)^{k + 2 + \alpha}[\del^{k + 2} w]_{\alpha,B_{d(x)/2}(x)}
\\
& \lesssim
     \sup_{B_{3d(x)/4}(x)} |w(x)| + \sum_{0 \leq i \leq k}\sup_{B_{3d(x)/4}(x)}d(x)^{2 + i} \big|\del^i\big(\Delta w  -  fw\big)\big| + d(x)^{k + \alpha}[(\Delta w  -  fw)]_{\alpha,B_{3d(x)/4}(x)}, 
\endaligned
$$
and the desired conclusion follows from the definitions.
\end{proof}

Recall that, in view of our notation in Remark \ref{rem22}, the function $r$ in $\RR^3$ does coincide with the standard Euclidean distance from the origin ---except in the unit ball centered at the origin--- while $r$ is bounded below by a positive constant (actually, $r \geq e^{-1/2}$). 

\begin{proposition}[Weighted H\"older elliptic regularity. III]
\label{laplaceplus}

1. Given $k \geq 0$, $\alpha \in (0,1)$, and $\theta \in (0,2)$, and for any function $w$  
one has 
\bel{laplace +}
\| w \| _{C^{k + 2, \alpha}_\theta(\RR^3)} \lesssim \|\Delta w - 2 \, r^{-2} w\|_{C^{k,\alpha}_{\theta + 2}(\RR^3)}, 
\ee
 (which is relevant only when the right-hand side is finite). 
   In particular, $\|r^{-2} w\|_{C^{k + 2,\alpha}_{\theta + 2}(\RR^3)}$ is then finite. 

2. Given $\alpha \in (0,1)$ and $\theta \in [1, +\infty)$, for any function $w$ 
one has
\bel{laplace + 2}
\| r^{-2} w \|_{L^1(\RR^3)} 
\lesssim \|\Delta w - 2 \, r^{-2} w\|_{C^{0,\alpha}_{\theta + 2}(\RR^3)} + \|\Delta w - 2 \, r^{-2} w\|_{L^1(\RR^3)},
\ee
provided the right-hand side is finite. 
\end{proposition}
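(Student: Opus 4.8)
The statement has two parts; part~1 (the range $\theta\in(0,2)$) is the non-resonant case, and the plan there is, as with the other weighted Poincar\'e-type inequalities in this paper (cf.\ the proof of Lemma~\ref{prop. weighted Poincare-curved}), to combine the H\"older regularity of Proposition~\ref{propo.laplace - holder} with a contradiction-and-compactness argument that absorbs the lower-order term $\|w\|_{C^0_\theta}$, using that $\Delta-2r^{-2}$ has no homogeneous solution of decay rate $r^{-\theta}$ with $\theta\in(0,2)$ on $\RR^3$ (checked mode by mode: the indicial roots are $1,-2$ for the radial mode and lie outside $[-2,1]$ otherwise). The substantive part is~2, an $L^1$ bound at the resonant level $\theta\ge1$, and the plan is to prove it via Kato's inequality, which turns the non sign-definite equation into a globally integrable differential inequality.

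Set $f:=\Delta w-2r^{-2}w$ and assume, as holds in all our applications, that $w$ decays at infinity, so that part~1 applies to it. Two facts would be used. First, since $\theta\ge1$ one has the embedding $C^{0,\alpha}_{\theta+2}(\RR^3)\subset C^{0,\alpha}_3(\RR^3)$, so part~1 with $k=0$ and $\theta=1$ gives
\[
\|w\|_{C^2_1(\RR^3)}\ \lesssim\ \|f\|_{C^{0,\alpha}_3(\RR^3)}\ \lesssim\ \|f\|_{C^{0,\alpha}_{\theta+2}(\RR^3)},
\]
hence in particular $|\nabla w(x)|\lesssim\|f\|_{C^{0,\alpha}_{\theta+2}(\RR^3)}\,|x|^{-2}$ for $|x|\ge1$. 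Second, Kato's inequality $\Delta|w|\ge\sgn(w)\,\Delta w$ (in the distributional sense) together with $\sgn(w)\,w=|w|$ and $r^{-2}>0$ gives
\[
\Delta|w|-2r^{-2}|w|\ \ge\ \sgn(w)\big(\Delta w-2r^{-2}w\big)\ =\ \sgn(w)\,f\ \ge\ -\,|f|\qquad\text{on }\RR^3.
\]
Integrating over $B_R$ with $R>1$ — rigorously, by replacing $|w|$ with $(w^2+\eps^2)^{1/2}$, applying the divergence theorem, and letting $\eps\to0$ with monotone convergence in the $r^{-2}|w|$ term — and using $r=|x|$ on $S_R=\partial B_R$, one obtains
\[
\int_{B_R}2r^{-2}|w|\,dV_e\ \le\ \int_{S_R}\partial_r|w|\,d\sigma+\int_{B_R}|f|\,dV_e\ \le\ \int_{S_R}|\nabla w|\,d\sigma+\|f\|_{L^1(\RR^3)}.
\]
By the gradient bound, $\int_{S_R}|\nabla w|\,d\sigma\lesssim\|f\|_{C^{0,\alpha}_{\theta+2}(\RR^3)}\,R^{-2}\cdot 4\pi R^2$, uniformly in $R$; letting $R\to\infty$ and using monotone convergence on the left yields $\|r^{-2}w\|_{L^1(\RR^3)}\lesssim\|f\|_{C^{0,\alpha}_{\theta+2}(\RR^3)}+\|f\|_{L^1(\RR^3)}$, as claimed. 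The neighbourhood of the origin, where the radius function differs from $|x|$, is harmless since there $r^{-2}$ is a bounded smooth weight.

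The step I expect to be the genuine obstacle is the control of the boundary term $\int_{S_R}\partial_r|w|\,d\sigma$: this is precisely where part~1 of the proposition is invoked, and where the hypothesis $\theta\ge1$ is used, so that $\|f\|_{C^{0,\alpha}_3}$ is finite and bounded by the right-hand side. It is also the structural reason the conclusion is only an $L^1$ estimate rather than a pointwise-decay improvement as in part~1: since $r^{-2}$ is itself a homogeneous solution of $\Delta-2r^{-2}$, the $O(r^{-1})$ decay of $w$ cannot be sharpened pointwise, and the integral (Kato) argument is what remains available. A minor caveat, already flagged, is that part~1 — like the other regularity statements here — is to be read as applying to $w$ having some a priori decay at infinity, which one may either add to the hypotheses or verify directly in each application.
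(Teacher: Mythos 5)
Your Part 2 is essentially the paper's proof: Kato's inequality gives $\Delta|w| - \sgn(w)\big(\Delta w - 2r^{-2}w\big) \geq 2r^{-2}|w|$, integration over $B_R$ and the divergence theorem leave the boundary flux $\int_{S_R}\del_\nu|w|\,d\sigma$, and this is exactly where the paper too invokes \eqref{laplace +} (the gradient decay $|\nabla w|\lesssim r^{-2}$ from Part 1 with $\theta=1$) to bound that flux uniformly in $R$ before letting $R\to+\infty$. Your regularization by $(w^2+\eps^2)^{1/2}$ and the embedding $C^{0,\alpha}_{\theta+2}(\RR^3)\subset C^{0,\alpha}_{3}(\RR^3)$ for $\theta\geq1$ are details the paper leaves implicit, and they are correct.

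Where you genuinely diverge is Part 1, and there your argument is only a plan. The paper does not use compactness: it first derives $-\Delta|w|\leq|\Delta w-2r^{-2}w|$ from Kato's inequality, controls $|w|$ at a fixed point by comparison with the solution of the Poisson problem with source $|\Delta w-2r^{-2}w|$ (via Proposition~\ref{propo.classcialregular} and the maximum principle), and then runs a barrier argument with the explicit comparison functions $\pm K\,r^{-\theta}\,\|\Delta w-2r^{-2}w\|_{C^{0}_{\theta+2}(\RR^3)}$; the key point is that $-\Delta r^{-\theta}+2r^{-\theta-2}=(2+\theta-\theta^2)\,r^{-\theta-2}>0$ precisely for $\theta\in(-1,2)$, so $r^{-\theta}$ is a strict supersolution of the operator --- this is the constructive form of your indicial-root observation, and it is where the hypothesis $\theta<2$ enters. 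Higher regularity then follows from Proposition~\ref{propo.laplace - holder}, as you say. Your contradiction-and-compactness route is plausible (the indicial computation is right, and the paper uses such arguments elsewhere, e.g.\ in Lemma~\ref{prop. weighted Poincare-curved}), but on the noncompact $\RR^3$ it requires a rescaling/Liouville step ruling out concentration at infinity which you have not written; since Part 2 leans on Part 1 for the boundary flux, your write-up is not self-contained as it stands, whereas the paper's barrier argument closes Part 1 with less machinery. Your caveat that $w$ must be assumed to decay a priori is well taken --- the paper's comparison at infinity needs it too.
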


\begin{proof} 1. It is sufficient to check that 
$\| w \|_{C^0_\theta(\RR^3)} \lesssim \|\Delta w - 2 r^{-2} w\|_{C^{0}_{\theta + 2}(\RR^3)}$. 
From the definitions, we have
\bel{eq:6.3:1a}
- r^{-\theta - 2} \|\Delta w - 2 r^{-2} w\|_{C^{0}_{\theta + 2}(\RR^3)}  
\leq  - \Delta w + {2 \over r^2} w 
\leq 
r^{-\theta - 2} \|\Delta w - 2 r^{-2} w\|_{C^{0}_{\theta + 2}(\RR^3)}.
\ee
Next, we will control $w(1)$ as follows. Thanks to Kato inequality $\Delta |w| \geq \sgn(w) \Delta w$, we find 
$$
\aligned
- \Delta |w| 
 \leq \sgn(w) ( - \Delta w + 2 r^{-2} w ) - 2 r^{-2} |w| 
  \leq | \Delta w - 2 r^{-2} w |. 
\endaligned
$$ 
Let $v$ be the (unique) solution  (chosen to tend to zero at infinity)
to the Poisson problem $- \Delta v = E$ 
with a source term $E = | \Delta w - 2 r^{-2} w |$ (see \eqref{poisson-equation} below). By the maximum principle $|w| \leq v$ and, on the other hand, it follows from Proposition~\ref{propo.classcialregular} 
that 
$
\| v \|_{C^{2, \alpha}_{\min(\theta, 1 - \eps)} (\RR^3)}  
\lesssim \| \Delta w - 2 r^{-2} w \|_{C^{0, \alpha}_{\theta +2} (\RR^3)}, 
$
  in which $\eps \in (0,1)$ is introduced to guarantee that $\min(\theta, 1 - \eps)<1$.
In particular, we obtain 
$
|w(1)| \leq v(1) \lesssim  \| \Delta w - 2 r^{-2} w \|_{C^{0, \alpha}_{\theta +2} (\RR^3)}.
$
Combining this with \eqref{eq:6.3:1a}, we see that there exists a constant $K>0$ independent of $w$ such that for all $r \ge 1$
$$
\aligned
& - \Delta \Big(
w  - K r^{-\theta} \|\Delta w - 2 r^{-2} w\|_{C^{0}_{\theta + 2}(\RR^3)} \Big) 
+ {2 \over r^2} \, \Big( 
w  - K r^{-\theta}  \|\Delta w - 2 r^{-2} w\|_{C^{0}_{\theta + 2}(\RR^3)} \Big) 
\leq 0,
\\
&
 - \Delta \Big(w + K  r^{-\theta} \|\Delta w - 2 r^{-2} w\|_{C^{0}_{\theta + 2}(\RR^3)}  \Big) 
+ {2 \over r^2} \Big(w + K  r^{-\theta}  \|\Delta w - 2 r^{-2} w\|_{C^{0}_{\theta + 2}(\RR^3)} \Big) 
\geq 0, 
\endaligned
$$
and 
$
|w(1)| \leq K \, \|\Delta w - 2 r^{-2} w\|_{C^{0}_{\theta + 2}(\RR^3)}.
$
By the maximum principle we deduce that
$
|w | \lesssim r^{-\theta} \, \|\Delta w - 2 r^{-2} w\|_{C^{0}_{\theta + 2}(\RR^3)}
$
and, thanks to Proposition~\ref{propo.laplace - holder}, the inequality \eqref{laplace +} follows. 

\vskip.16cm

2. In order to derive \eqref{laplace + 2}, we use again Kato inequality  
$\Delta |w| - \sgn(w) ( \Delta w - 2 r^{-2} w) \geq 2 r^{-2} |w|$, which we integrate on an arbitrary ball $B_R$: 
$$
\aligned
\int_{B_R} r^{-2}|w| \, dx & \lesssim \int_{B_R} \Delta |w| \, dy + \int_{B_R} |\Delta w - 2 \, r^{-2} w| \, dy
 \lesssim \int_{\del B_R} \Big| \del_i (|w|) {x_i \over R} \Big| \, d\omega + \int_{B_R} |\Delta w - 2 \, r^{-2} w| \, dy.
\endaligned
$$
In view of \eqref{laplace +},  by writing 
$
\| r^{-2} w \|_{L^1(\RR^3)} \lesssim \|\Delta w - 2 \, r^{-2} w\|_{C^{0,\alpha}_{\theta + 2}(\RR^3)} + \|\Delta w - 2 \, r^{-2} w\|_{L^1(\RR^3)}$, 
the proof is completed. 
\end{proof}


\subsection{Harmonic and super-harmonic decay} 

Given a source function $E \in L^1(\RR^3)\cap L^p(\RR^3)$ with $p>3/2$, we consider the (continuous) solution $w: \RR^3 \to \RR$ to the Poisson problem 
\bel{poisson-equation}
\aligned
& -  \Delta w = E \quad \text{in } \RR^3,
\qquad
 \lim_{|x| \to + \infty} w(x) =  0.
\endaligned
\ee  
The proofs of the following statements are postponed to Appendix~\ref{append-sec4}. 
The second proposition considers solutions with faster decay and therein it is convenient to specify the value of $\sgn(0)=0$ of the standard sign function.

\begin{proposition}[Solutions with harmonic decay]
\label{proposition behavior at infinity} 
Given $\alpha\in (0,1)$ and any function $E \in L^1(\RR^3)\cap C^{0,\alpha}_3(\RR^3)$, 
the solution $w$ to the problem \eqref{poisson-equation} belongs to $C^{2,\alpha}_1(\RR^3)$ 
and, moreover, satisfies 
\bel{decayPoisson}
\aligned
\lim_{|x| \to + \infty} |x| \, w(x) 
& = {1 \over 4 \pi} \int_{\RR^3} E \, dy =: \cbf(E), 
\qquad 
\lim_{|x| \to + \infty} |x|^2| \nabla w(x)| = |\cbf(E)|.
\endaligned
\ee 
\end{proposition}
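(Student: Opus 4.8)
The strategy is to exploit the explicit Newtonian potential representation of the solution and then upgrade the pointwise asymptotics to the claimed weighted H\"older regularity using Proposition~\ref{propo.classcialregular}. First I would write $w(x) = \frac{1}{4\pi}\int_{\RR^3} \frac{E(y)}{|x-y|}\,dy$, which is the unique solution of \eqref{poisson-equation} decaying at infinity; that this is well-defined and continuous follows from $E \in L^1 \cap C^{0,\alpha}_3 \subset L^1 \cap L^p$ for $p > 3/2$ (using the embedding $C^{0,\alpha}_3(\RR^3) \subset L^p$ for $p$ slightly below $3$). Since $C^{0,\alpha}_3(\RR^3) \subset C^{0,\alpha}_{1+2}(\RR^3)$, Proposition~\ref{propo.classcialregular} applied with $\theta = 1$, $k = 0$ immediately gives $w \in C^{2,\alpha}_1(\RR^3)$, together with the norm bound $\|w\|_{C^{2,\alpha}_1} \lesssim \|E\|_{C^{0,\alpha}_3}$; so the regularity half of the statement is essentially free, and the real content is the precise limit \eqref{decayPoisson}.

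For the limit, I would split $w(x) = \frac{1}{4\pi|x|}\int_{\RR^3} E(y)\,dy + \frac{1}{4\pi}\int_{\RR^3}\big(\frac{1}{|x-y|} - \frac{1}{|x|}\big)E(y)\,dy$ and show the second term is $o(|x|^{-1})$. To do this, fix a large radius $\rho$ and split the $y$-integral over $B_\rho$ and its complement. On $B_\rho$, for $|x| \geq 2\rho$ one has the elementary bound $\big|\frac{1}{|x-y|} - \frac{1}{|x|}\big| \lesssim \frac{|y|}{|x|^2} \lesssim \frac{\rho}{|x|^2}$, so that piece contributes $O(\rho\,\|E\|_{L^1}/|x|^2) = o(|x|^{-1})$ as $|x|\to\infty$ for fixed $\rho$. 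On the complement $\RR^3 \setminus B_\rho$, I would bound $|x|$ times the integrand crudely by $\big(\frac{|x|}{|x-y|} + 1\big)|E(y)|$ and use the decay $|E(y)| \lesssim |y|^{-3}\|E\|_{C^0_3}$ together with the standard estimate $\int_{\RR^3 \setminus B_\rho} \frac{|E(y)|}{|x-y|}\,dy$; more simply, since $\int_{\RR^3 \setminus B_\rho}|E| \to 0$ as $\rho \to \infty$ and $|x|\int_{\RR^3\setminus B_\rho}\frac{|E(y)|}{|x-y|}\,dy$ stays bounded uniformly in $x$ (by splitting further into $|x-y| \geq |x|/2$ and $|x-y| < |x|/2$, the latter region having measure controlled and $E$ decaying), one gets a quantity that is small with $\rho$, uniformly in large $|x|$. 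Letting $|x|\to\infty$ first and then $\rho \to \infty$ yields $\lim_{|x|\to\infty}|x|\,w(x) = \frac{1}{4\pi}\int_{\RR^3} E\,dy = \cbf(E)$.

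For the gradient, differentiate under the integral: $\nabla w(x) = \frac{1}{4\pi}\int_{\RR^3} \frac{(y-x)}{|x-y|^3}E(y)\,dy$, legitimate since $E \in C^{0,\alpha}_3$ ensures the singular integral converges and is continuous (this is again covered by the $C^{2,\alpha}_1$ regularity from Proposition~\ref{propo.classcialregular}). Then $|x|^2\nabla w(x) = \frac{1}{4\pi}\int_{\RR^3} \frac{|x|^2(y-x)}{|x-y|^3}E(y)\,dy$, and the same near/far splitting shows the main contribution comes from the far field where $\frac{|x|^2(y-x)}{|x-y|^3} \to -\frac{x}{|x|}$ pointwise in $y$ as $|x|\to\infty$ (for fixed $y$), with a dominated-convergence argument on $B_\rho$ (uniform integrable bound $\lesssim |E(y)|$ for $|x| \geq 2\rho$) and the tail estimated as before; this gives $|x|^2\nabla w(x) \to -\frac{x}{|x|}\cdot\frac{1}{4\pi}\int E\,dy$, hence $\lim |x|^2|\nabla w(x)| = |\cbf(E)|$.

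\textbf{Main obstacle.} The only genuinely delicate point is making the far-field estimate uniform in $|x|$: one must verify that $|x|\int_{\RR^3 \setminus B_\rho}\frac{|E(y)|}{|x-y|}\,dy$ (and its gradient analogue) is bounded by a quantity tending to $0$ as $\rho \to \infty$ independently of large $|x|$. This is handled by the sub-splitting $\{|x-y| \geq |x|/2\}$, where the factor $|x|/|x-y| \leq 2$ and the bound is just $2\int_{\RR^3\setminus B_\rho}|E| \to 0$, versus $\{|x-y| < |x|/2\}$, where necessarily $|y| \geq |x|/2 \geq \rho$ so $|E(y)| \lesssim |y|^{-3} \lesssim \rho^{-1}|y|^{-2}$ and $\int_{|x-y|<|x|/2}\frac{dy}{|x-y|} \lesssim |x|^2$, giving a contribution $\lesssim |x|\cdot |x|^{-2}\cdot \rho^{-1}\cdot(\sup_{|y|\geq |x|/2}|y|^2|E(y)|\cdot |x|^2)$— one must be slightly careful here, using instead the bound $|E(y)| \lesssim |x|^{-3}$ on this region to get $\lesssim |x|\cdot|x|^{-3}\cdot|x|^2 = O(1)$ together with smallness in $\rho$ from $\sup_{|y|\geq\rho}|y|^3|E(y)| \to 0$. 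Once this uniformity is in place the two limits follow by the standard interchange-of-limits argument.
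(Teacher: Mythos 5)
Your overall strategy coincides with the paper's: represent $w$ by the Newtonian potential, split the integration domain into a large ball and its complement, and sub-split the far region according to whether $|x-y|$ is comparable to $|x|$ or not. Two points, however, need repair.

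First, the regularity. Proposition~\ref{propo.classcialregular} asserts the isomorphism $\Delta: C^{k+2,\alpha}_\theta \to C^{k,\alpha}_{\theta+2}$ only for $\theta \in (0,1)$; $\theta=1$ is precisely the excluded critical exponent, so you cannot invoke it "with $\theta=1$" to get $w \in C^{2,\alpha}_1$. The correct route (and the one the paper takes) is to first obtain the pointwise bound $|w(x)| \lesssim |x|^{-1}$ directly from the potential representation (using $E \in L^1 \cap C^0_3$), i.e.\ $w \in C^0_1$, and then apply the a priori regularity statement of Proposition~\ref{propo.laplace - holder}, which is valid for all $\theta>0$ once membership in $C^0_\theta$ is known.

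Second, the tail estimate near the singularity of the kernel. With your fixed sub-splitting $\{|x-y| < |x|/2\}$ you correctly compute that the crude bound $|E(y)| \lesssim |y|^{-3}$ only yields an $O(1)$ contribution, and you rescue smallness by asserting $\sup_{|y|\ge \rho}|y|^3|E(y)| \to 0$. This is \emph{not} a consequence of $E \in C^{0,\alpha}_3$ alone (boundedness of $r^3|E|$ does not imply decay of $r^3|E|$); it does follow from $L^1 \cap C^{0,\alpha}_3$, but only via a separate argument combining integrability with the weighted H\"older control at scale $r(x)$, which you would need to supply. The paper sidesteps this entirely by taking the sub-splitting radius to be $\sqrt{\eps}\,|x|$ rather than $|x|/2$: then $\int_{|x-y|\le\sqrt{\eps}|x|}|x-y|^{-1}\,dy \lesssim \eps |x|^2$, and the crude bound $|E|\lesssim |x|^{-3}$ on that region already gives a contribution $\lesssim \eps$, using nothing beyond $\sup r^3|E| < \infty$. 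I recommend adopting that splitting; it is both shorter and requires weaker input. With these two corrections your argument for both limits in \eqref{decayPoisson}, including the dominated-convergence treatment of the gradient, goes through.
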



\begin{proposition}[Solutions with super-harmonic decay]
\label{proposition behavior at infinity k} 
Given an integer $k \geq 0$, a H\"older exponent $\alpha \in (0,1)$, and a decay exponent $\theta \in [1,2)$, as well as a source function 
$E \in L^1(\RR^3)\cap C^{k,\alpha}_{\theta + 2}(\RR^3)$ with vanishing integral 
$\int_{\RR^3} E \, dy = 0$,  
the solution $w : \RR^3 \to \RR$ to the Poisson problem \eqref{poisson-equation} decays at a super-harmonic rate, in the sense that 
\bel{eq:EEhat}
\| w \|_{C^{k + 2,\alpha}_\theta(\RR^3)} 
\lesssim \| E \|_{C^{k,\alpha}_{\theta + 2}(\RR^3)}.
\ee 
Furthermore, provided 
\begin{subequations}
\bel{eq:65cond}
\lim_{r = |x| \to +\infty} r^{\theta + 2 + i} \del^i E = 0, \qquad i = 0, 1, \ldots, k, 
\ee 
one also has
\bel{eq:65decay}
\lim_{r=|x| \to +\infty} r^{\theta + i} \del^i w = 0, 
\qquad i=0, 1, \ldots, k+2. 
\ee
\end{subequations}
\end{proposition}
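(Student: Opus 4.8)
The plan is to treat this as a refinement of Proposition~\ref{proposition behavior at infinity}, exploiting the vanishing-integral hypothesis to gain one extra power of decay over the harmonic rate $1/r$. First I would establish \eqref{eq:EEhat}. Since $E \in C^{k,\alpha}_{\theta+2}(\RR^3)$ with $\theta \in [1,2)$, one has in particular $E \in C^{0,\alpha}_{\theta+2}(\RR^3) \subset C^{0,\alpha}_3(\RR^3)$, so by Proposition~\ref{proposition behavior at infinity} the solution $w$ of \eqref{poisson-equation} exists, is continuous, tends to $0$ at infinity, and satisfies $\lim_{|x| \to +\infty} |x|\, w(x) = \cbf(E) = (1/4\pi)\int_{\RR^3} E\, dy = 0$ by hypothesis. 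Thus $w \in C^0_\theta(\RR^3)$ once we upgrade the pointwise bound to a weighted sup bound: I would argue that $w = \Obig(r^{-\theta})$ by a direct estimate on the Newtonian potential $w(x) = (1/4\pi)\int |x-y|^{-1} E(y)\,dy$, splitting the integral over $|y| \leq |x|/2$, $|x|/2 < |y| < 2|x|$, and $|y| \geq 2|x|$, using $|E(y)| \lesssim |y|^{-\theta-2}\|E\|_{C^0_{\theta+2}}$ on each piece and, crucially, using $\int E = 0$ to write $w(x) = (1/4\pi)\int \big(|x-y|^{-1} - |x|^{-1}\big) E(y)\,dy$ so that the far region $|y| \leq |x|/2$ contributes a factor $|y|/|x|^2$ rather than $1/|x|$, which against $|y|^{-\theta-2}$ integrates to $\Obig(|x|^{-\theta})$ precisely because $\theta < 2$ (so $\int_{|y|\leq |x|/2} |y|^{1-\theta-2}\,dy \sim |x|^{1-\theta}$, giving $|x|^{-1-\theta}\cdot|x| = |x|^{-\theta}$... here I would track the bookkeeping carefully, but the mechanism is the cancellation at the monopole level). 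Once $w \in C^0_\theta(\RR^3)$, Proposition~\ref{propo.laplace - holder} (applied with $f \equiv 0$, or rather its proof/Theorem~\ref{propo.ellipticity} directly) promotes this to $\|w\|_{C^{k+2,\alpha}_\theta(\RR^3)} \lesssim \|w\|_{C^0_\theta} + \|\Delta w\|_{C^{k,\alpha}_{\theta+2}} \lesssim \|E\|_{C^{k,\alpha}_{\theta+2}(\RR^3)}$, which is \eqref{eq:EEhat}.

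For the limit statement \eqref{eq:65decay} under the stronger hypothesis \eqref{eq:65cond}, I would use the standard $\eps$-splitting device already employed in the proof of Proposition~\ref{proposition behavior at infinity}. Given $\eps > 0$, pick $R_\eps$ so large that $|E(y)| \leq \eps\, |y|^{-\theta-2}$ and $|\del^i E(y)| \leq \eps\, |y|^{-\theta-2-i}$ for all $|y| \geq R_\eps$ and $0 \leq i \leq k$; on the compact ball $B_{R_\eps}$ the function $E$ is merely bounded. For the $i=0$ case, write (again using $\int E = 0$)
\[
r^\theta w(x) = \frac{r^\theta}{4\pi}\int_{B_{R_\eps}} \Big(\frac{1}{|x-y|} - \frac{1}{|x|}\Big) E(y)\, dy + \frac{r^\theta}{4\pi}\int_{\RR^3 \setminus B_{R_\eps}} \frac{E(y)}{|x-y|}\, dy.
\]
The first integral is over a fixed compact set and its integrand is $\Obig(|y|/|x|^2)$ uniformly, so multiplied by $r^\theta = |x|^\theta$ with $\theta < 2$ it tends to $0$ as $|x| \to \infty$; the second integral is controlled, by the same $|x-y| \gtrless \sqrt{\eps}|x|$ dichotomy as in \eqref{esI2} but now with the weight $|y|^{-\theta-2}$, by $C(\sqrt{\eps} + \eps)$, uniformly in large $|x|$. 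Since $\eps$ is arbitrary, $\lim r^\theta w = 0$. For the derivatives $i = 1, \ldots, k+2$, I would differentiate under the integral sign where the kernel is smooth, or better, observe that the weighted H\"older estimate \eqref{eq:EEhat} already gives uniform boundedness of $r^{\theta+i}\del^i w$, and then apply an interpolation/compactness (Arzel\`a--Ascoli on rescaled balls $B(x, r(x)/3)$) argument: rescaling the equation on such balls, the condition \eqref{eq:65cond} says the rescaled right-hand sides tend to $0$ in $C^{k,\alpha}$, and interior Schauder estimates then force the rescaled solution's derivatives to tend to $0$, which unwinds to \eqref{eq:65decay}. Alternatively and more elementarily, one bootstraps: having shown $r^\theta w \to 0$, the estimate of Proposition~\ref{laplaceplus} or Theorem~\ref{propo.ellipticity} applied on shrinking-at-infinity balls with the decaying data \eqref{eq:65cond} yields the derivative limits one order at a time.

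The main obstacle I expect is making the monopole-cancellation estimate for \eqref{eq:EEhat} fully rigorous and uniform, specifically handling the intermediate annulus $|x|/2 < |y| < 2|x|$ where $|x-y|$ can be small: there the singularity of the Newtonian kernel meets the near-critical weight, and one must check that $\int_{|x|/2 < |y| < 2|x|} |x-y|^{-1} |y|^{-\theta-2}\, dy \lesssim |x|^{-\theta-2}\cdot|x|^2 = |x|^{-\theta}$, which holds since the $|x-y|^{-1}$ singularity is locally integrable in $\RR^3$ and contributes only an $\Obig(|x|^2)$-volume-weighted local integral — but the dependence on $\theta$ being allowed up to (not including) $2$ is what keeps the far-field term convergent, and the case $\theta = 1$ versus $\theta \in (1,2)$ might require slightly different bookkeeping (for $\theta = 1$ the far integral $\int_{|y| \geq 2|x|} |y|^{-1-\theta-2+2}\,dy$ is borderline but still fine). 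The H\"older-seminorm part of \eqref{eq:EEhat} is then purely a consequence of interior elliptic regularity via Theorem~\ref{propo.ellipticity} and needs no new idea. The vanishing-integral hypothesis is used in exactly one place — to kill the leading $1/r$ term — and everything else is the same machinery as Propositions~\ref{propo.laplace - holder}--\ref{proposition behavior at infinity}.
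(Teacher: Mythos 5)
Your proposal is correct and follows essentially the same route as the paper: the vanishing integral is used exactly once to replace the Newtonian kernel by $|x-y|^{-1}-|x|^{-1}$ (equivalently, the paper's factor $|x|^{\theta-1}(|x|-|x-y|)/|x-y|$), the resulting integral is split into near, intermediate, and far regions (the paper splits by the size of $|x-y|$ rather than of $|y|$, plus a shrinking ball $B_d(0)$, but the estimates are interchangeable), the weighted H\"older bound is then obtained from Proposition~\ref{propo.laplace - holder}, and the limits \eqref{eq:65decay} are derived via the same cutoff-at-radius-$d$ device you describe. The only blemishes are the flagged bookkeeping slips (e.g.\ $\int_{|y|\le |x|/2}|y|^{-\theta-1}\,dy\sim |x|^{2-\theta}$ in $\RR^3$, not $|x|^{1-\theta}$), which do not affect the conclusion.
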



Instead of the standard distance $|x|$ from the origin, we now make use of our definition of the function $r=r(x)$  for the Euclidean case (cf.~Remark~\ref{rem22}), and we observe that $r$ and $|x|$ coincide in the exterior of the unit ball. 
So the term $\Delta (1/r)$ (arising in the definition of $\cbf_1(E)$ below) vanishes identically in the exterior of this ball. The following statement is relevant provided $E$ has sufficiently fast decay at infinity, so that the norm in the right-hand side of \eqref{eq.66:15c} (below) is finite. 

\begin{corollary}
\label{remarkpoisson}
\begin{subequations}
Given an integer $k \geq 0$, $\theta \geq 1$, and $\alpha \in (0,1)$, let $w$ be the solution to the Poisson problem \eqref{poisson-equation} with $E \in L^1(\RR^3) \cap C^{k,\alpha}_{\theta + 2}(\RR^3)$, and assume that $(1 - \sgn(2 - \theta)) \, r \, E \in L^1(\RR^3)$. With
$\cbf(E) \coloneqq (1/(4\pi)) \int_{\RR^3} E \, dx$, 
one has   
\be
\aligned
E_1 \coloneqq - \Delta \Big( w - \cbf(E)/r \Big)  
& \in  L^1(\RR^3) \cap C^{k,\alpha}_{\theta + 2}(\RR^3),
\qquad 
\int_{\RR^3} E_1 \, dx= 0, 
\\
(1 - \sgn(2 - \theta)) \, r \, E_1 
& \in L^1(\RR^3). 
\endaligned
\ee
Consequently, from Proposition~\ref{proposition behavior at infinity k} with $p = \min(\theta, 2)$ it follows that 
\bel{eq.66:15c}
\| w - \cbf(E)/r \|_{C^{k + 2, \alpha}_p(\RR^3)} 
\lesssim
\| E_1 \|_{C^{k,\alpha}_{p+2}(\RR^3)}  
+ \|(1 - \sgn(2 - \theta)) \, r \, E_1\|_{L^1(\RR^3)}.
\ee
\end{subequations}
\end{corollary}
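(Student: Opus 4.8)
The plan is to verify the three claimed properties of $E_1$ one at a time and then invoke Proposition~\ref{proposition behavior at infinity k}. First I would observe that since $r$ agrees with $|x|$ outside the unit ball, the function $\Delta(1/r)$ is supported in the closed unit ball and is smooth (by the definition of $r$ in Remark~\ref{rem22}); hence $\cbf(E)\,\Delta(1/r)$ is a smooth, compactly supported function, which trivially lies in $L^1(\RR^3)\cap C^{k,\alpha}_{\theta+2}(\RR^3)$ and in any weighted space we like, and in particular is integrable against the weight $r$. Writing $E_1 = E - \cbf(E)\,\Delta(1/r)$ — wait, more precisely $-\Delta(w - \cbf(E)/r) = E + \cbf(E)\,\Delta(1/r)$ since $-\Delta w = E$ — the membership $E_1 \in L^1(\RR^3)\cap C^{k,\alpha}_{\theta+2}(\RR^3)$ follows by adding the hypothesis on $E$ to this harmless correction term, and likewise $(1-\sgn(2-\theta))\,r\,E_1 \in L^1(\RR^3)$ follows from the corresponding hypothesis $(1-\sgn(2-\theta))\,r\,E \in L^1(\RR^3)$ plus the compact support of the correction.

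Next I would check the vanishing integral $\int_{\RR^3} E_1\,dx = 0$. The natural way is to integrate $-\Delta(w - \cbf(E)/r)$ over a large ball $B_R$, convert to a boundary flux via the divergence theorem, and let $R\to\infty$. By Proposition~\ref{proposition behavior at infinity}, $w$ has harmonic decay $w \sim \cbf(E)/|x|$ with $|x|^2|\nabla w| \to |\cbf(E)|$, and the explicit $\cbf(E)/r$ term has the matching gradient asymptotics $|x|^2 \nabla(1/|x|) \to$ the unit inward radial vector; so the outward fluxes of $\nabla w$ and of $\nabla(\cbf(E)/r)$ across $S_R$ both converge to $-4\pi \cbf(E)$ (consistent with $\int E = 4\pi \cbf(E)$), and their difference tends to $0$. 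Hence $\int_{\RR^3} E_1\,dx = -\lim_{R\to\infty}\int_{S_R}\partial_\nu(w - \cbf(E)/r)\,d\omega = 0$. Alternatively, and perhaps more cleanly, one can note $E_1 = -\Delta(w - \cbf(E)/r)$ where $w - \cbf(E)/r \in C^{0}_{p}(\RR^3)$ with $p\ge 1$ by Proposition~\ref{propo.classcialregular} applied to the smooth source $E + \cbf(E)\Delta(1/r)$ — actually the decay is governed by $\min(\theta,1-\eps)$ a priori — and use that a Poisson solution of an integrable source with decay strictly faster than the monopole forces the monopole moment to vanish; I would spell this out via the flux argument above to be safe.

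With the three properties in hand, the conclusion is a direct application of Proposition~\ref{proposition behavior at infinity k}: since $E_1 \in L^1(\RR^3)\cap C^{k,\alpha}_{\theta+2}(\RR^3)$ has vanishing integral, the solution of $-\Delta v = E_1$ with $v \to 0$ at infinity satisfies $\|v\|_{C^{k+2,\alpha}_{p}(\RR^3)} \lesssim \|E_1\|_{C^{k,\alpha}_{p+2}(\RR^3)}$ for $p = \min(\theta,2) \in [1,2)$ (and when $\theta \ge 2$ one uses the $\theta=1$, or rather the largest admissible sub-critical exponent; here the statement takes $p = \min(\theta,2)$, with the endpoint $p$ close to $2$ handled by choosing $p$ slightly below $2$ exactly as in Proposition~\ref{proposition behavior at infinity k}, supplemented by the extra $L^1$ term $\|(1-\sgn(2-\theta))\,r\,E_1\|_{L^1}$ which provides the control needed to push $\theta$ up to and past $2$). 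The uniqueness of the decaying Poisson solution identifies $v$ with $w - \cbf(E)/r$, giving \eqref{eq.66:15c}.

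The main obstacle I expect is the bookkeeping at the critical/super-critical endpoint $\theta \ge 2$: Proposition~\ref{proposition behavior at infinity k} is stated for $\theta \in [1,2)$, so to obtain decay at the rate $p = \min(\theta,2)$ one cannot simply quote it when $\theta \ge 2$. This is precisely why the extra hypothesis $(1-\sgn(2-\theta))\,r\,E_1 \in L^1(\RR^3)$ enters — it is vacuous for $\theta < 2$ and active for $\theta \ge 2$ — and the correct reading is that for $\theta \ge 2$ one runs the estimate with $p$ arbitrarily close to $2$, the additional $L^1$-moment control of $r E_1$ being what replaces the pointwise decay $r^{\theta+2}E_1$ in the borderline integrals $I_1, I_3, I_4$ of the proof of Proposition~\ref{proposition behavior at infinity k}. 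I would therefore either (i) re-examine those integrals to confirm the $rE_1 \in L^1$ hypothesis suffices to close the $p \nearrow 2$ case, or (ii) if the intended reading is simply $p = \min(\theta,2)$ with $\theta$ restricted so the right-hand norm is finite, note that the statement as written already caps $p$ below $2$ whenever strict inequality is needed, so that only the straightforward part of Proposition~\ref{proposition behavior at infinity k} is invoked. Either way the substantive content — membership, vanishing integral, estimate — is routine given the earlier propositions.
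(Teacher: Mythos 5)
Your proposal is correct and is exactly the argument the paper intends — the corollary is stated without a separate proof as an immediate consequence of Propositions~\ref{proposition behavior at infinity} and \ref{proposition behavior at infinity k}, and your verification ($E_1 = E + \cbf(E)\,\Delta(1/r)$ with $\Delta(1/r)$ smooth, supported in the unit ball, and of total integral $-4\pi$, whence the three properties of $E_1$, then apply Proposition~\ref{proposition behavior at infinity k}) is the intended one. Concerning your hedge at $\theta \geq 2$: alternative (i) is the correct reading — compare the parallel statement for the momentum operator in Proposition~\ref{proposition behavior LMcal 2}, whose second estimate explicitly covers $\theta \geq 2$ under the extra hypothesis $rE \in L^1(\RR^3)$ — and the check does close at $p=2$: using $\int_{\RR^3} E_1\,dy = 0$ one writes $|x|^2 w(x) = \tfrac{1}{4\pi}\int |x|\,(|x|-|x-y|)\,E_1(y)\,|x-y|^{-1}\,dy$, bounds the contribution of $\{|x-y|\geq |x|/2\}$ by $\|r E_1\|_{L^1(\RR^3)}$ via $||x|-|x-y||\leq |y|$, and bounds the contribution of $\{|x-y|\leq |x|/2\}$ (where $|y|\gtrsim |x|$) by $|x|^{2-\theta}\,\|r^{\theta+2}E_1\|_{C^0(\RR^3)} \lesssim \|E_1\|_{C^{k,\alpha}_{\theta+2}(\RR^3)}$ since $\theta\geq 2$, which is precisely the role of the $L^1$-moment term on the right-hand side of \eqref{eq.66:15c}.
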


\subsection{The adjoint momentum operator}

We now study the divergence of the adjoint momentum equation linearized around the Euclidean metric, that is, we focus our attention on the following operator defined over all vector fields in $\RR^3$:  
\bel{equa-411}
\LMcal(W) \coloneqq \Div_\delta d\Mcal^*_{(\delta,0)} [0,W].
\ee  
Given any vector field $E = (E^i) : \RR^3 \to \RR^3$, we thus consider the following linear second-order elliptic problem on $\RR^3$: 
\bel{AM-equations}
\aligned
&(\LMcal (W))^i  = - \Delta W^i - \del_i(\del_j W^j) = E^i, \qquad i = 1,2,3,
\\
& \lim_{|x| \to +\infty} W(x) = 0.
\endaligned
\ee 
Before we can investigate the asymptotic behavior of solutions to \eqref{AM-equations}, let us first compute the fundamental solution associated with our operator.

\begin{lemma}  
\label{lemma-477} 
The matrix-valued field $\Mbb(x,y)  = (\Mbb_{ij}(x,y))_{1\leq i,j \leq 3}$ defined in $\big\{(x,y) \in \RR^3 \times \RR^3, \,  x \ne y \big \}$ by 
$
\Mbb_{ij}(x, y) \coloneqq {1 \over 16 \pi  \, |x -y|} \Big(  {(x_i - y_i) (x_j - y_j) \over |x -y|^2} + 3 \delta_{ij} \Big)
$
satisfies 
\bel{Elementmatrixsense}
- \Delta \Mbb_{ij}(\cdot, y) - \sum_{k=1,2,3} \del_{ik} \Mbb_{jk}(\cdot,y) 
= \delta_y \, \delta_{ij}, 
 \qquad y \in \RR^3
\ee
in the sense of distributions on $\RR^3$, 
where $\delta_y$ is the Dirac measure at the point $y \in \RR^3$ (also written below as $\delta(x - y)$) and $\delta_{ij}$ denotes the Kronecker symbol.
\end{lemma}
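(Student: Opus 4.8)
The plan is to verify the distributional identity \eqref{Elementmatrixsense} by a direct computation based on the explicit formula for $\Mbb_{ij}$. First I would decompose the kernel into its two pieces: the ``Newtonian'' part $\tfrac{3}{16\pi}\,\delta_{ij}/|x-y|$ and the ``rank-one'' part $\tfrac{1}{16\pi}(x_i-y_i)(x_j-y_j)/|x-y|^3$. The first piece is a multiple of the fundamental solution of $-\Delta$ on $\RR^3$ (recall $-\Delta(1/(4\pi|x-y|))=\delta_y$), so $-\Delta$ applied to it gives $\tfrac{3}{4}\,\delta_{ij}\,\delta_y$ in the sense of distributions. The second piece is smooth away from the diagonal, and I would compute $-\Delta$ of it classically on $\{x\ne y\}$ and, if needed, track the distributional contribution concentrated at $x=y$; by homogeneity considerations (the second piece is homogeneous of degree $-1$ in $x-y$, like $1/|x-y|$, but has a nontrivial angular profile) one expects either a Dirac contribution or none, which must be identified carefully.

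The cleanest route is to exploit the potential-theoretic structure directly: observe that $(x_i-y_i)(x_j-y_j)/|x-y|^3 = -\,(x_i-y_i)\,\del_j\big(1/|x-y|\big) = -\,\del_i\del_j\big(|x-y|\big)/2 \cdot(\text{check})$; more precisely, one has $\del_i\del_j |x-y| = \delta_{ij}/|x-y| - (x_i-y_i)(x_j-y_j)/|x-y|^3$, so the rank-one part equals $\delta_{ij}/|x-y| - \del_i\del_j|x-y|$. Hence, up to constants,
\[
16\pi\,\Mbb_{ij}(x,y) = \frac{4\,\delta_{ij}}{|x-y|} - \del_i\del_j|x-y|.
\]
With this representation the verification of \eqref{Elementmatrixsense} reduces to two facts: (i) $-\Delta\big(1/|x-y|\big)=4\pi\,\delta_y$; (ii) $-\Delta|x-y| = -2/|x-y|$ (classically away from the diagonal, and in the distributional sense on all of $\RR^3$, since $|x-y|$ is locally $W^{1,1}$ with no concentrated Laplacian because $\Delta|x-y|$ is locally integrable). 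Then I would compute the mixed term $\sum_k \del_{ik}\Mbb_{jk}$ using $\del_k\big(\del_j\del_k|x-y|\big) = \del_j\big(\Delta|x-y|\big) = \del_j\big(-2/|x-y|\big) = 2(x_j-y_j)/|x-y|^3$, and likewise $\sum_k\del_{ik}\big(\delta_{jk}/|x-y|\big) = \del_i\del_j\big(1/|x-y|\big)$ away from the diagonal plus the Dirac term $\del_i\del_j(4\pi\delta_y/(-\Delta))$—better handled by keeping everything inside $-\Delta$. Assembling: $-\Delta\Mbb_{ij} - \sum_k\del_{ik}\Mbb_{jk}$ should, after the $\delta$-terms from the $1/|x-y|$ factors combine with coefficient $4/(16\pi)\cdot 4\pi = 1$, collapse to $\delta_{ij}\,\delta_y$, with all smooth terms cancelling identically.

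The main obstacle I anticipate is the careful bookkeeping of the distributional (diagonal) contributions: terms like $\del_i\del_j(1/|x-y|)$ are not locally integrable and must be interpreted in the principal-value-plus-Dirac sense, so one cannot naively differentiate. The safe technique is to test against a smooth compactly supported $\varphi$, integrate by parts moving all derivatives onto $\varphi$, and evaluate the resulting absolutely convergent integrals against $1/|x-y|$ and $\del_i\del_j|x-y|$ (both locally integrable) — so I would phrase the whole computation weakly from the start, writing $\langle -\Delta\Mbb_{ij}(\cdot,y) - \sum_k\del_{ik}\Mbb_{jk}(\cdot,y),\varphi\rangle = \int \Mbb_{ij}(x,y)\big(-\Delta\varphi(x) - \del_i\del_j\varphi(x)\big)\,dx$ (up to symmetrizing indices), and then use the two elementary Green's identities above to reduce it to $\varphi(y)\delta_{ij}$. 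A minor subtlety worth a sentence is checking the index symmetry: $\sum_k\del_{ik}\Mbb_{jk}$ involves $\Mbb_{jk}$, and since $\Mbb$ is symmetric in its tensor indices the computation is unaffected, but I would state this explicitly to avoid confusion when transposing $i\leftrightarrow j$.
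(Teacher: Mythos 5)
Your proposal is correct, and it takes a genuinely different route from the paper. The paper proceeds by excision: with $y=0$, it tests against $\phi$, restricts the integral to $\{|x|\geq a\}$, integrates by parts twice, observes that $\Delta \Mbb_{ki}+\del_{ij}\Mbb_{jk}=0$ classically away from the origin so the bulk term vanishes, checks that the boundary terms of the form $\Mbb\cdot\del\phi$ are $O(a)$, and then computes the surviving surface integral $-\tfrac{3}{16\pi}\int_{|x|=a}|x|^{-2}(\delta_{ik}+x_ix_k/|x|^2)\phi\,d\omega \to -\phi(0)\,\delta_{ik}$ using the angular average $\int_{S^2}\omega_i\omega_k\,d\omega=\tfrac{4\pi}{3}\delta_{ik}$. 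You instead use the algebraic decomposition $16\pi\,\Mbb_{ij}=4\delta_{ij}/|x-y|-\del_i\del_j|x-y|$ (which I have checked) and reduce everything to $-\Delta(1/|z|)=4\pi\delta_0$ and $\Delta|z|=2/|z|$ together with commutation of distributional derivatives: one then finds $-\Delta(16\pi\Mbb_{ij})=16\pi\delta_{ij}\delta_y+2\,\del_i\del_j(1/|z|)$ and $-\sum_k\del_i\del_k(16\pi\Mbb_{jk})=-4\,\del_i\del_j(1/|z|)+2\,\del_i\del_j(1/|z|)$, so the non-locally-integrable distributions $\del_i\del_j(1/|z|)$ cancel identically and only $16\pi\,\delta_{ij}\delta_y$ survives — exactly the coefficient you predicted. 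Your route buys a computation-free treatment of the singularity (no small-sphere surface integrals, no angular averages), at the price of the preliminary Hessian identity for $|z|$ and the need to justify, as you do, that $\Delta|z|=2/|z|$ holds with no concentrated part; the paper's route is more pedestrian but requires only the pointwise identities it records. Two small points to tighten in a write-up: the weak formulation should read $\int\big(\Mbb_{ij}\Delta\phi+\sum_k\Mbb_{jk}\del_i\del_k\phi\big)\,dx=-\phi(0)\,\delta_{ij}$ (your displayed test pairing contracts the wrong indices before the symmetrization remark), and the justification ``no concentrated Laplacian because $\Delta|x-y|$ is locally integrable'' is circular as phrased — replace it by the one-line excision estimate that the boundary flux of $\nabla|z|$ through $\{|z|=a\}$ is $O(a^2)$.
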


\begin{proposition}
\label{theorem.solutionAM}
Consider any $E=(E^1, E^2, E^3) \in L^1_\loc(\RR^3)$ such that the function $y \mapsto \Mbb_{ij}(x,y) \, E^j(y)$ a
  is integrable 
   (for $i,j = 1,2,3$ and almost every $x$). Then the vector field $W = (W^1, W^2, W^3)$ defined by
\begin{subequations}
\bel{eq:solM0}
W^i (x) \coloneqq \int_{\RR^3} \Mbb_{ij}(x,y) E^j(y) \, dy, 
\quad x \in \RR^3 
\ee
belongs to $L^1_\loc(\RR^3)$ and solves the linearized momentum equation in the sense of distributions, i.e. 
\bel{eq:solM}
\LMcal (W) = E. 
\ee
\end{subequations}
\end{proposition}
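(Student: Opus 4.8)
The plan is to verify directly that the convolution formula \eqref{eq:solM0} furnishes a distributional solution, by combining Fubini's theorem with the fundamental-solution identity \eqref{Elementmatrixsense} established in the preceding lemma. First I would check that $W \in L^1_\loc(\RR^3)$. Since $|\Mbb_{ij}(x,y)| \lesssim |x-y|^{-1}$, for any ball $B_R$ one estimates $\int_{B_R} |W^i(x)| \, dx \leq \int_{B_R} \int_{\RR^3} |\Mbb_{ij}(x,y)| \, |E^j(y)| \, dy \, dx$ and splits the inner $y$-integration into the near region $|y| < 2R$ and the far region $|y| \geq 2R$. On the near region, Tonelli's theorem and the uniform bound $\sup_{y}\int_{B_R}|x-y|^{-1}\,dx < \infty$ together with $E \in L^1_\loc$ give finiteness; on the far region, $|x-y| \geq |y|/2$ for $x \in B_R$, so $|\Mbb_{ij}(x,y)| \lesssim |y|^{-1}$ and the finiteness of $\int_{|y|\geq 2R}|y|^{-1}|E^j(y)|\,dy$ follows from the standing hypothesis applied at any single point $x_0 \in B_R$ with $\int_{\RR^3}|\Mbb_{ij}(x_0,y)E^j(y)|\,dy < \infty$.

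Next, to establish $\LMcal(W) = E$ in $\mathcal D'(\RR^3)$, observe that $\LMcal$, being the operator $W^i \mapsto -\Delta W^i - \del_i(\del_j W^j)$, is formally self-adjoint, so for any $\phi = (\phi^1,\phi^2,\phi^3) \in C_c^\infty(\RR^3;\RR^3)$ the distributional pairing is $\langle \LMcal(W),\phi\rangle = \langle W, \LMcal(\phi)\rangle = \int_{\RR^3} W^i(x)\, \LMcal(\phi)^i(x) \, dx$. Substituting \eqref{eq:solM0} and invoking Fubini, this becomes
$$
\int_{\RR^3} W^i(x)\, \LMcal(\phi)^i(x)\, dx
= \int_{\RR^3} E^j(y) \left( \int_{\RR^3} \Mbb_{ij}(x,y)\, \LMcal(\phi)^i(x) \, dx \right) dy.
$$
For each fixed $y$, the inner integral is the pairing of the locally integrable field $\Mbb_{\cdot j}(\cdot,y)$ against $\LMcal(\phi)$; by self-adjointness of $\LMcal$ and the identity \eqref{Elementmatrixsense} it equals $\langle \LMcal_x \Mbb_{\cdot j}(\cdot, y), \phi \rangle = \langle \delta_y \, e_j, \phi\rangle = \phi^j(y)$. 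Hence the right-hand side equals $\int_{\RR^3} E^j(y)\phi^j(y)\, dy$, which is precisely $\langle E, \phi\rangle$, as claimed.

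The one delicate point — and the main obstacle — is the application of Fubini's theorem to interchange the $x$- and $y$-integrations, since $E$ is assumed only to be locally integrable with the weak integrability hypothesis on $y \mapsto \Mbb_{ij}(x,y)E^j(y)$. Here $\LMcal(\phi)$ is bounded and supported in a fixed ball $B_R$, so the required joint integrability $\int_{B_R}\int_{\RR^3} |\Mbb_{ij}(x,y)|\,|E^j(y)|\,|\LMcal(\phi)^i(x)|\, dy\, dx < \infty$ is controlled exactly by the near/far splitting already used in the first step; with this in hand, the remaining manipulations (the formal self-adjointness and the unwinding of the distributional pairing) are routine.
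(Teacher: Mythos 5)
Your proof is correct and follows essentially the same route as the paper: establish $W \in L^1_\loc$ by splitting the $y$-integration into near and far regions (the paper does this via the explicit bound on $\int_{B_R(x_0)}|x-y|^{-1}\,dx$), then pair $W$ against $\LMcal(\phi)$, interchange the integrals by Fubini, and invoke the fundamental-solution identity \eqref{Elementmatrixsense}. Your explicit justification of the Fubini step in the second part is a welcome addition that the paper leaves implicit.
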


\begin{proof} We have 
$ 
| W^i(x) | \leq \int_{\RR^3} | \Mbb_{ij}(x,y) |  \,  | E^j (y) | \, dy
\leq {1 \over 4 \pi}\int_{\RR^3} {1 \over |x - y|}  | E^j (y) | \, dy$, 
and for any ball $B_R(x_0) \subset \RR^3$, using Fubini's theorem we find 
$$
\int_{B_R(x_0)} | W^i(y) | \, dy \leq {1 \over 4 \pi} \int_{\RR^3} \Big( \int_{B_R(x_0)} {1 \over |x - y|} \, dx \Big) | E^j(y) | \, dy.
$$
Since $\int_{B_R(x_0)} {1 \over |x - y|} \, dx$ is bounded and, moreover,
$
\int_{B_R(x_0)} {1 \over |x - y|} \, dx = {4 \pi R^3\over 3 |x_0 - y|}$ 
for $|x_0 -y| \geq R$  
  by the mean value property,
it follows that $\int_{B_R(x_0)} | W^i | \, dy < +\infty$, hence $W \in L^1_\loc(\RR^3)$ as claimed. 
Next, we show that
$$
- \int_{\RR^3} \big( W^i \Delta \phi + \sum_{1 \leq j \leq 3} W^j \del_{ij} \phi \big)  \, dx = \int_{\RR^3} E^i \phi \, dx
$$
 for any test-function $\phi: \RR^3 \to \RR$. 
Namely, by plugging \eqref{eq:solM0} in the left-hand side and applying Fubini's theorem, the above expression becomes from \eqref{Elementmatrixsense} 
\begin{equation*}
- \int_{\RR^3} 
\Bigg( \int_{\RR^3} \big( \Mbb_{ki}(x,y) \Delta \phi + \sum_{j=1,2,3} \Mbb_{jk}(x,y) \del_{ij}\phi \big) \, dy \Bigg) \, E^k(x) \, dx 
= \int_{\RR^3} E^i \phi \, dx. 
\qedhere
\end{equation*}
\end{proof}

We are now in a position to rely on a result established in Lockhart and McOwen \cite[Theorem 3]{Lockhart}. 

\begin{proposition}[Isomorphism property for the adjoint momentum equation. I]
\label{theorem elliptic system} 
Given an integer $k \geq 0$, a H\"older exponent $\alpha\in (0,1)$ and a decay exponent $\theta>0$, the operator $\LMcal: H^{k + 2}_\theta (\RR^3) \to H^k_{\theta + 2}(\RR^3)$ is continuous and, moreover if $\theta \in (0,1)$, is 
  an isomorphism. 
\end{proposition} 

Since the operator $\LMcal$ is elliptic in the sense of Douglis and Nirenberg (with $t_1 = t_2 = t_3 = k + 2$ and $s_1 = s_2 = s_3 = - k$), from Theorem~\ref{propo.ellipticity} we deduce also the following result. 

\begin{proposition}[Weighted regularity property for the adjoint momentum equation]
\label{proposition.LMregularity}
For any integer $k\ge 0$, H\"older exponent $\alpha \in (0,1)$, and decay exponent $\theta>0$, consider the solution $W$ to the problem \eqref{AM-equations} associated with a source-term $E \in C^{k, \alpha}_{\theta + 2}(\RR^3)$. Then, provided $W \in C^{0}_{\theta}(\RR^3)$ one has $W \in C^{k + 2, \alpha}_{\theta}(\RR^3)$ and, moreover,
$
\| W \|_{C^{k + 2, \alpha}_{\theta}(\RR^3)}
\lesssim
\| W \|_{C^0_{\theta}(\RR^3)} + \| E \|_{C^{k, \alpha}_{\theta + 2}(\RR^3)}. 
$
\end{proposition}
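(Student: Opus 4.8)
The statement to prove is Proposition~\ref{proposition.LMregularity}: a weighted H\"older regularity estimate for solutions $W$ to the linearized momentum equation \eqref{AM-equations} with source $E \in C^{k,\alpha}_{\theta+2}(\RR^3)$, assuming a priori only $W \in C^0_\theta(\RR^3)$.

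\textbf{Overall strategy.} The plan is to exploit the fact --- already recorded just before the statement --- that $\LMcal$ is elliptic in the sense of Douglis and Nirenberg with order parameters $t_1=t_2=t_3=k+2$ and $s_1=s_2=s_3=-k$, and then to invoke the interior H\"older regularity theorem, Theorem~\ref{propo.ellipticity}, applied on a family of balls whose radius scales like the distance to infinity. This is exactly the same rescaling device used in the proof of Proposition~\ref{propo.laplace - holder} for the scalar Laplace operator, and the argument here is its vector-valued analogue.

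\textbf{Key steps, in order.} First, I would localize: the estimate within any fixed compact region of $\RR^3$ (in particular near the origin, where our $r$ differs from $|x|$) is standard interior elliptic regularity for a constant-coefficient elliptic system, so it suffices to work in an exterior region where $r(x)=|x|$ and $|x|$ is large. Second, for each point $x$ with $|x|$ large, set $d(x) := r(x)/3$ and apply Theorem~\ref{propo.ellipticity} on the ball $\Gamma(x) = B(x, d(x))$ with the distance-to-boundary function equal to $d(x)$; with $t := \max_j t_j = k+2$ and each $t_j = k+2$, $s_j=-k$, the weight exponents $t - t_j$ all vanish, so the theorem gives
\be
\sum_{0 \le i \le k+2} d(x)^i |\del^i W(x)| + d(x)^{k+2+\alpha}[\del^{k+2}W]_{\alpha, B_{d(x)/2}(x)}
\lesssim \sup_{B_{3d(x)/4}(x)} |W| + \sum_{0\le i\le k} d(x)^{2+i}\sup_{B_{3d(x)/4}(x)}|\del^i E| + d(x)^{k+\alpha}[E]_{\alpha,B_{3d(x)/4}(x)},
\ee
where the implied constant is uniform in $x$ because the coefficients of $\LMcal$ are constant (so the Douglis--Nirenberg constant $K$ can be taken once and for all, and conditions (1)--(3) hold trivially). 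Third, I would multiply through by $r(x)^\theta$ and use that on $B_{3d(x)/4}(x)$ one has $r(y) \simeq r(x)$ (comparable up to fixed constants), so that $r(x)^\theta \sup_{B_{3d(x)/4}(x)}|W| \lesssim \|W\|_{C^0_\theta(\RR^3)}$ and $r(x)^{\theta+2+i}\sup_{B_{3d(x)/4}(x)}|\del^i E| \lesssim \|E\|_{C^{k,\alpha}_{\theta+2}(\RR^3)}$, and similarly for the H\"older seminorm of $E$, since $d(x)\simeq r(x)$. Recombining the $i$-th derivative terms with their natural weights $r(x)^{\theta+i}$ and taking the supremum over all large $x$, then adding the (already controlled) compact-region contribution, yields
\be
\| W \|_{C^{k+2,\alpha}_\theta(\RR^3)} \lesssim \|W\|_{C^0_\theta(\RR^3)} + \|E\|_{C^{k,\alpha}_{\theta+2}(\RR^3)},
\ee
which is the claimed inequality; in particular the finiteness of the right-hand side forces $W \in C^{k+2,\alpha}_\theta(\RR^3)$.

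\textbf{Main obstacle.} The substantive point is verifying that the hypotheses of Theorem~\ref{propo.ellipticity} are genuinely met with a constant that does not degenerate as $|x|\to\infty$: one must confirm the uniform positivity of the characteristic polynomial $P(\xi) = \det(L'_{ij}(\xi))$ for $\LMcal$ (a direct computation: $L'_{ij}(\xi) = |\xi|^2\delta_{ij} + \xi_i\xi_j$, whose determinant is $2|\xi|^6$, hence positive-definite of the right order $m = \sum(s_k+t_k) = 6$), and that rescaling $B(x,d(x))$ to the unit ball leaves the operator's ellipticity constants unchanged because $\LMcal$ is homogeneous with constant coefficients. Once this bookkeeping is in place, the a priori assumption $W\in C^0_\theta$ is exactly what is needed to feed the right-hand side of Theorem~\ref{propo.ellipticity}, and the weighted estimate follows by the scaling argument with no further analytic input. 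The proof is therefore short; I would present it in roughly the same three lines as the proof of Proposition~\ref{propo.laplace - holder}, citing that proof for the rescaling mechanism.
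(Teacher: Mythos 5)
Your proposal is correct and follows exactly the route the paper takes: the paper proves this proposition in one line by observing that $\LMcal$ is Douglis--Nirenberg elliptic with $t_j=k+2$, $s_j=-k$ and invoking Theorem~\ref{propo.ellipticity} on balls $B(x,r(x)/3)$, precisely as in the proof of Proposition~\ref{propo.laplace - holder}. Your verification of the characteristic polynomial ($\det(|\xi|^2\delta_{ij}+\xi_i\xi_j)=2|\xi|^6$) and the weight bookkeeping are sound, so nothing is missing.
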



\begin{proposition}[Isomorphism property for the adjoint momentum equation. II]
\label{theorem elliptic system ii} 
Given an integer $k\ge 0$, a H\"older exponent $\alpha \in (0,1)$, and a decay exponent $\theta>0$, the operator $\LMcal : C^{k + 2, \alpha}_\theta(\RR^3) \to C^{k, \alpha}_{\theta + 2}(\RR^3)$ is continuous and, moreover if $\theta \in (0,1)$, is  an isomorphism.
\end{proposition}

\begin{proof}
\begin{subequations} 
For any 
$\eps >0$ we have 
$C^{k, \alpha}_{\theta}(\RR^3) \subset H^{k}_{\theta - \eps}(\RR^3)$, hence the injectivity of the operator follows from Proposition~\ref{theorem elliptic system}. Next, given $E \in C^{k, \alpha}_{\theta + 2}(\RR^3)$, we may consider a sequence of smooth and compactly supported functions $\{E_i\}$ converging to $E$ in $H^k_{\theta + 2 - \eps}(\RR^3)$. It follows from Proposition~\ref{theorem.solutionAM} that 
$W_i^j \coloneqq \int_{\RR^3} \Mbb_{jk}(x,y) E_i^k(y) \, dy$
satisfies $\LMcal W_i = E_i$. Letting $i \to +\infty$, thanks to Proposition~\ref{theorem elliptic system} we obtain $W_i \to W$ in $H^{k+2}_{\theta-\eps}(\RR^3)$. Moreover, we have $\LMcal W = E$ and
$
W^j(x) = \int_{\RR^3} \Mbb_{jk}(x,y) E^k(y) \, dy.
$
Since $|\Mbb (x,y)| \leq {1 \over 4 \pi |x - y|}$,  this identity tells us that
$
|W^j(x)| \leq {1  \over 4 \pi } \int_{\RR^3} {|E^k(y)| \over |x - y| }  \, dy, 
$
hence, by the isomorphism property enjoyed by the Poisson operator we find 
$\| W^j \|_{C^2_{\theta}(\RR^3)} \lesssim \| E \|_{C^{0, \alpha}_{\theta + 2}(\RR^3)}$.
Combining this with Proposition~\ref{proposition.LMregularity}, we obtain
$\| W \|_{C^{k + 2, \alpha}_{\theta}(\RR^3)} \lesssim \| E \|_{C^{k, \alpha}_{\theta + 2}(\RR^3)}$ and the proof is completed.
\end{subequations}
\end{proof}


\subsection{Harmonic and super-harmonic decay} 

We have arrived at the desired asymptotic statements concerning Einstein's momentum operator.

\begin{proposition}[Adjoint momentum equation: solutions with harmonic decay]
\label{proposition behavior LMcal} 
Given any H\"older exponent $\alpha\in (0,1)$, and a source $E \in L^1(\RR^3)\cap C^{0,\alpha}_3(\RR^3)$, 
the solution $W: \RR^3 \to \RR^3$
   to the problem \eqref{AM-equations} has 
 H\"older continuous second-order derivatives, namely $W \in C^{2,\alpha}_1(\RR^3)$ and, moreover, for every $\omega \in S^2$  
\bel{decayLMcal}
\aligned
\lim_{x \to +\infty} 
 |x| \, W^i(x)  
& =  \widetilde \Mbb_{ij}(\omega) \int_{\RR^3} E^j(y) \, dy,
\qquad
\quad 
\widetilde \Mbb_{ij} (\omega) 
 \coloneqq 
\lim_{x \to +\infty} 
|x| \, \Mbb_{ij}(x, 0)
= 
{1 \over 16 \pi} \big(  \omega_i \omega_j + 3 \delta_{ij} \big).
\endaligned
\ee
\end{proposition}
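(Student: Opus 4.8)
The plan is to mirror the argument already carried out for the Poisson equation in Proposition~\ref{proposition behavior at infinity}, replacing the Newtonian kernel by the explicit fundamental solution $\Mbb$ of $\LMcal$ and the Newtonian potential by the representation formula. Since $E \in L^1(\RR^3) \cap C^{0,\alpha}_3(\RR^3)$, the functions $y \mapsto \Mbb_{ij}(x,y)E^j(y)$ are integrable for every $x$, so by Proposition~\ref{theorem.solutionAM} the field $W^i(x) = \int_{\RR^3} \Mbb_{ij}(x,y) E^j(y)\,dy$ is well defined and solves $\LMcal(W) = E$ distributionally; the decay condition at infinity in \eqref{AM-equations} will come out of the asymptotic estimate below. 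So the substance of the proof is the pointwise limit \eqref{decayLMcal} and the $C^{2,\alpha}_1$ regularity.

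For the asymptotics I would first record two elementary facts about the kernel: the bound $|\Mbb_{ij}(x,y)| \le (4\pi|x-y|)^{-1}$, immediate from its definition since $|(x_i - y_i)(x_j - y_j)| \le |x-y|^2$; and, for each fixed $y$, the limit $|x|\,\Mbb_{ij}(x,y) \to \frac{1}{16\pi}(\omega_i\omega_j + 3\delta_{ij}) = \widetilde\Mbb_{ij}(\omega)$ as $x \to \infty$ along the ray through $\omega$, which holds because $|x|/|x-y| \to 1$ and $(x_i - y_i)/|x-y| \to \omega_i$, uniformly for $y$ in a bounded set. Then, given $\eps \in (0,1)$, I would pick $R_\eps$ with $\int_{\RR^3 \setminus B_{R_\eps}(0)} \sum_j |E^j|\,dy \le \eps$ and split $|x|\,W^i(x) = I_1^\eps(x) + I_2^\eps(x)$ over $B_{R_\eps}(0)$ and its complement. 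The inner term converges, as $x \to \infty$ along $\omega$, to $\widetilde\Mbb_{ij}(\omega)\int_{B_{R_\eps}(0)} E^j\,dy$ by the uniform convergence just noted. For $I_2^\eps$, using $|\Mbb| \le (4\pi|x-y|)^{-1}$, I would split the region $\{|y| > R_\eps\}$ into $\{|x-y| \ge \sqrt\eps\,|x|\}$, where $|x|/|x-y| \le \eps^{-1/2}$ yields a contribution $\lesssim \eps^{-1/2} \cdot \eps = \eps^{1/2}$, and $\{|x-y| \le \sqrt\eps\,|x|\}$, where $|y| \ge (1-\sqrt\eps)|x|$ forces $|E(y)| \lesssim |y|^{-3} \lesssim |x|^{-3}$ (here $E \in C^{0,\alpha}_3$ is used), so that this piece is $\lesssim |x| \cdot |x|^{-3} \int_{B_{\sqrt\eps|x|}(0)} |z|^{-1}\,dz = |x|\cdot|x|^{-3}\cdot 2\pi\eps|x|^2 \lesssim \eps$. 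Hence, along the ray through $\omega$, $\limsup_{|x|\to\infty} \big| |x|\,W^i(x) - \widetilde\Mbb_{ij}(\omega)\int_{\RR^3} E^j\,dy \big| \lesssim \eps^{1/2} + \eps$, and sending $\eps \to 0$ gives \eqref{decayLMcal}.

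For the regularity, the estimate just obtained shows that $|x|\,|W(x)|$ stays bounded as $|x|\to\infty$; combined with the local boundedness of $W$ (it lies in $L^1_\loc(\RR^3)$ and solves an elliptic system with H\"older-continuous source), this yields $W \in C^0_1(\RR^3)$, and Proposition~\ref{proposition.LMregularity} with $k = 0$ and $\theta = 1$ then upgrades this to $W \in C^{2,\alpha}_1(\RR^3)$. I do not anticipate a genuine obstacle: the only points requiring care are the directional dependence of the leading term $\widetilde\Mbb_{ij}(\omega)$ (in contrast with the rotationally invariant Newtonian case, where the limit was direction-independent) and the bookkeeping for the near-diagonal region $\{|x-y|\le\sqrt\eps|x|\}$, and both are handled exactly as in the proof of Proposition~\ref{proposition behavior at infinity}.
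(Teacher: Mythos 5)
Your argument is correct and follows essentially the same route as the paper: the representation $W^i(x)=\int_{\RR^3}\Mbb_{ij}(x,y)E^j(y)\,dy$, a cut at radius $R_\eps$, the near-diagonal/far-field splitting of the outer integral exactly as in \eqref{esI2}, and elliptic regularity (Proposition~\ref{proposition.LMregularity}) to upgrade the decay to $C^{2,\alpha}_1$. The only cosmetic difference is that you absorb the paper's separate term $I_2^\eps=\int_{B_{R_\eps}}|x|\big(\Mbb_{ij}(x,y)-\Mbb_{ij}(x,0)\big)E^j\,dy$ into the uniform convergence of $|x|\,\Mbb_{ij}(x,y)$ for $y$ in a bounded set, which is equivalent.
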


\begin{proof} In the proof of Proposition~\ref{theorem elliptic system ii}, we have established that  
$W^i = \int_{\RR^3} \Mbb_{ik}(x,y) E^k(y) \, dy$ for $i = 1, 2, 3$. 
Given any $\eps \in (0,1)$ and since $E\in L^1(\RR^3)$, we can choose a radius $R_\eps$ so large that 
$\sum_{i=1}^3 \int_{\RR^3 \setminus B_{R_\eps}(0)} |E^{i}| \, dy \leq \eps$.  
We obtain
$$
\aligned
|x| \, W^i(x) 
& = \int_{B_{R_\eps}(0)} |x| \, \Mbb_{ij}(x,0) E^j(y) \, dy + \int_{B_{R_\eps}(0)} |x| \big( \Mbb_{ij}(x,y) - \Mbb_{ij}(x,0) \big) E^j(y) \, dy
\\
& \quad + \int_{\RR^3 \setminus B_{R_\eps}(0)} |x| \, \Mbb_{ij}(x,y) E^j(y) \, dy
 =:  I_1^\eps + I_2^\eps + I_3^\eps. 
\endaligned
$$
For all sufficiently large $|x|$ we have 
$\sup_{y \in B_{R_\eps}(0)} |x| \, | \Mbb_{ij}(x,y) - \Mbb_{ij}(x,0) | \lesssim \eps$, 
and, therefore, $I_2^\eps \lesssim \eps$. Next, an analysis similar to the one for \eqref{esI2} in the proof of Proposition~\ref{proposition behavior at infinity} shows 
that $I_3^\eps \lesssim \sqrt{\eps}$. Taking these results into account, for all sufficiently large $|x|$ we have
$
 |x| \, \Big| W^i(x) 
- \int_{B_{R_\eps}(0)} \Mbb_{ij}(x,0) E^j(y) \, dy \Big| 
\lesssim \eps. 
$
Combining this with our condition $\sum_{i=1}^3 \int_{\RR^3 \setminus B_{R_\eps}(0)} |E^{i}| \, dy \leq \eps$, we arrive at \eqref{decayLMcal}. 
\end{proof}


\bse
\label{sub-300}
Now, we introduce our {\bf asymptotic model for the momentum equation} as follows ($i, j = 1, 2, 3$): 
\be
\aligned
W^j_{i} 
& \coloneqq - {1 \over r^3} ( x_i x_j + 3 r^2 \delta_{ij} ), 
\qquad
 W_i \coloneqq (W^1_i, W^2_i, W^3_i),  
\qquad 
E^j_i 
& \coloneqq (\LMcal W_i)^j. 
\endaligned
\ee
By construction we have $E^j_i \equiv 0$ for $r \geq 1$
and, furthermore,  
\be
\aligned
\int_{\RR^3} E^i_i \, dx & = - 16 \pi \quad (i = 1, 2,3), 
\qquad \quad 
\int_{\RR^3} E^i_j \, dx = 0   \quad (i \ne j).
\endaligned
\ee  
Therefore, for any $E = (E^1, E^2, E^3) \in L^1$ and by setting 
\bel{specialmodel} 
W_\infty(E) 
 \coloneqq - {1 \over 16 \pi} \sum_{i=1}^3 \Big(\int_{\RR^3} E^i \, dx \Big) W_i,
\ee 
\ese
we arrive at
\be
\int_{\RR^3} (\LMcal W_\infty (E))^i \, dx = \int_{\RR^3} E^i \, dx, 
\qquad i = 1, 2, 3. 
\ee 

\begin{proposition}[Adjoint momentum equation: solution with super-harmonic behavior]
\label{proposition behavior LMcal 2} 
Given any exponents $k \geq 0$, $\alpha \in (0,1)$, and $\theta \in [1,2)$, as well as a source term $E \in L^1(\RR^3)\cap C^{k,\alpha}_{\theta + 2}(\RR^3)$,
the solution  $W: \RR^3 \to \RR^3$
  to the problem \eqref{AM-equations} satisfies
$$
\aligned
\| W - W_\infty (E)\|_{C^{k + 2,\alpha}_\theta(\RR^3)} 
\lesssim \| E - \LMcal W_\infty(E) \|_{C^{k,\alpha}_{\theta + 2}(\RR^3)}, 
\endaligned 
$$
where the asymptotic model $W_\infty = W_\infty(E)$ is defined by \eqref{sub-300}. Moreover, if $\theta \geq 2$ and $rE \in L^1(\RR^3)$, one has 
$$
\| W - W_\infty (E)\|_{C^{k + 2,\alpha}_2(\RR^3)} 
\lesssim 
\| E - \LMcal W_\infty(E) \|_{C^{k,\alpha}_4(\RR^3)} + \| r(E - \LMcal W_\infty(E))\|_{L^1(\RR^3)}. 
$$
\end{proposition}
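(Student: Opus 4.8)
The plan is to reduce the statement to the scalar Poisson results already established, namely Propositions~\ref{proposition behavior at infinity}, \ref{proposition behavior at infinity k}, and Corollary~\ref{remarkpoisson}, by exploiting that $W_\infty(E)$ was \emph{constructed} precisely so that the error source has vanishing integral. First, I would set $\widehat E := E - \LMcal W_\infty(E)$ and $\widehat W := W - W_\infty(E)$, so that $\LMcal \widehat W = \widehat E$ with $\widehat W \to 0$ at infinity. By the definition \eqref{specialmodel} of $W_\infty$ and the normalization identities $\int_{\RR^3} E^i_i \, dx = -16\pi$ and $\int_{\RR^3} E^i_j \, dx = 0$ ($i \ne j$), we get $\int_{\RR^3} \widehat E^i \, dx = 0$ for each $i$. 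Moreover $W_\infty(E)$ is a fixed smooth profile (a linear combination of the $W_i$, each $\simeq r^{-1}$ with $\LMcal W_i$ compactly supported), so $\widehat E \in L^1(\RR^3) \cap C^{k,\alpha}_{\theta+2}(\RR^3)$ whenever $E$ is, and similarly $r\widehat E \in L^1(\RR^3)$ in the second case.

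Next, I would represent $\widehat W$ using the fundamental solution from Proposition~\ref{theorem.solutionAM}: since $\LMcal$ has the constant-coefficient kernel $\Mbb_{ij}(x,y)$, and in the proof of Proposition~\ref{theorem elliptic system ii} it was shown that the unique decaying solution is given by convolution against $\Mbb$, we have $\widehat W^i(x) = \int_{\RR^3} \Mbb_{ij}(x,y)\, \widehat E^j(y)\, dy$. The key structural point is that $\Mbb_{ij}(x,y) = \tfrac{1}{16\pi|x-y|}\big( \tfrac{(x_i-y_i)(x_j-y_j)}{|x-y|^2} + 3\delta_{ij}\big)$, so each component of $\widehat W$ is a sum of terms of the Newtonian-potential type $\int |x-y|^{-1} \rho(y)\, dy$ with $\rho$ either $\widehat E^j$ itself or $\widehat E^j$ modulated by the bounded, Hölder-regular angular factor $(x_i-y_i)(x_j-y_j)/|x-y|^2$. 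For the pointwise-bound part, I would mimic line by line the splitting argument in the proof of Proposition~\ref{proposition behavior at infinity k} (the decomposition $I_1 + I_2 + I_3 + I_4$ over the regions $\{|y|\le d\}$, $\{|x|/2 \le |x-y| \le 3|x|/2\}$, $\{|x-y|\ge 3|x|/2\}$, $\{|x-y|\le |x|/2\}$), using crucially the cancellation $\int \widehat E = 0$ in the form $|x|^\theta \int \Mbb(x,y)\widehat E(y)\,dy = |x|^{\theta-1}\int (|x| - |x-y|)\,\tfrac{\text{(angular factor)}}{|x-y|}\,\widehat E(y)\,dy$, plus $\big| |x|-|x-y| \big| \le |y|$; the bounded angular factor does not affect any of these estimates. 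This yields $\|\widehat W\|_{C^0_\theta(\RR^3)} \lesssim \|\widehat E\|_{C^{k,\alpha}_{\theta+2}(\RR^3)}$ when $\theta \in [1,2)$, and the $L^1$-weighted variant $\|\widehat W\|_{C^0_2(\RR^3)} \lesssim \|\widehat E\|_{C^{k,\alpha}_4} + \|r\widehat E\|_{L^1}$ when $\theta \ge 2$, exactly paralleling how Corollary~\ref{remarkpoisson} upgrades Proposition~\ref{proposition behavior at infinity k} at the critical rate. Finally I would invoke the weighted elliptic regularity in Proposition~\ref{proposition.LMregularity}, which bootstraps the $C^0_\theta$ control on $\widehat W$ together with $\widehat E \in C^{k,\alpha}_{\theta+2}$ to the full $\|\widehat W\|_{C^{k+2,\alpha}_\theta(\RR^3)} \lesssim \|\widehat W\|_{C^0_\theta} + \|\widehat E\|_{C^{k,\alpha}_{\theta+2}}$, and combine the two inequalities to conclude.

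The main obstacle I anticipate is the critical case $\theta = 2$. There the naive estimate $\int |y|^{-\theta-1}\,dy$ over $\{|y| \le |x|/2\}$ diverges logarithmically, so one cannot simply plug $\theta = 2$ into the $\theta \in (1,2)$ argument; this is precisely why the extra hypothesis $rE \in L^1(\RR^3)$ (equivalently $r\widehat E \in L^1$) is imposed. I would handle it as in Proposition~\ref{laplaceplus}(2) and Corollary~\ref{remarkpoisson}: isolate the borderline region and bound the offending integral by $|x|^{-1}\int r|\widehat E|\,dy \lesssim |x|^{-1}\|r\widehat E\|_{L^1}$, which gives the $r^{-2}$ decay with an $L^1$-norm on the right-hand side rather than a pure weighted Hölder norm. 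The second subtlety, minor by comparison, is checking that $W_\infty(E)$ really lands in the right spaces and that $\LMcal W_\infty(E)$ is compactly supported (hence trivially in every weighted space with arbitrarily fast decay, so it contributes nothing harmful to $\widehat E$) — this follows directly from $E^j_i \equiv 0$ for $r \ge 1$ as recorded just before \eqref{specialmodel}. Everything else is a routine transcription of the scalar arguments, the angular factor $(x_i-y_i)(x_j-y_j)/|x-y|^2$ being uniformly bounded with bounded derivatives away from the diagonal and therefore inert in all the estimates.
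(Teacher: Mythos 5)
Your proposal is correct and follows exactly the route the paper takes: the paper's own proof merely records that $\LMcal(W - W_\infty(E))$ lies in $L^1 \cap C^{k,\alpha}_{\theta+2}$ with vanishing integral and then declares the rest ``similar in spirit'' to Proposition~\ref{proposition behavior at infinity k} and Corollary~\ref{remarkpoisson}, which is precisely the reduction you carry out (including the correct treatment of the critical case via $rE \in L^1$ and the final bootstrap through Proposition~\ref{proposition.LMregularity}). Your write-up in fact supplies more detail than the paper does.
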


\begin{proof}
By a straightforward calculation, we have 
$
\LMcal (W - W_\infty(E))^i   
 \in L^1(\RR^3) \cap C^{k,\alpha}_\theta(\RR^3)
$
and
$
\int_{\RR^3} \LMcal (W - W_\infty(E))^i \, dx  = 0
$
(for $i = 1,2,3$). 
The rest of the proof is similar in spirit to the ones of Proposition~\ref{proposition behavior at infinity k} and Corollary~\ref{remarkpoisson} and is omitted. 
\end{proof}


\subsection{Reduction to the Euclidean background}
\label{remarkmanifold}
 
For future reference, we close this section with an immediate observation. 
Let $B_R$ be a large ball in the background manifold $(\Mbf, e)$ such that $\Mbf \setminus B_R$ is isomorphic to the exterior 
of a ball\footnote{The same notation will be used for the balls in the two manifolds.}
 in the Euclidean space $\RR^3$. Consider a solution $W$ to the equation (or system of equations) 
$L W = E$ in $\Mbf \setminus B_R$, 
where the operator $L$ denotes either  $- \Delta$ or $\LMcal$. Let $\phi$ be a cut-off function in $\RR^3$ such that $\phi$ is identically $1$ in $\RR^3 \setminus B_{R+2}$ and vanishes identically in $B_{R + 1}$. Setting 
$\Wt \coloneqq \phi W$ it follows that
$
L \Wt  = \phi E + \del \phi * \del W + \del^2 \phi * W =: E_1. 
$
Then, an estimate {\sl in the Euclidean space $\RR^3$}, say  
$
\| \Wt \|_{C^{k + 2,\alpha}_\theta (\RR^3)} \leq \| E_1  \|_{C^{k, \alpha}_\eta (\RR^3)}
$
(for some integer $k \geq 0$, H\"older exponent $\alpha \in (0,1)$, and decay exponents $\theta, \eta>0$)
implies an estimate {\sl in the manifold itself,} namely  
\be
\| W \|_{C^{k + 2}_\theta (\Mbf \setminus B_{R + 2})} 
\lesssim  \|E\|_{{C^{k, \alpha}_\eta (\Mbf \setminus B_{R + 1})}}
+ 
\| \del W \|_{{C^{k, \alpha}_\eta (B_{R + 2} \setminus B_{R + 1})}} + \| W \|_{{C^{k, \alpha}_\eta (B_{R + 2} \setminus B_{R + 1})}}.
\ee 


\section{Asymptotic properties of the seed-to-solution map} 
\label{section--7}
 
\subsection{Objective for this section}

\paragraph{Full set of assumptions for reaching the conclusions of this section.}

Building upon the material developed in the previous sections, we are now in a position to give a proof of Theorems~\ref{theo:deuxieme} and \ref{theo:deuxieme-2}. Our arguments rely on a detailed analysis of the nonlinear coupling taking place between the Hamiltonian and momentum constraints.  We are going to apply our existence theory for the seed-to-solution map with the ``weakest'' pair of exponents $(1/2, 3/2)$ and then successively improve these basic rates of decay by 
expressing the asymptotic properties in Section~\ref{section--6} for the (linearized and dual versions of) Hamiltonian and momentum constraint equations. 
Before we begin, recall that a H\"older exponent $\alpha \in (0,1)$ is fixed throughout the proof, and we work with 
admissible exponents
\be
1/2 \leq \ppG \leq 2 ( \qqG -1),
\qquad \quad
1/2 \leq \ppM \leq 2 ( \qqM -1),
\ee
and an admissible pair $(p,q)$ satisfying the effective conditions \eqref{eq--2-12}, that is, 
\bel{eq--2-12-repeat}
\aligned
& (p, q) < (1,2), 
\qquad \quad
 (p, q) \leq (\ppM, \qqM), 
\qquad \quad
 | q - p -  1 | \leq \qqG -1.
\endaligned
\ee
For Theorem~\ref{theo:deuxieme}, the strongly tame conditions \eqref{eq:strongtameH}, i.e.  
\bel{eq:strongtameH-repeat}
\ppM \geq 1, 
\qquad  \qqM  > \max(3 - \qqG, 3/2), 
\qquad  
\Hcal(g_1,h_1) - \Hstar \in L^1(\Mbf) 
\ee  
are assumed, while for Theorem~\ref{theo:deuxieme-2} the additional conditions \eqref{eq:strongtameM} are also assumed, i.e. 
\bel{eq:strongtameM-repeat}
\qqM \geq 2, 
\qquad \hskip4.2cm 
\Mcal(g_1,h_1) - \Mstar \in L^1(\Mbf). 
\ee   

\paragraph{Dealing with a large compact region.}

In addition, the problem is reduced to a single asymptotic end, as follows. 
Let $B_R \subset \Mbf$ be a sufficiently large geodesic ball so that $\Mbf \setminus B_R$ is diffeomorphic to the union of finitely many exterior domains of the form $\RR^3 \setminus B_R$. Within the ball with radius $2R$, thanks to Theorem \ref{theo-prec} with $(p, q)$ satisfying the 
conditions \eqref{eq--2-12} (that is, \eqref{eq--2-12-repeat}), we control the solution as follows:  
$$
\aligned
& \| g - g_1 \|_{L^2C_{\oldp}^{2, \alpha}(B_{2R})} 
 \lesssim 
 \| \Hcal( g_1, h_1) - \Hstar\|_{L^2C_{\oldp + 2}^{0, \alpha}(\Mbf)}
 +  \eeG 
\| \Mcal( g_1, h_1) - \Mstar\|_{L^2C_{\oldq+1}^{1, \alpha}(\Mbf)},
\\
&  
   \| h - h_1 \|_{L^2C_{\oldq}^{2, \alpha}(B_{2R})}
 \lesssim 
\eeG   \| \Hcal( g_1, h_1) - \Hstar\|_{L^2C_{\oldp + 2}^{0, \alpha}(\Mbf)}
 +  
\| \Mcal( g_1, h_1) - \Mstar\|_{L^2C_{\oldq+1}^{1, \alpha}(\Mbf)},
\endaligned 
$$
Consequently, it is obvious that we can also control the following weighted norms (for any fixed $\theta > 0$): 
\bel{inball} 
\aligned
& \| g - g_1 \|_{L^2C_{\theta}^{2, \alpha}(B_{2R})}
  \lesssim
 R^{\theta - \oldp} 
\Big(
 \| \Hcal( g_1, h_1) - \Hstar\|_{L^2C_{\oldp + 2}^{0, \alpha}(\Mbf)}
 +  \eeG 
\| \Mcal( g_1, h_1) - \Mstar\|_{L^2C_{\oldq+1}^{1, \alpha}(\Mbf)}
\Big),
\\
&  \| h - h_1 \|_{L^2C_{\theta}^{2, \alpha}(B_{2R})}
  \lesssim
R^{\theta - \oldp} 
\Big(\eeG 
 \| \Hcal( g_1, h_1) - \Hstar\|_{L^2C_{\oldp + 2}^{0, \alpha}(\Mbf)}
 +  
\| \Mcal( g_1, h_1) - \Mstar\|_{L^2C_{\oldq+1}^{1, \alpha}(\Mbf)}
\Big).  
\endaligned
\ee 
This provides us with suitable estimates in any compact domain. 
The arguments at the end of Section~\ref{remarkmanifold} explain how our analysis can be reduced to the Euclidean space $\RR^3$ and, therefore, from now on without loss of generality, we assume that $(\Mbf, e) = (\RR^3, \delta)$.

\paragraph{Basic decay rates.}

By applying Theorem~\ref{theo-prec}  with the exponents $p = 1/2$ and $q = 3/2$, we know that there exists a pair $(g_2, h_2) \in (d\Gcal)^{-1}_{(g_1, h_1)}\big( L^2C^{0,\alpha}_{5/2}(\RR^3) \times L^2C^{1,\alpha}_{5/2}(\RR^3) \big)$ such that 
$(g,h) = (g_1 + g_2, h_1 + h_2)$
is a solution to 
 the Einstein constraints
  enjoying the following (and possibly slow at this stage) decay  
\bel{8:equa-213}
\aligned
& \| g_2 \|_{L^2C_{1/2}^{2, \alpha}(\RR^3)} 
 \lesssim
\| \Hcal(g_1,h_1) - \Hstar \|_{L^2C_{5/2}^{\alpha}(\RR^3)}
 + \eeG \, \| \Mcal(g_1,h_1) - \Mstar \|_{L^2C_{5/2}^{1, \alpha}(\RR^3)}, 
\\
& \| h_2 \|_{L^2C_{3/2}^{2, \alpha}(\RR^3)}
 \lesssim
\eeG \, \| \Hcal(g_1,h_1) - \Hstar \|_{L^2C_{5/2}^{\alpha}(\RR^3)}
 +  
 \| \Mcal(g_1,h_1) - \Mstar \|_{L^2C_{5/2}^{1, \alpha}(\RR^3)}. 
\endaligned 
\ee
Moreover, $(g_2, h_2)$ belongs to the image of the adjoint Hamiltonian and momentum operators, and so 
there exists a pair $(u_2,Z_2) \in C^{4,\alpha}_{1/2}(\RR^3)  \times C^{3,\alpha}_{1/2}(\RR^3)$ such that  
  (after recalling the adjoint equations \eqref{eq2:hua})
\bel{2:hu} 
 - \big(\Delta_{g_1}u_2\big)g_1 + \Hess_{g_1}(u_2) - u_2\Ric_{g_1} + h_1*h_1*u_2 + \nabla_{g_1}(h_1*Z_2)
 = {g_2 \over r^2},
\qquad \quad 
 - {1 \over 2}  \Lcal_{Z_2}g_1 + h_1*u_2 =h_2. 
\ee
Here, $r=r(x)$ defined on $(\Mbf, e) = (\RR^3, \delta)$ was given in Remark~\ref{rem22}.

In this setup, it is clear that a decay rate for the dual variables $(u_2, Z_2)$ implies a decay rate for the original variables $(g_2, h_2)$ since
\bel{mind}
\aligned
\| g_2 \|_{C^{2, \alpha}_\theta} 
& \lesssim 
\| u_2 \|_{C^{4, \alpha}_\theta} + \eeG \, \| Z_2 \|_{C^{3, \alpha}_\theta}, 
\qquad\quad
\| h_2 \|_{C^{2, \alpha}_{\theta + 1}} 
\lesssim \eeG \, \| u_2 \|_{C^{4, \alpha}_\theta} + \| Z_2 \|_{C^{3, \alpha}_\theta}
\endaligned
\ee
for any decay exponent $\theta >0$ (provided the right-hand sides are finite). 
Before going further, we summarize our current decay result as 
\bel{eq:optim}
(g_2,h_2,u_2,Z_2) \in C^{2,\alpha}_{a_1} \times C^{2,\alpha}_{a_2}\times C^{4,\alpha}_{a_3}\times C^{3,\alpha}_{a_4}
\ee
with (at this initial stage of the proof) 
\bel{eq:ineq4}
a_1 = a_3 = a_4 = 1/2, 
\qquad \quad
a_2= 3/2.
\ee
Throughout our forthcoming arguments, the inequalities $a_1, a_3, a_4 \geq 1/2$ and $a_2 \geq 3/2$ always hold, and 
we work toward ``improving'' the basic rates in \eqref{eq:ineq4}.


\subsection{Preliminary sub-harmonic estimates for all variables} 

\paragraph{A weaker condition is sufficient to initiate the argument.} 

We begin by deriving {\sl sub-harmonic} estimates for both the metric and extrinsic curvature variables. 
In order to clearly explain the respective role of our conditions on decay exponents, we find it convenient to work
 first under a weakened version of  our conditions \eqref{eq:strongtameH-repeat} and \eqref{eq:strongtameM-repeat}, 
 namely instead of these two conditions we solely assume 
\bel{eq:strongtameH-repeat-weak}
\ppM \geq 3/4.  
\ee   
Our first aim is to establish \eqref{eq:optim} the {\sl first set of improved exponents}
(for any sufficiently small but fixed $\eps, \eps' \in (0,1)$) 
\bel{eq=claim4} 
\aligned
& a_1 = a_3 = \min(a,1-\eps'),
\qquad   \quad 
a_2 = \min(\qqM+1/2, 2), 
\qquad \quad
a_4 = \min(\qqM  -  1, 1 - \eps), 
\endaligned
\ee 
in which $a \geq 3/4$ (cf.~more precisely \eqref{equa:defpt} and \eqref{equa-the-trace-only-a}, below).

\paragraph{Equation for the trace of the metric deformation.}

From the definition of the scalar curvature, we have 
\bel{equa-Rscal}
\aligned
R_g - R_{g_1}  
& =  \sum_{i,j} \big( \del_{ij} g_{ij}  -   \del_{jj} g_{ii} \big)
- \sum_{i,j} \big( \del_{ij} g_{1 ij}  -   \del_{jj} g_{1 ii} \big)
\\
& \quad 
 + g_2* \del^2 g_1   + g_2 * \del g_1 * \del g_1 + \del g_1 * \del g_2 
 +  (g_1 -e) * \del^2 g_2 
 + g_2 * \del^2 g_2
 + \del g_2 * \del g_2 
\\ 
& =: \big( \sum_{i,j}\del_{ij} g_{2ij} \big) - \Delta_e (\Tr_e g_2) - \smallcal_0(g_2), 
\endaligned
\ee
where we have introduced the notation $\smallcal_0(g_2)$ for various terms involving $g_2$ 
(which, as we will see, enjoy comparatively better decay).
We recall that the star symbol correspond to various contractions involving possibly both metrics 
$g_1$ and $g=g_1+g_2$. 
In view of the Hamiltonian constraint $R_g = \Hstar + | h|_g^2  -  \big( \Tr_g h \big)^2/ 2$ satisfied by $(g,h)$, we can rewrite the above identity as
$$
\aligned
 -  \Delta_e (\Tr_e g_2) +   \textstyle  \sum_{i,j} \del_{ij} g_{2ij} 
& = \Hstar + |h|_g^2  - (1/2) \big( \Tr_g h \big)^2 -  R_{g_1}  + \smallcal_0(g_2)
\\
& = \Big(\Hstar + |h_1|_{g_1}^2  - (1/2) (\Tr_{g_1} h_1)^2 -  R_{g_1} \Big) 
+ \smallcal_0(g_2) +  ( h_1 * h_2 + h_2 * h_2) + g_2*h_1*h_1, 
\endaligned
$$
and, using the Hamiltonian operator applied to the seed data $(g_1,h_1)$, 
\bel{eq:hR}
\aligned
 -  \Delta_e (\Tr_e g_2) +   \textstyle  \sum_{i, j} \del_{ij} g_{2ij} 
& = \big( \Hstar - \Hcal( g_1, h_1) \big) + \smallcal_0(g_2) + \smallcal_1(h_2) + \smallcal_2(g_2), 
\endaligned
\ee
where we have introduced the notation $\smallcal_1(h_2) = h_1 * h_2 + h_2 * h_2$ and $\smallcal_2(g_2) := g_2*h_1*h_1$.  


Clearly, the following identity holds in $\RR^3$  
\be
 \textstyle  \sum_{i, j} 
 \del_{ij} \Big( - |x|^2 (\Delta_e u_2) \, e + |x|^2 \Hess_e(u_2)\Big)_{ij} =  - 4 \, \Delta_e u_2, 
\ee
where $|x|^2 = \sum_j (x^j)^2$ and $e=(e_{ij}) = (\delta_{ij})$. 
Using the distance function $r=r(x)$ in Remark~\ref{rem22} and differentiating the expression of~$g_2$ given in~\eqref{2:hu}, 
 it follows that
$$ 
\aligned
\sum_{i,j}
\del_{ij}\Bigg( {|x|^2 \over r^2} g_{2ij} \Bigg) 
& = \sum_{i,j}
\del_{ij}\Bigg( |x|^2  \Big( 
-\big(\Delta_{g_1}u_2 \big)g_1 + \Hess_{g_1}(u_2) -u_2\Ric_{g_1} + h_1*h_1*u_2 + \nabla_{g_1}(h_1*Z_2) 
\Big)_{ij}
\Bigg)
\\
& = -4 \, \Delta_e u_2 
+ \del^2\big( |x|^2 \big( (g_1 - e) * \del^2 u_2 + \del g_1 * \del u_2 \big) \Big)
\\
& \quad 
 + \del^2\Big( |x|^2 (u_2 \Ric_{g_1} +  h_1*h_1*u_2) \Big) 
+ \del^2\big( |x|^2 \nabla_{g_1} (h_1*Z_2) \big)
\\
&
=: -4 \, \Delta_e u_2 + \smallcal_3(u_2,Z_2). 
\endaligned
$$
We now use that $\sum_{i, j} \del_{ij}\big( {|x|^2 \over r^2} g_{2ij} \big) 
= \sum_{i, j}  \del_{ij} g_{2ij} + \del_{ij}\Big( {|x|^2 - r^2 \over r^2} g_{2ij} \Big)$ and arrive at 
\bel{hijij}
\sum_{i, j}  \del_{ij} g_{2ij} 
= 
- 4 \, \Delta_e u_2 + \smallcal_3(u_2,Z_2) 
+ \del_{ij} \big( ( 1 - |x|^2/r^2) \, g_{2ij} \big), 
\ee
which is one of the terms in the left-hand side of \eqref{eq:hR}. 
Observe the last term in \eqref{hijij} vanishes identically for all $|x| \geq 1$.

On the other hand, taking now the trace 
of~$g_2$ given in~\eqref{2:hu} 
(with respect to $g_1$) and then expanding $g_1 = e + (g_1-e)$, we obtain 
\bel{laplace u}
\aligned
 - 2\Delta_e u_2 
& = {1\over r^2} \Tr_e g_2 + R_{g_1} u_2 + 
\Big( 
h_1*h_1*u_2 + \nabla_{g_1}(h_1*Z_2)
+ \del g_1 *  \del u_2 + {1 \over r^2} (g_1 -e) * g_2
\Big)
\\
&
=: 
{1\over r^2} \Tr_e g_2 + R_{g_1} u_2 + \smallcal_4(g_2, u_2,Z_2) .
\endaligned
\ee 
Taking \eqref{hijij} and \eqref{laplace u} into account in \eqref{eq:hR}, 
we arrive at the following elliptic equation\footnote{recalling that $r$ is bounded below} (after absorbing numerical factors in our symbolic notation $*$). 

\begin{claim} 
\label{trace h}
The trace of the metric deformation satisfies 
$$ 
\aligned
- \Delta_e \big( \Tr_e g_2 \big) + {2 \over r^2} \Tr_e g_2  
& 
= \big( \Hstar - \Hcal( g_1, h_1) \big) + \smallcal_5(g_2,h_2,u_2,Z_2),  
\\
 \smallcal_5(g_2,h_2,u_2,Z_2) 
 & := \smallcal_0(g_2)  + \smallcal_1(h_2) + \smallcal_2(g_2)
 - \smallcal_3(u_2,Z_2)
 \\
 & \quad  - 2 \, \smallcal_4(g_2, u_2,Z_2)
 - \del_{ij} \big( ( 1 - |x|^2/r^2) \, g_{2ij} \big)
 - 2 \, R_{g_1} u_2. 
\endaligned
$$ 
\end{claim}

\vskip.15cm

\paragraph{Improving the decay of the trace of the metric deformation.} 

We now make use of the strong tame condition \eqref{eq:strongtameH} (restated in \eqref{eq:strongtameH-repeat}).  
We see that  the terms 
in the equation in Claim~\ref{trace h} 
consists of terms with decay exponents at least 
\bel{equ-table11} 
\aligned
\smallcal_0 : \quad & p_0 := a_1 + 2 + \min (\ppG, a_1)  \geq 3, 
\\
\smallcal_1 : \quad & p_1 := a_2 + \min (\qqG, a_2)  \geq 11/4, 
\\
\smallcal_2 : \quad & p_2 := a_1 + 2 \qqG \geq 3, 
\\
\smallcal_3 : \quad & p_3 := \min (a_3+2+\ppG, a_3+2q_G, a_4+1+\qqG)  \geq  \min(3, a_4+1+\qqG)  \geq 11/4,
 \\
\smallcal_4 : \quad &  p_4 := \min( a_1+2+\ppG, a_3 + 2 + \ppG, a_3+ 2 \qqG, a_4 + 1 + \qqG) 
\geq \min(3, a_4 + 1 + \qqG) \geq 11/4, 
\\
R_{g_1} u_2: \quad & a_3 + 2 + \ppG \geq 3,
\\
\Hstar - \Hcal( g_1, h_1): \quad & \ppM +2 \geq 11/4, 
\endaligned
\ee
while the term $ - \smallcal_3(u_2,Z_2) - 2 \, \smallcal_4(g_2, u_2,Z_2)$ has compact support. 
So the right-hand side $\big( \Hstar - \Hcal( g_1, h_1) \big) + \smallcal_5$ enjoys the decay $r^{-2 -\pt}$ 
for an exponent   
\bel{equa:defpt} 
\aligned
\pt 
& := -2 + \min \Big( p_0, p_1, p_2, p_3, p_4,  a_3 + 2 + \ppG, \ppM +2)
\\
& = \min(a_1+p_G,  2a_1, a_2+q_G-2,  2a_2-2, a_3+p_G,  a_4+q_G-1, \ppM +2).
\endaligned 
\ee
Then, by applying Proposition~\ref{laplaceplus} (concerning the ``modified'' Laplace operator $-\Delta + 2/r^2$) 
to the elliptic equation in Claim~\ref{trace h} with any exponent $a$ chosen such that 
\bse
\label{equa-the-trace-only}
\bel{equa-the-trace-only-a}  
3/4\leq a \leq \pt, \qquad a < 2, 
\ee
we see that $\Tr_e g_2$ decays at the rate $a$: 
\bel{equa:trac}
\|\Tr_e g_2 \|_{C^{2,\alpha}_a} 
\lesssim \|  \smallcal_5 (g_2,h_2,u_2,Z_2)  \|_{C^{0,\alpha}_{a + 2}}. 
\ee
Here, $a \geq 3/4$ therefore {\sl improves upon} the rate $a_1=1/2$ available in \eqref{eq:ineq4} ---for the trace of the metric, at least. 
\end{subequations}

\paragraph{Improving the decay of the (dual) metric variables.} 

Returning to the trace equation \eqref{laplace u}, we can infer an improved decay for the dual variable $u_2$. Indeed, applying Proposition~\ref{propo.classcialregular} (now concerning the standard Laplace operator $-\Delta$)
to \eqref{laplace u} we obtain the decay property~\eqref{eq:optim} now with the improved exponent \bse
\label{equa:15ab}
\bel{udecay} 
a_3 = \min(1 - \eps', a) \geq 3/4, 
\ee
$\eps'$ being sufficiently small but fixed. Indeed, Proposition~\ref{propo.classcialregular} applies since, in the equation~\eqref{laplace u}, 
 the source-terms
${1\over r^2} \Tr_e g_2 + R_{g_1} u_2 + \smallcal_4(g_2, u_2,Z_2)$ decays at least at the rate 
$\min( a+2, (a_3=1/2) + 2 + \ppG, p_4) \geq a_3+2$.

In turn, we can use the expression in \eqref{eq2:hua} which gives $g_2$ in terms of the adjoint Hamiltonian operator  and, by considering the decay of each term, it follows that in \eqref{eq:optim} for the metric deformation $g_2$ we find the same exponent as above
\bel{a11} 
a_1 = \min(1 - \eps', a) \geq 3/4. 
\ee 
This is all we can deduce from the Hamiltonian operator {\it ---at this stage}.
\ese
 

\paragraph{Second-order equation for the adjoint momentum variable.}

Taking now the divergence of the adjoint momentum equation with respect to the metric $g_1$ and expanding $g_1 = e + (g_1-e)$,
 we obtain 
$$
\aligned
2 \, \Div_{g_1}(h_2)
& = \big( - \Delta_e Z_2 - \Div_e(\nabla_e Z_2) \big) 
- \Div_{g_1}(h_1*u_2) + \del^2 \big((g_1 - e)*Z_2\big)
\endaligned
$$
thus 
\bel{div Z}
\aligned
 \LMcal(Z_2)
& = 
2 \, \Div_{g_1}(h_2) + \smallcal_6(u_2, Z_2),
\\
\smallcal_6(u_2, Z_2) 
& := \Div_{g_1}(h_1*u_2) - \del^2 \big((g_1 - e)*Z_2\big).
\endaligned
\ee
The divergence term $\Div_{g_1}(h_2)$ on the 
left-hand side is related to the momentum operator applied to $h$;
 namely, the equation $\Div_{g}(h) = \Mstar$ implies 
$$
\aligned
\Div_{g_1}(h_2) 
& =  \Div_g h_2 + \del g_2 * h_2 = \big( \Mstar - \Div_g h_1 \big) + \del g_2* h_2
= \big( \Mstar - \Mcal(g_1, h_1) \big) + \smallcal_7(g_2, h_2),
\endaligned
$$
in which $\smallcal_7(g_2, h_2) := \del g_2*h_1 + \del g_2* h_2$. 
Therefore, we can rewrite \eqref{div Z} in the form of a second-order elliptic system, as follows. 

\begin{claim} 
\label{div Z2}
The dual momentum variable satisfies  
$$ 
\LMcal(Z_2)  
= 2 \, \big( \Mstar - \Mcal(g_1, h_1) \big) + \smallcal_6(u_2, Z_2) + 2 \,\smallcal_7(g_2, h_2).  
$$ 
\end{claim}

\paragraph{Improving the decay of the momentum variables.}

We now make use of the strong tame condition \eqref{eq:strongtameM} (restated in \eqref{eq:strongtameM-repeat}) and proceed in two steps, as follows. 
\bei 

\item From the improved decay in~\eqref{udecay} and the property of the divergence of the adjoint momentum operator, as stated earlier
in Proposition~\ref{theorem elliptic system ii}, it follows that $Z_2 \in C^{3,\alpha}_{a_4}$ with the improved exponent 
(as announced in \eqref{eq=claim4}) 
\bel{equa-debut}
a_4 = \min(\qqM - 1,1 - \eps)
\ee
 for some sufficiently small but fixed $\eps \in (0,1)$. Namely, we use here that the right-hand side of 
enjoys: 
$$
\aligned
\Mstar - \Mcal(g_1, h_1): \quad & 1+\qqM,
\\
\smallcal_6(u_2, Z_2): \quad & p_6 := \min(a_3+1+\qqG, a_4+2+\ppG) 
\\
& \qquad \geq \min(\min(1 - \eps, a) +1 + \qqG, 3) 
\geq (3/4) + (5/4) + 1 = 3, 
\\
\smallcal_7(g_2, h_2): \quad & p_7 := a_1+1 + \min(\qqG, a_2) \geq 3. 
\endaligned
$$

\item Returning to the adjoint momentum equation $h_2 = -{1 \over 2}  \Lcal_{Z_2} g_1 + h_1*u_2$, 
which enjoys the decay rate 
$$
p_8= \min(a_4+1+\ppG, a_3+\qqG) 
\geq \min(2-\eps+1/2, \qqM +1/2, 3/4+5/4) 
\geq \min(\qqM+1/2, 2), 
$$  
 we infer that the extrinsic curvature satisfies 
$h_2 \in C^{2,\alpha}_{p_8}$.
In other words, we can now take the exponent for the momentum variable in \eqref{eq:optim} to be 
(announced in \eqref{eq=claim4}) 
\bel{a21:a31}
  a_2 = \min(\qqM+1/2, 2). 
\ee
\eei
This completes the analysis of the sub-harmonic level and the derivation of the first set of improved exponents~\eqref{eq=claim4}, which only required the conditions~\eqref{eq:strongtameH-repeat-weak}.


\subsection{Harmonic decay for the (dual) metric variables}

\paragraph{A weaker condition is sufficient for the metric at the harmonic level.} 

We now work with the following conditions (which are still weaker than~\eqref{eq:strongtameH-repeat} and \eqref{eq:strongtameM-repeat}): 
\bel{equa524} 
\ppM \geq 3/4,  
\qquad 
 \qqM  > \max(3 - \qqG, 3/2).
\qquad  
\Hcal(g_1,h_1) - \Hstar \in L^1(\Mbf). 
\ee   
We now seek to establish \eqref{eq:optim} in agreement with the following {\sl second set of improved exponents} 
(for any arbitrarily small but fixed $\eps \in (0,1)$): 
\bel{eq=claim4-bis} 
\aligned
& a_1 = a_3 = \min(\ppM,1),
\qquad   \quad 
  a_2 = 2,
\qquad \quad
a_4 = \min(\qqM  -  1, 1 - \eps). 
\endaligned
\ee 

\paragraph{Improved sub-harmonic decay for the metric variables.}  

Plugging our result \eqref{equa:15ab} and \eqref{a21:a31} in the expression of the exponent $\pt$ defined in \eqref{equa:defpt}, we see that the exponent $\pt$ associated with $\smallcal_5 (g_2,h_2,u_2,Z_2)$ and 
the trace in \eqref{equa-the-trace-only} satisfies $\pt \geq 1$, since we can now improve upon 
the worst estimates in \eqref{equ-table11}, namely  
\bel{equ-table11-2} 
\aligned
\smallcal_1 : \quad & p_1 = a_2 + \min (\qqG, a_2)  \geq 3, 
\\
\smallcal_3 : \quad & p_3 \geq \min(3,a_4+1+\qqG) \geq \min(3, \qqM + \qqG )\geq 3,  
 \\
\smallcal_4 : \quad &  p_4 \geq \min(3, a_4 + 1 + \qqG) \geq \min(3, \qqM + \qqG )\geq 3,  
\endaligned
\ee
so we can now pick up $a \geq 1$.  Consequently, this exponent $a$ is no longer a restriction in a second application of our technique in Step~3 and 
we can choose the exponents associated with the dual variable $u_2$ 
to be
\be
a_3 = \min(\ppM, 1 -\eps').
\ee 
Consequently, in view of the equation~\eqref{2:hu}, we can also take 
\be
a_1 = \min(\ppM, 1 -\eps')
\ee
for the metric deformation $g_2$. 
But this is not (yet) the desired decay and it remains to ``remove'' the small parameter~$\eps'$.

\paragraph{Almost harmonic decay for the metric.}

We revisit the inequalities in \eqref{equ-table11} and thanks to the assumptions \eqref{equa524} now write 
\bel{equ-table11-better} 
\aligned
\smallcal_0 : \quad & p_0 = \min (a_1 + 2 + \ppG, 2 a_1 + 2 ) = \min (\ppM + 2 + \ppG,  3-\eps+ \ppG, 2 \ppM + 2, 4 - 2\eps )  > 3, 
\\
\smallcal_1 : \quad & p_1 = \min (a_2 + \qqG, 2a_2) = \min (\ppM+1/2 + \qqG, 2\ppM+1, 2 + \qqG, 4)  >3, 
\\
\smallcal_2 : \quad & p_2 = a_1 + 2 \qqG =  \min(\ppM + 2 \qqG, 1 -\eps + 2\qqG) > 3, 
\\
\smallcal_3 : \quad & p_3 = \min (a_3+2+\ppG, a_3+2q_G, a_4+1+\qqG) 
\geq \min (13/4, \qqM+\qqG, \qqM+5/2)  >3,
 \\
\smallcal_4 : \quad &  p_4 = \min( a_1+2+\ppG, a_3 + 2 + \ppG, a_3+ 2 \qqG, a_4 + 1 + \qqG) 
\geq \min(13/4, \qqM + \qqG) >3, 
\\
R_{g_1} u_2: \quad & a_3 + 2 + \ppG = \min(2+\ppM+\ppG, 3 -\eps + \ppG) > 3, 
\endaligned
\ee
Consequently, using also the assumption $\Hstar - \Hcal(g_1, h_1) \in L^1(\RR^3)$ in \eqref{equa524} and applying  
\eqref{laplace + 2} in Proposition~\ref{laplaceplus} to the elliptic equation for the trace in Claim~\ref{trace h}, we find the {\sl integrability property} 
\bel{Trg:L1}
r^{-2}\Tr_e g_2 \in L^1.
\ee 
In turn, we find that the right-hand side 
${1\over r^2} \Tr_e g_2 + R_{g_1} u_2 + \smallcal_4(g_2, u_2,Z_2)$ of the equation~\eqref{laplace u} (for the dual variable $u_2$) is integrable and, by applying Proposition~\ref{proposition behavior at infinity} concerning solutions with harmonic decay 
we can now choose $a_3 = \min(\ppM, 1)$. Finally, returning once more to the linearized Hamiltonian equation, namely the 
 expression in \eqref{eq2:hua} giving $g_2$, 
  overall we conclude that we can choose  
\bel{a21:a12}
a_1 = \min(\ppM, 1), \qquad \quad a_3 = \min(\ppM, 1). 
\ee
Together with  \eqref{equa-debut} and \eqref{a21:a31}, this is the decay announced in \eqref{eq=claim4-bis}.

\paragraph{Harmonic decay estimates.}

We now express our results in terms of norms in order to keep track of the factors $\eeG$. 
Applying Proposition~\ref{propo.classcialregular} to  
 the trace equation~\eqref{laplace u}, 
we obtain (for any $\theta_1 \in (0,1)$)  
$$
\aligned
\| u _2 \|_{C^{4,\alpha}_{\theta_1}} 
& \lesssim
\| r^{-2} \Tr_e g_2 \|_{C^{2,\alpha}_{\theta_1 + 2}}
+ \|R_{g_1} u_2 + h_1 * h_1 * u_2 \|_{C^{2,\alpha}_{\theta_1 + 2}}
+ \| \nabla_{g_1}( h_1 * Z_2) \|_{C^{2,\alpha}_{\theta_1 + 2}}
\\
& \quad 
+ \| \del g_1 * \del u_2 \|_{C^{2,\alpha}_{\theta_1 + 2}} 
+ \| r^{-2} (g_1-e) * g_2 \|_{C^{2,\alpha}_{\theta_1 + 2}}. 
\endaligned
$$
\bse
\label{equa-collecti}%
Using the condition $ \qqM  > \max(3 - \qqG, 3/2)$, 
we find 
$$
\| \nabla_{g_1}( h_1 * Z_2) \|_{C^{2,\alpha}_{\theta_1 + 2}} 
\lesssim 
\eeG \, \| Z_2 \|_{C^{3,\alpha}_{\theta_1 +2-\qqG}}  
\lesssim \eeG \, \| Z_2 \|_{C^{3,\alpha}_{\min(\qqM - 1,\theta_1)}}
$$
and similarly 
 $\| \del g_1 * \del u_2  \|_{C^{2,\alpha}_{\theta_1 + 2}} \lesssim \eeG \, \| u_2 \|_{C^{3,\alpha}_{\theta_1 + 2 +\ppG}}$ 
and $\| r^{-2} (g_1-e) * g_2  \|_{C^{2,\alpha}_{\theta_1 + 2}} \lesssim \eeG \, \| g_2 \|_{C^{2,\alpha}_{\theta_1 + 2 + \ppG-+2}}$. 
Hence a fortiori we obtain 
\be
\| u _2 \|_{C^{4,\alpha}_{\theta_1}} \lesssim
\| \Tr_e g_2 \|_{C^{2,\alpha}_{\theta_1 + 4}} + \eeG \, \Omega, 
\qquad
\Omega := \| u_2 \|_{C^{4, \alpha}_{\theta_1}} + \| g_2 \|_{C^{2, \alpha}_{\theta_1}} + \| Z_2 \|_{C^{3,\alpha}_{\min (\qqM - 1,\theta_1 )}}.
\ee
Similarly, for the momentum equation in Claim~\ref{div Z2}, we write 
$$
\| \smallcal_6(u_2, Z_2) \|_{C^{1,\alpha}_{\min ( \qqM - 1,\theta_1+2 )}} 
\lesssim
\eeG \, \|  u_2 \|_{C^{3,\alpha}_{\qqG+\min ( \qqM - 2,\theta_1+1 )}} 
+ \eeG \,  \| Z_2 \|_{C^{3,\alpha}_{\ppG + \min ( \qqM - 3,\theta_1)}} 
\lesssim  \eeG \, \Omega, 
$$
 $$
\| \smallcal_7(g_2, h_2) \|_{C^{1,\alpha}_{\min ( \qqM - 1,\theta_1 +2)}} 
\lesssim 
 \eeG \, \| \del g_2 \|_{C^{1,\alpha}_{\qqG + \min ( \qqM - 1,\theta_1 +2)}} 
 + \eeG \, \| \del g_2 \|_{C^{1,\alpha}_{a_1 + \min ( \qqM - 1,\theta_1 +2)}} 
 \lesssim \eeG \, \Omega,  
 $$
 hence  thanks to Proposition~\ref{theorem elliptic system ii} 
 we find 
\be
\| Z_2 \|_{C^{3,\alpha}_{\min ( \qqM - 1,\theta_1 )}} 
\lesssim \| \Mcal (g_1, h_1) -  \Mstar \|_{C^{1,\alpha}_{\min (\qqM + 1,\theta_1 + 2)}} 
+ \eeG \, \Omega. 
\ee 
Finally, we similarly control the trace of the metric deformation, namely by applying Proposition~\ref{laplaceplus} to the equation in Claim~\ref{trace h}: 
\be
\aligned
\| \Tr_e g_2 \|_{C^{2,\alpha}_{\theta_1}} 
& \lesssim
\| \Hcal(g_1, h_1) - \Hstar \|_{C^{2,\alpha}_{\theta_1 + 2}} 
+  
\| \smallcal_5(g_2,h_2,u_2,Z_2) \|_{C^{2,\alpha}_{\theta_1 + 2}} 
\lesssim
\| \Hcal(g_1, h_1) - \Hstar \|_{C^{2,\alpha}_{\theta_1 + 2}} 
+  \eeG \, \Omega.  
\endaligned
\ee
\ese
Consequently, recalling \eqref{mind} and collecting the inequalities \eqref{equa-collecti}, we arrive at the desired estimate for the dual unknowns $(u_2, Z_2)$ (for any $\theta_1 \in (0,1)$): 
\bel{8:subhar}
\aligned
\| u _2 \|_{C^{4,\alpha}_{\theta_1}}  
& \lesssim \| \Hcal(g_1, h_1) - \Hstar \|_{C^{2,\alpha}_{\theta_1 + 2}} + \eeG \, \| \Mcal (g_1, h_1) -  \Mstar \|_{C^{1,\alpha}_{\min (\qqM + 1,\theta_1 + 2)}},
\\
\| Z _2 \|_{C^{3,\alpha}_{\min (\qqM - 1,\theta_1 )}}  
& \lesssim \eeG \, \|\Hcal(g_1, h_1) - \Hstar \|_{C^{2,\alpha}_{\theta_1 + 2}} +  \| \Mcal (g_1, h_1) -  \Mstar \|_{C^{1,\alpha}_{\min (\qqM + 1,\theta_1 + 2)}}. 
\endaligned
\ee
The main unknowns $(g_2, h_2)$ are also controlled thanks to \eqref{mind}. 
This completes our analysis at the harmonic level. 


\subsection{Super-harmonic estimates}
\label{sec-super-cri}

\paragraph{Mass corrector and super-harmonic decay property for the metric.} 

From now on we do impose the full list of conditions~\eqref{eq:strongtameH-repeat}. 
It remains to exhibit the Hessian term and study the mass corrector. We thus seek estimates at the super-harmonic rate in \eqref{eq:strongexpo}, more precisely: 
\bel{eq:strongexpo-repeat} 
\aligned
&  (1,2) \leq (\pstar, \qstar) \leq (\ppM, \qqM), 
\qquad
\\
& \pstar \leq \min (\ppG + 1, \qqG + \qqM - 2),
\qquad  
&& \pstar < \min (2, \qqG), 
\\
& \qstar \leq \min (\ppG + 2, \qqG +1), 
\qquad 
&&
\qstar < 3. 
&
\endaligned
\ee   
We control first 
$\| u_2 - \mt /r \|_{C^{4,\alpha}_{\pstar}}$ for some (real) constant $\mt$, as follows.  
For instance we have 
$$
\aligned
\|R_{g_1} u_2\|_{C^{2,\alpha}_{\pstar + 2}} 
& \lesssim \|R_{g_1}\|_{C^{2,\alpha}_{\ppG + 2}}\| u_2 - \mt/r \|_{C^{4,\alpha}_{\pstar}} + |\mt| \, \| r^{-1} R_{g_1} \|_{C^{2,\alpha}_{\pstar + 2}} 
\lesssim \eeG \, \|u_2 - \mt r^{-1}\|_{C^{4,\alpha}_{\pstar}} + |\mt| \, \| r^{-1} R_{g_1} \|_{C^{2,\alpha}_{\pstar + 2}}, 
\endaligned
$$
and similarly for other terms in the equation \eqref{laplace u} for $\Delta_e u_2$. 
By applying Corollary~\ref{remarkpoisson} it then follows that the dual metric variable $u_2$ satisfies (for some constant $\mt$)
\bel{u2:maines1}
\| u_2 - \mt /r \|_{C^{4,\alpha}_{\pstar}} 
\lesssim |\mt| + \| r^{-2} \Tr_e g_2 \|_{C^{2,\alpha}_{\pstar + 2}} 
+ \eeG \, \| Z_2 \|_{C^{3,\alpha}_{1 + \pstar - \qqG}}.
\ee

On the other hand, by our choice of $\pstar$ we have $1 + \pstar - \qqG \in (0,1)$ as well as 
$1+ \pstar - \qqG \leq \qqM-1$, therefore 
\eqref{8:subhar} applies 
 (by picking up a value $\theta_1 \in (0,1)$ such that $1 + \pstar - \qqG \leq \min (\qqM - 1,\theta_1)$) 
and 
gives us the following control of the dual variable $Z_2$: 
$$
\| Z_2 \|_{C^{3,\alpha}_{1 + \pstar - \qqG}} \lesssim \eeG \, \|\Hcal(g_1, h_1) - \Hstar \|_{C^{2,\alpha}_{3 + \pstar - \qqG}}
 +  \| \Mcal (g_1, h_1) -  \Mstar \|_{C^{1,\alpha}_{\min(\qqM+1, 3 + \pstar - \qqG)}}. 
$$
Furthermore, applying Proposition~\ref{laplaceplus} to the equation in Claim~\ref{trace h} 
for the operator $- \Delta_e \Tr_e g_2 + (2/r^2) \Tr_e g_2$, we find 
$$
\aligned
\| \Tr_e g_2 \|_{C^{2,\alpha}_{\pstar}} 
& \lesssim
\| \Hcal(g_1, h_1) - \Hstar \|_{C^{0,\alpha}_{\pstar + 2}} 
 +  
 \| \smallcal_5(g_2,h_2,u_2,Z_2) \|_{C^{0,\alpha}_{\pstar + 2}}
\endaligned
$$ 
and we can also control $\| \smallcal_5(g_2,h_2,u_2,Z_2) \|_{C^{0,\alpha}_{\pstar + 2}}$ by completely 
similar arguments. Hence, recalling \eqref{mind} we can rewrite the estimate \eqref{u2:maines1} as 
\bel{equa-5266}
\|u_2 - \mt /r \|_{C^{4,\alpha}_{\pstar}} 
\lesssim \| \Hcal(g_1, h_1) - \Hstar \|_{C^{2,\alpha}_{3 + \pstar - \qqG}}  
+  \eeG \, \|\Mcal(g_1, h_1) - \Mstar \|_{C^{1,\alpha}_{\min(\qqM+1, 3 + \pstar - \qqG)}}  
+ |\mt|.
\ee

\paragraph{Controlling the mass corrector.} 

The constant $\mt$ is now estimated in terms of the data.  
First of all, in \eqref{equa:trac} and \eqref{Trg:L1} we have established that $r^{-2} \Tr_e g_2 \in  L^1\cap C^{2,\alpha}_1$, so, in view of Claim~\ref{trace h}, we also have $\Delta (\Tr_e g_2) \in L^1 \cap C^{2,\alpha}_1$. 
Applying Proposition~\ref{proposition behavior at infinity},  we obtain 
$$ 
\lim_{r\to +\infty} r \, \Tr_e g_2 = - {1 \over 4\pi} \int_{\RR^3} \Delta (\Tr_e g_2) \, dx = 0, 
\qquad \qquad 
\lim_{r\to +\infty} \big| r^2 \nabla (\Tr_e g_2) \big|
 = {1 \over 4\pi} \Big| \int_{\RR^3} \Delta (\Tr_e g_2) \, dx \Big| = 0, 
$$
that is, 
\bel{81:forremark}
\lim_{r \to +\infty} r^3 \, \Big| {\Tr_e g_2 \over r^2} \Big| 
=
0 
=
\lim_{r \to +\infty}  r^4 \, \Big| \nabla \Bigg({\Tr_e g_2 \over r^2} \Bigg) \Big|.
\ee 
Next, in view of Proposition~\ref{proposition behavior at infinity}, $\mt$ is the coefficient of the asymptotic behavior of $u_2$, that is, 
$$
\mt = \lim_{r \to +\infty} r \, u_2
= - {1 \over 4 \pi } \int_{\RR^3} \Delta u_2 \, dx.
$$  
Combining this observation with \eqref{hijij} and checking that 
the terms therein decay sufficiently fast, 
we find 
\bel{minfty.formula}
\aligned
\mt 
& = {1 \over 16 \pi} \int_{\RR^3} \del_{ij} ((|x|^2/r^2) g_{2ij}) \, dx - {1 \over 16 \pi} \int_{\RR^3} \smallcal_3(u_2,Z_2)  \, dx 
\\
& = 
{1 \over 16 \pi} \lim_{r \to +\infty} {}\int_{S_r(0)} \del_j g_{2ij} {x_j \over r} \, d\omega 
- {1 \over 16 \pi} \int_{\RR^3} \smallcal_3(u_2,Z_2)  \, dx, 
\endaligned
\ee
after an integration by parts.

On the other hand, integrating the identity \eqref{eq:hR} over $\RR^3$ 
and using the asymptotic property \eqref{81:forremark} we obtain 
$$
\aligned
 \lim_{r \to +\infty} {}\int_{S_r(0)} \del_j g_{2ij} {x_j \over r} \, d\omega  = 
 \int_{\RR^3}  \del_{ij} g_{2ij} \, dx 
& = \int_{\RR^3} (\Hstar - \Hcal( g_1, h_1) ) \, dx
  + \int_{\RR^3} \Big( \smallcal_0(g_2) + \smallcal_1(h_2) + \smallcal_2(g_2) \Big) \, dx, 
\endaligned
$$
and, in combination with \eqref{minfty.formula}, we arrive at a formula for the mass corrector: 
\bel{minfty.fola:2}
\aligned
\mt & = {1 \over 16 \pi} \int_{\RR^3} (\Hstar - \Hcal( g_1, h_1) ) \, dx
  + {1 \over 16 \pi} \int_{\RR^3}  \Big( \smallcal_0(g_2) + \smallcal_1(h_2) + \smallcal_2(g_2) - \smallcal_3(u_2,Z_2)  \Big) \, dx. 
\endaligned
\ee
Using H\"older's inequality 
and \eqref{8:equa-213}, from \eqref{minfty.fola:2} we deduce the desired estimate of the mass corrector: 
\bel{82:mexact:a}
\Big| \mt - {1 \over 16 \pi} \int_{\RR^3} (\Hstar - \Hcal( g_1, h_1) ) \, dx \Big|
\lesssim \eeG \, \| \Hcal(g_1,h_1) - \Hstar \|_{L^2C_{5/2}^{\alpha}}
 + \eeG \, \| \Mcal(g_1,h_1) - \Mstar \|_{L^2C_{5/2}^{1, \alpha}}, 
\ee 
where for simplicity we only state estimates with decay rate $5/2$ as this is sufficient for our main purpose. 


\paragraph{Super-harmonic decay for the extrinsic curvature.}

From our earlier analysis of the adjoint momentum operator, especially Proposition~\ref{proposition behavior LMcal 2}, similar arguments to those above show that, provided the second strongly tame conditions \eqref{eq:strongtameM} hold 
and by 
relying on the elliptic system in Claim~\ref{div Z2}, there exists a constant vector $\Pt \in \RR^3$ such that
\bel{82:Pinfty}
\Big| \Pt - {1 \over 8 \pi} \int_{\RR^3} (\Mstar - \Mcal (g_1, h_1)) \, dx \Big| 
\lesssim 
\eeG \, \| \Hcal(g_1,h_1) - \Hstar \|_{L^2C_{5/2}^{\alpha}}
 + \eeG \, \| \Mcal(g_1,h_1) - \Mstar \|_{L^2C_{5/2}^{1, \alpha}}
\ee
and
\bel{eq:6}
\| Z_2 + 2 V \cdot \Pt \|_{C^{3,\alpha}_{\qstar - 1}} 
\lesssim  \eeG \, \| \Hcal(g_1, h_1) - \Hstar \|_{C^{2,\alpha}_{\pstar + 2}}  +  \|\Mcal(g_1, h_1) - \Mstar \|_{C^{2,\alpha}_{\qstar + 1}} + \eeG| \mt | +  | \Pt |, 
\ee  
in which $V_{ij} = \big( x_i x_j + 3 r^2 \delta_{ij} \big)/(2 r^3)$ (for $i,j = 1,2,3$). 
Consequently, under the conditions in Theorem~\ref{theo:deuxieme}, by taking our results above into account in the  dual version of the linearized Einstein constraints
  \eqref{2:hu} and recalling our earlier  
  estimate \eqref{8:subhar}
  together with \eqref{u2:maines1}-\eqref{equa-5266},
  we obtain
$$
\aligned
\| g_2 - \mt \, r^2 \, \Hess_{e}(1/r) \|_{C_{\pstar}^{2,\alpha}} 
& \lesssim  
\eeG \, \| \Hcal(g_1,h_1) - \Hstar \|_{L^2C_{5/2}^{\alpha}} 
+ \| \Hcal(g_1,h_1) - \Hstar \|_{C^{2,\alpha}_{\pstar + 2}} + \Big| \int_{\RR^3} ( \Hcal(g_1,h_1) - \Hstar ) \, dx \Big|
\\
& \quad+ \eeG \, \| \Mcal(g_1,h_1) - \Mstar \|_{L^2C_{5/2}^{1, \alpha}} + \eeG  \, \| \Mcal(g_1,h_1) - \Mstar \|_{C^{2,\alpha}_{\min(\qqM + 1, \pstar + 2)}} 
\endaligned
$$
and
$$
\aligned
\| h_2 - \Lcal_{(V \cdot \Pt)} e \|_{C_{\qstar}^{2,\alpha}}
& \lesssim  \eeG \, \| \Mcal(g_1,h_1) - \Mstar \|_{L^2C_{5/2}^{\alpha}}
 +
\| \Mcal(g_1,h_1) - \Mstar \|_{C^{2,\alpha}_{\qstar + 1}} 
+ \sum_{1 \leq j \leq 3} \Big| \int_{\RR^3} \big( \Mcal( g_1, h_1) - \Mstar \big)_j \, dx \Big|
\\
& \quad + \eeG \, \| \Hcal(g_1,h_1) - \Hstar \|_{L^2C_{5/2}^{\alpha}}
 + 
 \eeG \, \| \Hcal(g_1, h_1) - \Hstar \|_{C^{2,\alpha}_{\pstar + 2}} + \eeG \Big| \int_{\RR^3} ( \Hcal(g_1,h_1) - \Hstar ) \, dx \Big|. 
\endaligned
$$


\subsection{Final observations concerning the case $\pstar = 1$ and $\qstar = 2$.}

We now study the asymptotic behavior of $(g_2, h_2)$ in the {\sl special case} $\pstar = 1$ and $\qstar = 2$. We claim that, based on our previous arguments, when $\pstar = 1$ we also have\footnote{which, of course, is also true if it happens that $\pstar >1$}: 
\bel{uZc0:a}
\lim_{r \to +\infty} \big( r \, \big| g_2 - \mt \, r^2 \, \Hess_{e} (1 /r ) \big| 
+ r^2 \, \big| \del( g_2 - \mt \, r^2 \, \Hess_{e}  (1 /r ) ) \big| \big) = 0.
\ee 
Indeed, observing that
$\int_{\RR^3} \Delta ( u_2 - \mt r^{-1}) \, dx = 0$
and 
$\Delta ( u_2 - \mt r^{-1}) = \Delta u_2$ (for $r \geq 1$), 
and applying Proposition~\ref{proposition behavior at infinity k} to $\Delta (u_2 - \mt r^{-1} )$, 
we deduce from the elliptic equation \eqref{laplace u} and the property of the trace in \eqref{81:forremark}
$$
\lim_{r \to + \infty} \max_{0 \leq i \leq 3} r^{1 + i} \, |\del^i (u_2 - \mt r^{-1} )|  = 0.
$$
Taking into account the Hamiltonian constraint and the 
 adjoint equations \eqref{eq2:hua},
we obtain \eqref{uZc0:a}
as claimed.
Similarly, in the case $\qstar = 2$ with 
$V^\infty := \Pt \cdot V$
we also have
$$
\lim_{r \to +\infty} \max_{0 \leq i \leq 3} r^{1 + i} \, \big| \del^i (Z_2 + V^\infty) \big| = 0
$$
and, by the adjoint momentum equation, 
$\lim_{r \to +\infty} \max_{i = 0,1,2} r^{2 + i} \, \big| \del^i (h_2 - (1/2)\Lcal_{V^\infty} e) \big| = 0$. 


We also observe that in the super-harmonic regime of interest \eqref{eq:strongtameH}--\eqref{eq:strongtameM} and, more precisely, thanks to the 
integrability condition $\Mcal(g_1, h_1) - \Mstar \in L^1$, the strict inequality $\pstar < \qqG$ can be {\sl improved to}
 $\pstar \leq \qqG$, by writing 
$$
\aligned 
\| g_2 - \mt \, r^2 \, \Hess_{e} (1/r) \|_{C_{\pstar}^{2,\alpha}} 
\lesssim  
& \, \eeG \, 
\| \Hcal(g_1,h_1) - \Hstar \|_{L^2C_{5/2}^{\alpha}} 
+ \| \Hcal(g_1,h_1) - \Hstar \|_{C^{2,\alpha}_{\pstar + 2}} + \Big| \int_{\RR^3} ( \Hcal(g_1,h_1) - \Hstar ) \, dx \Big|
\\
& + \eeG \, \| \Mcal(g_1,h_1) - \Mstar \|_{L^2C_{5/2}^{1, \alpha}}
 + \eeG  \, \| \Mcal(g_1,h_1) - \Mstar \|_{C^{2,\alpha}_{3}}
 \\
 &  
 +  \eeG \hskip-.12cm \sum_{1 \leq j \leq 3} \Big| \int_{\RR^3} \big( \Mcal( g_1, h_1) - \Mstar \big)_j \, dx \Big|.
\endaligned
$$
This completes the proof of Theorems~\ref{theo:deuxieme} and \ref{theo:deuxieme-2}.

\section{The asymptotic localization method}
\label{sectio6}
 
\subsection{Preliminaries}

\paragraph{Heuristics.}
 
We now apply our method in order to solve a new localization problem motivated by a question raised by  Carlotto and Schoen in~\cite{CarlottoSchoen}. The main theorem established by the authors therein provides solutions to 
 Einstein's constraint equations, 
  which are {\sl localized at infinity} in the sense that their geometry is prescribed within an angular sector (to be, for instance, the Schwarzschild metric), 
while the solution in the remaining angular sector is identically Euclidean except for a (``small'') {\sl transition region.} In the latter transition region, the solutions constructed in~\cite{CarlottoSchoen} are controlled at 
only {\sl sub-harmonic} decay (namely $r^{-1+\eps}$ for $\eps \in (0,1)$). 
By analogy with Carlotto and Schoen's localization problem, we propose here 
the {\bf asymptotic localization problem}, as we call it, which consists of finding solutions whose {\sl asymptotic} behavior is arbitrarily prescribed at a {\sl super-harmonic} rate,
 {\sl except} within an (arbitrarily small) transition region where the solution has {\sl harmonic} decay $r^{-1}$ (or possibly decay faster). 
In order to achieve this result, our strategy is based on carefully {\sl designing a seed data set}
 adapted to the localization problem. 
 As a matter of fact, in order to achieve the desired harmonic decay 
 we find it quite natural ---from a physical standpoint, at least---
  to 
 relax the requirement originally proposed in~\cite{CarlottoSchoen}
  and allow for an {\sl asymptotic} localization only. 

Let us further explain our strategy and results in the present section. 

\bei 

\item Starting from a seed data $(g_1, h_1)$, we have proven in earlier sections of this paper that iterations based on a linearization of 
  the Einstein constraints
 can be performed and generate an actual solution $(g,h)$. This solution, however,
 contains an extra contribution at infinity, accounting for an ``error'' in the prescribed seed data which our algorithm ``propagates'' to 
infinity. 

\item More precisely, under the assumptions of Theorem \ref{theo:deuxieme} there exists an Einstein solution $(g, h)$ such that 
\bel{82:motivation}
g - g_1 +  \sum_{1 \leq a \leq n} \mt_a \chi_a r^2 \, \Hess_{e}(1/r) \in C^{2, \alpha}_{\pstar} (\Mbf), 
\ee
in which the constant $\mt_a$ is usually non-vanishing. Our purpose now is to further 
analyze the Hessian contribution $r^2 \, \Hess_{e} (1/r)$, 
and establish that it can be {\sl suppressed} in all asymptotic directions {\sl except}
 possibly within a small asymptotic angular region; cf.~Theorem~\ref{theorem:trois}, below. 
 
\item Our proof of~Theorem~\ref{theorem:trois} is based on designing a parametrized family of seed data $s \mapsto (g_1(s), h_1)$ by adding (in a neighborhood of infinity) the contribution
\bel{equa-611}
s \, 
\big( (1/2) \Deltas \Phi (x/r)- \Phi (x/r) \big) \, r^2 \, \Hess_{e} (1/r). 
\ee
Here, the ``shape function'' $\Phi$ is defined on the unit two-sphere $S^2$ and $\Deltas$ denotes the Laplace operator on $S^2$. 
We control the mass coefficient at infinity by suitably choosing the function $\Phi$ as well as the ``magnitude'' parameter $s$ in \eqref{equa-611}.  
More precisely, \eqref{eq-choice} below defines our parametrized family of seed data,
 and  
\eqref{83:minfty:1a} below gives the expression of the mass term $\mt(s)$. 
The parameter $s$ is determined by solving the algebraic equation \eqref{equa-our-equation}
and using the vanishing condition \eqref{equa-vanish}, below. 
 
\item  Next, in Section~\ref{sec:revisit}, we solve the asymptotic localization problem specifically for the choice of 
the Minkowski and the Schwarzschild metrics; cf.~Theorem~\ref{corollary-optimalCS}. Our Ansatz for the seed metric is now more involved ; see \eqref{eq:8.3:seeddata} which now depends upon two magnitude parameters denoted by $(s,t)$ and two shape 
functions $\Phi, \Psi$ defined on the sphere $S^2$. The mass coefficient $\mt(t,s)$ is now given by \eqref{83:equ:mSch} and 
finally we select the parameters $(s,t)$ in order to achieve the desired asymptotic localization.  
\eei 


\subsection{The asymptotic localization method for general seed metrics}

We now revisit the conclusion of Theorem \ref{theo:deuxieme} in the 
harmonic regime where the exponents $(p,q)=(1,2)$ are taken therein. Our theorem singles out a cone $\Cscr_a := \big\{ x \cdot \nu \geq |x| \, \cos a \text{ and } |x| \geq 1 \big\}$ in coordinates at infinity for some unit vector $\nu$ and some $a \in (0,2 \pi)$. In short, we prove that the Hessian term can be ``concentrated'' in an arbitrarily small cone.

\begin{theorem}[The asymptotic localization method for a general metric] 
\label{theorem:trois}  
As in Theorem~\ref{theo:deuxieme}, consider a seed data set $(g_1, h_1, \Hstar, \Mstar)$ defined 
on a background manifold $\Mbf = (\Mbf,e,r)$ and satisfying the strongly tame conditions \eqref{eq:strongtameH} for some decay exponents $(\ppG, \qqG,\ppM, \qqM)$ and H\"older exponent $\alpha \in (0,1)$. Consider\footnote{In the balanced regime $(\qqG, \qqM, \qstar) = (\ppG+1, \ppM+1, \pstar+1)$, we can take 
$1/2 \leq \ppG$ and 
$1 \leq \pstar \leq \min(\ppG+1,\ppM)$, together with 
$\pstar < \min(2,\ppG+1)$.}
 a pair of effective 
exponents $(\pstar, \qstar)$ for strongly tame data  in the sense \eqref{eq:strongexpo}.  
Consider an asymptotic cone $\Cscr_a$ with angle\footnote{arbitrarily small or large} $a \in (0, 2\pi)$.
Then, to the solution of Einstein's constraint equations 
  $(g, h)$ given by Theorem~\ref{theo:deuxieme}
one can associate another solution $(\gloc, \hloc)$, called an {\bf 
asymptotically localized solution}, which enjoys the same harmonic control as $(g,h)$:
\be
\aligned
(\gloc - g_1, \hloc - h_1)& \in C^{2,\alpha}_1 (\Mbf) \times C^{2,\alpha}_2 (\Mbf), 
\qquad \quad 
(g - g_1, h - h_1) \in C^{2,\alpha}_1 (\Mbf) \times C^{2,\alpha}_2 (\Mbf) 
\endaligned
\ee
and, in addition, $\gloc$ enjoys the following (super-harmonic) localization property outside the given cone: 
\be
\gloc - g_1\in C^{2,\alpha}_{\pstar} (\Cscr_a^c). 
\ee
Hence, $\gloc-g_1$ as a super-harmonic decay in all directions, except inside the cone $\Cscr_a$ within which 
it has harmonic decay only. 
\end{theorem}

\begin{proof} As in Section 5, we can reduce the problem to the case $(\Mbf,  e) = (\RR^3, g_\Eucl)$ 
and we decompose the proof in several steps.   
All of the operators $\Hess$ and $\Delta$ are computed with the Euclidian metric $e$ 
unless  specified differently.

\vskip.16cm

\noindent{\bf Step 1. A parametrized family of new seed data.} 

{\it 1.1 Defining the new seed data.}
Let $\Phi : S^2 \to \RR$ be a smooth function defined on the ``sphere at infinity'' and to be chosen at the end of our argument. Let  $\xi$ be a smooth cut-off function that $\xi$ is identically $0$ inside the unit ball $B(0,1) \subset \RR^3$ and $1$ outside $B(0,2)$. We denote by $\Delta = \Deltas_{S^2}$ the Laplace operator on the unit two-sphere.
For any $s \in \RR$, we set
\bel{eq-choice}
g_1(s) := g_1 + \ghat_1(s),
\qquad\qquad
\ghat_1(s) := s \, \xi_s \, \big({1 \over 2}\Deltas \Phi(x/r)  - \Phi(x/r) \big) \, r^2 \, \Hess(1/r),
\ee
in which $\xi: \RR^3 \to [0,1]$ with $\xi_s := \xi(x / \rho_s )$
and $\rho_s >0$ is a sufficiently large constant to be chosen later in the proof. Clearly, we have $|\ghat_1(s)| \lesssim 1/r$ so that this term enjoys harmonic decay. 


Some elementary identities involving the correction term $\Phi$ will be used in the following calculations, especially the observation
\bel{equa-65} 
\del_{ij} \ghat_1(s)_{ij} - \del_{jj} \ghat_1(s)_{ii} 
=
 s \, \Delta \Bigg( {r^2 \over 2} \Delta \Big( {\Phi \over r} \Big) \Bigg) -  s \, \Delta \Bigg( {\Phi \over r} \Bigg) 
 =: \Acal(s), 
 \qquad \text{ for all } r \geq \rho_s.
\ee
Namely, observe first that for any $C^1$-regular functions $\Psi : S^2 \to \RR$ and $f : \RR \to \RR$, the scalar product 
$\langle \nabla (\Psi(x/|x|)), \nabla (f(|x|) ) \rangle = 0$ vanishes in $\RR^3$. 
So, for all $r \geq 1$ we have 
$$ 
\aligned
\del_i \Big(- {1 \over 2} \Deltas \Phi + \Phi \Big) \del_j \Big( r^2 \del_{ij} \Big( {1 \over r} \Big) \Big) 
= \del_i \Big(- {1 \over 2} \Deltas \Phi + \Phi \Big)  \Bigg( {6 x_i x_j^2 \over r^5} - {2 \delta_{ij} x_j \over r^3} \Bigg) 
+ r^2 \del_i \Big(- {1 \over 2} \Deltas \Phi + \Phi \Big) \del_i \Big( \Delta \Big( {1 \over r} \Big) \Big)
= 0
\endaligned
$$
and, for second-order derivatives,  
$$
\aligned
& \del_{ij} \Big(- {1 \over 2} \Deltas \Phi + \Phi \Big) \del_{ij} \Big( {1 \over r} \Big) 
= \del_j \Big( \del_{i} \Big(- {1 \over 2} \Deltas \Phi + \Phi \Big) \del_{ij} \Big( {1 \over r} \Big) \Big) 
- \Big \langle  \nabla \Big(- {1 \over 2} \Deltas \Phi + \Phi \Big), \, \nabla \Big( \Delta \Big( {1 \over r} \Big) \Big) \Big \rangle
\\
& = - \del_j \Bigg( {\delta_{ij} \over r^3} \del_{i} \Big(- {1 \over 2} \Deltas \Phi + \Phi \Big) \Bigg) 
- \del_j \Big( x_j \Big \langle \nabla \Big(- {1 \over 2} \Deltas \Phi + \Phi \Big), \, \nabla \Big( {1 \over r^3} \Big)  \Big \rangle \Big)
\\
& = - \del_i \Big( {1 \over r^3} \del_{i} \Big(- {1 \over 2} \Deltas \Phi + \Phi \Big) \Big)
  = - {1 \over r^3} \Delta \Big(- {1 \over 2} \Deltas \Phi + \Phi \Big) 
- \Big \langle \nabla \Big( {1 \over r^3} \Big), \,  \nabla \Big(- {1 \over 2} \Deltas \Phi + \Phi \Big) \Big \rangle, 
\endaligned
$$
hence, using $\Delta (\Phi(x/r)) = r^{-2} \Deltas (\Phi(x/r))$ and $\nabla( \Phi(x/r) )\cdot \nabla (1/r)= 0$, 
$$
\aligned
& \del_{ij} \Big(- {1 \over 2} \Deltas \Phi + \Phi \Big) \del_{ij} \Big( {1 \over r} \Big) 
        = - {1 \over r^3} \Delta \Big(- {1 \over 2} \Deltas \Phi + \Phi \Big) 
         = {1 \over r^2} \Delta \Bigg( {r^2 \over 2} \Delta \Big( {\Phi \over r} \Big) \Bigg) 
-  {1 \over r^2}  \Delta \Big( {\Phi \over r} \Big)
= {1 \over s r^2} \Acal(s). 
\endaligned
$$
Observing also that the trace term $\del_{ij}\Big( r^2 \del_{ij} \Big( {1 \over r} \Big) \Big) = 0$ vanishes for all $r \geq 1$, 
and taking the factor $r^2$ into account, this leads us to \eqref{equa-65} .

 \vs

{\it 1.2 Einstein constraints for the new seed data.} 
In view of \eqref{equa-Rscal}, the scalar curvature $R_{g_1(s)}$ of $g_1(s)$ reads 
(recalling that $g_1(s)$ is the modified data and $g_1=g_1(0)$ is the ``fixed' data)
$$ 
\aligned
R_{g_1(s)} - R_{g_1}
& =  \big( \textstyle \sum_{i,j}\del_{ij} \ghat_1(s)_{ij} \big) - \Delta (\Tr_e \ghat_1(s))  - \Theta_0(\ghat_1(s)),
\\
 - \Theta_0(\ghat_1(s)) 
& := \ghat_1(s) * \del^2 g_1 
 + \ghat_1(s)* \del g_1 * \del g_1 + \del g_1 * \del \ghat_1(s) 
 \\
 & \quad 
 +  (g_1 -e) * \del^2 \ghat_1(s) 
 + \ghat_1(s)* \del^2 \ghat_1(s)  + \del \ghat_1(s) * \del \ghat_1(s), 
 \endaligned
 $$
so that the Hamiltonian takes the form (the numbering of the contributions $\Theta$
being chosen by analogy with the decompositions in Section~5): 
\bel{82:Hcals}
\aligned
(\Hcal(g_1(s), h_1) - \Hstar) 
&= \big( \Hcal(g_1, h_1) - \Hstar \big) +  \del_{ij} \ghat_1(s)_{ij} - \del_{jj} \ghat_1(s)_{ii} 
+ \Theta_{02}(s)
\\
&  
= \big( \Hcal(g_1, h_1) - \Hstar \big) 
+  \Acal(s)  + \Theta_{10}(s) + \Theta_{02}(s),
\endaligned
\ee
in which we make explicit the contribution $\Acal(s)$ from \eqref{equa-65} with 
 $\Theta_2(\ghat_1(s)) := \ghat_1(s)*h_1*h_1$ and 
$$ 
\aligned  
\Theta_{10}(s)
 := \del_{ij} \ghat_1(s)_{ij} - \del_{jj} \ghat_1(s)_{ii} - \Acal(s), 
 \qquad\quad
\Theta_{02}(s) := \Theta_0(\ghat_1(s)) + \Theta_2(\ghat_1(s)). 
\endaligned
$$
 We follow closely the notation from Section~5 (but there is no analogue of $\smallcal(h_2)$ here since we do not modify the extrinsic curvature). 
Thanks to \eqref{equa-65}, we have $\Theta_{10}(s) = 0$ identically 
for all (sufficiently large) radius $r \geq \rho_s$. 


On the other hand, writing the divergence   
$\Div_{g_1(s)} h_1 = \Div_{g_1} h_1 + h_1 * \del \ghat_1(s) +  \ghat_1(s) * \del g_1 * h_1$
of the prescribed extrinsic curvature
we write the Einstein momentum as ($8$ being chosen again by analogy with Section~5)
\bel{82:Mcalt} 
(\Mcal(g_1(s), h_1) - \Mstar)
 = (\Mcal(g_1, h_1) - \Mstar) + \Theta_8(s), 
 \qquad
 \Theta_8(s) := h_1 * \del \ghat_1(s) + 
 \ghat_1(s) * \del g_1 * h_1. 
\ee
Using the above decompositions, it is not difficult to check that, provided we choose 
$\rho_s \gtrsim ( (1 + |s|) /  \min(\eeM, \eeG) )^2$, 
the new seed data $(g_1(s), h_1)$ is well-defined and enjoys the conditions in Theorem~\ref{theo-prec} in which we choose $(p, q) = (1/2, 3/2)$.

In turn, the new solution $(g (s), h(s))$ generated by $(g_1(s), h_1)$ is well-defined and, in view of \eqref{eq2:hua} with $(p, q) = (1/2, 3/2)$ and $(g_2, h_2) (s) := (g- g_1, h - h_1)(s)$, we have 
\begin{subequations}
\label{eq:8.3:hub-all} 
\begin{align}
{g_2 (s) \over r^2} 
& = - \Big(\Delta_{g_1(s)}u_2 (s)\Big) \, g_1(s) + \Hess_{g_1(s)}(u_2 (s)) - u_2 (s)\Ric_{g_1(s)}   
 + h_1 *h_1 *u_2 (s) + \nabla_{g_1(s)}\big(h_1 *Z_2 (s)\big),
\label{eq:8.3:hua}
\\
h_2 (s) & = - {1 \over 2}  \Lcal_{Z_2 (s)}g_1(s) + h_1 *u_2 (s), 
\label{eq:8.3:hub}
\end{align}
\end{subequations}
where the dual unknowns are denoted by $(u_2 (s), Z_2 (s)) \in C^{4,\alpha}_{1/2} \times  C^{3,\alpha}_{1/2}$. 
These basic decay rates are now going to be improved by following the method in Section~5 and eventually choosing the parameter $s$. 


\vskip.3cm

\noindent{\bf Step 2. Super-harmonic estimates.}
We proceed as  
 in the proof of Theorem~\ref{theo:deuxieme}, and we emphasize the dependency in $s$. Terms like $\smallcal(s)$ with various indices corresponding to terms with favorable decay of the kind already analyzed in Section 5. 
 For the metric variable we have (using that $h_1(s)=h_1$) 
\begin{subequations}
\label{eq:8.3.general}
\bel{eq:8.3:g2 hess}
\aligned
 -  \Delta (\Tr_e g_2(s)) +   \textstyle  \sum_{i, j} \del_{ij} g_{2ij}(s) 
& = \big( \Hstar - \Hcal( g_1(s), h_1) \big) + \smallcal_{012}(s), 
\\
 - 2\Delta u_2 (s)
& =
{1\over r^2} \Tr_e g_2(s) + R_{g_1(s)} u_2 (s) + \smallcal_4(s),
\qquad 
\del_{ij}\Bigg( {|x|^2 \over r^2} g_{2ij}(s) \Bigg)  
 = -4 \, \Delta u_2 + \smallcal_3(s). 
\endaligned
\ee
On the other hand, for the variable $Z_2$, we can also derive 
\be
\LMcal(Z_2)  
 = 2 \, \big( \Mstar - \Mcal(g_1, h_1) \big) + \smallcal_{67}(s). 
\ee

\end{subequations}

As before, combining these equations we obtain (for all $r \geq 1$) 
\bel{eq:8.3:Tr}
\aligned
&- \Delta \big( \Tr_e g_2 (s) \big) + {2 \over r^2} \Tr_e g_2 (s) 
 := \big( \Hstar - \Hcal( g_1(s), h_1) \big) + \smallcal_5(s).
\endaligned
\ee
with $
 \smallcal_5(s) := \smallcal_{012}(s)
 - \smallcal_3(s) - 2 \, \smallcal_4(s)
 - \del_{ij} \big( ( 1 - |x|^2/r^2) \, g_{2ij}(s) \big)
 - 2 \, R_{g_1(s)} u_2(s)$.

We can rewrite the equation for $\Delta u_2 (s)$ in the form 
\bel{eq:8.3:uh0b}
\aligned
- 2 \Delta \Bigg(u_2 (s) + {s \, \Phi \over 4r} \Bigg) 
& = {1 \over r^2} \Bigg( \Tr_e g_2 (s) -  {s \, r^2 \over 2} \Delta \Big( {\Phi \over r} \Big) \Bigg) 
+  R_{g_1(s)} u_2 (s) + \smallcal_4(s),
\endaligned
\ee
and, in view of the expression \eqref{82:Hcals} of the Hamiltonian for the seed data, 
\bel{eq:8.3:uh0}
\aligned
&- \Delta \Bigg( \Tr g_2 (s) - {s \, r^2 \over 2} \Delta \Big( {\Phi \over r} \Big) \Bigg) 
+ {2 \over r^2} \Bigg( \Tr g_2 (s) - {s \, r^2 \over 2} \Delta \Big( {\Phi \over r} \Big) \Bigg) 
   = 
  \big( \Hstar  - \Hcal(g_1, h_1) \big)
+ \smallcal_5(s)
 + \Theta_{10}(s) + \Theta_{02}(s). 
\endaligned
\ee

Therefore, an analysis similar to
the one in the proof of Theorem \ref{theo:deuxieme} leads to the following result. 

\begin{claim}[Harmonic estimates]
 The metric and the dual variables satisfy 
$$
{1 \over r^2} \Bigg( \Tr g_2 (s) - {s \, r^2 \over 2} \Delta \Big( {\Phi \over r} \Big) \Bigg) \in L^1 \cap C^{2, \alpha}_1,
\qquad \quad
u_2 (s) + {s \, \Phi \over 4r} \in C^{4, \alpha}_1,
\quad\qquad Z_2 (s) \in C^{3, \alpha}_1. 
$$
\end{claim}

Using this result in \eqref{eq:8.3:hub-all}, we obtain $(g_2 (s), h_2 (s)) \in C^{2, \alpha}_{1} \times C^{2, \alpha}_{2}$
and, therefore, thanks again to \eqref{eq:8.3:uh0b}-\eqref{eq:8.3:uh0} 
\begin{align}
- \Delta \Bigg( \Tr g_2 (s) - {s \, r^2 \over 2} \Delta \Big( {\Phi \over r} \Big) \Bigg) 
+ {2 \over r^2} \Bigg( \Tr g_2 (s) - {s \, r^2 \over 2} \Delta \Big( {\Phi \over r} \Big) \Bigg)
& \in L^1 \cap C^{0,\alpha}_{\pstar + 2}, 
\label{eq:8.3:uh1}
\\
- 2 \Delta \Bigg(u_2 (s) + {s \, \Phi \over 4r} \Bigg) - {1 \over r^2} \Bigg( \Tr g_2 (s) - {s \, r^2 \over 2} \Delta \Big( {\Phi \over r} \Big) \Bigg) 
& \in L^1\cap C^{2,\alpha}_{\pstar + 2}. 
\label{eq:8.3:uh2}
\end{align}
By applying Proposition \ref{laplaceplus} to \eqref{eq:8.3:uh1} and for $\pstar$ given in the statement of the theorem, we have  
\bel{eq:8,3:fastdecay0}
\Bigg( \Tr g_2 (s) - {s \, r^2 \over 2} \Delta \Big( {\Phi \over r} \Big) \Bigg) \in C^{2, \alpha}_{\pstar}, 
\qquad \quad
{1 \over r^2} \Bigg( \Tr g_2 (s) - {s \, r^2 \over 2} \Delta \Big( {\Phi \over r} \Big) \Bigg) \in L^1. 
\ee
Therefore, by applying Corollary \ref{remarkpoisson} to \eqref{eq:8.3:uh2}, there exists a constant $\mt(s)$ such that
$u_2 (s) + {s \, \Phi \over 4 r } - {\mt(s) \over r } \in C^{4, \alpha}_{\pstar}$.
In the same way as 
 in the proof of Theorem~\ref{theo:deuxieme}, we also have
$Z_2 (s) + 2 V \cdot \Pt(s) \in C^{3, \alpha}_{\qstar - 1}$, 
with the notation $V \cdot \Pt (s)$ introduced earlier with $V_{ij} := \Big( x_i x_j + 3 r^2 \delta_{ij} \Big)/ r^3$. 
Taking the last two results into account in \eqref{eq:8.3:hub-all}, we arrive at the following conclusion for the new  solution. 

\begin{claim}[Super-harmonic estimates]
 The metric and dual metric variables of the new solution enjoy
\bel{eq:8.3:main1} 
g_2 (s) - s \, r^2 \Delta \Big({\Phi \over 4 r}\Big) \, \delta 
+ s \, r^2 \, \Hess \Big( {\Phi \over 4 r} \Big) 
- \mt(s) \, r^2 \, \Hess \Big( {1 \over r}\Big) \in C^{2, \alpha}_{\pstar}, 
\qquad \quad
h_2 (s) - \Lcal_{V \cdot \Pt (s)} \delta \in C^{2, \alpha}_{\qstar}.
\ee
\end{claim}


\vskip.16cm

\noindent{\bf Step 3. Estimating the mass corrector.} We can apply similar arguments to the ones in the proof of Theorem \ref{theo:deuxieme} and control the term $\mt(s)$. By applying Proposition \ref{proposition behavior at infinity} to \eqref{eq:8.3:uh0b} 
we have
\be
\mt(s) = - {1 \over 4 \pi} \int_{\RR^3} \Delta \Big( u_2 (s) + {s \, \Phi \over 4 r } \Big) \, dx
\ee
and, consequently with the third equation in \eqref{eq:8.3:g2 hess},
\bel{83:minfty:1a}
\mt(s) = {1 \over 16 \pi} \int_{\RR^3} \Bigg(  \del_{ij} g_2 (s)_{ij} - s \, \Delta \Big( {\Phi \over r} \Big)
+ \smallcal_3(s) \Bigg)  \, dx.
\ee

On the other hand, after analyzing the decay with the same arguments as the ones leading to \eqref{81:forremark}  
we can check the formula 
$$ 
\int_{\RR^3} \Delta \Bigg( \Tr g_2 (s) - {s \, r^2 \over 2} \Delta \Big( {\Phi \over r} \Big) \Bigg) \, dx = 0
$$
and consequently, thanks to the first equation in \eqref{eq:8.3:g2 hess}, 
$$
\aligned
& \int_{\RR^3} \Bigg(  \del_{ij} g_2 (s)_{ij} - s \, \Delta \Big( {r^2 \over 2} \Delta \Big( {\Phi \over r} \Big) \Big)  \Bigg) \, dx  
= 
\int_{\RR^3} \big( \Hstar - \Hcal(g_1(s),h_1) \big) \, dx
+ \int_{\RR^3} \smallcal_{012}(s) \, dx.
\endaligned
$$
Taking also \eqref{82:Hcals} into account or equivalently 
$$
 \big( \Hcal(g_1(s),h_1) - \Hstar\big)
 =
\big( \Hcal(g_1, h_1) - \Hstar \big) 
+  s \, \Delta \Bigg( {r^2 \over 2} \Delta \Big( {\Phi \over r} \Big) \Bigg) -  s \, \Delta \Bigg( {\Phi \over r} \Bigg) 
 + \Theta_{10}(s) + \Theta_{02}(s),
$$
we can thus rewrite \eqref{83:minfty:1a} as  
\bel{82:minfty.b1}
\aligned
& 16 \pi \, \mt(s) 
 = \int_{\RR^3} \big(\Hstar - \Hcal(g_1, h_1) \big) \, dx
 + \int_{\RR^3} \Big( - \Theta_{10}(s) - \Theta_{02}(s) + \smallcal_{012}(s) \Big) \, dx. 
\endaligned
\ee


Next, by integration by parts we can rewrite the contribution $\Theta_{10}$ as  
$$
\int_{\RR^3} \Theta_{10}(s) \, dx
= \lim_{r \to +\infty} \int_{S_r}  \Bigg( 
\del_i \ghat_1(s)_{ij} - \del_j \ghat_1(s)_{ii} - s \, \del_j \Big( {r^2 \over 2} \Delta \Big( {\Phi \over r} \Big) \Big) 
+  s \, \del_j \Big( {\Phi \over r} \Big) \Bigg) \, 
{x_j \over r} \, d\omega,
$$
and 
a straightforward calculation gives us ($r \geq 2 \rho_s$)
\begin{subequations}
\label{82:Phiinteg}
\be
\aligned
\int_{S_r} \del_j \ghat_1(s)_{ii} {x_j \over r}\, d\omega 
& = s \int_{S_r} \del_j \Bigg( \Big( {1 \over 2} \Deltas \Phi - \Phi \Big) \, r^2 \Delta \Big(1/r \Big) \Bigg) \, {x_j \over r} \, d\omega
   = 0,
\endaligned
\ee
and 
$$
\aligned
\int_{S_r} \del_i \ghat_1(s)_{ij} {x_j \over r}\, d\omega 
& = s \int_{S_r} 
\Bigg( 
\Big( {1 \over 2} \Deltas \Phi - \Phi \Big) \Bigg( r \del_j \Big( \Delta \Big( {1 \over r} \Big) \Big) x_j  + {6x_i^2 x_j^2 \over r^6} - {2x_i^2 \over r^4} \Bigg)
+ \del_i \Big( {1 \over 2} \Deltas \Phi - \Phi \Big) \Bigg( {3x_i x_j^2 \over r^4} - {\delta_{ij} x_j \over r^2} \Bigg) \Bigg) \, d\omega
\\
& = 4 s \int_{S^2} \Big( {1 \over 2} \Deltas \Phi - \Phi \Big) \, d\omega, 
\endaligned
$$
thus
\be
\aligned
\int_{S_r} \del_i \ghat_1(s)_{ij} {x_j \over r}\, d\omega 
&  
   = - 4 s \int_{S^2} \Phi \, d\omega,
\endaligned
\ee
as well as 
\be
\aligned
\int_{S_r}  \Bigg( \del_j \Big( {r^2 \over 2} \Delta \Big( {\Phi \over r} \Big) \Big) 
-  \del_j \Big( {\Phi \over r} \Big) \Bigg) \, {x_j \over r} \, d\omega 
& = \int_{S_r} \Bigg( \del_j (\Deltas \Phi) {x_j \over 2r^2} \, d\omega - \Deltas \Phi {x_j^2 \over 2 r^4} - \del_j \Phi {x_j \over r^2} + \Phi {x_j^2 \over r^4} \Bigg) \, d\omega
\\
& = - \int_{S^2} \Big({1 \over 2} \Deltas \Phi - \Phi \Big) \, d\omega
  = \int_{S^2} \Phi \, d\omega. 
\endaligned
\ee
\end{subequations}
Therefore, it follows that
\be
\int_{\RR^3} \Theta_{10}(s) \, dx
= - 5 \, s \int_{S^2} \Phi \, d\omega. 
\ee
Using this result in \eqref{82:minfty.b1}, we arrive at the following {\sl formula for the mass corrector} of the solution associated with our choice of parametrized seed data: 
\bel{eq:82:m_identity}
\aligned
16 \pi \, \mt(s) 
 & = 
 \int_{\RR^3} \big( \Hstar - \Hcal(g_1, h_1)  \big) \, dx 
 + 5 \, s \int_{S^2} \Phi \, d\omega + \int_{\RR^3} \Big( - \Theta_{02}(s) + \smallcal_{012}(s) \Big) \, dx.
\endaligned
\ee
Finally, proceeding as in Section~5 for the proof of Theorem \ref{theo:deuxieme} (see \eqref{82:mexact:a})
and observe that all the terms in $ - \Theta_{02}(s) + \smallcal_{012}(s)$ vanish when $s=0$ and are proportional to $\eeG$, 
 we control the mass corrector as follows: 
\bel{82:minfty.c}
\aligned
& \Big| 16 \pi \, \mt(s) -  \int_{\RR^3} \big(\Hstar - \Hcal(g_1, h_1) \big) \, dx
- 5 \, s  
 \int_{S^2}  \Phi \, d\omega \Big|
 \\
 &
 \lesssim \eeG \, \| \Hcal(g_1,h_1) - \Hstar \|_{L^2C_{5/2}^{\alpha}}
 + \eeG \,  \| \Mcal(g_1,h_1) - \Mstar \|_{L^2C_{5/2}^{1, \alpha}}
 + \eeG s, 
\endaligned
\ee
in which we are going to choose $\int_{S^2}  \Phi \, d\omega$ to vanish (cf.~next step). 


\vskip.3cm

\noindent{\bf Step 4. Continuity of the mass corrector.} We now study the function  
$F : I \to \RR$, $s \mapsto {4 \over 5} \mt(s)$ determined in previous steps. We consider this function in the 
range 
\bel{equa-interv} 
I := [ - C_1 \, \beta - \eps, C_1 \, \beta + \eps], 
\qquad\quad
\beta := \|\Hcal(g_1, h_1) - \Hstar \|_{L^1} \leq C_0 \eeG, 
\ee
for some (large) constant $C_1>0$ and (small) constant $\eps>0$. The constant $C_0>0$ is fixed in our argument and depends upon the data $(g_1,h_1)$. 

We claim that $F$ is continuous. We thus fix some point 
 $s \in I$, and we consider a sequence $\{s_i\} \subset I$ converging to~$s$. In view of \eqref{eq:8.3:hub-all} we have
\bse
\bel{eq:8.3b:hua}
\aligned
g_2 (s) - g_2(s_i) 
& = - r^2 \Big( \big(\Delta_{g_1(s)}u_2 (s)\big)g_1(s) - \big(\Delta_{g_1(s_i)}u_2 (s_i)\big)g_1(s_i)\Big) 
\\
& \quad + r^2 \Big( \Hess_{g_1(s)}(u_2 (s)) - \Hess_{g_1(s_i)}(u_2 (s_i)) \Big) - r^2 \big( u_2 (s)\Ric_{g_1(s)} - u_2 (s_i)\Ric_{g_1(s_i)} \big) 
\\
& \quad + r^2 \big( h_1 *h_1 * (u_2 (s) - u_2 (s_i)) \big) + r^2 \big( \nabla_{g_1(s)}(h_1 *Z_2 (s)) - \nabla_{g_1(s_i)}(h_1 *Z_2 (s_i)) \big) 
\endaligned
\ee
and
\bel{eq:8.3b:hub}
h_2 (s) - h_2(s_i) = - {1 \over 2}  \big( \Lcal_{Z_2 (s)}g_1(s) - \Lcal_{Z_2 (s_i)}g_1(s_i) \big) + h_1 * \big(u_2 (s) - u_2 (s_i) \big),
\ee
\ese
where $(g_2, h_2, u_2, Z_2)(s)$ and $(g_2, h_2, u_2, Z_2)(s_i)$ belong to 
$C^{2,\alpha}_1 \times C^{2,\alpha}_2 \times C^{4, \alpha}_1 \times C^{3,\alpha}_3$. 
For instance, for any $\theta \in (0,1)$ we have 
$$
\aligned
& \big\| \big(\Delta_{g_1(s)}u_2 (s)\big)g_1(s) - \big(\Delta_{g_1(s_i)}u_2 (s_i)\big)g_1(s_i)  \big\|_{C^{2,\alpha}_{\theta + 2}}
\\
& \lesssim \big\| \Delta_{g_1(s)}u_2 (s)  - \Delta_{g_1(s_i)}u_2 (s)\big) g_1(s) \big\|_{C^{2,\alpha}_{\theta + 2}}
+ \big| \big( \Delta_{g_1(s_i)} ( u_2 (s) - u_2(s_i) ) \big) g_1(s) \big\|_{C^{2,\alpha}_{\theta + 2}} + \big| \big (\Delta_{g(s_i)} u_2 (s_i) \big) (g_1(s) - g_2(s_i) \big\|_{C^{2,\alpha}_{\theta + 2}}
\\
& \lesssim \| u_2(s) - u_2(s_i) \|_{C^{4, \alpha}_{\theta}} + \Ecal(s - s_i),
\endaligned
$$
in which $\Ecal=\Ecal(t)$ stands for some function satisfying
$\lim_{t \to 0} \Ecal (t) = 0$. 
So it follows from \eqref{eq:8.3b:hua} that
\bse
\label{eq:8.3b2:hua-all}
\bel{eq:8.3b2:hua}
\| g_2(s) - g_2(s_i) \|_{C^{2,\alpha}_{\theta}} \lesssim \| u_2(s) - u_2(s_i) \|_{C^{4, \alpha}_\theta} + \eeG \, \| Z_2(s) - Z_2(s_i) \|_{C^{3, \alpha}_\theta} + \Ecal(s - s_i). 
\ee
Similarly, we also obtain by \eqref{eq:8.3b:hub}
\bel{eq:8.3b2:hub}
\| h_2(s) - h_2(s_i) \|_{C^{2,\alpha}_{\theta + 1}} \lesssim \eeG \, \| u_2(s) - u_2(s_i) \|_{C^{4, \alpha}_\theta} + \| Z_2(s) - Z_2(s_i) \|_{C^{3, \alpha}_\theta} + \Ecal(s - s_i). 
\ee
\ese
On the other hand, recalling \eqref{eq:8.3.general}
we find 
$$ 
\aligned
- \Delta \big( \Tr g_2 (s) - \Tr g_2 (s_i ) \big) + \big(  \del_{ij} g_2 (s)_{ij} -  \del_{ij} g_2 (s_i )_{ij} \big) 
& = \big( \Hcal( g_1(s_i), h_1) - \Hcal( g_1(s), h_1)\big) 
 + \smallcal_{012}(s) - \smallcal_{012}(s_i),
\\
- \Delta ( \Tr g_2 (s) - \Tr g_2 (s_i ) ) + {2 \over r^2} \big( \Tr g_2 (s) - \Tr g_2 (s_i ) \big) 
& = \big( \Hcal( g_1(s_i), h_1) - \Hcal( g_1(s), h_1) \big) 
  + \smallcal_{67}(s) - \smallcal_{67}(s_i)
\endaligned
$$
and
$$
\aligned
- 2\Delta (u_2 (s) - u_2 (s_i ) ) 
& = {1\over r^2} \big( \Tr_e g_2 (s) - \Tr_e g_2 (s_i ) \big) 
+ R_{g_1(s)} u_2 (s) - R_{g_1(s_i)} u_2 (s_i)
+ \smallcal_4(s) -  \smallcal_4(s_i),
\\
-4 \, \Delta ( u_2 (s) - u_2 (s_i ) ) & = \del_{ij}\Bigg( {|x|^2 \over r^2} ( g_2 (s)_{ij} - g_2 (s_i )_{ij} ) \Bigg)
-\smallcal_3(s) + \smallcal_3(s_i),
\\
\Delta Z_2 (s) + \Div(\nabla Z_2 (s)) 
& = \big( \Mcal(g_1(s_i), h_1) - \Mcal(g_1(s), h_1) \big)
+  \smallcal_{67}(s) -  \smallcal_{67}(s_i).
\endaligned
$$
An analysis similar to the one 
in the proof of Theorem \ref{theo:deuxieme}, but using \eqref{eq:8.3b2:hua-all} instead of \eqref{mind}, shows that
$$
\aligned
\| u(s) - u(s_i)\|_{C^{4,\alpha}_{\theta}} & \lesssim \| \Hcal(g_1(s), h_1) -  \Hcal(g_1(s_i), h_1) \|_{C^{2,\alpha}_{\theta + 2}} + \eeG \, \| \Mcal(g_1(s), h_1) -  \Mcal(g_1(s_i), h_1) \|_{C^{2,\alpha}_{\theta + 2}}
\\
& \quad + \eeG \, \big(\| g_2(s) - g_2(s_i) \|_{C^{2,\alpha}_{\theta}} + \| h_2(s) - h_2(s_i) \|_{C^{2,\alpha}_{\theta + 1}} \big) + \Ecal(s - s_i),
\\
\| Z(s) - Z(s_i)\|_{C^{3,\alpha}_{\theta}} & \lesssim \eeG \, \| \Hcal(g_1(s), h_1) -  \Hcal(g_1(s_i), h_1) \|_{C^{2,\alpha}_{\theta + 2}} + \| \Mcal(g_1(s), h_1) -  \Mcal(g_1(s_i), h_1) \|_{C^{2,\alpha}_{\theta + 2}}
\\
& \quad + \eeG \, \big(\| g_2(s) - g_2(s_i) \|_{C^{2,\alpha}_{\theta}} + \| h_2(s) - h_2(s_i) \|_{C^{2,\alpha}_{\theta + 1}} \big) + \Ecal(s - s_i). 
\endaligned
$$
Combining this result with the bounds \eqref{eq:8.3b2:hua-all} 
we obtain 
\bel{mind:82}
\aligned
\| g_2(s) - g_2(s_i) \|_{C^{2,\alpha}_{\theta}} \lesssim \| \Hcal(g_1(s), h_1) -  \Hcal(g_1(s_i), h_1) \|_{C^{2,\alpha}_{\theta + 2}} + \eeG \, \| \Mcal(g_1(s), h_1) -  \Mcal(g_1(s_i), h_1) \|_{C^{2,\alpha}_{\theta + 2}} + \Ecal(s -s_i),
\\
\| h_2(s) - h_2(s_i) \|_{C^{2,\alpha}_{\theta + 1}} \lesssim \eeG \, \| \Hcal(g_1(s), h_1) -  \Hcal(g_1(s_i), h_1) \|_{C^{2,\alpha}_{\theta + 2}} + \| \Mcal(g_1(s), h_1) -  \Mcal(g_1(s_i), h_1) \|_{C^{2,\alpha}_{\theta + 2}} + \Ecal(s -s_i). 
\endaligned
\ee
In view of \eqref{eq:82:m_identity} we have 
$$
{5 \over 4 } F(s) = 
16 \pi \, \mt(s) 
  = - \int_{\RR^3} \big( \Hcal(g_1, h_1) - \Hstar \big) \, dx 
 + 5 \, s \int_{S^2} \Phi \, d\omega + \int_{\RR^3} \Big( - \Theta_{02}(s) + \smallcal_{222}(s) \Big) \, dx
$$
and, thanks to \eqref{mind:82}, 
$$
\aligned
| F(s) - F(s_i)| 
& \lesssim \| \Hcal(g_1(s), h_1) -  \Hcal(g_1(s_i), h_1) \|_{C^{2,\alpha}_{\theta + 2}} + \| \Mcal(g_1(s), h_1) -  \Mcal(g_1(s_i), h_1) \|_{C^{1,\alpha}_{\theta + 2}}
\\
& \quad
 + |s - s_i| \, \Big| \int_{S^2} \Phi \, d\omega \Big| + \Ecal(s -s_i).
\endaligned 
$$
Since 
$$
\lim_{s_i \to s}
 \big(\| \Hcal(g_1(s), h_1) -  \Hcal(g_1(s_i), h_1) \|_{C^{2,\alpha}_{\theta + 2}} + \| \Mcal (g_1(s), h_1) - \Mcal (g_1(s_i), h_1) \|_{C^{1,\alpha}_{\theta + 2}} \big) = 0,
$$
we have the convergence $F(s_i) \to F(s)$ for any given $s \in I$ and any sequence converging to $s$. 
Therefore, $F$ is continuous, as claimed.



\vskip.3cm

\noindent{\bf Step 5. Hessian term outside the cone.} 

It remains to choose the parameter $s$ in the range \eqref{equa-interv}, namely  $|s - C_1 \, \beta | \leq \eps$. 
In view of \eqref{82:minfty.c} we have 
$$
\aligned
|F(s)|
& = {4 \over 5}\,  |\mt(s)| 
\leq {1 \over 20 \pi} \Big| \int_{\RR^3} \big(\Hstar - \Hcal(g_1, h_1) \big) \, dx +
 5 \, s \int_{S^2} \Phi \, d\omega \Big| + C_3 \eeG (\eeM + s)
 \\
 & \leq  {1 \over 20 \pi} \beta + C_3 \eeG \big( \eeM + C_1 \, \beta + \eps \big)
 < C_1 \, \beta + \eps 
 \endaligned
$$
for a suitable choice of constants (namely $C_1$ large and $\eeG$ small, while $C_3$ is given by our estimates), 
provided we choose $\Phi$ such that
\bel{equa-vanish} 
\int_{S^2} \Phi \, d\omega = 0. 
\ee
Hence, 
we can guarantee that $|F(s)| < C_1 \, \beta + \eps$, 
namely $F(I) \subset I$ and,  by the Intermediate Value Theorem, 
there exists $s^* \in I$ such that 
\bel{equa-our-equation}
{4 \over 5} \mt (s^*) = F(s^*) = s^*.
\ee

In turn, thanks to the decay property of the metric \eqref{eq:8.3:main1} and by focusing to the exterior of the cone $\Cscr_a$ we have arrived at our main conclusion
$$
g_2 (s^*) - s^* r^2 \Delta \Big({\Phi \over 4 r}\Big) \, \delta + s^* r^2 \, \Hess \Big( {\Phi \over 4 r} \Big) - {5 \, s^* \over 4} r^2 \, \Hess \Big( {1 \over r}\Big) \in C^{2, \alpha}_{\pstar} (\Cscr_a^c). 
$$
It now remains to choose 
\be
\Phi(x/r) = 1 \qquad \text{ for all } \, x \in \Cscr_a^c, 
\ee
so that the above property reduces to saying 
$g_2(s^*) -  s^* r^2 \, \Hess(1/r) \in C^{2,\alpha}_{\pstar} (\Cscr_a^c)$. 
We have completed the proof by taking $\gloc=g(s^*)$ since, in view of the choice of $g_1(s)$ in \eqref{eq-choice} and for sufficiently large $r$, 
\begin{align*}
g(s^*) - g_1 
= \ghat_1(s^*) + g_2(s^*)
& =  s^* \, \big({1 \over 2}\Deltas \Phi(x/r)  - \Phi(x,r) \big) \, r^2 \, \Hess(1/r)
  + g_2(s^*)
\\
 \hskip-3.cm  & =  - s^* \, r^2 \, \Hess(1/r) + g_2(s^*)
 \in C^{2,\alpha}_{\pstar} (\Cscr_a^c). 
\qedhere
\end{align*}
\end{proof}


\subsection{The asymptotic localization problem for the Schwarzschild and Minkowski metrics} 
\label{sec:revisit}

Another application of our method is now presented.

\begin{theorem}[The asymptotic localization problem for the Schwarzschild and Minkowski metrics]
\label{corollary-optimalCS}
Consider solutions to the Einstein's vacuum constraint equations 
  with topology $M\simeq \RR^3$, and let 
$g_\Sch$ be the Schwarzschild solution with small mass $m>0$ (defined outside a large compact region)
and let $g_\Eucl = \delta$ be the Euclidean metric. 
Decompose asymptotic infinity in $\RR^3$ into three angular regions denoted by $\Cscr_a \cup \Cscr^c_{a+\eps} \cup \Tscr_{a,\eps}$, where $\Cscr_a$ is a cone with (possibly arbitrarily small) angle $a \in (0, 2\pi)$, 
$\Cscr^c_{a+\eps}$ is the complement of the same cone but with a (possibly only slightly) larger angle, 
and $\Tcal_{a,\eps}$ is the remaining transition region.  
Then, there exists a choice of seed data set such that the seed-to-solution method generates a solution to the Einstein constraints,
   denoted by $(g,h)$ and defined on $\RR^3$, having (at least) 
harmonic decay $1/r$ in every angular  direction, that is, 
\be
(g - g_\Eucl , h) \in C^{2,\alpha}_1 (\Mbf) \times C^{2,\alpha}_1(\Mbf)
\ee
and being asymptotic to the Schwarzschild metric in the cone $\Cscr_a$ 
and to the Euclidean metric in the cone $\Cscr^c_{a+\eps}$. Specifically, for some $\theta \in (1,2)$ one has 
\bel{eq:71a}
\aligned
g - g_\Sch  & \in C^2_\theta (\Cscr_a),
\qquad 
g - g_\Eucl  \in C^2_\theta (\Cscr^c_{a + \eps}). 
\endaligned
\ee   
\end{theorem}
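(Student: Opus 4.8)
The plan is to combine Theorem~\ref{theorem:trois} (the asymptotic localization method) with a doubled application of the seed-to-solution map, one copy adapted to ``peel off'' the Euclidean behaviour outside the big cone and one copy to install the Schwarzschild behaviour inside the small cone. First I would set up the seed data. Take $(\Mbf, e) = (\RR^3, \delta)$, vanishing matter $\Hstar = \Mstar = 0$, and build a seed metric $g_1$ that interpolates between $g_\Sch$ (in a neighbourhood of the directions of $\Cscr_a$) and $g_\Eucl = \delta$ (in a neighbourhood of the directions of $\Cscr^c_{a+\eps}$), using a smooth angular partition of unity $\{\psi_{\Sch}, \psi_{\Eucl}\}$ subordinate to the decomposition $\Cscr_a \cup \Tscr_a^\eps \cup \Cscr^c_{a+\eps}$ together with a radial cut-off that glues these exterior Ans\"atze to $\delta$ on a large compact region. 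Since $g_\Sch = (1 + 2m/r)\delta + \Obig(r^{-2})$, the seed metric has the form $g_1 = \delta + 2m\psi_\Sch r^{-1}\delta + \Obig(r^{-2})$ near infinity; one checks directly that it satisfies the strongly tame conditions of Definition~\ref{def-sb} with, e.g., $(\ppG, \qqG) = (1, 2)$ and $(\ppM, \qqM)$ slightly above $(1,2)$, because $\Hcal(g_1, 0) = R_{g_1}$ is supported (to leading order) in the transition directions where the two exact solutions are being patched, and there it is $\Obig(r^{-3})$ and integrable; take $h_1 = 0$.

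Next I would feed this seed data into Theorem~\ref{theo:deuxieme}, which produces an Einstein solution $(g,h)$ with $(g - g_1, h - h_1) \in C^{2,\alpha}_1(\Mbf)\times C^{2,\alpha}_2(\Mbf)$ and, more precisely, $g - \gt_1 \in C^{2,\alpha}_{\pstar}(\Mbf)$ where $\gt_1 = g_1 + 2\mt r^{-1}g_\Eucl$ (single end, so one mass corrector $\mt$), and $\pstar \in (1,2)$ can be taken as in \eqref{eq:strongexpo}. The difficulty is that the solution carries the spurious Hessian/$1/r$ mass-corrector term $2\mt r^{-1}\delta$ spread over \emph{all} directions, which would spoil both the Euclidean behaviour in $\Cscr^c_{a+\eps}$ and the precise Schwarzschild mass in $\Cscr_a$. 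This is exactly what Theorem~\ref{theorem:trois} is designed to remove: applying it with the cone $\Cscr_a$, I obtain an asymptotically localized solution $(\gloc, \hloc)$ with the same decay class that in addition satisfies $\gloc - g_1 \in C^{2,\alpha}_{\pstar}(\Cscr_a^c)$, i.e. super-critical decay toward $g_1$ in every direction outside the cone. In the angular region $\Cscr^c_{a+\eps} \subset \Cscr_a^c$ we have $g_1 = g_\Eucl$ (by construction of the seed), hence $\gloc - g_\Eucl \in C^{2,\alpha}_{\pstar}(\Cscr^c_{a+\eps}) \subset C^2_\theta(\Cscr^c_{a+\eps})$ for any $\theta \le \pstar$, and in particular for $\theta \in (1,2)$ with $\theta < \pstar$; this gives the second part of \eqref{eq:71a}.

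It remains to get $\gloc - g_\Sch \in C^2_\theta(\Cscr_a)$. Inside $\Cscr_a$ the seed is $g_1 = g_\Sch$ up to the $\Obig(r^{-2})$ error of the Schwarzschild expansion, but the localized solution still differs from $g_1$ there by the single surviving mass-corrector term $2 s^\star r^{-1}\delta$ produced inside the cone (in the notation of the proof of Theorem~\ref{theorem:trois}, $s^\star$ is the fixed point of $F$). The key point is that $2 s^\star r^{-1}\delta$ is, modulo a coordinate change at infinity, precisely a shift of the Schwarzschild mass: by Step~10 in the proof of Theorems~\ref{theo:deuxieme}--\ref{theo:deuxieme-2}, the term $m_1 r^{-1}\delta + m_2 r^2\Hess(1/r)$ is diffeomorphic to a Schwarzschild metric of mass $m_1 + m_2$. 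So I would apply that change of coordinates in the directions of $\Cscr_a$ (extended smoothly, using a cut-off that is $1$ on $\Cscr_a$ and $0$ near $\Cscr^c_{a+\eps}$, noting the coordinate change is asymptotically trivial and lands in $C^{2,\alpha}_0$ so it does not disturb the $C^2_\theta$ conclusion already obtained outside), which re-expresses $\gloc$ inside $\Cscr_a$ as $g_{\Sch}$ with shifted mass $m_\Sch := m + s^\star$ plus a remainder in $C^2_{\pstar}(\Cscr_a)$; absorbing the $\Obig(r^{-2})$ Schwarzschild error and renaming $m_\Sch$ as the prescribed positive mass (one is free to tune $m$ in the seed, and $s^\star = \Obig(\eeM^2)$ is small), we get $g - g_\Sch \in C^2_\theta(\Cscr_a)$ for $\theta \in (1,2)$. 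The main obstacle is bookkeeping: one must verify that the diffeomorphism used to turn the Hessian/$1/r$ term into a genuine Schwarzschild mass shift can be localized to the cone without introducing new long-range terms in the complementary region where we have already certified Euclidean behaviour — this is handled by the fact that such a diffeomorphism is $x \mapsto x + \Obig(r^{-1})$ and its failure to be globally Schwarzschild-inducing is confined to the transition directions, which is acceptable since we make no claim in $\Tscr_a^\eps$ beyond the global $C^{2,\alpha}_1$ bound. Collecting the estimates on $(\gloc, \hloc)$ and setting $(g,h) := (\gloc, \hloc)$ completes the proof.
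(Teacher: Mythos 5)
Your proposal takes a different route from the paper (apply Theorems~\ref{theo:deuxieme} and \ref{theorem:trois} to a naive angular interpolation of $g_\Sch$ and $\delta$, then fix the cone by a localized coordinate change), but it breaks down at two essential points. The first is the seed data itself. For the interpolation $g_1 = \psi\, g_\Sch + (1-\psi)\,\delta = \delta + 2m\,\psi(\omega)\,r^{-1}\delta + \Obig(r^{-2})$ one computes
$R_{g_1} = -2\,\Delta\big(2m\psi/r\big) + \Obig(r^{-4}) = -4m\,r^{-3}\,\Deltas\psi(\omega) + \Obig(r^{-4})$,
so the best admissible matter exponent is the critical one, $\ppM = 1$, and Definition~\ref{def-sb} then requires $R_{g_1}\in L^1(\RR^3)$. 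But $\int_{\RR^3}|R_{g_1}|\,dx \gtrsim \int_1^\infty r^{-1}\,dr \int_{S^2}|\Deltas\psi|\,d\omega = +\infty$ whenever the interpolation is non-trivial. Hence the seed is \emph{not} strongly tame, Theorem~\ref{theo:deuxieme} does not apply, and Theorem~\ref{theorem:trois} (which is stated under its hypotheses) does not apply either. This logarithmic divergence in the transition directions is precisely the critical-decay obstruction the theorem addresses; the paper's proof circumvents it by replacing the interpolation coefficient $\Psi$ with $-\tfrac12\Deltas\Psi + \Psi$ and inserting $r^2\Hess(1/r)$ correctors weighted by two parameters $(s,t)$, so that $\Hcal(g_1(s,t),0)$ takes the exact form $\Delta\big(\tfrac{r^2}{2}\Delta(\cdot)\big) - \tfrac{2}{r^2}\big(\tfrac{r^2}{2}\Delta(\cdot)\big)$ plus compactly supported and $\Obig(r^{-4})$ terms, which is what the operator $-\Delta + 2r^{-2}$ of Proposition~\ref{laplaceplus} is built to absorb.

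The second gap is the structure of the residual inside the cone. Even if Theorem~\ref{theorem:trois} applied, its conclusion controls $\gloc - g_1$ in $C^{2,\alpha}_{\pstar}$ only on $\Cscr_a^c$; inside $\Cscr_a$ the surviving critical tail is, by \eqref{eq:8.3:c0a}, of the form $s^*\big[r^2\Delta(\Phi/4r)\,\delta - r^2\Hess(\Phi/4r) + \tfrac54\,r^2\Hess(1/r)\big]$, where $\Phi$ is forced to equal $1$ on $\Cscr_a^c$ while having zero mean on $S^2$ — hence $\Phi$ is strongly non-constant precisely inside $\Cscr_a$. This is an angularly varying $1/r$ term, not of the rotationally symmetric form $m_1 r^{-1}\delta + m_2\, r^2\Hess(1/r)$ that the Step-10 diffeomorphism $x\mapsto x + a x/r$ can convert into a Schwarzschild mass shift; so your final coordinate-change step does not close. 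The paper instead chooses \emph{both} $\Psi$ and $\Phi$ to be constant on $\Cscr_\alpha\cup\Cscr^c_{\alpha+\eps}$, confining all angular variation to the transition region, and cancels the Hessian corrector on both cones simultaneously via the single fixed point $s^* = \tfrac45\,\mt(s^*,-4m/5)$; the extra parameter $t = -4m/5$ is what makes the prescribed mass $m$ come out exactly, so no a posteriori coordinate change or mass renaming (with its circular dependence of $m$ on $s^\star$) is needed.
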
  

We now give a proof of this theorem. 

\paragraph{Step 1. A parametrized family of seed data.}
Let $\Psi : S^2 \to \RR$ be a  given $C^2$-regular function such that 
\bel{equa-choice-Psi} 
\Psi(x/r) = 1 \, \text{ in } \, \Cscr_a,
\qquad \quad 
 \Psi(x/r) = 0 \, \text{ in } \, \Cscr^c_{a + \eps}.
 \ee 
Let $\xi$ be a cut-off function such that $\xi$ is identically $0$ inside the unit ball $B(0,1)$ and $1$ outside the ball $B(0,2)$. Consider the seed data defined by $h_1=h_1(t,s) = 0$ and  
\bel{eq:8.3:seeddata} 
\aligned
g_1(s,t) & 
: = 
\delta
+ \Big(- {1 \over 2} \Deltas \Psi + \Psi \Big)(x/r) \, \big( g_\Sch  - \delta \big) + \xi_{R_{s,t}} \, \mu \, r^2 \, \Hess \big( 1/r \big) 
\\
&
\mu := t \, \Big( {1 \over 2} \Deltas \Psi - \Psi \Big)(x/r)     
 + s \,  \Big( {1 \over 2} \Deltas \Phi - \Phi \Big)(x/r), 
\endaligned
\ee
where $t, s$ are parameters and $\Phi : S^2 \to \RR$ is also a given $C^2$-regular function, and 
$
\xi_{R_{s,t}} := \xi(x / R_{s,t}).
$
Here, the radius $R_{s,t} >0$ is chosen to be a sufficiently large constant so that $(g_1(s,t), h_1)$ satisfies the assumptions in Theorem \ref{theo-main3} with $(p, q) = (1/2, 3/2)$. 
We can apply Theorem \ref{theo-prec} and the corresponding Einstein solution $(g, h)(s, t)$ 
determined from the seed data set $(g_1(s, t), 0)$ has the form 
\begin{subequations}
\label{3eq:8.3:huensemble}
\begin{align}
{g_2 (s, t) \over r^2} & = - \big(\Delta_{g_1(s, t)}u_2 (s, t)\big)g_1(s, t) + \Hess_{g_1(s, t)}(u_2 (s, t)) - u_2 (s, t)\Ric_{g_1(s, t)},
\label{3eq:8.3:hua}
\\
h_2 (s, t) & = - {1 \over 2}  \Lcal_{Z_2 (s, t)}g_1(s, t),
\label{3eq:8.3:hub}
\end{align}
\end{subequations}
where 
$(g_2, h_2) (s, t) := (g - g_1, h)(s, t) 
$
and $ (u_2, Z_2) (s, t) \in C^{4,\alpha}_{1/2}(\RR^3) \times  C^{3,\alpha}_{1/2}(\RR^3). $


From now on, the notation $\Ocal_\eps^{2, \alpha} (r^{-4})$ stands for a function, say $f$, satisfying 
$\| f \|_{C^{2, \alpha}_{\theta}(\RR^3)} \lesssim \eps$. 
Denoting by $m$ the ADM mass of the Schwarzschild metric $g_\Sch$ under consideration, we observe that 
$$
\aligned
\Big(- {1 \over 2} \Deltas \Psi + \Psi \Big) \, g_\Sch + \Big(1 + {1 \over 2} \Deltas \Psi - \Psi \Big) \, \delta - \delta 
& = \Big(- {1 \over 2} \Deltas \Psi + \Psi \Big) ( g_\Sch - \delta)
\\
&
  = - 2m \, \Big({1 \over 2 r} \Deltas \Psi - {1 \over r} \Psi \Big) \, \delta + \Ocal_\eps^{2, \alpha} (r^{-4}) 
\endaligned
$$
and
$$
\aligned
\Bigg(  \del_{ij} \Big({1 \over 2 r} \Deltas \Psi - {1 \over r} \Psi \Big) \, \delta \Bigg)_{ij} 
- \Bigg(\del_{jj} \Big({1 \over 2 r} \Deltas \Psi - {1 \over r} \Psi \Big) \, \delta \Bigg)_{ii} 
& = - 2m \, \Delta \Big({1 \over 2 r} \Deltas \Psi - {1 \over r} \Psi \Big)
 = - 2 \, \Bigg({1 \over 2 } \Delta \Big( r^2 \Delta \Big( {\Psi \over r} \Big) \Big) - \Delta \Big( {\Psi \over r} \Big) \Bigg).
\endaligned
$$
Straightforward calculations similar to the ones done for the Hamiltonian operator in the proof of Theorem \ref{theorem:trois} (see for instance \eqref{82:Hcals}) lead us to
\bel{83:Hcal}
\aligned
\Hcal(g_1(s,t), h_1) & = \Delta \Bigg( {r^2 \over 2} \Delta \Big( {(4m + t) \Psi + s \, \Phi \over r} \Big) \Bigg)  
- \Delta \Big( {(4m + t) \Psi + s \, \Phi \over r} \Big) \Bigg)
+ K_0(s,t)
+ \Ocal_\eps^{2,\alpha}(r^{-4}), 
\\
K_0(s,t):= 
& \del_{ij} g_1(s,t)_{ij} - \del_{jj} g_1(s,t)_{ii} 
-  \Delta \Bigg( {r^2 \over 2} \Delta \Big( {(4m + t) \Psi + s \, \Phi \over r} \Big) \Bigg) 
+  \Delta \Bigg( {(4m + t) \Psi + s \, \Phi \over r} \Bigg). 
\endaligned
\ee
Here, it is helpful to keep in mind that $K_0(s,t) = 0$ for all $r \geq 2 \, R_{s,t}$.


\paragraph{Step 2. Super-harmonic estimates for the metric and dual metric variables.}
Proceeding as in 
the proof of Theorem~\ref{theorem:trois}, we obtain the following equation for the metric variable
\begin{subequations}
\label{3eq:8.3.general}
\bel{3eq:8.3:g2 hess}
\aligned
 - \Delta (\Tr g_2 (s, t) ) +  \del_{ij} g_2 (s, t)_{ij} 
&= - \Hcal( g_1(s, t), 0 ) + \smallcal_{012}(s,t), 
\endaligned
\ee
and, for the dual variables $u_2$ and $Z_2$, 
\begin{align}
\label{3eq:8.3:u1}
- 2\Delta u_2 (s, t) 
& = {1\over r^2} \Tr_e g_2 (s, t) + R_{g_1(s, t)} u_2 (s, t) + \smallcal_4(s,t), 
\\
\label{3eq:8.3:u2}
-4 \, \Delta u_2 (s, t) & = \del_{ij}\Big( {|x|^2 \over r^2} g_2 (s, t)_{ij} \Big) 
- \smallcal_3(s,t), 
\\
\Delta Z_2 (s, t) + \Div(\nabla Z_2 (s, t)) & =  
 \smallcal_{67}(s,t) .
\label{3eq:8.3:Z0}
\end{align}
Taking \eqref{3eq:8.3:u1} and \eqref{3eq:8.3:u2} into account in \eqref{3eq:8.3:g2 hess}, we deduce that 
\bel{3eq:8.3:Tr}
\aligned
& - \Delta (\Tr g_2) (s, t) + {2 \over r^2} \Tr g_2 (s, t) 
=  - \Hcal( g_1(s, t), 0 ) + \smallcal_{5} (s,t),
\endaligned
\ee
\end{subequations}
By recalling the expansion of the Hamiltonian operator in \eqref{83:Hcal}, the equations \eqref{3eq:8.3:u1} and \eqref{3eq:8.3:Tr} can be rewritten as 
\bse
\bel{3eq:8.3:uh0}
\aligned
&- \Delta \Bigg( \Tr g_2 (s, t) - {r^2 \over 2} \Delta \Big( {(4m + t) \Psi + s \, \Phi \over r} \Big) \Bigg) 
+ {2 \over r^2} \Bigg( \Tr g_2 (s, t) - {r^2 \over 2} \Delta \Big( {(4m + t) \Psi + s \, \Phi \over r} \Big) \Bigg) 
\\
& = \Delta \Bigg( {r^2 \over 2} \Delta \Big( {(4m + t) \Psi + s \, \Phi \over r} \Big) \Bigg) 
-  \Delta \Big( {(4m + t) \Psi + s \, \Phi \over r} \Big) -  \del_{ij} g_1(s,t)_{ij} +  \del_{jj} g_1(s,t)_{ii} 
           + \smallcal_{67}(s,t)
+ \Ocal_\eps^{2,\alpha}(r^{-4})
\endaligned
\ee
and
\bel{3eq:8.3:uh0b}
\aligned
- 2 \Delta \Bigg(u_2 (s, t) + {(4m + t) \Psi + s \, \Phi \over 4r} \Bigg) 
& = {1 \over r^2} \Bigg( \Tr g_2 (s, t) -  {r^2 \over 2} \Delta \Big( {(4m + t) \Psi + s \, \Phi \over r} \Big) \Bigg) 
+ R_{g_1(s, t)} u_2 (s, t).
\endaligned
\ee
\ese


Therefore, an analysis similar to 
the one in the proof of Theorem \ref{theo:deuxieme} shows that
$$
\aligned
& {1 \over r^2} \Bigg( \Tr g_2 (s, t) - {r^2 \over 2} \Delta \Big( {(4m + t) \Psi + s \, \Phi \over r} \Big) \Bigg) 
\in L^1(\RR^3) \cap C^{2, \alpha}_3 (\RR^3),
\qquad  
\\
&u_2 (s, t) + {(4m + t) \Psi + s \, \Phi \over 4r} \in C^{4, \alpha}_1 (\RR^3),
\qquad \qquad \qquad 
\qquad\quad
Z_2 (s, t) \in C^{3, \alpha}_1(\RR^3).
\endaligned
$$
It follows from \eqref{3eq:8.3:hua}-\eqref{3eq:8.3:hub} that $(g_2 (s, t), h_2 (s, t)) \in C^{2, \alpha}_{1}(\RR^3) \times C^{2, \alpha}_{2}(\RR^3)$, hence by \eqref{3eq:8.3:uh0}-\eqref{3eq:8.3:uh0b}
\bse
\begin{align} 
- 2\Delta \Bigg(u_2 (s, t) + {(4m + t) \Psi + s \, \Phi\over 4r} \Bigg) 
- {1 \over r^2} \Bigg( \Tr g_2 (s, t) - {r^2 \over 2} \Delta \Big( {(4m + t) \Psi + s \, \Phi \over r} \Big) \Bigg) 
& \in C^{2,\alpha}_4(\RR^3),
\label{3eq:8.3:uh2}
\\
- \Delta \Bigg( \Tr g_2 (s, t) - {r^2 \over 2} \Delta \Big( {(4m + t) \Psi + s \, \Phi \over r} \Big) \Bigg) 
+ {2 \over r^2} \Bigg( \Tr g_2 (s, t) - {r^2 \over 2} \Delta \Big( {(4m + t) \Psi + s \, \Phi \over r} \Big) \Bigg) 
& \in C^{0,\alpha}_4(\RR^3). 
\label{3eq:8.3:uh1}
\end{align}
\ese
By applying Proposition~\ref{laplaceplus} to \eqref{3eq:8.3:uh1} we have for any $\theta \in (1,2)$ 
\bel{3eq:8,3:fastdecay0}
\Big( \Tr g_2 (s, t) - {r^2 \over 2} \Delta \Big( {(4m + t) \Psi + s \, \Phi \over r} \Big) \Big) \in C^{2, \alpha}_{\theta} (\RR^3), 
\ee
and therefore, by applying Corollary \ref{remarkpoisson} to \eqref{3eq:8.3:uh2}, there exists a constant $\mt (s, t)$ such that
\bel{3eq:8.3:fastdecay}
u_2 (s, t) + {(4m + t)\Psi + s \, \Phi \over 4 r } - {\mt (s, t) \over r } \in C^{4, \alpha}_\theta (\RR^3). 
\ee
Similarly as in 
the proof of Theorem \ref{theo:deuxieme}, we also find 
$Z_2 (s, t) + V^\infty (s, t) \in C^{3, \alpha}_\theta(\RR^3)$, 
where 
$
V^\infty (s,t) := \Pt (s,t) \cdot V,
$
with some constant vector $\Pt(s,t)$  and with $V_{ij} = \Big( x_i x_j + 3 r^2 \delta_{ij} \Big)/ r^3$ as defined earlier. 
Plugging these results  
in \eqref{3eq:8.3:huensemble},  
we arrive at the desired decay properties. 

\begin{claim}
The metric and extrinsic curvature of the parametrized family of Einstein solutions satisfy 
\begin{subequations}\label{3eq:8.3:main1}
\begin{align}
&
g_2 (s, t) - r^2 \Delta \Bigg({(4m + t) \Psi + s \, \Phi \over 4 r}\Bigg) \, \delta 
+ r^2 \, \Hess \Big( {(4m + t) \Psi + s \, \Phi \over 4 r} \Big) 
- \mt (s, t) r^2 \, \Hess \Big( {1 \over r}\Big)  \in C^{2, \alpha}_\theta,
\label{3eq:8.3:c0a}
\\
&  
h_2 (s, t) - \Lcal_{V^\infty (s, t)} \delta  \in C^{2, \alpha}_{\theta + 1}.
\label{3eq:8.3:h2fastdecay}
\end{align}
\end{subequations}
\end{claim}

\paragraph{Step 3. Estimating the mass corrector.} We now estimate the coefficient $\mt(s,t)$.
Proceeding as 
in the proof of Theorem \ref{theorem:trois}, we find 
$$
\aligned
& \mt (s, t) = {1 \over 16 \pi} \int_{\RR^3} 
\Bigg(  \del_{ij} g_2(s,t)_{ij} - \Delta \Big( {(4m + t)\Psi + s \, \Phi \over r}\Big)\Bigg) \, dx
           = {1 \over 16 \pi} \int_{\RR^3}  K_0(s,t)  \, dx 
  + \int_{\RR^3} \Ocal_\eps^{0,\alpha} (r^{-4}) \, dx
\\
& = {1 \over 16 \pi} \lim_{r \to +\infty} \int_{S_r} 
\Bigg( g_1(s, t)_{ii,j} - g_1(s, t)_{ij,i} 
+ \del_j \Bigg( {r^2 \over 2} \Delta \Big( {(4m + t)\Psi + s \, \Phi \over r} \Big) \Bigg) 
- \del_j \Big( {(4m + t)\Psi + s \, \Phi \over r} \Big) \Bigg) 
\, {x_j \over r} \, d\omega
\\
& \quad
 + \int_{\RR^3} \Ocal_\eps^{0,\alpha}(r^{-4}) \, dx.
\endaligned
$$
By integration by parts we obtain 
$$
\aligned
& \int_{\RR^3} \Big( \del_{ij} \Big( \Big( {1 \over 2} \Deltas \Psi - \Psi \Big) (g_\Sch - \delta)_{ij} \Big) - \del_{ii} \Big( \Big( {1 \over 2} \Deltas \Psi - \Psi \Big) (g_\Sch - \delta)_{jj} \Big) \Big) \, dx
\\
& = \lim_{r \to +\infty} \int_{S_r}
 \Bigg( \del_{i} \Big( \Big( {1 \over 2} \Deltas \Psi - \Psi \Big) (g_\Sch - \delta)_{ij} \Big) - \del_{j} \Big( \Big( {1 \over 2} \Deltas \Psi - \Psi \Big) (g_\Sch - \delta)_{ii} \Big) \Bigg) 
 \, {x_j \over r} \, d\omega
 = - 4m \int_{S^2} \Psi \, dx.
\endaligned
$$
Furthermore, in view of \eqref{82:Phiinteg}, we have 
$$
\aligned
\int_{S_r}  \Big( \del_j \Big( {r^2 \over 2} \Delta \Big( {\Phi \over r} \Big) \Big) 
-  \del_j \Big( {\Phi \over r} \Big) \Big) {x_j \over r} \, d\omega
& = \int_{S^2} \Phi \, d\omega,
\qquad
\int_{S_r}  \Big( \del_j \Big( {r^2 \over 2} \Delta \Big( {\Psi \over r} \Big) \Big) 
-  \del_j \Big( {\Psi \over r} \Big) \Big) {x_j \over r} \, d\omega
= \int_{S^2} \Psi \, d\omega
\endaligned
$$
and
$$
\aligned
\int_{S_r} 
\Bigg( \del_{j}\Big( \Big( -{1 \over 2} \Deltas \Phi + \Phi \Big) \, r^2 \del_{ii} \Big( {1 \over r} \Big) \Big)
- \del_{i}\Big( \Big( -{1 \over 2} \Deltas \Phi + \Phi \Big) \, r^2 \del_{ij} \Big( {1 \over r} \Big) \Big) \Bigg) 
\, 
{x_j \over r} \, d\omega 
& = 4 \int_{S^2} \Phi \, d\omega,
\\
\int_{S_r} \Bigg( \del_{i}\Big( \Big( -{1 \over 2} \Deltas \Psi + \Psi \Big) \, r^2 \del_{ij} \Big( {1 \over r} \Big) \Big) - \del_{j}\Big( \Big( -{1 \over 2} \Deltas \Psi + \Psi \Big) \, r^2 \del_{ii} \Big( {1 \over r} \Big) \Big) \Bigg) 
\, 
{x_j \over r} \, d\omega 
& = 4 \int_{S^2} \Psi \, d\omega. 
\endaligned
$$
Taking these identities into account, we arrive at
\bel{83:equ:mSch}
\mt (s, t)= {5 \over 16 \pi} \Bigg( t \int_{S^2} \Psi \, d\omega + s \int_{S^2} \Phi \, d\omega \Bigg) 
+ \int_{\RR^3} \Ocal_\eps^{0,\alpha}(r^{-4}) \, dx.
\ee 


\paragraph{Step 4. Selecting a particular seed data.} It remains to specify the field $\Phi$ and as well as the real parameters $s,t$.
In view of \eqref{eq:8.3:main1} the Einstein solution $(g, h)$ satisfies
$$
\aligned
& g(s, t) - g_1(s,t) - {1 \over 4r} \Deltas ((4m + t) \Psi + s \, \Phi ) \delta + r^2 \, \Hess \Bigg( {(4m + t) \Psi + s \, \Phi \over 4 r} \Bigg) - \mt (s, t) \, r^2 \, \Hess \Big( {1 \over r}\Big) 
\in C^{2, \alpha}_\theta
\endaligned
$$
and $h(s, t) - \Lcal_{V^\infty (s, t)} \delta \in C^{2, \alpha}_{\theta + 1}$. 
By the definition of $g_1(s,t)$, for $r \geq 2 \, R_{s,t}$ we have 
$$
\aligned
& g(s, t) - g_1(s,t) - {1 \over 4r} \Deltas ((4m + t) \Psi + s \, \Phi ) \delta + r^2 \, \Hess \Bigg( {(4m + t) \Psi + s \, \Phi \over 4 r} \Bigg) - \mt (s, t) r^2 \, \Hess \Big( {1 \over r}\Big)
\\
& = g(s,t) + \Big({1 \over 2} \Deltas \Psi - \Psi \Big) g_\Sch  - \Big(1 + {1 \over 2} \Deltas \Psi - \Psi \Big) \, \delta
      - {t \over 2} \Deltas \Psi \, r^2 \, \Hess \big( 1/r \big) 
      \\
      & \quad
       -{s \over 2} \Deltas \Phi r^2 \, \Hess \Big( {1 \over r}\Big) 
      - {1 \over 4r} \Deltas ((4m + t) \Psi + s \, \Phi ) \delta
\\
& \quad + t \, \Psi r^2 \, \Hess \Big( {1 \over r} \Big) + {(4m + t) \over 4} r^2 \, \Hess \Big( {\Psi \over r} \Big)
  + {s \over 4} r^2 \, \Hess \Big( {\Phi \over r} \Big)
+  s \, \Phi r^2 \, \Hess \Big( {1 \over r} \Big) -  \mt(s,t) \, r^2 \, \Hess \Big( {1 \over r} \Big).
\endaligned
$$
Provided  
$
t = - {4 m \over 5}
$
and $
\Phi \big( {x \over r} \big) = 1$ for all $x \in \Cscr_a \cup \Cscr^c_{a + \eps}, 
$
from the definition of $\Psi$ it follows that 
\bel{eq:8.3:main1b}
g(s,-4m/5) - \Psi g_\Sch  - \Big(1 - \Psi \Big) \, \delta + \Big( {5 \, s \over 4} - \mt (s, -4m/5) \Big) \, r^2 \, 
\Hess \Big( {1 \over r}\Big) \in C^{2,\alpha}_\theta (\Cscr_a \cup \Cscr^c_{a + \eps}).
\ee
Now thanks to \eqref{83:equ:mSch}, proceeding as 
in the proof of Theorem \ref{theorem:trois} and provided the 
vanishing integral condition \eqref{equa-vanish}  
is assumed by our seed data, there exists a constant $s^*$ satisfying 
$s^* = {4 \over 5} \mt ( s^*, - 4m/5 )$. 
Hence, \eqref{eq:8.3:main1b} associated with $s^*$ is rewritten as
$$
g(s^*, - 4m/5) - \Psi g_\Sch  - (1 - \Psi) \, \delta  \in C^{2,\alpha}_\theta (\Cscr_a \cup \Cscr^c_{a + \eps}), 
$$
and the desired conclusion follows in view of our choice of $\Psi$ in \eqref{equa-choice-Psi}. This completes the proof of Theorem~\ref{corollary-optimalCS}. 


\paragraph*{Acknowledgments.} 

The authors gratefully thank Bruno Le Floch (Sorbonne Universit\'e and CNRS) and two referees  
for their very careful reading of the original manuscript 
and for making many comments,
which helped us improve the presentation in the final version.
This paper was written during the Academic year 2018-2019 when the first author was
a visiting research fellow at the 
Courant Institute for Mathematical Sciences, New York University. The second author (TCN) is grateful to the Fondation des Sciences Math\'ematiques de Paris for a postdoc fellowship spent at Sorbonne Universit\'e during the period 
2016--2018. The authors were also partially supported by the Innovative Training Network (ITN) grant 642768 (ModCompShock).


\addcontentsline{toc}{section}{References}

\appendix  

\section{Technical arguments on the seed-to-solution map (Section~\ref{section--3})} 
\label{appendix-AAA}

\begin{proof}[Proof of Lemma~\ref{lemma WP1}]
 {\bf Case 1: $a = 1/2$.} 
Since the set of smooth and compactly supported functions on $\RR^3$ is dense in $H^k_a(\RR^3)$, we can assume that $w$ has bounded support.  
The weight function $v= \ln|x|$ satisfies 
$
\nabla v = |x|^{- 1} \nabla |x|$
and $\Delta v = |x|^{- 2}$ at any $x \neq 0$. 
Integrating by parts the integral of $w^2\Delta v$ over the exterior of the unit ball $B_1(0) \subset \RR^3$, we obtain
$$
\int_{\RR^3\setminus B_1(0)} w^2\Delta v \, dx 
=  - \int_{\del B_1(0)}w^2|x|^{- 1}dA(x) - 2\int_{\RR^3\setminus B_1(0)} w \, \nabla w \, \nabla v \, dx.
$$
The boundary term has a favorable sign and thus 
$
\int_{\RR^3\setminus B_1(0)}w^2 |x|^{-2} dx \leq 2\int_{\RR^3\setminus B_1(0)} w \, \nabla w \, \nabla v \, dx$, 
hence by the Cauchy-Schwarz inequality
$$
\int_{\RR^3\setminus B_1(0)}w^2|x|^{- 2} \, dx 
\leq  \Big(\int_{\RR^3\setminus B_1(0)}w^2|x|^{- 2} \, dx \Big)^{1/2} \Big(\int_{\RR^3\setminus B_1(0)} | \nabla w|^2 \, dx \Big)^{1/2}. 
$$
Therefore, we have established the Hardy-type inequality  
\bel{outsideball}
\int_{\RR^3\setminus B_1(0)}w^2|x|^{- 2} \, dx \leq \int_{\RR^3\setminus B_1(0)} | \nabla w|^2 \, dx.
\ee

Now let $\eta$ be a cut-off function supported in the ball $B_2(0)$ with radius $2$ and satisfying $\eta \equiv1$ in $B_1(0)$. By the Poincar\'e inequality for function supported in $B_2(0)$, we have 
$
\int_{\RR^3}(w\eta)^2 \, dx 
\lesssim
\int_{\RR^3} | \nabla(\eta w)|^2 \, dx,
$
which implies 
$$
\aligned
\int_{B_1(0)} w^2 \, dx 
& 
\lesssim
\int_{B_2(0)} | \nabla w|^2 \eta^2 \, dx + \int_{B_2(0)}w^2| \nabla\eta|^2 \, dx
\lesssim
\int_{B_2(0)} | \nabla w|^2 \, dx + \int_{B_2(0)\setminus B_1(0)}w^2 \, dx.
\endaligned
$$
Since $r$ is bounded above and below by positive constants on $B_2(0)$, from \eqref{outsideball} it follows that 
$$
\int_{B_1(0)}w^2r^{- 2} \, dx 
\lesssim 
\int_{B_2(0)} | \nabla w|^2 \, dx + \int_{\RR^3} | \nabla w|^2 \, dx.
$$
Combining the above two inequalities, we conclude that
$ 
\int_{\RR^3}w^2r^{- 2} \, dx 
\lesssim \int_{\RR^3} | \nabla w|^2 \, dx, 
$
which is the first inequality in Proposition~\ref{lemma WP1}.

\hskip.15cm

\noindent{\bf Case 2: $a \ne 1/2$.} For definiteness, we treat the case $a \in (0, 1/2)$.
 Setting $v=|x|^{- 1 + 2a}$, for all  $x \ne 0$ we have 
$\nabla v=  - (1 - 2a)|x|^{- 2 + 2a} \nabla |x|$
and 
$\Delta v= - 2a(1 - 2a)|x|^{- 3 + 2a}$. 
Integrating by parts we establish that
$$
\int_{\RR^3\setminus B_1(0)}w^2\Delta v dx = (1 - 2a)\int_{\del B_1(0)}w^2|x|^{- 2 + 2a}dA(x) - 2\int_{\RR^3\setminus B_1(0)} w \, \nabla w \, \nabla v \, dx.
$$
The left-hand side and the boundary term have opposite sign, and we deduce that 
$$
a\int_{\RR^3\setminus B_1(0)}w^2|x|^{- 3 + 2a} \, dx \leq  \int_{\RR^3\setminus B_1(0)} |x|^{- 2 + 2a}w \, \nabla w \nabla |x| dx
$$
and, by the Cauchy-Schwarz inequality, 
$$
a\int_{\RR^3\setminus B_1(0)}w^2|x|^{- 3 + 2a} \, dx \leq \Big(\int_{\RR^3\setminus B_1(0)}w^2|x|^{- 3 + 2a} \, dx \Big)^{1/2} \Big(\int_{\RR^3\setminus B_1(0)} | \nabla w|^2|x|^{- 3 + 2(a + 1)} \, dx \Big)^{1/2}.
$$
It follows that
$$
\int_{\RR^3\setminus B_1(0)}w^2|x|^{- 3 + 2a} \, dx 
\lesssim
 {1 \over a} \int_{\RR^3\setminus B_1(0)} | \nabla w|^2|x|^{- 3 + 2(a + 1)} \, dx.
$$
Finally, an analysis similar to the one in Case~1 leads us to 
$$
\int_{\RR^3}w^2r^{- 3 + 2a} \, dx 
\lesssim
{1 \over a^2} \int_{\RR^3} | \nabla w|^2r^{- 1 + 2a} \, dx.
$$


\vskip.16cm

\noindent{\bf Hessian estimate.} In the same spirit, we use the previous arguments to handle each partial derivative of $w$ with $v=|x|^{1 + 2a}$ and we get
$$
\int_{\RR^3} | \nabla w|^2 r^{- 1 + 2a} \, dx \lesssim \int_{\RR^3} | \Hess(w)|^2r^{1 + 2a} \, dx.
$$  
By applying the first inequality in Proposition~\ref{lemma WP1}, we obtain immediately the second inequality. 
\end{proof}
 

\begin{proof}[Proof of Lemma~\ref{prop. weighted KKK}] 
\bse
{\bf Step 1.}  We write $Z=Y + Z_ne_n$, where $e_n = \del_r$ and $Y$ is orthogonal to the radial direction. Since $q<2$, we have $a \coloneqq 2  -  q >0$ and so, by Proposition~\ref{lemma WP1}, 
\bel{Yn}
	\int_{\RR^3}(Z_n)^2r^{- 3 + 2a} \, dx 
=  - \frac{1}{a} \int_{\RR^3}(Z_n\del_rZ_n)r^{- 2 + 2a} \, dx.
\ee
Denoting by $D$ the Euclidean derivative operator, we have $D_{e_n}e_n = 0$ and $\del_rZ_n ={1 \over 2}  \Dcal(Z)_{nn}$. 
 Therefore, \eqref{Yn} gives us 
\bel{Yn2}
\int_{\RR^3}(Z_n)^2r^{- 3 + 2a} \, dx
\lesssim
\int_{\RR^3} | \Dcal(Z)|^2r^{- 1 + 2a} \, dx,
\ee
which is the desired bound for $Z_n$. A similar argument shows that
\bel{Yn1}
\int_{\RR^3} |Z|^2r^{- 3 + 2a} \, dx
= - \frac{1}{a} \int_{\RR^3}(Z \cdot \del_rZ)r^{- 2 + 2a} \, dx.
\ee

Now, with $Z=Y + Z_ne_n$ we observe that
$Z \cdot \del_r Z = \Dcal(Z)(e_n,Z) - D_ZZ \cdot e_n$ 
and 
$$
D_ZZ \cdot e_n 
= D_YY \cdot e_n + Z_n D_{e_n}(Z_ne_n) \cdot e_n + Z_nD_{e_n}Y \cdot e_n + D_Y(Z_ne_n) \cdot e_n.
$$
The second identity is rewritten as
$
D_ZZ \cdot e_n = - r^{- 1} |Y|^2 + Z_ne_n(Z_n) + Y(Z_n) 
= - r^{- 1} |Y|^2 + Z(Z_n)$. 
Hence, by recalling the identity $Z \cdot \del_r Z = \Dcal(Z)(e_n,Z) - D_ZZ \cdot e_n$, we find 
$Z \cdot \del_r Z = \Dcal(Z)(e_n,Z) + r^{- 1} |Y|^2 - Z(Z_n)$. 
Combining this result with \eqref{Yn1}, we obtain
$$
\int_{\RR^3} |Z|^2r^{- 3 + 2a} \, dx 
\leq \frac{1}{a} \int_{\RR^3} |Z|| \Dcal(Z)|r^{- 2 + 2a} \, dx 
+ \frac{1}{a} \int_{\RR^3}Z(Z_n)r^{- 2 + 2a} \, dx.
$$
Integrating the second term by parts, we have
$$
\int_{\RR^3} |Z|^2r^{- 3 + 2a} \, dx 
\leq \frac{1}{a} \int_{\RR^3} |Z|| \Dcal(Z)|r^{- 2 + 2a} \, dx
 - \frac{1}{a} \int_{\RR^3} \Div(r^{- 2 + 2a}Z)Z_n \, dx.
$$
Now since $\Div(Z) \lesssim | \Dcal(Z)|$, we have 
$$
\aligned
\int_{\RR^3} |Z|^2r^{- 3 + 2a} \, dx
& \lesssim
\int_{\RR^3} |Z|| \Dcal(Z)|r^{- 2 + 2a} \, dx
+ \int_{\RR^3} | \Dcal(r^{- 2 + 2a}Z)|Z_n \, dx
\\
&
\lesssim
\int_{\RR^3} |Z|| \Dcal(Z)|r^{- 2 + 2a} \, dx + \int_{\RR^3} (Z_n)^2r^{- 3 + 2a} \, dx
\\
& \lesssim
\Big(\int_{\RR^3} |Z|^2r^{- 3 + 2a} \, dx \Big)^{1/2} \Bigg(\int_{\RR^3} | \Dcal(Z)|^2r^{- 1 + 2a} \, dx \Bigg)^{1/2} 
+ \int_{\RR^3}(Z_n)^2r^{- 3 + 2a} \, dx.
\endaligned.
$$
In combination with \eqref{Yn2}, we find 
$$
\int_{\RR^3} |Z|^2r^{- 3 + 2a} \, dx 
\lesssim
\Big( \int_{\RR^3} |Z|^2r^{- 3 + 2a} \, dx \Big)^{1/2} \Big(\int_{\RR^3} | \Dcal(Z)|^2r^{- 1 + 2a} \, dx \Big)^{1/2} 
+ \int_{\RR^3} | \Dcal(Z)|^2r^{- 1 + 2a} \, dx,
$$
hence  
$
\int_{\RR^3} |Z|^2r^{- 3 + 2a} \, dx 
\lesssim
\int_{\RR^3} | \Dcal(Z)|^2r^{- 1 + 2a} \, dx. 
$

\ese

\vskip.3cm

\bse

\noindent{\bf Step 2.}  We will prove the result 
when $a>0$ with $a \neq 1/2$, since the case where $a = 1/2$ is similar.
In view of Step~1, it suffices to show that 
\bel{nablaZ1}
\int_{\RR^3} | \nabla Y|^2r^{- 1 + 2a} \, dx
\lesssim
\int_{\RR^3} | \Dcal(Y)|^2r^{- 1 + 2a} \, dx + \int_{\RR^3} |Y|^2 r^{- 3 + 2a} \, dx.
\ee
By the definition of the Lie derivative, we have
$$
\int_{\RR^3}(Y^2_{i;j} + Y^2_{j;i})r^{- 1 + 2a} \, dx
\lesssim
\int_{\RR^3} | \Dcal(Y)|^2r^{- 1 + 2a} \, dx - 2\int_{\RR^3}Y_{i;j}Y_{j;i}r^{- 1 + 2a} \, dx.
$$
Thus to establish \eqref{nablaZ1}, 
 we now show that,  for some sufficiently small $\eps \in (0,1)$,
\bel{nablaZ2}
\aligned
- 2\int_{\RR^3}Y_{i;j}Y_{j;i}r^{- 1 + 2a} \, dx
& \leq 
 \eps \int_{\RR^3} | \nabla Y|^2r^{- 1 + 2a} \, dx
+ \eps^{- 1} \Big(\int_{\RR^3} | \Dcal(Y)|^2r^{- 1 + 2a} \, dx + \int_{\RR^3} |Y|^2 r^{- 3 + 2a} \, dx \Big). 
\endaligned
\ee
In fact, by the divergence theorem we get
\bel{nablaZ3}
 - \int_{\RR^3}Y_{i;j}Y_{j;i}r^{- 1 + 2a} \, dx \leq \int_{\RR^3}Y_jY_{i;ji}r^{- 1 + 2a} \, dx 
+ \int_{\RR^3} |Y_jY_{i;j} |r^{- 2 + 2a} \, dx
\ee
and, observing that $Y_jY_{i;ji}=Y_jY_{i;ij}$ and applying the divergence theorem again,  
$$
\int_{\RR^3}Y_jY_{i;ji}r^{- 1 + 2a} \leq  - \int_{\RR^3}Y_{i;i}Y_{j;j}r^{- 1 + 2a} \, dx 
+ \int_{\RR^3} |Y_jY_{i;i} |r^{- 2 + 2a} \, dx.
$$
In combination with \eqref{nablaZ3}, we arrive at 
$$
\aligned
 - \int_{\RR^3}Y_{i;j}Y_{j;i}r^{- 1 + 2a} \, dx  
& \lesssim
\int_{\RR^3} |Y_{i;i}Y_{j;j} |r^{- 1 + 2a} \, dx
+ \int_{\RR^3} |Y_jY_{i;j} |r^{- 2 + 2a} \, dx + \int_{\RR^3} |Y_jY_{i;i} |r^{- 2 + 2a} \, dx
\\
& \lesssim
\int_{\RR^3} | \Dcal(Y)|^2r^{- 1 + 2a} \, dx + \int_{\RR^3} |Y|| \nabla Y|r^{- 2 + 2a} \, dx.
\endaligned
$$
By the Cauchy-Schwarz inequality, we have 
$$
\int_{\RR^3} |Y|| \nabla Y|r^{- 2 + 2a} \, dx
\leq \eps 
\int_{\RR^3} | \nabla Y|^2r^{- 1 + 2a} \, dx  + \eps^{- 1} \int_{\RR^3} |Y|^2r^{- 3 + 2a} \, dx 
$$
 (with $\eps \in (0,1)$) 
and, therefore, we arrive at \eqref{nablaZ2} as claimed.
\ese
\end{proof}


\begin{proof}[Proof of Lemma \ref{lemma-onemore}]
In view of the expressions \eqref{eq=formuleexactes} and \eqref{eq=formuleexactes2} of the linearized operators, we estimate the terms involving the function $u$ as follows. For the Laplace term in \eqref{eq=formuleexactes}, by denoting by $\Gamma^l_{jk}$ the Christoffel symbols of a metric we have 
$$
\aligned
& \|(\Delta_{g_1}u) \, g_1 - (\Delta u)e\|_{L^2_{3-p}(\Mbf)} 
 \lesssim \|(\Delta_{g_1} u - \Delta u)e\|_{L^2_{3-p}(\Mbf)} + \|(\Delta_{g_1}u)(g_1 - e)\|_{L^2_{3-p}(\Mbf)}
\\
& \lesssim \, \Big( \int_\Mbf  |g_1^{jk} - e^{jk} |^2|\del_{jk} u|^2 r^{3 -2p} \, dV_e \Big)^{1/2} 
+ \Big( \int_\Mbf  |g_1^{jk} \Gamma^{l}_{jk}(g_1) - e^{jk} \Gamma^{l}_{jk}(e)|^2|\del_{l} u|^2 r^{3 -2p} \, dV_e \Big)^{1/2}
\\
& \quad
    +   \eeG \, \Big( \int_\Mbf  | g_1^{jk} |^2 \big( |\del_{jk} u| + | \Gamma_{ij}^k | | \del_k u | \big)^2 r^{3 -2p}|g_1 - e|^2 \, dV_e \Big)^{1/2}
\\
& \lesssim \eeG \, \Big( \int_\Mbf  |\del_{jk} u|^2 r^{3 -2p - 2\ppG} \, dV_e \Big)^{1/2} 
       + \eeG\Big( \int_\Mbf  |\del_{l} u|^2 r^{1 - 2p - 2\ppG} \, dV_e \Big)^{1/2}, 
\endaligned
$$ 
therefore
$
\|(\Delta_{g_1}u) \, g_1 - (\Delta u)e\|_{L^2_{3-p}(\Mbf)}  \lesssim \eeG \, \|u\|_{H^2_{1-p}(\Mbf)}.
$
For the Hessian term, we find  
$$
\aligned
 \|\Hess_{g_1}(u) - \Hess_e(u)\|_{L^2_{3 -p}(\Mbf)} 
&
\lesssim \, \Big(\int_\Mbf  \big|\big( \Gamma^k_{ij}(g_1) - \Gamma^k_{ij}(e) \big) \del_ku \big|^2 r^{3 - 2p} \, dV_e\Big)^{1/2}
\\
&
 \lesssim \eeG \, \Big(\int_\Mbf  |\del_ku|^2 r^{1 - 2 p - 2\ppG} \, dV_e\Big)^{1/2}
\endaligned
$$
and, therefore, 
$
\|\Hess_{g_1}(u) - \Hess_e(u)\|_{L^2_{3-p}(\Mbf)}    \lesssim \eeG\|u\|_{H^2_{1-p}(\Mbf)}. 
$
For the zero-order terms in $u$ we write  
$$
\aligned
& 
\Big\|u \big( \Ric(g_1) - \Ric(e) \big) + \Big((\Tr_{g_1} h_1)h_1 - h_ 1 \times h_1\Big) \, u\Big\|_{L^2_{3-p}(\Mbf)} 
\\
& \lesssim \, \Big(\int_\Mbf  |u \big(\Ric(g_1) - \Ric(e)  \big) |^2 r^{3 - 2p} \, dV_e\Big)^{1/2}
    + \Big(\int_\Mbf  \Big|(\Tr_{g_1} h_1)h_1 - h_1\otimes h_1\Big|^2u^2 r^{3 - 2p} \, dV_e\Big)^{1/2} 
\\
& \lesssim 
\eeG\Big(\int_\Mbf  |u|^2 r^{- 1 - 2p - 2\ppG} \, dV_e\Big)^{1/2}
   + \eeG\Big(\int_\Mbf  |u|^2 r^{3 - 2p - 4\qqG} \, dV_e\Big)^{1/2}
       \lesssim \eeG\|u\|_{H^2_{1-p}(\Mbf)}.
\endaligned
$$
Next, we consider the terms in \eqref{eq=formuleexactes}  involving $Z$. By the definition the Lie derivative, we have  
(using coma for partial derivatives and implicit summation over $i,j,k$)
$$
\aligned
\|\Lcal_Zh_1\|_{L^2_{3-p}(\Mbf)} 
& \lesssim \, \Big(\int_\Mbf  |h_{1ij,k}Z^i|^2r^{3 - 2p} \, dV_e\Big)^{1/2} + \Big(\int_\Mbf  |h_{1kj} \del_iZ^i|^2r^{3 - 2p} \, dV_e\Big)^{1/2}
\\
& \lesssim \eeG \, \Big(\int_\Mbf  |Z^i|^2r^{1 - 2p - 2\qqG} \, dV_e\Big)^{1/2} + \eeG \, \Big(\int_\Mbf  |\del_iZ^i|^2r^{3 - 2p - 2\qqG} \, dV_e\Big)^{1/2}
  \lesssim \eeG \, \|Z\|_{H^1_{2-\oldq}(\Mbf)},
\endaligned
$$
and
$$
\aligned
\|\Lcal_Z g_1 - \Lcal_Z e\|_{L^2_{3-\oldq}(\Mbf)} 
& \lesssim 
\Big(\int_\Mbf  \Big(
|(g_{1ij,k} - e_{1ij,k}) Z^k|^2 + |(g_{1ij} - e_{ij})\del_k Z^{i} |^2 
\Big)
\, r^{3 - 2\oldq} \, dV_e\Big)^{1/2} 
\\
& \lesssim 
\eeG \, \Big(
\int_\Mbf  |Z^k|^2r^{1 - 2\oldq -2\ppG} + |\del_k Z^{i} |^2r^{3 - 2\oldq -2\ppG} \, dV_e\Big)^{1/2} 
\lesssim \eeG \, \|Z\|_{H^1_{2-\oldq}(\Mbf)}.
\endaligned
$$
Also, we have
$$
\aligned
\|g_1\big(\Lcal_Z g_1, h_1\big)\|_{L^2_{3-p}(\Mbf)}  & \lesssim \, \Big(\int_\Mbf  \big(|g_{1ij,k} Z^k|^2 + |g_{1kj} |^2|\del_iZ^k|^2 \big) |h_1 |^2r^{3 - 2p} \, dV_e \Big)^{1/2}
\\
& \lesssim \eeG\Big(\int_\Mbf  \big(|Z^k|^2r^{1 - 2p - 2\ppG - 2\qqG} + |\del_iZ^k|^2r^{3 - 2p -2\qqG} \big) \, dV_e \Big)^{1/2}
 \lesssim \eeG \, \|Z\|_{H^1_{2-\oldq}(\Mbf)}. 
\endaligned
$$
On the other hand, from the definition of the divergence operator we obtain 
$$
\aligned
\|\Div_{g_1} (Z) h_1\|_{L^2_{3-p}(\Mbf)} & \lesssim \, \Big(\int_\Mbf  |\del_i Z^j h_{1kl} |^2r^{3- 2p} \, dV_e \Big)^{1/2} 
\\
&
\lesssim \, \Big(\int_\Mbf  |\del_i Z^j|^2r^{3- 2p -2\qqG} \, dV_e \Big)^{1/2}
    \lesssim \eeG \, \|Z\|_{H^1_{2-\oldq}(\Mbf)}.
\endaligned
$$
Similarly, we find 
$$
\aligned
\|Z\otimes\Div_{g_1} h_1 + \Div_{g_1} h_1\otimes Z\|_{L^2_{3-p}(\Mbf)}  
 &\lesssim \eeG \, \Big(\int_\Mbf  |h_{1ij,k} Z^l|^2 r^{3 - 2p} \, dV_e\Big)^{1/2}
 \\
 &
 \lesssim \eeG  \, \Big(\int_\Mbf  |Z^l|^2 r^{1 - 2p - 2\qqG} \, dV_e\Big)^{1/2}
   \lesssim \eeG \, \|Z\|_{H^1_{2-\oldq}(\Mbf)}. 
\endaligned
$$
Finally, we estimate the last term in \eqref{eq=formuleexactes} by 
$$
\aligned
\|g_1(Z,\Div_{g_1} h_1)\|_{L^2_{3-p}(\Mbf)} 
& \lesssim \, \Big(\int_\Mbf  |Z^kh_{1ij,l} |^2r^{3 -2p} \, dV_e\Big)^{1/2} 
\\
&
\lesssim \eeG \, \Big(\int_\Mbf  |Z^k|^2r^{1 -2p - 2\eta} \, dV_e\Big)^{1/2}
   \lesssim \eeG \, \|Z\|_{H^1_{2-\oldq}(\Mbf)},
\endaligned
$$
which completes the study for the Hamiltonian constraint \eqref{eq=formuleexactes}. 
Finally, we deal with \eqref{eq=formuleexactes2} similarly, by writing for instance 
$$
\aligned
& \Big\|\Big((\Tr h_1)g_1^{-1} - h_1\Big) \, u\Big\|_{L^2_{3-\oldq}(\Mbf)} 
 \lesssim \, \Big(\int_\Mbf  \Big|(\Tr h_1)g_1^{-1} - h_1\Big|^2u^2 r^{3 - 2\oldq} \, dV_e \Big)^{1/2}
\\
& \lesssim \eeG\Big(\int_\Mbf u^2 r^{3 - 2\oldq -2\qqG} \, dV_e \Big)^{1/2}
 \lesssim \eeG \, \Big(\int_\Mbf u^2 r^{-1 - 2p} \, dV_e \Big)^{1/2}
  \lesssim \eeG\|u\|_{H^2_{1-p}(\Mbf)}. 
\endaligned
$$
Taking also our previous estimates into account, we can complete the argument for the Hamiltonian operator. 
\end{proof}


\bse

\begin{proof}[Proof of Proposition \ref{pro_integral}] 
1. 
 Recall that $(u,Z)$ is the unique minimizer of $\Jbf_{(g_1, h_1, f, V)}$ over the function space $H^2_{1-\oldp}(\Mbf)\times H^1_{2-\oldq}(\Mbf)$ and  we have 
$$
d\Gcal_{(g_1,h_1))}[g_2,h_2] 
=  (f, V)$$
 with 
$g_2  = r^{3-2p} d\Hcal_{(g_1,h_1)}^*[u,Z]$, 
and 
$h_2 = r^{3-2\oldq} d\Mcal_{(g_1,h_1)}^*[u,Z]$.  
It follows that
$$
\Jbf_{(g_1, h_1, f, V)}(u,Z) \leq \Jbf_{(g_1, h_1, f, V)}(0,0) = 0,
$$
hence
$$
\big(\| g_2\|_{L^2_{p}(\Mbf)} + \| h_2 \|_{L^2_{q}(\Mbf)} \big)^2
\lesssim 
 \big(\| u\|_{L^2_{1 - p}(\Mbf)} +  \| Z \|_{L^2_{2-\oldq}(\Mbf)} \big) \big(\| f \|_{L^2_{\oldp + 2}(\Mbf)} + \| V \|_{L^2_{\oldq+1}(\Mbf)} \big).
$$
On the other hand, Proposition~\ref{prop.key} gives us 
\bel{eq:43a}
\aligned
\| u \|_{H^2_{1-p}(\Mbf)} 
& \lesssim 
\big\|   g_2  \big\|_{L^2_{p}(\Mbf)}
+ \eeG \, \big\|  h_2   \big\|_{L^2_{\oldq}(\Mbf)},
\qquad 
 \| Z \|_{H^1_{2-\oldq}(\Mbf)} 
 \lesssim 
\eeG \, \big\|  g_2 \big\|_{L^2_{p}(\Mbf)}
+ 
\big\|  h_2   \big\|_{L^2_{\oldq}(\Mbf)}.
\endaligned
\ee
Taking these inequalities into account, we have
\bel{eq:43b}
\| g_2\|_{L^2_{p}(\Mbf)} + \| h_2 \|_{L^2_{q}(\Mbf)}
\lesssim 
  \| f \|_{L^2_{\oldp + 2}(\Mbf)} + \| V \|_{L^2_{\oldq+1}(\Mbf)}. 
\ee

\vskip.16cm 

2. 
Next in view of  $\Jbf_{(g_1, h_1, f, V)}(u,Z) \leq \Jbf_{(g_1, h_1, f, V)}(0,Z)$,   
we obtain 
$$
\aligned
\| g_2 \|_{L^2_{p}(\Mbf)}^2
\lesssim
& \int_\Mbf  \Big( |d \Hcal^*_{(g_1, h_1)} [0, Z] |^2 r^{3 - 2p} + \Big( |d \Mcal^*_{(g_1, h_1)} [0, Z] |^2 - |d \Mcal^*_{(g_1, h_1)} [u, Z] |^2 \Big) r^{3 - 2q} \Big) \, dV_{g_1} 
\\
&  +
\| u \|_{L^2_{1 - \oldp}(\Mbf)} \| f \|_{L^2_{\oldp + 2}(\Mbf)}.
\endaligned
$$
Therefore, a straightforward calculation (similar to the one in the proof of Lemma \ref{lemma-onemore}) leads us to 
$$
\| g_2 \|_{L^2_{p}(\Mbf)}^2
\lesssim \eeG^2 \big(\| Z \|^2_{H^1_{2 - q}(\Mbf)} + \| u \|^2_{H^2_{1-p}(\Mbf)} \big) + \| u \|_{L^2_{1 - \oldp}(\Mbf)} \| f \|_{L^2_{\oldp + 2}(\Mbf)}
$$
and so, by recalling \eqref{eq:43a} and \eqref{eq:43b}, 
$$
\aligned
\| g_2 \|_{L^2_{p}(\Mbf)}^2
& \lesssim 
\eeG^2 \big(\| g_2 \|_{L^2_{p}(\Mbf)} + \| h_2 \|_{L^2_q(\Mbf)} \big)^2 + \big(\| g_2 \|_{L^2_{p}(\Mbf)} + \eeG \, \| h_2 \|_{L^2_q(\Mbf)} \big) \| f \|_{L^2_{\oldp + 2}(\Mbf)}
\\
& \lesssim
\eeG^2 \big(\| f \|_{L^2_{\oldp + 2}(\Mbf)} + \| V \|_{L^2_{\oldq+1}(\Mbf)} \big)^2 +  \eeG \, \big(\| f \|_{L^2_{\oldp + 2}(\Mbf)} + \| V \|_{L^2_{\oldq+1}(\Mbf)} \big) \| f \|_{L^2_{\oldp + 2}(\Mbf)} + \| g_2 \|_{L^2_{p}(\Mbf)} \| f \|_{L^2_{\oldp + 2}(\Mbf)}.
\endaligned
$$
This establishes the first inequality of the proposition.  

\vskip.16cm

3. Similarly, from $\Jbf_{(g_1, h_1, f, V)}(u,Z) \leq \Jbf_{(g_1, h_1, f, V)}(u,0)$ we deduce the second inequality and the proof is completed. 
\end{proof}

\ese

\bse

\begin{proof}[Proof of Proposition~\ref{prop. estimate 4 order}]
 1. We follow here a strategy  found in~\cite{ChruDelay} 
    and~\cite{CarlottoSchoen}, 
  and we apply Theorem~\ref{propo.ellipticity}. 
The inequality within any compact region of $\Mbf$ being standard by a local elliptic regularity argument, we focus on any of the asymptotic ends, denoted below by $N = \RR^3 \setminus B_{R_1} \subset \RR^3$ for some $R_1>0$. 
Recall that we are working with the weighted unknown $\ut = r^{-\oldp}u$ and $\Zt = r^{-\oldq}Z$. Since the Einstein constraints 
 satisfy Douglis-Nirenberg's  ellipticity conditions (as pointed out as the end of Section~\ref{section Douglis-Nirenberg}), we can apply the interior regularity estimate in Theorem~\ref{propo.ellipticity}. We use the bounded domain $\Gamma(x) = B(x, r(x)/3)$ centered at any arbitrary point $x \in \RR^3 \setminus B_{2R_1}$, so that $\Gamma(x)  \subset N$.  With $d(x) = r(x)/3$ we obtain 
\bel{eq:theo44.a}
\aligned
& \sum_{i= 0}^{4} 
\Big( 
d(x)^i \big| \del^i \ut(x)\big| + d(x)^{4 + \alpha}[\del^4 \ut]_{\alpha, B_{d(x)/2}(x)}  
\Big)
+  \sum_{i= 0}^3d(x)^{i + 1} 
\Big( \big| \del^i\Zt(x)\big| + d(x)^{4 + \alpha}[\del^3 \Zt]_{\alpha,B_{d(x)/2}(x)} \Big)
\\
& 
\lesssim
\max_{B_{3d(x)/4}(x)} | \ut | 
+ r(x) \max_{B_{3d(x)/4}(x)} | \Zt |  
  + d(x)^4r(x)^{\oldp -3}  \sup_{B_{3d(x)/4}(x)} |f| + d(x)^\alpha[f]_{\alpha,B_{3d(x)/4}(x)}
\\
& \quad + d(x)^3r(x)^{\oldq  - 3}  \sup_{B_{3d(x)/4}(x)} |V| + d(x)\sup_{B_{3d(x)/4}(x)} | \del V| + d(x)^{1 
+ \alpha}[\del V]_{\alpha,B_{3d(x)/4}(x)}.
\endaligned
\ee

We next control the sup norm as follows. 
Given any sufficiently large radius $R > 0$, by applying Caccioppoli-Leray's inequality 
(which is based on an integration by parts and the inequality of arithmetic and geometric means)
to $\Delta_e \ut (R x)$ and $\Delta_e (\del \ut (R x))$ respectively, we have (for any $\eps > 0$)
$$
\aligned
\int_{B_{1/2} (x_0 / R)} | \nabla \ut (R x) |^2 \, dx 
& \lesssim 
\eps^{-2} \int_{B_1 (x_0 / R)} \ut^2 (R x) \, dx + \eps^2 \int_{B_1 (x_0 / R)} \big(\Delta \ut  (R x) \big)^2 \, dx,
\\
\int_{B_{1/4}(x_0 / R)} | \nabla^2 \ut (Rx) |^2 \, dx & \lesssim 
\eps^{-1} \int_{B_{1/2} (x_0 / R)} | \nabla \ut (R x) |^2 \, dx + \eps \int_{B_{1/2} (x_0 / R)}( \Delta (\nabla \ut (R x)) )^2 \, dx.
\endaligned
$$
It follows that 
$$
\| \ut (R x) \|^2_{W^{2, 2} (B_{1/4}(x_0 / R)) } \lesssim \eps^{-3} \int_{B_1 (x_0 / R)} \ut^2 (R x) \, dx + \eps \int_{B_1 (x_0 / R)} ( \Delta \ut (R x)) ^2 \, dx + \eps  \int_{B_1 (x_0 / R)}( \Delta (\nabla \ut (R x)) )^2 \, dx 
$$
and so, by the Sobolev embedding theorem,
$$
\max_{B_{1/4}(x_0 / R)} |\ut (R x) |^2 
\lesssim 
\eps^{-3} \int_{B_1 (x_0 / R)} \ut^2 (R x) \, dx 
+ \eps  (\max_{B_1 (x_0 / R)} | \Delta \ut (R x) |)^2 
+ \eps  \big(\max_{B_1 (x_0 / R)} \big| \Delta (\nabla \ut (R x) ) \big| \big)^2,
$$
which is equivalent to
\bel{eq:theo44.b}
\max_{B_{R/4}(x_0)} |\ut (x) |^2 
\lesssim 
{1 \over \eps^3 R^3} \int_{B_R (x_0)} \ut^2 (x) \, dx + \eps R^4 \big(\max_{B_R (x_0)} | \Delta \ut (x) | \big)^2 
+ \eps R^6 \big(\max_{B_R (x_0)} \big| \Delta (\nabla \ut (x) ) \big| \big)^2.
\ee
Similarly, we also have
\bel{eq:theo44.c}
\max_{B_{R/4}(x_0)} |\Zt (x) |^2 
\lesssim 
{1 \over \eps^3 R^3} \int_{B_R (x_0)} |\Zt|^2 (x) \, dx 
+ \eps R^4 \big(\max_{B_R (x_0)} | \Delta \Zt (x) | \big)^2 
+ \eps R^6 \big(\max_{B_R (x_0)} \big| \Delta (\nabla \Zt (x) ) \big| \big)^2.
\ee


Therefore, as long as $\eps$ is sufficiently small, we obtain from \eqref{eq:theo44.a}-\eqref{eq:theo44.c}
\bel{UZFV}
\aligned
& \| \ut\|_{C^{4,\alpha}_0(B_{3d(x)/4}(x))} + 
\| \Zt\|_{C_1 ^{3,\alpha}(B_{3d(x)/4}(x))}
\\
&
\lesssim 
r(x)^{-3/2} \| \ut\|_{L^2(B_{3d(x)/4}(x))} +  r(x)^{-1/2} \| \Zt\|_{L^2(B_{3d(x)/4}(x))} 
   +  \|f\|_{C^{0,\alpha}_{\oldp+1}(B_{3d(x)/4}(x))} + \|V\|_{C^{1,\alpha}_{\oldq}(B_{3d(x)/4}(x))}. 
\endaligned
\ee 
Since
$$
\aligned
& r(x)^{-3} \| \ut\|^2_{L^2(B_{3d(x)/4}(x))}
 \lesssim
d(x)^{-3}r(x)^{-2\oldp} \int_{B_{d(x)/2}(x)} |u(y)|^2\, dy
\\
& \lesssim
r(x)^{-2} \int_{B_{d(x)/2}(x)} |u(y)|^2r^{-3 + 2(1 - p)} \, dy
 \lesssim
r(x)^{-2} \|u\|^2_{H^2_{1-\oldp}}
\lesssim
r(x)^{- 2} \|(u,Z)\|^2_{H^2_{1-\oldp} \times H_{1,2-\oldq}},
\endaligned
$$
we have from Propositions~\ref{prop.key} and \ref{pro_integral}
$$
\aligned
r^{-3 / 2} \| \ut\|_{L^2(B_{3d(x)/4}(x))} 
&
\lesssim
r(x)^{-1} \big\|d\Gcal^*_{(g_1,h_1)}(u,Z)\big\|_{L^2_{3 - \oldp} (N)\times L^2_{3 - \oldq}(N)}
   \lesssim
r^{-1} \|(f,V)\|_{L^2_{\oldp + 2}(N) \times L^2_{\oldq + 1}(N)}.
\endaligned
$$
Similarly, we also have
$
 r(x)^{-1/2} \| \Zt\|_{L^2(B_{3d(x)/4}(x))} 
\lesssim
r^{-1} \|(f,V)\|_{L^2_{\oldp + 2}(N) \times L^2_{\oldq + 1}(N)}.
$
Taking this into account in \eqref{UZFV}, we establish that 
$$
\| \ut\|_{C^{4,\alpha}_0(B_{3d(x)/4}(x))} + 
\| \Zt\|_{C_1 ^{3,\alpha}(B_{3d(x)/4}(x))} \lesssim r^{-1} \|(f,V)\|_{L^2_{\oldp + 2}(N) \times L^2_{\oldq + 1}(N)} +  \|f\|_{C^{0,\alpha}_{\oldp+1}(B_{3d(x)/4}(x))} + \|V\|_{C^{1,\alpha}_{\oldq}(B_{3d(x)/4}(x))}
$$ 
and by the definition of $(g_2, h_2)$ in \eqref{def: gh} we arrive at 
$$
\aligned
\|g_2\|_{C^{2,\alpha}_{\oldp}(\Mbf)}  + \|h_2\|_{C^{2,\alpha}_{\oldq}(\Mbf)}
& 
\lesssim  
\|f\|_{L^2C^{0,\alpha}_{\oldp+2}(\Mbf)} + \| V \|_{L^2C^{1,\alpha}_{\oldq+1}(\Mbf)}. 
\endaligned
$$

\vskip.16cm 

2. In order to improve the previous estimates and cope with the dependency in $\eeG$, it suffices to re-apply the same arguments to each of the two sets of equations in \eqref{system1}. 
\end{proof}

\ese


\section{Technical arguments on the linearized  
Einstein constraints 
 (Section~\ref{section--6})}
\label{append-sec4}

\begin{proof}[Proof of Proposition \ref{proposition behavior at infinity}]
 The stated regularity was already explained in Proposition~\ref{propo.laplace - holder}. For the first statement in \eqref{decayPoisson}, recall that  
$$
w(x)  
= {1\over 4 \pi} \int_{\RR^3} {E(y) \over |x  - y|} \, dy
$$
and select an arbitrarily small $\eps \in (0,1)$. Since $E\in L^1(\RR^3)$, we can find a radius $R_\eps$ so large that 
$$
\int_{\RR^3 \setminus B_{R_\eps}(0)} |E| \, dy \leq \eps.
$$
\begin{subequations}
\label{eq.rrp-tout}
Writing 
\bel{eq.rrp}
\aligned
|x|w(x)  
= & {1 \over 4 \pi} \int_{B_{R_\eps}(0)} \frac{|x|E(y)}{|x - y|} \, dy + {1 \over 4 \pi} \int_{\RR^3\setminus B_{R_\eps}(0)} \frac{|x|E(y)}{|x - y|} \, dy
=: I_1^\eps + I_2^\eps
\endaligned
\ee 
and using $\lim_{|x| \to + \infty}\big(\sup_{y\in B_{R_\eps}(0)} |x| / |x - y| \big) = 1$, 
we obtain   
\bel{eq.r5}
\lim_{|x| \to +\infty} I_1^\eps
= {1 \over 4 \pi}  \int_{B_{R_\eps}(0)} E(y) \, dy.
\ee
On the other hand, we have
\bel{esI2}
\aligned
| I_2^\eps |
& \leq 
\int_{\RR^3\setminus B_{R_\eps}(0) \atop|x - y| \geq \sqrt{\eps} |x|} \frac{|x||E(y)|}{|x - y|} \, dy 
 + 
\int_{\RR^3\setminus B_{R_\eps}(0) \atop|x - y| \leq \sqrt{\eps} |x|} \frac{|x||E(y)|}{|x - y|} \, dy
\\
&
\lesssim 
\eps^{- 1/2} \int_{\RR^3\setminus B_{R_\eps}(0)} |E(y)| \, dy 
 +  
\int_{\RR^3\setminus B_{R_\eps}(0) \atop|x - y| \leq \sqrt{\eps} |x|} \frac{|x||y|^{- 3}}{|x - y|} \, dy
\\
& \lesssim
\eps^{- 1/2} \int_{\RR^3\setminus B_{R_\eps}(0)} |E(y)| \, dy 
 +  
\int_{\RR^3\setminus B_{R_\eps}(0) \atop|x - y| \leq \sqrt{\eps} |x|} \frac{|x|^{- 2}}{|x - y|} \, dy
\quad 
\lesssim 
\sqrt{\eps} + \eps, 
\endaligned
\ee 
\end{subequations}
 provided $\eps \in (0, 1/4)$, say.
Since $\eps$ can be chosen to be arbitrarily small, from \eqref{eq.rrp-tout} we obtain the first statement in \eqref{decayPoisson}. 
Finally, we write 
$$
|x|^2\nabla w (x) =  - {1 \over 4 \pi} \int_{\RR^3} |x|^2 (\nabla |x  - y| ) E(y) |x  - y|^{-2} \, dy
$$
and by observing that $|\nabla |x  -  y| | =1$, a similar analysis as above leads us to the second statement in \eqref{decayPoisson}. 
\end{proof}

\begin{proof}[Proof of Proposition \ref{proposition behavior at infinity k}] 
\bse
When $\theta = 1$ the desired result follows immediately from the proof of Proposition~\ref{proposition behavior at infinity}, so we treat here the interval $\theta \in (1, 2)$.  Since $\int_{\RR^3} E \, dy = 0$, we have 
$$
\aligned
|x|^\theta w(x) 
& = {1 \over 4 \pi} \int_{\RR^3} |x|^\theta \, {E(y) \over |x - y|} \, dy 
 = {1 \over 4 \pi} \int_{\RR^3} |x|^{\theta  - 1} \big( |x|  -  |x  -  y| \big) \, {E(y) \over |x - y|} \, dy, 
\endaligned
$$
and therefore given any radius $d>0$
$$
\aligned
4\pi |x|^\theta w(x) 
& = \int_{B_d(0)} \frac{|x|^{\theta  - 1}(|x|  -  |x  -  y|)E(y)}{|x - y|} \, dy
+ \int_{\RR^3 \setminus B_d(0) \atop |x|/2 \leq |x  -  y| \leq 3|x|/2} \frac{|x|^{\theta  - 1}(|x|  -  |x  -  y|) E(y)}{|x - y|} \, dy 
\\
& \quad + \int_{\RR^3 \setminus B_d(0) \atop|x  -  y| \geq 3|x|/2} \frac{|x|^{\theta  - 1}(|x|  -  |x  -  y|)E(y)}{|x - y|} \, dy
+ \int_{\RR^3 \setminus B_d(0) \atop|x  -  y| \leq |x|/2} \frac{|x|^{\theta  - 1}(|x|  -  |x  -  y|)E(y)}{|x - y|} \, dy
\\
&
   =: I_1(x) + I_2(x) + I_3(x) + I_4(x).
\endaligned
$$
Since
$
\frac{|x|^{\theta  - 1} | |x - y| - |x| |}{|x - y|} 
\leq \frac{|x|^{\theta  - 1}  |y|}{|x - y|},
$
we have for any $\eps \in  (0,1)$
$$
|I_1|(x) \lesssim  (1 + \sgn(|x| - 2d))  |x|^{\theta - 2}d \| E \|_{L^1(B_d(0))} + (1 - \sgn(|x| - 2d)) |x|^{\theta - \eps}  \| E \|_{C^{0,\alpha}_{2 + \eps}}, 
$$
where our convention is that $\sgn(0)= 0$.  
For the term $I_2$ we write 
$$ 
\aligned
|I_2(x)| 
& \lesssim |x|^{\theta - 2} \int_{\RR^3 \setminus B_d(0) \atop |x|/2 \leq |x  -  y| \leq 3|x|/2} |y||E(y)| \, dy
\\
&
 \lesssim |x|^{\theta - 2} \| r^{\theta + 2} E \|_{C^0(\RR^3 \setminus B_d(0))} \int_{\RR^3 \setminus B_d(0) \atop |y| \leq 2 |x|} |y|^{-\theta - 1} \, dy 
 \lesssim {1 \over 2 - \theta} \| r^{\theta + 2} E \|_{C^0(\RR^3 \setminus B_d(0))}, 
\endaligned
$$
since $\theta \in (1,2)$, 
while for the term  $I_3$
$$
\aligned
|I_3(x)|
& \lesssim |x|^{\theta  - 1} \int_{\RR^3 \setminus B_d(0) \atop|x  -  y| \geq 3|x|/2} |E(y)| \, dy
\lesssim |x|^{\theta  - 1} \| r^{\theta + 2} E \|_{C^0(\RR^3 \setminus B_{\max(d,|x|/2)}(0))} \int_{\RR^3 \setminus B_d(0) \atop |y| \geq 2|x|}  |y|^{-\theta - 2} \, dy
\\
&
\lesssim {1 \over \theta - 1} \| r^{\theta + 2} E \|_{C^0(\RR^3 \setminus B_{\max(d,|x|/2)}(0))}.
\endaligned
$$

Finally, we estimate $I_4$ by writing 
$$
\aligned
|I_4(x)|
& \lesssim 
|x|^{\theta  - 1} \int_{\RR^3 \setminus B_d(0) \atop|x  -  y| \leq |x|/2} {|y| |E(y)| \over |x - y|} \, dy
\lesssim
|x|^{\theta  - 1}\| r^{\theta + 2} E \|_{C^0(\RR^3 \setminus B_{\max(d, 3|x|/2)}(0))} \int_{\RR^3 \setminus B_d(0) \atop|x  -  y| \leq |x|/2} {|y|^{-\theta - 1} \over |x - y|} \, dy 
\\
& \lesssim
|x|^{\theta  - 1} \| r^{\theta + 2} E \|_{C^0(\RR^3 \setminus B_{\max(d, 3|x|/2)}(0))} \int_{\RR^3 \setminus B_d(0) \atop |x - y| \leq |x|/2} {(|x| - |x - y|)^{-\theta - 1} \over |x - y|}  \, dy
\\
&
 \lesssim \| r^{\theta + 2} E \|_{C^0(\RR^3 \setminus B_{\max(d, 3|x|/2)}(0))}.
\endaligned 
$$
Taking the above inequalities into account, we conclude that
\bel{eq:65et}
\aligned
|x|^\theta |w| 
& \lesssim (1 + \sgn(|x| - 2d))  |x|^{\theta - 2}d \| E \|_{L^1({B_d(0)})} + (1 - \sgn(|x| - 2d)) |x|^{\theta - \eps}  \| E \|_{C^{0,\alpha}_{2 + \eps}}
\\
& \quad + {1 \over 2 - \theta}  \| r^{\theta + 2} E \|_{C^0(\RR^3 \setminus B_d(0))} 
+ \theta \| r^{\theta + 2} E \|_{C^0(\RR^3 \setminus B_{\max(d,|x|/2)}(0))}. 
\endaligned
\ee
Combining this with the property
$$ 
\lim_{d \to 0^*}
\Big((1 + \sgn(|x| - 2d)) \,  |x|^{\theta - 2}d \| E \|_{L^1(B_d(0))} + (1 - \sgn(|x| - 2d)) |x|^{\theta - \eps}  \| E \|_{C^{0,\alpha}_{2 + \eps}} \Big) = 0,
$$
we find
$$
|x|^\theta |w| \lesssim (2 - \theta) \| r^{\theta + 2} E \|_{C^0(\RR^3)} + \theta \| r^{\theta + 2} E \|_{C^0(\RR^3 \setminus B_{|x|/2(0)})}.
$$
Hence, by Proposition~\ref{propo.laplace - holder} the inequality \eqref{eq:EEhat} holds. 

From \eqref{eq:65et}, one has 
$$
\limsup_{|x| \to +\infty} |x|^\theta |w| \lesssim \|r^{\theta + 2} E\|_{C^0(\RR^3 \setminus B_d(0))}
$$
for all $d > 0$.
Therefore, thanks to \eqref{eq:65cond} and by letting $d \to +\infty$, we obtain
$$
\lim_{|x| \to +\infty} |x|^\theta |w| = 0,
$$
as claimed. 
  
Now for any $d > 0$, let $\xi_d$ be a cut-off function which equals $0$ for all $|x| \leq d$ and equals $1$ for all $|x| \geq 2d$, and
$|\del^i \xi_d | \lesssim d^{-i}$ ($i = 1, 2, 3, \ldots$). 
By setting
$\wt_d := \xi_d w$, 
we find 
$$
- \Delta \wt_d = \xi_d E - w \Delta \xi_d + 2 \nabla w. \nabla \xi_d := \Et_d.
$$
Thanks to Proposition~\ref{propo.laplace - holder}, we have
$$
\| \wt_d \|_{C^{k + 2, \alpha}_\theta (\RR^3)} \lesssim \| \wt_d \|_{C^{0, \alpha}_\theta (\RR^3)} + \|\Et_d\|_{C^{k, \alpha}_{\theta + 2} (\RR^3)}.
$$
Observing also that
$$
\|\Et_d\|_{C^{k, \alpha}_{\theta + 2} (\RR^3)} \lesssim \|r^{\theta + 2 + i} \del^i E \|_{C^0(\RR^3 \setminus B_d(0))}, 
$$
 we obtain
$$
\| \wt_d \|_{C^{k + 2, \alpha}_\theta (\RR^3)} \lesssim \| \wt_d \|_{C^{0, \alpha}_\theta (\RR^3)} + \|r^{\theta + 2 + i} \del^i E \|_{C^0(\RR^3 \setminus B_d(0))}.
$$
Therefore, given $\eps \in (0,1)$, in view of \eqref{eq:65cond} and our previous conclusion $\lim_{|x| \to +\infty} |x|^\theta \, |w| = 0$, we choose $d$ to be sufficiently large so that
$
\| \wt_d \|_{C^{k + 2, \alpha}_\theta (\RR^3)} \lesssim \eps,
$
which gives us \eqref{eq:65decay}.
\ese
\end{proof}


\begin{proof}[Proof of Lemma~\ref{lemma-477}]
\begin{subequations}
Since $\LMcal$ has constant coefficients, without loss of generality we can assume that $y = 0$ and, after setting 
$\Mbb_{ij}(x) := \Mbb_{ij}(x,0)$, we only need to check
\be
I_{ij}(\phi) \coloneqq \int_{\RR^3} \Big( \Mbb_{ij}(x) \Delta \phi + \sum_{k=1,2,3} \Mbb_{jk}(x) \del_{ik}\phi \Big) \, dx 
= - \phi(0) \, \delta_{ij}
\ee
for all smooth and compactly supported functions $\phi: \RR^3 \to \RR$. By observing that $\Mbb \in {L^1_\loc(\RR^3 \setminus \{0 \})}$, it suffices to check that
$$
- \phi(0) \, \delta_{ik} = \lim_{a \to 0^+} I_{ik}(a),
\qquad
I_{ik}(a) \coloneqq \int_{|x| \geq a} \Big( \Mbb_{ki}(x) \Delta \phi 
+ \sum_{j=1,2,3} \Mbb_{jk}(x) \del_{ij}\phi \Big) \, dx.
$$
Using that $\phi$ has compact support and integrating by parts, we obtain 
$$
\aligned
I_{ik}(a) 
& = - \int_{|x| \geq a} \Big( \del_j \Mbb_{ki} \del_j \phi + \del_j \Mbb_{jk} \del_i \phi \Big) \, dx 
   - \int_{|x| =a} \Big( \Mbb_{ki} \del_j \phi {x_j \over |x|} + \Mbb_{jk} \del_i \phi {x_i \over |x|} \Big) \, d\omega
\\
& = \int_{|x| \geq a} \big( \Delta \Mbb_{ki} + \del_{ij} \Mbb_{jk} \big)\phi \, dx  + \int_{|x| = a} \Big( \del_j \Mbb_{ki} {x_j \over |x|} + \del_j \Mbb_{jk} {x_i \over |x|} \Big) \phi \, d\omega 
\\
&\quad
 -  \int_{|x| =a} \Big( \Mbb_{ki}{x_j \over |x|}  \del_j \phi  + \Mbb_{jk}  {x_i \over |x|}\del_i \phi  \Big) \, d\omega.
\endaligned
$$

Since $(x_j/|x|) \del_j \phi$ is bounded
 and since $|\Mbb(x)| \lesssim 1/|x|$, the last integral converges to zero when $a \to 0$. On the other hand, a straightforward calculation away from the singularity at $x=0$ gives us 
\be
\aligned
& \Delta \Mbb_{ki} + \del_{ij} \Mbb_{jk} = 0, 
\qquad
 {x_j \over |x|} \del_j \Mbb_{ki} + {x_i \over |x|} \del_j \Mbb_{jk} 
= - {3 \over 16\pi \,  |x|^2} \, \Bigg( \delta_{ik}  + {x_i x_k \over |x|^2} \Bigg).
\endaligned
\ee
Taking these identities into account, we then find
\begin{align*}
\lim_{a \to 0^+} I_{ik} & = - {3 \over 16\pi } \lim_{a \to 0^+} \int_{|x| = a} 
{1 \over |x|^2} 
\Bigg( \delta_{ik}+ {x_i x_k \over |x|^2} \Bigg) \phi(x) \, d\omega 
\\
& = - {3 \over 16\pi } \phi(0) \lim_{a \to 0^+} \int_{|x| = a} 
{1 \over |x|^2} 
\Bigg( \delta_{ik} + {x_i x_k \over |x|^2} \Bigg) \, d\omega 
- {3 \over 16\pi }  \lim_{a \to 0^+} \int_{|x| = a} 
{1 \over |x|^2} 
\Bigg( \delta_{ik} + {x_i x_k \over |x|^2} \Bigg) \big( \phi(x) - \phi(0) \big) \, d\omega 
\\
& = - {3 \, \phi(0) \over 16\pi } \lim_{a \to 0^+} \int_{|x| = a} 
{1 \over |x|^2} 
\Bigg( \delta_{ik} + {x_i x_k \over |x|^2} \Bigg) \, d\omega
 = - \phi(0) \, \delta_{ik}. 
  \qedhere
\end{align*}
\end{subequations}
\end{proof}


\begin{thebibliography}{99}



\bibitem{AMR83}
{\sc R. Abraham, J.E. Marsden, and T. Ratiu,}
{\sl  Manifolds, tensor analysis, and applications,}
Vol. 2,
Global Analysis in Pure and Applied Mathematics, Series B,
 Addison-Wesley Publishing Co., Reading, Mass., 1983.

\bibitem{Allen}
{\sc P.T. Allen, J. Isenberg, J.M. Lee, and I. Stavrov Allen,} 
Asymptotic gluing of shear-free hyperboloidal initial data sets,
Ann. Henri Poincar\'e 22 (2021), 771--819.  
 

\bibitem{Bartnik}
{\sc R. Bartnik,} 
The mass of an asymptotically flat manifold,
Comm. Pure Appl. Math. 49 (1986), 661--693. 



\bibitem{BC96} 
{\sc R. Beig and P.T. Chru\'sciel,}
Killing vectors in asymptotically flat space-times. I. Asymptotically translational Killing vectors and the rigid positive energy theorem,
 J. Math. Phys. 37 (1996), 1939--1961.

 

\bibitem{CaciottaN}
{\sc G. Caciotta and F. Nicol\'o,}
Global characteristic problem for Einstein vacuum equations with small initial data: the initial data constraints,
J. Hyper. Differ. Equ. 2 (2005), 201--277. 

\bibitem{CarlottoSchoen}
{\sc A. Carlotto and R. Schoen,} 
Localizing solutions of the Einstein constraint equations, 
Invent. Math. 205 (2016), 559--615.

\bibitem{Choquet-book} 
{\sc Y. Choquet-Bruhat,}
{\sl General relativity and the Einstein equations,}
Oxford Math. Monographs, Oxford Univ. Press, 2009.

\bibitem{ChoquetC}
{\sc Y. Choquet-Bruhat and D. Christodoulou,}
Elliptic systems in $H^{s, \delta}$ spaces on manifolds which are Euclidean at infinity,
Acta Math. 146 (1981), 129--150. 

\bibitem{ChruscielB}
{\sc P.T. Chru\'sciel,}
Anti-gravity \`a la Carlotto-Schoen [after Carlotto and Schoen],
S\'eminaire Bourbaki, Vol. 2016/2017, Exp. 1120,  
Ast\'erisque 407 (2019), 1--25. 



\bibitem{ChrusCorvinoIsenberg}  
{\sc P.T. Chru\'sciel, J. Corvino, and J. Isenberg,}
 Construction of N-body initial data sets in general relativity,
 Comm. Math. Phys. 304 (2011), no. 3, 637--647. 
 
 
 

\bibitem{ChruDelay} 
{\sc P.T. Chru\'sciel and E. Delay,}
On mapping properties of the general relativistic constraints operator in weighted function spaces
with applications, 
M\'em. Soc. Math. France, Vol 94. French Math. Society, 2003. 

 


 
\bibitem{ChD}
{\sc P.T. Chru\'sciel and E. Delay,}
Exotic hyperbolic gluings,
J. Differential Geom. 108 (2018), 243--293. 






\bibitem{Corvino-2000}  
{\sc J. Corvino,}
Scalar curvature deformation and a gluing construction for the Einstein constraint equations, 
Comm. Math. Phys. 214 (2000), 137--189. 

 

\bibitem{CorvinoEM} 
{\sc J. Corvino, M. Eichmair, and P. Miao,}
Deformation of scalar curvature and volume, 
Math. Ann. 357 (2013), 551--584.



\bibitem{CorvinoHuang} 
{\sc J. Corvino and L.-H. Huang,} 
Localized deformation for initial data sets with the dominant energy condition, 
Calc. Variations PDEs 59 (2020), 42.
 

\bibitem{CorvinoSchoen} 
{\sc J. Corvino and R. Schoen,}
On the asymptotics for the vacuum Einstein constraint equations, 
J. Diff. Geom. 73 (2006), 185--217.
 

\bibitem{Delay}
{\sc E. Delay,}
Localized gluing of Riemannian metrics in interpolating their scalar curvature, 
Differential Geom. Appl. 29 (2011), 433--439.  

\bibitem{DiltMaxwell}
{\sc J. Dilts and D. Maxwell,}
Yamabe classification and prescribed scalar curvature in the asymptotically Euclidean setting,
Comm. Anal. Geom. 26 (2018),  1127--1168. 

\bibitem{DouglisNirenberg}
{\sc A. Douglis and L. Nirenberg,}
Interior estimates for elliptic systems of partial differential equations,
Commun. Pure Appl. Math. 8 (1955), 503--538.
 
\bibitem{DruetHebey}
{\sc O. Druet and E. Hebey,}
Stability and instability for Einstein-scalar field Lichnerowicz equations on compact Riemannian manifolds,
Math. Z. 263 (2009), 33--67. 

\bibitem{DruetPremoselli}
{\sc O. Druet and B. Premoselli,}
Stability of the Einstein-Lichnerowicz constraint system,
Math. Ann. 362 (2015), 839--886. 

\bibitem{FischerMarsden-1975a} 
{\sc A.E. Fischer and J.E. Marsden,}
Linearization stability of the Einstein equations,
Bull. AMS. 79 (1975), 997--1003.

\bibitem{FischerMarsden-1975b} 
{\sc A.E. Fischer and J.E. Marsden,}
Deformations of the scalar curvature, 
Duke Math. J. 42 (1975), 519--547. 

\bibitem{GMS}
{\sc G. Galloway, P. Miao, and R. Schoen,}
Initial data and the Einstein constraint equations. General relativity and gravitation, 
Cambridge Univ. Press, Cambridge, 2015, pp.~412--448. 

\bibitem{HanLin}
{\sc Q. Han and F.-H. Lin,}
{\sl Elliptic partial differential equations,}
Courant Lecture Notes in Mathematics, 
Vol. 1, Amer. Math. Soc., Providence, RI, 2nd edition, 2011.
 
\bibitem{Le-FlochLeFloch}
{\sc B. Le Floch and P.G. LeFloch,}
The localized seed-to-solution projection as a parametrization of solutions to the Einstein constraints, Preprint 2023. 

\bibitem{Lockhart}
{\sc R.B. Lockhart and R.C. McOwen,}
 On elliptic systems in $\RR^n$, 
 Acta Math. 150 (1983), 125--135. 

\bibitem{Maxwell}
{\sc D. Maxwell,}
Rough solutions of the Einstein constraint equations on compact manifolds,
J. Hyp. Differ. Equ. 2 (2005), 521--546. 

\bibitem{Maxwell2014}
{\sc D. Maxwell,}
The conformal method and the conformal thin-sandwich method are the same,
Class. Quantum Gravity 31 (2014), 145006.

\bibitem{SchoenYau} 
{\sc R. Schoen and S.T. Yau,}
On the proof of the positive mass conjecture in general relativity, 
Comm. Math. Phys. 65 (1979), 45--76. 

\bibitem{Wit81}
{\sc E. Witten,}
A new proof of the positive energy theorem,
Comm. Math. Phys. 80 (1981), 381--402.

\end{thebibliography}
\end{document}